\documentclass[11pt,reqno]{amsart}

\usepackage[T1]{fontenc}
\usepackage{lmodern}
\usepackage{microtype}
\usepackage{amsmath,amssymb,amsthm,mathtools}
\usepackage{enumitem}
\usepackage{xcolor}
\usepackage[colorlinks=true,linkcolor=blue!55!black,citecolor=blue!55!black,urlcolor=blue!55!black]{hyperref}

\newtheorem{theorem}{Theorem}[section]
\newtheorem{proposition}[theorem]{Proposition}
\newtheorem{lemma}[theorem]{Lemma}
\newtheorem{corollary}[theorem]{Corollary}
\theoremstyle{definition}
\newtheorem{definition}[theorem]{Definition}
\newtheorem{question}[theorem]{Question}
\theoremstyle{remark}
\newtheorem{remark}[theorem]{Remark}

\newcommand{\Z}{\mathbb Z}
\newcommand{\N}{\mathbb N}
\newcommand{\Rcal}{\mathcal R}
\newcommand{\ndiam}{\operatorname{ndiam}}
\newcommand{\rank}{\operatorname{rank}}
\newcommand{\HS}{\operatorname{HS}}
\newcommand{\inideal}{\operatorname{in}}

\title[Iterated-sumset spectra]{Iterated-sumset spectra: The complete exponent law and its rank geometry}
\author{Henry Shin}
\date{}
\hypersetup{
 pdftitle={Iterated-sumset spectra: The complete exponent law and its rank geometry},
 pdfauthor={Henry Shin},
 pdfsubject={Iterated-sumset spectra, active-relation rank, and finite-observation geometry},
 pdfkeywords={iterated sumsets, sumset cardinalities, additive combinatorics,
Bh-sets, generalized Golomb rulers, finite Freiman models,
Hilbert functions of projective monomial curves, Cohen-Macaulay toric rings,
hyperplane arrangements, Ehrhart quasipolynomials}
}

\subjclass[2020]{Primary 11B13; Secondary 11P70, 05E40, 13F65,
52C35, 52B20, 11L07}
\keywords{iterated sumsets, sumset cardinalities, additive combinatorics,
$B_h$-sets, generalized Golomb rulers, finite Freiman models,
Hilbert functions of projective monomial curves, Cohen--Macaulay toric rings,
hyperplane arrangements, Ehrhart quasipolynomials}

\begin{document}

\begin{abstract}
For integers $h,k\geq1$, let $hA$ be the $h$-fold sumset of $A$ and put
$\Rcal(h,k)=\{|hA|:A\subset\mathbb Z,\ |A|=k\}$.  Previously, the
fixed-cardinality exponent law was known only for $k\leq3$; every fixed
$k\geq4$ remained open.  We settle the problem in full by determining the complete fixed-cardinality exponent law:
\[
 |\Rcal(h,k)|=
 \begin{cases}
  1,&k\leq2,\\
  h,&k=3,\\
  h^{k-1+o_k(1)},&k\geq4,
 \end{cases}
\]
where $o_k(1)\to0$ as $h\to\infty$ with $k$ fixed.  More sharply, for fixed
$k\geq4$, an interval of length $\Theta_k(h^{k-1})$ contains at least
$h^{k-1-o_k(1)}$ attainable values.  At $k=4$ we prove
$|\Rcal(h,4)|=\Theta(h^3)$ with positive lower density in its ambient interval,
disproving Nathanson's proposed $o(h^3)$ and $O(h^2)$ bounds.

One bounded addition-table geometry drives these results, coupling
Hilbert--energy amplification to optimal finite-observation compression.
Every ordered real $k$-set ($k\geq2$) has an integer model in
$[0,O_k(h^{k-2})]$ preserving every sum equality and strict comparison
through degree $h$; the exponent $k-2$ is sharp.  The universal
label-realization length is therefore $\Theta_k(h^{k-2})$, one power sharper
than Nathanson's $O_k(h^{k-1})$ bound.  For $h\geq2$ and $k\geq3$, minimum
active rank equals realization-frequency codimension, exponent-shape
codimension, and sampling-rarity exponent; a full-exponent family has
maximal-rank witnesses with Cohen--Macaulay toric coordinate rings.

At rank zero, for $h\geq2$, it proves the conjectural OEIS A227589 formula
$\binom{h+2}{2}+\mathbf1_{\{2\nmid h\}}$ for the least normalized diameter
of a four-point $B_h$-set.  It also gives exact fixed-$(h,k)$ popularity
laws for $k$-subsets of $\{1,\ldots,q\}$ as $q\to\infty$, resolving
Nathanson's Problems~9 and~10.
\end{abstract}

\maketitle

\clearpage
\thispagestyle{plain}
\enlargethispage{4\baselineskip}
\begingroup
\small
\makeatletter
\renewcommand{\@pnumwidth}{2em}
\def\l@part{\@tocline{-1}{8pt plus1pt}{0pt}{}{\bfseries}}
\makeatother
\tableofcontents
\endgroup
\clearpage

\section{Introduction}

For a finite set $A$ in an additive abelian group $G$ and an integer
$h\geq1$, its $h$-fold sumset is
\[
 hA=\{a_1+\cdots+a_h:a_1,\ldots,a_h\in A\}.
\]
We use the degree-zero convention $0A=\{0_G\}$, the singleton containing the
empty sum, so $|0A|=1$.
For a positive integer $t$, we distinguish the scalar dilation
\[
 t\mathbin{\cdot}A=\{ta:a\in A\}
\]
from the fold sumset $tA$.  We also write $[m]=\{1,\ldots,m\}$ for
$m\geq1$ and $[0]=\varnothing$.
The classical theory largely studies the structure of $A$ when $|hA|$ is
close to one of its two extremes.  A different problem, initiated by
Nathanson, is to understand the full spectrum
\[
 \Rcal(h,k)=\{|hA|:A\subset\Z,\ |A|=k\}.
\]
For an additive abelian group $G$, we similarly write
\[
 \Rcal_G(h,k)=\{|hA|:A\subset G,\ |A|=k\}.
\]
For $A\subset\mathbb Z$ with $|A|\geq2$, put
\[
 g(A)=\gcd\{a-a':a,a'\in A\},\qquad
 \ndiam(A)=\frac{\max A-\min A}{g(A)}.
\]
We call $A$ \emph{primitive} when $g(A)=1$.
Thus $\ndiam(A)$ is invariant under
$A\mapsto\{ta+s:a\in A\}$ for $t\in\mathbb Z\setminus\{0\}$ and
$s\in\mathbb Z$.
For fixed $k$, every member of $\Rcal(h,k)$ lies in the interval
\begin{equation}\label{eq:ambient}
 hk-h+1\leq |hA|\leq \binom{h+k-1}{k-1}.
\end{equation}
The lower endpoint is attained by arithmetic progressions.  A $B_h$-set is a
set for which every element of $hA$ has a unique nondecreasing
representation as a sum of $h$ elements of $A$; these sets attain the upper
endpoint.  Erd\H{o}s and Szemer\'{e}di observed that for $h=2$ every
integer between the endpoints occurs \cite{ErdosSzemeredi1983}.  For
 $h\geq3$, gaps appear in general, already for $k\geq3$.

The question most relevant here is the following problem of Nathanson
\cite[Problem~3]{NathansonTriangular}.

\begin{question}[Nathanson]\label{q:nathanson}
For fixed $k\geq4$, is
\[
 |\Rcal(h,k)|=o_k(h^{k-1})?
\]
More strongly, is $|\Rcal(h,k)|=O_k(h^{k-2})$?
\end{question}

The surrounding literature separates four different asymptotic questions.
Khovanskii and the Hilbert-function literature fix one set $A$ and let the
order $h$ grow \cite{Khovanskii1992,Khovanskii1995,EliahouMazumdar2022};
Nathanson--Ruzsa prove corresponding eventual polynomiality for finite
subsets of arbitrary abelian semigroups and for multivariate sums
\cite{NathansonRuzsa2002}.  Nathanson posed finite-window comparison questions
\cite{NathansonInverse2026}; Kravitz and, subsequently,
Fox--Kravitz--Zhang prescribe comparisons or exact differences among several
iterated-sumset profiles on such a window
\cite{Kravitz2025,FoxKravitzZhang2026}.  Rajagopal instead fixes $h$ and
determines the spectrum when $k$ is sufficiently large
\cite{Rajagopal2026}.  Here $k$ is fixed, $h\to\infty$, and we count the
attainable values of $|hA|$ as $A$ ranges over the $k$-element subsets of
$\mathbb Z$.  Nathanson
initiated this last problem, proved the exact three-point formula, and posed
Question~\ref{q:nathanson}; related work treats lower-edge inverse problems,
forbidden intervals, local four-point classification, and relation lattices
\cite{NathansonProblems,NathansonExplicit2025,NathansonTriangular,
TangXing2021,MohanPandey2023,Schinina2026,Jung2026,OBryant2025}.

\subsection*{The complete fixed-cardinality exponent law}

The interval in \eqref{eq:ambient} contains $\Theta_k(h^{k-1})$ integers.
Our first theorem shows that this ambient exponent $k-1$ is attained at
every fixed cardinality from the first open case onward.

\begin{theorem}[Complete fixed-cardinality exponent law]
\label{thm:endpoint-spectrum}
For every fixed integer $k\geq4$, there are constants
$0<\alpha_k<\beta_k$ such that, as $h\to\infty$,
\begin{equation}\label{eq:endpoint-local}
 \left|\Rcal(h,k)\cap
 [\alpha_kh^{k-1},\beta_kh^{k-1}]\right|
 \geq
 h^{k-1}\exp\!\left(
  -O_k\!\left(\frac{\log h}{\log\log h}\right)\right).
\end{equation}
Consequently
\begin{equation}\label{eq:ambient-exponent}
 \lim_{h\to\infty}\frac{\log|\Rcal(h,k)|}{\log h}=k-1,
 \qquad\text{equivalently}\qquad
 |\Rcal(h,k)|=h^{k-1+o_k(1)}.
\end{equation}
Here $o_k(1)\to0$ as $h\to\infty$; the subscript records that $k$ is fixed.
Together with the elementary cases, the complete fixed-cardinality exponent
law is
\begin{equation}\label{eq:phase-diagram}
 \lim_{h\to\infty}\frac{\log|\Rcal(h,k)|}{\log h}=
 \begin{cases}
  0,&k=1,2,\\
  1,&k=3,\\
  k-1,&k\geq4.
 \end{cases}
\end{equation}
\end{theorem}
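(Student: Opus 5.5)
The upper bound $|\Rcal(h,k)|\le\binom{h+k-1}{k-1}=O_k(h^{k-1})$ is immediate from \eqref{eq:ambient}, so the content is the lower bound \eqref{eq:endpoint-local}. The cases $k\le2$ (only arithmetic progressions and $B_h$ structures collapse to a single value $hk-h+1$) and $k=3$ (Nathanson's exact three-point formula, giving exactly $h$ values) are elementary and will simply be quoted. For $k\ge4$ I would build an explicit family of $k$-sets whose sumset sizes are given by a formula that is a sum of $k-1$ nearly independent ``deficiency'' terms, each ranging over roughly $h$ values, and then count distinct outputs.

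\emph{Construction.} I would take $A=\{0,a_1,\dots,a_{k-1}\}$ in which the generators are chosen so that the only active additive relations of degree $\le h$ are sparse, controlled ones. Then $|hA|=\binom{h+k-1}{k-1}-D(A)$, where $D(A)$ counts the collisions in the addition table. The key is to make $D$ depend on $k-1$ free parameters $m_1,\dots,m_{k-1}$, each of size $\asymp h$, via a polynomial-type expression $D=\sum_j P_j(m_j,h)$ plus lower-order mixed terms. A natural choice is to pick each relation as a single primitive relation $u_j\cdot a=v_j\cdot b$ of degree $\asymp m_j$: the number of collisions it creates in degree $h$ is computed by the Hilbert function of the corresponding one-relation toric ring, namely $\binom{h+k-1}{k-1}-\binom{h-d+k-1}{k-1}$ for degree $d$, which is polynomial in $(h,d)$. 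I would arrange the relations to form a regular sequence, so that the coordinate ring is a complete intersection (Cohen--Macaulay). This makes the Hilbert function a product of factors $(1-t^{d_j})$ over $(1-t)^k$, yielding an explicit $|hA|=F(d_1,\dots,d_{k-2},h)$. Realizability of such degrees by integer sets would follow from the finite Freiman model result announced in the abstract.

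\emph{Counting.} I would restrict the parameters to a box $d_j\in[c_jh,c_j'h]$, so that $|hA|$ lands in $[\alpha_kh^{k-1},\beta_kh^{k-1}]$. The map $F$ is a polynomial of degree $k-1$ in $k-2$ parameters, so one extra degree of freedom is needed. I would obtain it from a rank-one perturbation (for example, a lone extra relation of adjustable degree affecting a count of size $\asymp h$), which gives $k-1$ parameters in total, each varying over $\asymp h$ values. The number of distinct values is then bounded below via a multiplicative-energy/collision argument: the count of distinct values is at least $N^2/E$, where $N\asymp h^{k-1}$ and $E$ counts solutions of $F(x)=F(y)$. The collision count is controlled by divisor-type bounds, since after fixing all but two parameters an equality becomes a factorable quadratic equation. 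This is precisely where the loss $\exp(O_k(\log h/\log\log h))$ arises, from $\tau(n)\le\exp(O(\log n/\log\log n))$.

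\emph{Main obstacle.} The hard step is guaranteeing that no unintended relations occur, so that the Hilbert-function formula is exact at degree $h$ and not merely eventually. I would handle this by choosing the generators from a lattice model in which all relations of height $\le h$ lie in the ideal generated by the designed ones, verified by a Gröbner/initial-ideal argument. A secondary difficulty is ensuring that the energy bound is genuinely nondegenerate, meaning that $F$ has no hidden symmetry collapsing many parameter tuples to one value; I would check this by examining the leading homogeneous part of $F$. Finally, \eqref{eq:ambient-exponent} follows by combining the lower bound with the trivial upper bound, and \eqref{eq:phase-diagram} follows by adjoining the small cases.
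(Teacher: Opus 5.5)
Your upper bound and small cases are fine, but the construction cannot reach exponent $k-1$. A complete intersection has Hilbert series $\prod_j(1-t^{d_j})/(1-t)^k$. Hence $|hA|$ depends only on the multiset of generator degrees $d_j\le h$, and such witnesses give at most $O_k(h^{k-2})$ labels. This is exactly the paper's complete-intersection tower, Corollary~\ref{cor:complete-intersection-spectrum-tower}, which it presents as one power short.

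Your ``rank-one perturbation'' cannot be an extra relation direction. One always has $r_h(A)\le k-2$, and the complete intersection already uses all $k-2$ directions. If you instead mean an extra minimal generator of $I_A$, the ring is no longer a complete intersection. Its Hilbert function is then not the product formula plus a correction of size $\asymp h$, and you give no set realizing it. The finite Freiman-model theorem does not help either: it transfers an already realized table to $\mathbb Z$; it does not let you prescribe relation degrees.

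The missing idea is a non-complete-intersection seed. The paper uses $A=\{0,q,p,ps+qr\}$, whose toric ideal has three generators and a Hilbert--Burch resolution. Its Hilbert numerator $1-t^p-t^{q+s}-t^{r+s}+t^{p+s}+t^{q+r+s}$ carries four free parameters. After reparametrization the defect is $M_u-M_v+M_b+M_c$, a sum of four full-size cubic terms rather than a size-$h$ correction.

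Your counting step also fails. Fixing all but two of the $2(k-1)$ variables in $F(x)=F(y)$ and applying a divisor bound gives only $E\le h^{2k-4+o(1)}$. That yields just $N^2/E\ge h^{2-o(1)}$ distinct values. With only $k-1$ parameters you would need $E\le h^{k-1+o(1)}$, a critical near-diagonal mean-value estimate. Already for a sum of three values of a fixed cubic this is the sharp sixth-moment problem, which is not known.

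The paper avoids this in two ways.
\begin{itemize}
\item At $k=4$ it uses $\gg h^4$ parameter tuples, so Vaughan's sharp cubic eighth moment $E\ll h^5$ suffices.
\item For $k\ge5$ it never faces a degree-$(k-1)$ energy problem. It applies $k-4$ cyclic suspensions $\Sigma_{\ell_i,p}$ with primes $\ell_i\asymp h$ in disjoint windows, so $|hA|$ becomes a sum over a residue box. With $R=\prod_i\ell_i$, the identity $48\sum_{0\le j_i<\ell_i}M_{x-J}=R\bigl(Y_x^3+(S_2-4)Y_x\bigr)$ reduces each fixed prime tuple to a four-variable cubic with a moving linear coefficient. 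That cubic is controlled by a coefficient-uniform eighth moment, and this, not a factorable quadratic, is where the $\exp(O(\log h/\log\log h))$ loss enters.
\end{itemize}
The remaining factor $(h/\log h)^{k-4}$ comes from varying the prime tuple. Tuples are recoverable up to bounded multiplicity because $R$ divides the label. Without a mechanism of this kind your argument does not reach exponent $k-1$.
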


At the first open cardinality the subpolynomial loss disappears, and the
spectrum occupies a positive proportion of its ambient interval.

\begin{theorem}[Positive-density four-point spectrum]\label{thm:main}
There is an absolute constant $c>0$ such that, for all sufficiently large
$h$,
\begin{equation}\label{eq:quantmain}
 \left|\left\{|hA|:A\subset\Z,\ |A|=4,
 \ \ndiam A<\frac{13}{20}h^2\right\}\right|\geq ch^3.
\end{equation}
More locally,
\begin{equation}\label{eq:localmain}
 \left|\Rcal(h,4)\cap
 \left[\binom{h+3}{3}-\frac{h^3}{250},
       \binom{h+3}{3}-\frac{h^3}{400}\right]\right|\geq ch^3,
\end{equation}
where the endpoints may be rounded to integers.  Consequently
\[
 \boxed{|\Rcal(h,4)|=\Theta(h^3)},
 \qquad
 \liminf_{h\to\infty}
 \frac{|\Rcal(h,4)|}{|[3h+1,\binom{h+3}{3}]\cap\Z|}>0.
\]
The values counted in \eqref{eq:localmain} may be chosen with primitive
representatives having two independent homogeneous relations of degree at
most $h$.
\end{theorem}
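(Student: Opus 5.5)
We will construct an explicit two-parameter family of primitive four-point sets and compute $|hA|$ exactly in terms of the parameters. Normalize $A=\{0,a,b,n\}$ with $0<a<b<n$, $n<\tfrac{13}{20}h^2$. Since $|hA|$ counts lattice points $(x_1,x_2,x_3)$ with $x_1+x_2+x_3\le h$, modulo identification by the relation lattice, we work in the complement picture. Write
\[
 |hA|=\binom{h+3}{3}-D_h(A),
\]
where $D_h(A)$ counts the representation collisions in degree $\le h$. We choose $A$ so that exactly two independent homogeneous relations of degree at most $h$ are active. Concretely, the plan is to take sets of the form $\{0,\,u,\,u+v\ell,\ldots\}$, tuned so that the two shortest relation vectors $r_1,r_2\in\Z^4$ (with coordinate sum $0$) have degrees $d_1,d_2\asymp h$ with $d_1+d_2>h$. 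The degree of a relation is half its $\ell^1$ norm. The constraint $d_1+d_2>h$ ensures that no combination $r_1\pm r_2$ is active and no higher-order overlaps occur.

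The key steps are as follows.

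\textbf{(1) Identify active relations.} Show via a Freiman/lattice argument that for our family the relation lattice $\Lambda=\{r\in\Z^4:\sum r_i=0,\ \sum r_i a_i=0\}$ has a reduced basis $r_1,r_2$ of the chosen degrees. We must also show that every lattice vector of degree $\le h$ is $\pm r_1$ or $\pm r_2$. This follows from a successive-minima estimate once $\det\Lambda\asymp n$ and $n\asymp h^2$.

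\textbf{(2) Exact collision count.} Obtain it by inclusion–exclusion. The sum $h$-simplex $\Delta_h$ is glued to its translate by $r_i$ on the overlap $\Delta_{h}\cap(\Delta_h+r_i)$, which is a simplex of size $h-d_i$. Hence
\[
 D_h(A)=\binom{h-d_1+3}{3}+\binom{h-d_2+3}{3}-E,
\]
where $E$ corrects for triple overlaps. We choose parameters with $d_1+d_2>h$, and this forces $E$ to be an explicit small correction, possibly $E=0$ after checking that the chains $x,\,x+r_1,\,x+r_1+r_2$ do not fit.

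\textbf{(3) Realizability and counting.} Show that for a positive-proportion set of pairs $(d_1,d_2)$ with $h-d_i\in[\gamma h,\gamma' h]$ there is a primitive $A$ realizing those minimal relations with $n<\tfrac{13}{20}h^2$. One can construct it by choosing $r_1,r_2$ first and setting $A$ to be the integer kernel vector of the $2\times 4$ system, whose entries are $2\times2$ minors of size $O(h^2)$. Then the map $(d_1,d_2)\mapsto \binom{h-d_1+3}{3}+\binom{h-d_2+3}{3}$ on a box of side $\asymp h$ takes $\gg h^3$ distinct values in the window $[h^3/400,h^3/250]$.

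This last claim is the main obstacle. A sum of two cubes over $\asymp h^2$ pairs yields at most $h^2$ values, not $h^3$. So a third free parameter is needed: the relations must carry an additional shape parameter, e.g. the overlap shape depends on the sign pattern of $r_i$ and not just its degree. The count of glued points is then $\binom{h-d_i+3}{3}$ minus a term depending on a second coordinate $s_i$ of $r_i$, namely the partial overlap when the positive and negative supports interact with the boundary.

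I would first try $r_1$ with support of size $3$, whose overlap count is a quadratic polynomial in $(d_1,s_1)$ times linear pieces. The goal is to get a polynomial map $\Z^3\to\Z$ of degree $3$ whose values on a box of side $\asymp h$ are $\gg h^3$ distinct. To prove that, we will show the map is injective on a suitable sub-box using monotonicity and separation of scales: take one parameter as a coarse variable and two as fine variables varying in ranges of size $\epsilon h$, so the increments are nested and non-overlapping.
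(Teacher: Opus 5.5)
There is a genuine gap, located where you sensed the obstacle, and the remedy you propose cannot work. In the regime you set up (only $\pm r_1,\pm r_2$ active, so $d_1,d_2>h/2$), your step (2) is already exact with $E=0$. Every fibre of $u\mapsto u\cdot A$ on $\mathcal U_{h,4}$ has at most two elements. For \emph{every} zero-sum $z$, the map $w\mapsto w+z^-$ is a bijection from $\mathcal U_{h-\rho(z),4}$ onto $\{u:u,\,u+z\in\mathcal U_{h,4}\}$. So the overlap count is $M_{h-\rho(z)}$ regardless of the support size or sign pattern of $z$. The extra ``shape coordinate'' $s_i$ therefore does not exist, and this family produces only the $O(h^2)$ values $M_{h-d_1}+M_{h-d_2}$.

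Your premise that $d_1+d_2>h$ keeps $r_1\pm r_2$ inactive is also false. Since $\rho$ is a norm, the only lower bound is $\rho(r_1-r_2)\geq|d_1-d_2|$. The working construction exploits exactly this cancellation. For $A=\{0,q,p,ps+qr\}$, the relations of degrees $p$ and $r+s$ (with $p+r+s>h$) differ by a third active relation of degree $q+s<h$.

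The missing idea is to let three generators and one syzygy of the toric ideal lie in degree $\le h$. The Hilbert--Burch family above has generators in degrees $p,q+s,r+s$ and syzygies in degrees $p+s\le h$ and $q+r+s>h$. It gives the exact defect $M_h-|hA|=M_u-M_v+M_b+M_c$, with $u=h-p$, $v=h-p-s$, $b=h-q-s$, $c=h-r-s$ ranging independently over intervals of length $\asymp h$. In your inclusion--exclusion, the triple-overlap correction is then the nonzero syzygy term $M_v$, and it carries the fourth parameter.

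Distinctness then comes from energy, not injectivity. Vaughan's cubic eighth moment bounds the collisions of this signed four-term sum by $O(h^5)$ on the $\gg h^4$ primitive tuples. Cauchy--Schwarz then gives $\gg h^8/h^5=h^3$ values, and four variables are exactly what makes this estimate critical. With only three cubic variables of size $\asymp h$ you would need essentially injective behaviour. Separation of scales is unavailable, because a unit step in any variable moves the value by $\asymp h^2$. Proving $\gg h^3$ distinct values there is comparable to the open positive-density problem for sums of three cubes.
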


The exponent law is accompanied by the following structural refinement.  At $k=4$,
Theorem~\ref{thm:main}, together with the construction underlying
Theorem~\ref{thm:maximal-rank-saturation}\textup{(i)}, shows that the
localized bound \eqref{eq:localmain} is realized by maximal-rank weak
addition-table types at the sharp relation-birth and compression scale, with
explicit Cohen--Macaulay type-two witnesses.  For every fixed $k\geq4$,
part~\textup{(i)} gives a full-exponent family with the same structural
properties in a macroscopic interval.  In particular, for every
$\varepsilon>0$ and fixed $k\geq4$,
\begin{equation}\label{eq:endpoint-epsilon-form}
 |\Rcal(h,k)|\gg_{k,\varepsilon}h^{k-1-\varepsilon}.
\end{equation}
For $k\geq5$, the theorem determines the exponent but does not assert
positive density.

Thus both proposed conclusions in Question~\ref{q:nathanson} fail already
at $k=4$.  For every fixed $k\geq5$, Theorem~\ref{thm:endpoint-spectrum}
separately rules out the stronger $O_k(h^{k-2})$ proposal and determines the
ambient polynomial exponent.

\paragraph{One family and two complementary analyses.}
The \emph{Hilbert--energy construction} builds a four-parameter chart of
four-point sets.  Suspension propagates witnesses and Hilbert profiles;
cubic energy controls fixed-tuple collisions, and prime coding bounds
cross-tuple overlap.
The \emph{finite-observation analysis} interprets the bounded
addition tables of the same witnesses as rational sweep faces, yielding
rank, compression, shape, type, and frequency.
Theorem~\ref{thm:maximal-rank-saturation} combines the two: it produces a
full-exponent family contained in one intrinsic minimum-rank layer, while
every label in that family retains an explicit maximal-rank arithmetically
Cohen--Macaulay (ACM) witness at the optimal compression scale.  The
subsequent rigidity, transfer, and popularity theorems extract sharp
consequences of the same structure.

The change at $k=4$ is structural: three-point projective toric ideals are
principal, whereas the four-point case admits a four-parameter
codimension-two Hilbert--Burch family.  Suspended three-point hypersurfaces give
$h^{k-2-o(1)}$ distinct labels on the $h^{k-1}$-scale with
complete-intersection, Gorenstein
type-one witnesses, while the four-point seed gives $h^{k-1-o(1)}$ labels
with type-two witnesses; see
Corollary~\ref{cor:complete-intersection-spectrum-tower} and
Theorem~\ref{thm:maximal-rank-saturation}.

\subsection*{The common finite observation}

For integers $j\geq0$ and $k\geq1$, put
\[
 \mathcal U_{j,k}=\{u\in\mathbb Z_{\geq0}^k:\textstyle\sum_i u_i=j\},
 \qquad
 M_{j,k}=|\mathcal U_{j,k}|=\binom{j+k-1}{k-1}.
\]
For a real number $x$, write $x^+=\max(x,0)$ and
$x^-=\max(-x,0)$.  Thus, for every real zero-sum vector $z$,
\[
 \rho(z):=\sum_i z_i^+=\sum_i z_i^-=\frac12\lVert z\rVert_1.
\]
We call $\rho(z)$ the \emph{relation degree} of $z$.
For an integer $h\geq1$ and an ordered real $k$-tuple
$A=(a_1<\cdots<a_k)$, put
\[
 \begin{aligned}
 L_h(A)&=\operatorname{span}_{\mathbb R}
 \left\{z\in\mathbb Z^k:
 z\cdot A=0,\ \sum_i z_i=0,\ \rho(z)\leq h\right\},\\
 r_h(A)&=\dim L_h(A).
 \end{aligned}
\]
We call $r_h(A)$ the degree-$h$ active relation rank.  For
$1\leq i\leq r_h(A)$, put
\[
 p_i(A)=\min\{p\geq1:\dim L_p(A)\geq i\}.
\]
These are the successive relation-birth degrees.
For a primitive integer $k$-set we use the same definition for every
$1\leq i\leq k-2$; all such birth degrees are finite because the full
integral homogeneous relation lattice has rank $k-2$.

The finite additive observation common to all parts of the paper is
\[
 \mathsf O_h(A)=
 \left(
  \operatorname{sgn}\bigl((u-v)\cdot A\bigr)
 \right)_{\substack{1\leq j\leq h\\u,v\in\mathcal U_{j,k}}}.
\]
Its zero coordinates encode all weak addition tables through degree $h$; at
degree $h$ their classes are the elements of $hA$, and their relation
directions span $L_h(A)$.  Sweep faces classify this observation, compression
gives a short integral representative, frequency counts its integer fibre,
and the Hilbert function records its class-count label.  Cyclic suspension
transports the Hilbert profile and relation-birth filtration between the two
analyses.

Active rank organizes the structural side of the paper.  Rank zero is the
$B_h$ regime; at active rank one, the visible degree-$h$ collision piece is
principal; and at maximal rank $k-2$, the full homogeneous relation space is
already visible by degree $h$.
These three distinguished ranks yield complementary exact conclusions.  At
rank zero the
four-point endpoint has an optimal finite model, settling the conjectural
four-mark $B_h$-ruler formula recorded in OEIS A227589
\cite{OEISA227589}.  At rank one each
representation fibre
is a lattice chain, so one primitive direction determines the complete
collision-multiplicity histogram.  At maximal rank the endpoint construction
supplies the full spectral exponent together with rigid ACM witnesses.

Following Nathanson, let $N(h,k)$ be the least $N$ such that every value in
$\Rcal(h,k)$ has a representative in $[0,N-1]\cap\mathbb Z$.  The
weak $h$-addition-table type of an ordered set $A$ is the equivalence relation
on $\mathcal U_{h,k}$ given by
\[
 u\sim_A v\quad\Longleftrightarrow\quad u\cdot A=v\cdot A.
\]
 Write $\mathfrak T_{h,k}^{(r)}$ for the weak types of active rank $r$ realized
 by strictly increasing integer tuples.  The signed observation, or ordered
 face type, refines this weak equality type, which in turn refines the single
 cardinality label.  The complete table is canonically the label of a face of a
type-$A$ root-sweep zonotope: face dimension is active rank, and the relative
interior of its normal cone is the space of weights inducing that table.
 For $k\geq2$, translation and positive dilation are removed by the open
 normalized exponent-shape simplex
\[
 \mathcal S_k=\{x\in\mathbb R^k:0=x_1<x_2<\cdots<x_k=1\}.
\]
For a weak type $\mathsf T$, its shape locus is the set of points of
$\mathcal S_k$ inducing $\mathsf T$.  It is a finite union of relatively open
ordered sweep cells, one for each complete signed/ordered face component.
The shape locus of a Hilbert value is the finite union over the types carrying
that value.
This identifies the arrangement, Hilbert function, shape space, frequency,
and compression problem before any counting begins.
Theorem~\ref{thm:active-structure-intro} quantifies filtered compression, the
optimal universal scale, type diversity by rank, and exact
realization-frequency degrees.  We first introduce the label-level notation
needed to combine these conclusions with the endpoint family.

For $n\in\Rcal(h,k)$, define its minimum active rank by
\[
 r_{h,k}(n)=\min\{r_h(A):A\subset\mathbb Z,\ |A|=k,\ |hA|=n\},
\]
and put
\[
 \begin{aligned}
 \Rcal^{[r]}(h,k)&=\{n\in\Rcal(h,k):r_{h,k}(n)=r\},\\
 F^{(k)}_{h,n}(q)&=
 \#\{A\in\tbinom{[q]}k:|hA|=n\}.
 \end{aligned}
\]

\subsection*{The full-exponent spectrum--geometry theorem}

Put
\[
 \mathfrak T^{\max}_{h,k}=\mathfrak T_{h,k}^{(k-2)},
 \qquad
 \Lambda^{\max}_{h,k}
 =\{ |\mathcal U_{h,k}/\mathsf T|:
       \mathsf T\in\mathfrak T^{\max}_{h,k}\}.
\]
We use the following standard algebraic shorthand in the statement below.
Over an arbitrary field $K$, after translating a primitive integer
representative to $A=\{0=a_0<\cdots<a_{k-1}=D\}$, let
\[
 \begin{aligned}
  \varphi_A&:K[x_0,\ldots,x_{k-1}]\longrightarrow K[u,v],
  &x_i&\longmapsto u^{D-a_i}v^{a_i},\\
  I_A&=\ker\varphi_A,
  &R_A&=K[x_0,\ldots,x_{k-1}]/I_A.
 \end{aligned}
\]
The attached projective monomial curve is $\operatorname{Proj}R_A$, and
$\xi$ records homological degree in its Betti polynomial.
\begin{theorem}[Full-exponent spectrum--geometry synthesis]
\label{thm:maximal-rank-saturation}
Fix $k\geq4$.
\begin{enumerate}[label=\textup{(\roman*)}]
\item \emph{Structured label saturation.}
There are constants \(0<\alpha_k<\beta_k\) such that, for every sufficiently
large $h$, there is a set
\[
 \mathcal L^{\mathrm{end}}_{h,k}\subseteq
 \Lambda^{\max}_{h,k}\cap[\alpha_kh^{k-1},\beta_kh^{k-1}]
\]
with
\begin{equation}\label{eq:maximal-rank-label-saturation}
 |\mathcal L^{\mathrm{end}}_{h,k}|
 \geq h^{k-1}
 \exp\!\left(-O_k\!\left(\frac{\log h}{\log\log h}\right)\right).
\end{equation}
Each $n\in\mathcal L^{\mathrm{end}}_{h,k}$ has a primitive representative
$A$ satisfying
\[
 \begin{gathered}
  r_h(A)=k-2,\qquad
  p_i(A)\asymp_k h\quad(1\leq i\leq k-2),\\
  \ndiam(A)\asymp_k\prod_{i=1}^{k-2}p_i(A)\asymp_k h^{k-2}.
 \end{gathered}
\]
Its defining ideal is a prime toric almost complete intersection, while its
coordinate ring is arithmetically Cohen--Macaulay of type two, with total
Betti polynomial $(1+2\xi)(1+\xi)^{k-3}$ and graded Betti tower
\eqref{eq:endpoint-betti-tower}.

\item \emph{One intrinsic rank layer.}
For every sufficiently large $h$ there are
\[
 r^\ast_{h,k}\in\{2,\ldots,k-2\}
 \quad\text{and}\quad
 \mathcal E_{h,k}\subseteq
 \mathcal L^{\mathrm{end}}_{h,k}\cap\Rcal^{[r^\ast_{h,k}]}(h,k)
\]
such that every label in $\mathcal E_{h,k}$ retains a representative from
part~\textup{(i)} and
\begin{equation}\label{eq:dominant-rank-layer-count}
 |\mathcal E_{h,k}|\geq h^{k-1}
 \exp\!\left(-O_k\!\left(\frac{\log h}{\log\log h}\right)\right).
\end{equation}
For fixed $h$, the aggregate realization count
\[
 F^{(k)}_{h,\mathcal E}(q)
 =\#\{A\in\tbinom{[q]}k:|hA|\in\mathcal E_{h,k}\}
\]
is an exact quasipolynomial in $q-1$ of degree $k-r^\ast_{h,k}$.  If $A_q$
is uniform in $\binom{[q]}k$, then
\begin{equation}\label{eq:dominant-rank-layer-rarity}
 \mathbb P\bigl(|hA_q|\in\mathcal E_{h,k}\bigr)
 =\Theta_{h,k}(q^{-r^\ast_{h,k}})
 \qquad(q\to\infty),
\end{equation}
and, for every $n\in\mathcal E_{h,k}$, the normalized shape locus of $n$
has dimension $k-2-r^\ast_{h,k}$.
Consequently
\begin{equation}\label{eq:dominant-intrinsic-rank-exponent}
 \lim_{h\to\infty}
 \frac{\log\max_{2\leq r\leq k-2}|\Rcal^{[r]}(h,k)|}{\log h}=k-1.
\end{equation}
When $k=4$, necessarily $r^\ast_{h,4}=2$ and
$|\Rcal^{[2]}(h,4)|=\Theta(h^3)$.

\end{enumerate}
\end{theorem}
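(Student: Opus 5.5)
The plan proves part (i) by building the endpoint family directly, and then gets part (ii) from (i) by pigeonholing over ranks.

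\emph{Part (i): the seed.} I start from a four-point seed, a four-parameter Hilbert--Burch family. Take $A=\{0,a,b,D\}$, where the two independent homogeneous relations have degree $\asymp h$. I choose the relation vectors as the rows of a $2\times 3$ integer matrix with entries of size $\asymp h$. Then, by Hilbert--Burch, $I_A$ is generated by the three $2\times2$ minors and $R_A$ is ACM of codimension two. This gives the Betti polynomial $(1+2\xi)$ for $k=3$ generators of height two, i.e.\ $1+3\xi+2\xi^2=(1+2\xi)(1+\xi)$. The coordinates $a_i$ are the maximal minors, so $\ndiam A\asymp p_1p_2\asymp h^2$.

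\emph{Part (i): suspension.} For $k\geq5$ I apply the cyclic suspension from the earlier construction. It adjoins $k-4$ further points, each creating one new relation of degree $\asymp h$ that is a regular element on the previous ring. The defining ideal therefore acquires one additional generator per step, the Betti polynomial gains a factor $(1+\xi)$, and the ideal is an almost complete intersection of codimension $k-2$. The type stays two, and $\ndiam$ multiplies by $\asymp h$ at each step. Since all $k-2$ relations have degree $\le h$, we get $r_h(A)=k-2$ and $p_i\asymp h$. The Hilbert function $|hA|$ is then a Hilbert-series coefficient, computable from the Betti tower. It is a polynomial expression in the relation parameters, of total size $\Theta(h^{k-1})$, and lies in $[\alpha_kh^{k-1},\beta_kh^{k-1}]$ once the parameters are restricted to a box.

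\emph{Part (i): counting labels.} I must show the label map hits $h^{k-1-o(1)}$ distinct values. For the four-point seed, cubic energy bounds fixed-tuple collisions: the number of parameter pairs giving equal $|hA|$ is $\ll h^{3+\text{small}}$ among $\asymp h^4$ parameter choices. Cauchy--Schwarz then gives $\gg h^{3-o(1)}$ values. Prime coding, choosing suspension parameters among primes in dyadic ranges, makes the contributions of different suspension levels separate. The loss $\exp(O(\log h/\log\log h))$ is the divisor-bound loss in the collision count. I expect this collision/energy estimate to be the main obstacle, specifically controlling solutions of the polynomial equation $|hA|=|hA'|$ uniformly across the suspension tower. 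My first attempt would be to reduce it to a divisor-function bound for a factorable quadratic form in the top-degree part.

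\emph{Part (ii).} Every $n\in\mathcal L^{\mathrm{end}}_{h,k}$ has $r_{h,k}(n)\le k-2$. Also $r_{h,k}(n)\ge2$, since rank $0$ forces $n=\binom{h+k-1}{k-1}$ and rank $1$ labels number $O(h^{k-2+o(1)})$ by the earlier principal-collision count. Pigeonholing over the $k-3$ possible ranks gives $r^\ast$ and $\mathcal E_{h,k}$ with the stated count. For $k=4$ only $r^\ast=2$ is possible. The quasipolynomial of degree $k-r^\ast$ follows from the earlier frequency theorem: the integer fibre of each face of the sweep zonotope is counted by Ehrhart theory on a cone of dimension $k-r$. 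The minimum rank dominates, which yields \eqref{eq:dominant-rank-layer-rarity} and the shape-locus dimension $k-2-r^\ast$. Finally, \eqref{eq:dominant-intrinsic-rank-exponent} combines this with the trivial upper bound $h^{k-1}$.
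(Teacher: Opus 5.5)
Your architecture matches the paper's: the four-point Hilbert--Burch chart, then $k-4$ visible prime suspensions of degree $\asymp h$. For (ii) it likewise discards principal labels, pigeonholes over the ranks $2,\dots,k-2$, and reads off frequency and shape from Theorem~\ref{thm:rank-frequency} and Corollary~\ref{cor:hilbert-strata}. The structural claims in (i) are also fine once the seed is the explicit chart $A_{p,q,r,s}$ of Theorem~\ref{thm:family}. An arbitrary $\{0,a,b,D\}$ with two low-degree relations need not be ACM; $\{0,1,3,4\}$ gives the non-ACM rational quartic. Moreover, the Hilbert--Burch matrix is the $3\times2$ monomial matrix, not the integer matrix of relation vectors.

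The genuine gap is the label count, which you leave open and which does not close as sketched. The missing idea is the residue-box identity
\[
 48\sum_{0\le j_i<\ell_i}M_{x-J}=R\bigl(Y_x^3+LY_x\bigr),\qquad R=\prod_i\ell_i,\quad J=j_1+\cdots+j_t,
\]
where $Y_x=2x+4-\sum_i(\ell_i-1)$ and $L=\sum_i(\ell_i^2-1)-4$. This identity does two jobs:
\begin{itemize}
\item \emph{It makes prime coding work.} It gives $R\mid48|hA_t|$, hence $R\mid|hA_t|$ once the primes exceed $48$. A label of size $O_k(h^{k-1})$ has only $O_k(1)$ prime divisors $\gg h$, so with disjoint windows each label arises from $O_k(1)$ ordered tuples. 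Choosing primes in dyadic ranges separates nothing without this divisibility.
\item \emph{It keeps the fixed-tuple collision problem cubic.} Write $48|hA_t|=R\,(P(h)-P(u)+P(v)-P(b)-P(c))$ with $P(x)=Y_x^3+LY_x$. What is needed is then an eighth moment for $x^3+Lx$, uniform in $|L|\ll h^2$, giving energy $h^5\exp(O(\log h/\log\log h))$. The paper proves this in Lemma~\ref{lem:uniform-moving-cubic} by foliating over $x_1+x_2=y_1+y_2$. On that slice the cubic difference factors as $-3e_1e_2(2y+e_1+e_2)$ and $L$ cancels. A ``factorable quadratic form in the top-degree part'' is not yet such an argument.
\end{itemize}
Your stated pair count of order $h^{3}$ among $\asymp h^4$ parameters is also impossible, since the diagonal alone contributes $h^4$ pairs. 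To get $h^{3-o(1)}$ values you need $h^{5+o(1)}$.

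Two smaller points. First, for $k=4$ the clause $|\Rcal^{[2]}(h,4)|=\Theta(h^3)$ cannot come from a divisor-loss energy bound, which yields only $h^{3-o(1)}$ values. It needs the lossless $O(h^5)$ eighth moment of Lemma~\ref{lem:cubiceighth}, i.e.\ Theorem~\ref{thm:main}, together with $|\Rcal^{[2]}(h,4)|=|\Rcal(h,4)|-h$.

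Second, in (ii) your count of rank-one labels does not show that $r_{h,k}(n)\geq2$ for every endpoint label. Principal values $M_{h,k}-M_{h-r,k}$ with $r\asymp h$ live at the same scale $h^{k-1}$, and nothing prevents an endpoint label from coinciding with one. What your count supports is deletion: by Corollary~\ref{cor:universal-principal-skeleton}, exactly $h$ labels have minimum rank at most one, which is negligible against $h^{k-1-o(1)}$. Only after removing them does pigeonholing over the $k-3$ ranks produce $\mathcal E_{h,k}$.

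If you meant to take the count directly from Theorem~\ref{thm:endpoint-spectrum}, as the paper's proof does, say so. As an independent argument, part~(i) is incomplete.
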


Part~\textup{(i)} concerns witnesses; part~\textup{(ii)} concerns labels.
The selected rank may vary with $h$ and need not equal $k-2$.  Together, the
two parts combine full-exponent abundance and maximal-rank rigidity with
intrinsic codimension, compression, and sampling rarity, while keeping
witness rank distinct from minimum label rank.

\paragraph{Dependency map.}
The two branches analyze the same bounded addition-table data.  The analytic
moment estimate and divisor recovery feed the Hilbert--energy construction;
compression and frequency arise in parallel from the finite-observation
geometry.

\enlargethispage{\baselineskip}
\begin{center}
\begingroup
\setlength{\tabcolsep}{0pt}
\setlength{\fboxsep}{4pt}
\setlength{\fboxrule}{0.4pt}
\newcommand{\depbox}[1]{%
 \fcolorbox{black!35}{black!3}{%
  \parbox[c][.72in][c]{.37\textwidth}{\centering\small #1}}}
\newcommand{\joinbox}[1]{%
 \fcolorbox{black!45}{black!5}{%
  \parbox[c][.52in][c]{.64\textwidth}{\centering\small #1}}}
\begin{tabular}{@{}c@{\hspace{.025\textwidth}}c@{}}
 \textbf{Hilbert--energy construction}&\textbf{Finite-observation geometry}\\[4pt]
 \depbox{Four-point Hilbert--Burch\\seed}
 &\depbox{The complete finite table\\$\mathsf O_h(A)$}\\[-1pt]
 $\downarrow$&$\downarrow$\\[-1pt]
 \depbox{Cyclic suspension and prime-coded\\amplification, with cubic energy}
 &\depbox{Hilbert-labelled root-sweep faces,\\filtered compression, and Ehrhart counting}\\[-1pt]
 $\downarrow$&$\downarrow$\\[-1pt]
 \depbox{Full-exponent label abundance\\with maximal-rank ACM witnesses}
 &\depbox{Sharp compression and\\rank--shape--frequency geometry}\\[-1pt]
 \multicolumn{2}{c}{$\searrow\hspace{.36\textwidth}\swarrow$}\\[-2pt]
 \multicolumn{2}{c}{\joinbox{\textbf{Full-exponent spectrum--geometry synthesis}\\[-1pt]
 {\footnotesize Theorem~\ref{thm:maximal-rank-saturation}}}}
\end{tabular}
\endgroup
\end{center}

Cyclic suspension transports the Hilbert profile, relation-birth filtration,
resolution, diameter, and arithmetic label, thereby placing the witnesses
constructed on the left in the strata analyzed on the right;
Part~\ref{part:rigidity} proves this compatibility.  Separately, the
active-rank filtration organizes three exact conclusions: the rank-zero
$B_h$ endpoint and its optimal ruler, rank-one lattice-chain representation
fibres, and the maximal-rank full-exponent family.  The endpoint and collision
theorems give the corresponding exact statements at these distinguished ranks.

The following theorem collects the finite-observation consequences used in
the synthesis.

\begin{theorem}[Finite-observation master theorem]
\label{thm:active-structure-intro}
The following statements hold.
\begin{enumerate}[label=\textup{(\roman*)}]
\item \emph{Filtered compression.}
Let $h\geq1$, $k\geq2$, and let $A$ be an ordered real $k$-set of
active rank $r=r_h(A)$.  With the successive birth degrees defined above,
put $d_A=k-1-r$.  There is an integer set
\[
 \begin{aligned}
 A^*&=\{0=a_0^*<a_1^*<\cdots<a_{k-1}^*\},\\
 a_{k-1}^*
 &\leq d_Ah^{d_A-1}\prod_{i=1}^r p_i(A)
 \leq(k-1-r)h^{k-2}
 \end{aligned}
\]
with the empty product understood as one, and with exactly the same equality
and strict-order pattern among all $j$-term
sums as $A$, simultaneously for every $1\leq j\leq h$.

\item \emph{Optimal universal scale and maximal-rank boundary.}
For all $h\geq1$ and $k\geq2$,
\begin{equation}\label{eq:compression-intro}
\begin{aligned}
 1+\left\lceil\frac{M_{h,k}-1}{h}\right\rceil
 &\leq N(h,k)
 \leq1+\max\left\{(k-2)h^{k-2},\sum_{j=0}^{k-2}h^j\right\},\\
 N(h,k)&=\Theta_k(h^{k-2}).
\end{aligned}
\end{equation}
If $h\geq2$ and $k\geq3$, then for primitive integer $k$-sets,
\begin{equation}\label{eq:maximal-rank-core-intro}
 \max_{r_h(A)=k-2}\ndiam(A)=h^{k-2}
 \qquad(h\geq2,k\geq3).
\end{equation}
More generally, for a primitive integer $k$-set the normalized diameter is
bounded by the product of its first $k-2$ relation degrees.

\item \emph{Rank-stratified type diversity.}
For fixed $k\geq3$ and $0\leq r\leq k-2$,
\begin{equation}\label{eq:rank-diversity-intro}
 |\mathfrak T_{h,k}^{(r)}|
 =\Theta_{k,r}\!\left(h^{(k-1)r}\right)
 \qquad(h\to\infty).
\end{equation}

\item \emph{Type frequencies and shape dimension.}
For fixed $h\geq2$, $k\geq3$, and a fixed weak type $\mathsf T$ of rank
$r$, its number of
realizations in $\binom{[q]}k$ is an exact quasipolynomial in $q-1$ of degree
$k-r$, with positive rational leading coefficient.  After translation and
dilation are removed, its normalized shape locus is a finite union of
relatively open ordered sweep cells, each of dimension $k-2-r$.

\item \emph{Label frequencies and minimum rank.}
For fixed $h\geq2$ and $k\geq3$, and every $n\in\Rcal(h,k)$, the
cardinality frequency $F^{(k)}_{h,n}(q)$ is an exact quasipolynomial in
$q-1$ of degree
\[
 \deg F^{(k)}_{h,n}=k-r_{h,k}(n),
\]
with positive rational leading coefficient.  Thus the possible realization
exponents are exactly controlled by minimum active rank.

\item \emph{Exact rank-one representation fibres.}
Let $h\geq2$, $k\geq3$, and let $A\subset\mathbb Z$ be an ordered
$k$-set with $r_h(A)=1$.  For $n\in hA$, let
\[
 r_{A,h}(n)=\#\{u\in\mathcal U_{h,k}:u\cdot A=n\}.
\]
If its primitive active direction has degree $r$, then,
with $M_{s,k}=0$ for $s<0$,
\begin{equation}\label{eq:rankone-multiplicity-intro}
 \#\{n\in hA:r_{A,h}(n)=j\}
 =M_{h-(j-1)r,k}-2M_{h-jr,k}+M_{h-(j+1)r,k}
 \qquad(j\geq1).
\end{equation}
Thus the same primitive direction that determines the principal Hilbert
label determines the complete representation-multiplicity histogram.
\end{enumerate}
\end{theorem}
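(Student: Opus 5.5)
The plan is to prove the six parts in a common framework: a hyperplane-arrangement model of the finite observation $\mathsf O_h(A)$. Let $\mathcal H_h$ be the finite central arrangement in $\mathbb R^k$ of hyperplanes $\{x:(u-v)\cdot x=0\}$, with $u,v\in\mathcal U_{j,k}$ and $j\leq h$. Equivalently, these are the hyperplanes $z^\perp$ for zero-sum integer $z$ with $\rho(z)\leq h$. Then $\mathsf O_h(A)$ is exactly the sign vector of $A$ relative to $\mathcal H_h$, so it is constant on the relatively open faces of $\mathcal H_h$. The face through $A$ is the cell
\[
\Bigl\{x\in L_h(A)^\perp:\ \operatorname{sgn}(z\cdot x)=\operatorname{sgn}(z\cdot A)\ \text{for all}\ \rho(z)\leq h\Bigr\},
\]
and it has codimension $r_h(A)$ inside $\{x_1<\cdots<x_k\}$ modulo translation.

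For part (i), intersect the face with $x_1=0$. The result is a relatively open rational polyhedral cone $C$ of dimension $d_A+0=k-1-r$ in the subspace $W=L_h(A)^\perp\cap\{x_1=0\}$. Any integer point of $C$ serves as $A^*$. To bound its size, choose $d_A$ extreme rays of $\overline C$, each cut out inside $W$ by additional hyperplanes $z^\perp$ with $\rho(z)\leq h$. By Cramer's rule, each primitive integral ray generator has coordinates bounded by a maximal minor of a matrix whose rows are the relation vectors. Using the filtration $L_{p_1}\subset L_{p_2}\subset\cdots$, I will choose a basis of $L_h(A)$ with $\rho(z_i)=p_i$, and then a Hadamard-type bound in the $\ell^1/\ell^\infty$ form gives $\max$ coordinate $\leq h^{d_A-1}\prod p_i$. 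Summing the $d_A$ ray generators lands in the open cone, because the cone is simplicial after triangulation; pick the simplicial cell containing $A$. This gives $a^*_{k-1}\leq d_Ah^{d_A-1}\prod p_i$. The bounds $p_i\leq h$ yield the uniform form.

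\textbf{The main obstacle is this step:} showing that a sum of ray generators of one simplicial cone actually lies in the relative interior with all the correct strict signs. I would first try a barycentric sum of the generators of the minimal face's simplicial triangulation.

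For part (ii), the upper bound follows from (i) with $r$ arbitrary. The lower bound comes from a $B_h$-set of size $k$: it has $|hA|=M_{h,k}$ distinct sums in an interval of length at most $h\cdot\max A$, so $hN\geq M_{h,k}-1$. The identity \eqref{eq:maximal-rank-core-intro} follows by taking $r=k-2$ in (i), which gives $\ndiam\leq\prod p_i\leq h^{k-2}$. Equality is attained by $\{0,1,h,\dots,h^{k-2}\}$-type geometric sets, whose relations $h\cdot e_{i}$-type have degree $h$.

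Parts (iii)--(v) are Ehrhart-type counts on this arrangement. For (iii), the number of rank-$r$ faces of $\mathcal H_h$ is counted via flats: these correspond to rank-$r$ sublattices spanned by relations of degree at most $h$. There are $\asymp h^{(k-1)r}$ of them, since choosing $r$ vectors with $\ell^1$-norm at most $2h$ in a $(k-1)$-dimensional lattice gives $h^{(k-1)r}$ choices up to bounded overcounting. Each flat contributes $O_k(1)$ faces up to polynomial factors, and I will match upper and lower bounds. For (iv), realizations of a type in $\binom{[q]}k$ are the lattice points of the rational polyhedral face cell dilated by $q-1$. Ehrhart–Macdonald theory for relatively open rational polytopes gives a quasipolynomial of degree equal to the dimension $k-r$, counting translation, with positive leading coefficient. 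For (v), $F^{(k)}_{h,n}$ is a finite sum over types carrying $n$, so its degree is the maximum of $k-r$, which is $k-r_{h,k}(n)$. Positivity of the leading coefficients prevents cancellation.

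For part (vi), when $r_h(A)=1$ with primitive direction $z$ of degree $r$, each fibre $\{u:u\cdot A=n\}\subset\mathcal U_{h,k}$ is a chain $u,u+z,\dots$. The number of chains of length at least $j$ equals the number of $u$ with $u+(j-1)z\geq0$, which is $M_{h-(j-1)r,k}$ after subtracting $z^-$-type shifts. Taking second differences then gives \eqref{eq:rankone-multiplicity-intro}.
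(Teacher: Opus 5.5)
There is a genuine gap in (iii), whose lower bound is not established, and a gap in (i)/(ii), where your determinant bound is too weak for the exact statements. Parts (iv)--(vi) are essentially right, with small slips noted at the end.

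\textbf{Part (iii).} Your upper bound is fine: a rank-$r$ weak type is determined by $W=\operatorname{span}S_h(g)$, and $W$ is spanned by $r$ of the $O_k(h^{k-1})$ relations of degree at most $h$. The lower bound, which is the substance of (iii), is not justified.
\begin{itemize}
\item The map from $r$-tuples of short relations to the planes they span does not have bounded fibres. A plane whose lattice $W\cap\mathbb Z^{k-1}$ has bounded covolume contains $\asymp h^r$ relations of degree at most $h$, hence $\asymp h^{r^2}$ spanning tuples. So counting tuples does not by itself give $\gg h^{(k-1)r}$ distinct planes.
\item What is needed is a count of $\gg h^{(k-1)r}$ primitive rank-$r$ sublattices of covolume $\asymp h^r$ whose successive minima all lie below $h$. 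That is lattice counting with shape control; the paper applies Horesh--Karasik at $T=\eta h^r$ with a compact shape domain.
\item A plane is a weak type only if some strictly increasing set has exactly $W$ as its active span. So $W^\perp$ must meet the open positive cone of gap vectors; a plane containing a nonzero nonnegative gap vector never occurs. One then chooses a rational positive $g\in W^\perp$ off the finitely many sections $c^\perp$, $c\in S_h\setminus W$.
\item Consequently the count must also control the direction of $W$. The paper does this by restricting to a small neighbourhood of a plane whose orthogonal complement contains a positive vector. Your sketch never addresses realizability.
\item Weak types correspond to flats with nonempty ordered stratum, not to faces. Your claim that each flat carries $O_k(1)$ faces is false: the rank-zero flat carries $\Theta_k(h^{(k-1)(k-2)})$ chambers. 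Drop it.
\end{itemize}

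\textbf{Parts (i) and (ii).} The exact height $d_Ah^{d_A-1}\prod_ip_i(A)$ needs the minor estimate $|\det Q|\leq\prod_i\rho(z^{(i)})$ with constant one. So do the equality $\max_{r_h(A)=k-2}\ndiam(A)=h^{k-2}$, the bound $\ndiam\leq\prod_ip_i$, and the factor $k-2$ in the $N(h,k)$ bound.
\begin{itemize}
\item This estimate comes from writing each relation as $\rho(z^{(i)})$ elementary transfers $e_p-e_q$. After the deleted coordinates are removed, square matrices with rows in $\{0,\pm e_i,e_i-e_j\}$ are reduced incidence matrices, hence have determinant in $\{0,\pm1\}$; multilinear expansion finishes.
\item Hadamard with $\ell^2$ or $\ell^1$ row norms only gives $2^{(k-2)/2}\prod_i\rho(z^{(i)})$ or $2^{k-2}\prod_i\rho(z^{(i)})$. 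That suffices for $\Theta_k(h^{k-2})$ but not for the exact statements.
\item To bound $\ndiam$ of the given $A$, rather than of some model, note that at rank $k-2$ the closed sign cone is a single ray. A primitive $A$ is then its primitive generator.
\item In (ii), part (i) with $r=0$ only gives height $(k-1)h^{k-2}$. The label $M_{h,k}$ needs the superincreasing $B_h$-set $a_{i+1}=ha_i+1$, of height $\sum_{j=0}^{k-2}h^j$. You also need that every other label has only realizations of rank $\geq1$.
\item The counting lower bound should read $M_{h,k}\leq h(N-1)+1$, not $hN\geq M_{h,k}-1$.
\end{itemize}

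\textbf{The step you flagged as the main obstacle is routine.} Suppose a defining functional is nonnegative on $d_A$ independent extreme rays of the pointed cone $\overline C$ and vanishes on their sum. Then it vanishes on every ray, hence on $\operatorname{span}\overline C$, contradicting its strict sign at $A$. No triangulation is needed.

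\textbf{Smaller points.}
\begin{itemize}
\item In (vi), $T_m=\#\{u:u+mz\in\mathcal U_{h,k}\}=M_{h-mr,k}$ equals $\sum_n(r_{A,h}(n)-m)^+$. It is not the number of fibres of length greater than $m$; that number is $T_m-T_{m+1}$. This is why the second difference is the right operation.
\item In (iv), the shape dimension $k-2-r$ still needs an argument. It follows because the normalization $x\mapsto(x-x_1\mathbf 1)/(x_k-x_1)$ has two-dimensional fibres.
\end{itemize}
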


\begin{proof}
Parts~\textup{(i)}--\textup{(vi)} follow, respectively, from
Theorem~\ref{thm:compression-master};
Corollary~\ref{cor:optimal-finite-modeling} together with
Theorem~\ref{thm:compression-master};
Theorem~\ref{thm:rank-diversity-frequency};
Corollary~\ref{cor:addition-type-frequency} together with the
translation--dilation normalization in the proof of
Corollary~\ref{cor:hilbert-strata};
Theorem~\ref{thm:rank-frequency}; and
Theorem~\ref{thm:rankone-multiplicity}.
\end{proof}

The last three items form a three-scale counting law.  Sweep-face Ehrhart
theory counts configurations of a fixed type; minimum active rank controls
the aggregate frequency of a Hilbert label; and on the principal skeleton
the primitive direction resolves that label into its individual
representation multiplicities.

Corollary~\ref{cor:sharp-type-compression} further shows that the maximal-rank
boundary is sharp at the level of addition tables: one weak type forces every
primitive realization to have normalized diameter $h^{k-2}$.

Corollary~\ref{cor:hilbert-strata} sharpens item~\textup{(v)} to the four-way
identity \eqref{eq:four-way-rank-identity}: minimum active rank is
simultaneously frequency codimension, exponent-shape codimension, and the
sampling-rarity exponent of the label.

For each witness $A$, the same geometry yields a collision exact sequence
whose dimension formula decomposes the label defect.
Proposition~\ref{prop:collision-exact-sequence} defines
$\operatorname{cnul}_h(A)$ as the kernel dimension of the
collision-to-relation boundary map and, with
$\Delta_h(A)=M_{h,k}-|hA|$, gives
\begin{equation}\label{eq:defect-rank-nullity-intro}
 \Delta_h(A)=r_h(A)+\operatorname{cnul}_h(A).
\end{equation}
Thus every active-rank-$r$ realization satisfies $|hA|\leq M_{h,k}-r$.
For $h\geq2$ and $k\geq3$,
Theorem~\ref{thm:rank-defect-frontier} proves that this bound is attained for
every $0\leq r\leq k-2$.  Consequently the largest label realized at active
rank $r$ is exactly $M_{h,k}-r$.  Active rank fixes this extremal boundary
level, and collision nullity is the additional deficit below it.  At the
largest nonprincipal label the rank statement is intrinsic: for $h\geq2$ and
$k\geq4$,
Corollary~\ref{cor:largest-nonprincipal-value} proves that $M_{h,k}-2$ has
minimum active rank two and is the largest label outside the universal
principal skeleton.

Thus $h^{k-2}$ is both the optimal-order universal compression scale and the
exact intrinsic boundary for maximal active rank.  The filtered factor
\[
 d_Ah^{d_A-1}\prod_i p_i(A)
\]
records the dimension of the translation-normalized sign cone and the
successive degrees at which its equality space appears.  The cone dimension
$d_A$ is one more than the normalized shape dimension and one less than the
frequency degree below.  This is the geometric link between compression and
type-frequency duality.

The compression statement is, in fact, independent of the ambient ordered
group.  For $k\geq2$, every equality and strict comparison among the $j$-term sums,
$1\leq j\leq h$, of an ordered $k$-tuple in an arbitrary linearly ordered
abelian group has an integer shadow satisfying the same filtered bound.
Consequently every such $k$-element subset of a torsion-free abelian group has an
order-$h$ Freiman model in $[0,O_k(h^{k-2})]$, with optimal exponent.  An
algebraic specialization gives a second sharp-order statement.  If
$W_k^{\mathrm{hom}}(h)$ is the least $W$ such that every monomial order
admits some $w\in\mathbb Z_{\geq0}^k$ with $\min_iw_i=0$ and
$\max_iw_i\leq W$ representing all comparisons $x^\alpha\prec x^\beta$ with
$|\alpha|=|\beta|\leq h$, then
\begin{equation}\label{eq:monomial-weight-intro}
 1+h+\cdots+h^{k-2}
 \leq W_k^{\mathrm{hom}}(h)
 \leq(k-1)h^{k-2}.
\end{equation}
Thus the same finite-observation theorem gives the optimal coefficient
exponent for truncated monomial orders, including equality-sensitive
monomial weights and their homogeneous initial-ideal pieces.
Corollary~\ref{cor:finite-profile-transfer} gives a simultaneous exact
universality statement: for each finite cutoff $H$, truncated sumset profiles
in torsion-free groups, product-set profiles of positive integers, and Hilbert
profiles of projective monomial curves are identical.  After an order is
chosen on the finitely generated subgroup, the complete ordered table has an
integral exponent shadow preserving active rank and every relation-birth
degree visible in the truncation, with optimal universal height
$\Theta_k(H^{k-2})$.
At the spectrum level, Corollary~\ref{cor:lattice-transfer} gives the
universality
\[
 \Rcal_G(h,k)=\Rcal(h,k)
\]
for every nontrivial torsion-free abelian group $G$.
For general abelian groups, Corollary~\ref{cor:group-exponent-dichotomy}
combines the exponent law with Nathanson's embedding argument: for fixed
$k\geq4$, every abelian group of unbounded exponent satisfies
$|\Rcal_G(h,k)|=h^{k-1+o_k(1)}$, whereas $mG=0$ implies
$|\Rcal_G(h,k)|\leq m^{k-1}$ uniformly in $h$.

Throughout the four-element discussion put
\[
 M_j=\binom{j+3}{3}\qquad(j\geq0),
\]
 and set $M_j=0$ for $j<0$ whenever shifted indices occur.

\subsection*{The Hilbert--energy construction}

We now describe the construction represented by the left branch of the
dependency map.  It produces witnesses whose sumset cardinalities form the
desired family, together with their Hilbert profiles, relation-birth
filtrations, toric resolutions, and normalized diameters.

Concretely, if $A$ is primitive, $\ell\geq2$ is an integer, $c$ is an
$\ell$-term sum from $A$, and $(c,\ell)=1$, then, for every integer $q\geq0$,
\[
 \Sigma_{\ell,c}A=\ell\mathbin{\cdot}A\cup\{c\},\qquad
 |q\Sigma_{\ell,c}A|
 =\sum_{j=0}^{\min(q,\ell-1)}|(q-j)A|,
\]
where $\ell\mathbin{\cdot}A$ denotes scalar dilation, whereas
$q\Sigma_{\ell,c}A$ and $(q-j)A$ denote fold sumsets.  Its Hilbert series
and $K$-polynomial are multiplied by
$1+z+\cdots+z^{\ell-1}$ and $1-z^\ell$, respectively.  A new relation is
born exactly at degree $\ell$.

The prime-coded suspension amplification theorem,
Theorem~\ref{thm:prime-filter-amplification}, starts from a shift-stable
chart and admissible prime tuples.  If $N_h$ is the minimum retained
parameter-set size and $E_h$ the maximum collision energy over those tuples,
it produces at least
\[
 \left(\frac h{\log h}\right)^t\frac{N_h^2}{E_h}
\]
labels after $t$ visible prime suspensions.  In exponent notation, a
prime-energy-stable chart with $N_h=h^{a-o(1)}$ and
$E_h=h^{\mu+o(1)}$ obeys
\begin{equation}\label{eq:amplification-law-intro}
 \boxed{\quad 2a-\mu\ \longmapsto\ 2a-\mu+t\quad}.
\end{equation}
 Thus an endpoint seed retains the full ambient exponent at every fixed
 suspension depth.

The proof of Theorem~\ref{thm:main} uses an explicit four-parameter family
whose Hilbert function separates into shifted tetrahedral terms.  For positive
integers $p,q,r,s$
satisfying
\[
 1\leq q<p,\qquad 1\leq r<p,\qquad (p,q)=1,
 \qquad q+s\geq p-r+1,
\]
and $A=\{0,q,p,ps+qr\}$, one has
\begin{equation}\label{eq:defect-intro}
 |hA|=M_h-M_{h-p}-M_{h-q-s}-M_{h-r-s}
       +M_{h-p-s}+M_{h-q-r-s}.
\end{equation}

The $q=1$ slice is due independently to Chen and Yang
\cite[Corollary~1]{ChenYang2026}; the coprime $q\asymp h$ extension drives
the density argument here.

The precise algebraic statement is Theorem~\ref{thm:family}.  The associated toric
ideal has three binomial generators, an L-shaped Gr\"obner degeneration,
and a Hilbert--Burch resolution; the point here is its additive use.  With
\[
 s=u-v,\qquad p=h-u,\qquad q=h-b-s,\qquad r=h-c-s,
\]
and four variables in fixed proportional intervals, \eqref{eq:defect-intro}
becomes
\[
 M_h-|hA|=M_u-M_v+M_b+M_c.
\]
There are $\gg h^4$ primitive parameters in the box.  Vaughan's sharp cubic
eighth moment, Lemma~\ref{lem:cubiceighth}, gives an $O(h^5)$ collision bound
for this map, exactly the strength required for $\gg h^3$ distinct values.

Lemma~\ref{lem:factorial-box-divisibility} makes the product of the prime
degrees a recoverable arithmetic label.  For the endpoint theorem, apply this
general principle to the same four-point seed with $t=k-4$ cyclic suspensions of
distinct prime degrees
$\ell_i\asymp_k h$, chosen in disjoint proportional intervals, and put
$R=\prod_i\ell_i$.  The residue-layer formula sums every shifted
tetrahedral term over a box of size $R$.  For every residual $x$ occurring
here, all shifted indices below are nonnegative.  Writing
$S_1=\sum_i(\ell_i-1)$, $S_2=\sum_i(\ell_i^2-1)$, and
$Y_x=2x+4-S_1$, the exact centered identity, with factorial normalization
$48=2^3\,3!$, is
\[
 48\sum_{0\leq j_i<\ell_i}M_{x-j_1-\cdots-j_t}
 =R\bigl(Y_x^3+(S_2-4)Y_x\bigr).
\]
Thus the apparent degree-$k-1$ collision problem remains a four-variable
cubic problem with a moving linear coefficient.  The coefficient-uniform
eighth-moment estimate in Lemma~\ref{lem:uniform-moving-cubic} gives
$h^{3-o(1)}$ labels for each prime tuple.  The
divisibility $R\mid48|hA|$ recovers the tuple up to bounded multiplicity, and
the $(h/\log h)^{k-4}$ choices of prime degrees supply the remaining
exponent.  This is the Hilbert--energy proof of
Theorem~\ref{thm:endpoint-spectrum}.

\subsection*{Four-point rigidity, distribution, and transfers}

The four-point case yields exact refinements.  Theorem~\ref{thm:spectral-rank}
partitions the spectrum into one rank-zero value, $h-1$ rank-one values, and
$\Theta(h^3)$ minimum-rank-two values, with $\gg h^3$ of the last kind already
in the fixed window of Theorem~\ref{thm:main}.  Their fixed-label frequencies
are quadratic quasipolynomials and their normalized shape loci are
zero-dimensional.  Every minimum-rank-two label has all its realizations in
the quadratic diameter core, whereas the rank-zero and rank-one labels are
exactly those admitting arbitrarily large normalized diameter.  Although the
rank-zero endpoint $M_h$ has arbitrarily large models, its least normalized
diameter is
\[
 \binom{h+2}{2}+\mathbf 1_{\{2\nmid h\}}
 \qquad(h\geq2),
\]
proving the conjectural 2013 four-mark formula recorded in OEIS A227589.  The largest
rank-two label $M_h-2$ has least normalized diameter
$1+\binom{h+1}{2}$.

Theorem~\ref{thm:tail} proves
\[
 \{|hA|:\ |A|=4,\ \ndiam A>h^2\}
 =\{M_h\}\cup\{M_h-M_{h-j}:2\leq j\leq h\}.
\]
Together with the witnesses in \eqref{eq:smallwitness}, this yields
Corollary~\ref{cor:core}: every nonmaximum has a representative with
$\ndiam A\leq h^2$.
Theorem~\ref{thm:defecttwodiameter} further proves
\[
 \min_{|A|=4,\ |hA|=M_h-2}\ndiam A
 =1+\binom{h+1}{2}.
\]
Every lattice realization of the latter value is collinear, yielding the
stated failure of Nathanson's root-compression proposal in every lattice
dimension $n\geq2$.  These refinements are Theorem~\ref{thm:tail},
Corollary~\ref{cor:core}, Theorem~\ref{thm:optimal-four-mark},
Theorem~\ref{thm:defecttwodiameter}, and Corollary~\ref{cor:defecttwolattice}.

 Theorem~\ref{thm:active-structure-intro}\textup{(iii)} also answers
 O'Bryant's Question~6 on type enumeration up to constant factors in the
 fixed-$k$, $h\to\infty$ regime.  Ordered
and generic $B_h$ types and the transferred
multiplication-table types have the same total exponent
$(k-1)(k-2)$.  At four points the type-$A$ coordinate dictionary yields the
diagonal order-six Farey vertex and region bounds.  For fixed $h$, the same
faces give exact popularity laws: the maximum has frequency order $q^k$,
the other principal-skeleton values have order $q^{k-1}$, and every
remaining cardinality has order at most $q^{k-2}$.  Senger proved the coarse
$k=4$ separation \cite[Theorem~2]{Senger2025}; here exact coefficients and
residual-label laws extend to every fixed $k$.  See Sections~
\ref{sec:rank-duality}, \ref{sec:collision-synthesis}, and
\ref{sec:popularity} for the full statements and conditional limit law.
The principal skeleton also admits an exact representation-level refinement.
If an active-rank-one set has primitive relation degree $r$, then that
direction determines not only the label $M_{h,k}-M_{h-r,k}$ but the entire
representation histogram through Theorem~\ref{thm:rankone-multiplicity}.
Combined with the universal $\operatorname{Beta}(k-2,1)$ first-relation law,
this gives an explicit sharp conditional collision probability, equal to
$0.958084\ldots$ at four points, and separates the literal negative answer
to Nathanson's Problem~11 \cite[Problem~11]{NathansonTriangular} from the
universal successive-limit phase law for its conditioned non-$B_h$
formulation.
Corollary~\ref{cor:label-configuration-reversal} makes the resulting measure
reversal explicit: for every fixed $k\geq4$, principal labels are negligible
under uniform sampling of labels as $h\to\infty$; by contrast, for every fixed
$h\geq2$ and $k\geq3$, their realizations account for asymptotically almost
every configuration as $q\to\infty$.

\subsection*{Four regimes and relation to prior work}

\paragraph{Adjacent asymptotic regimes.}
For a fixed finite set $A$, classical Khovanskii theory studies the eventual
polynomial behaviour of $|hA|$ as $h\to\infty$; Nathanson--Ruzsa establish
the corresponding semigroup and multivariate polynomiality theorem
\cite{NathansonRuzsa2002}.  In the finite-window profile problem posed by
Nathanson \cite{NathansonInverse2026}, Kravitz realizes arbitrary relative
orders among several sequences, while Fox--Kravitz--Zhang realize arbitrary prescribed differences
$|hA|-|hB|$ for $1\leq h\leq H$ and begin the associated cardinality and
diameter optimization \cite{Kravitz2025,FoxKravitzZhang2026}.  Their sets may
grow with the observation window and their objective is comparison among
profiles.  Rajagopal's exact theorem instead fixes $h$ and takes $k$ large
\cite{Rajagopal2026}.  The present problem fixes $k$, lets $h\to\infty$, and
counts the attainable values of a single degree-$h$ label.  Thus these four
directions share the same iterated-sumset profile but answer genuinely
different questions.

Nathanson initiated the systematic study of the last spectrum and gave
explicit constructions \cite{NathansonProblems,NathansonExplicit2025,
NathansonTriangular}.  His exact three-point formula is
\[
 \Rcal(h,3)=
 \left\{\binom{h+2}{2}-\binom{\ell}{2}:1\leq\ell\leq h\right\}.
\]
Tang--Xing and Mohan--Pandey obtained extended inverse theorems near the lower
edge, while Schinina proved the first forbidden interval for torsion-free
abelian groups
\cite{TangXing2021,MohanPandey2023,Schinina2026}.  Theorem~
\ref{thm:endpoint-spectrum} determines the polynomial exponent with $k$ fixed
and $h\to\infty$, while Theorem~\ref{thm:main} gives positive density at the
four-point transition.  Jung,
O'Bryant, and Chen--Yang supply recent local, relation-lattice, and
explicit-family results adjacent to this regime
\cite{Jung2026,OBryant2025,ChenYang2026}.

\paragraph{Hilbert and finite-observation antecedents.}
For a fixed finite set $A$, the connection between iterated sumsets and
Hilbert functions goes back to Khovanskii
\cite{Khovanskii1992,Khovanskii1995}.  Eliahou--Mazumdar encode the full
sequence $(|hA|)_{h\geq0}$ in a standard graded algebra, while
Colarte-G\'omez--Elias--Mir\'o-Roig and Elias realize these cardinalities via
monomial projections of Veronese varieties and projective monomial curves
\cite{EliahouMazumdar2022,EliahouMazumdar2025,
ColarteGomezEliasMiroRoig2023,Elias2022}.  Gimenez--Gonz\'alez-S\'anchez
further relate eventual sumset structure to Castelnuovo--Mumford regularity
and homogeneous-semigroup data of the corresponding curve
\cite{GimenezGonzalezSanchez2023}.  The later work of
Eliahou--Mazumdar proves that the Macaulay upper bounds are best possible by
constructing extremal examples in suitable abelian semigroups using monomial
ideals and Gr\"obner deformation.  For a finite $A\subset\mathbb Z$ with
$m=|A|\geq2$, Elias's identity is made explicit as follows.  After translating, write
$A=\{0=a_0<a_1<\cdots<a_{m-1}=D\}$ and, over an arbitrary field $K$, put
\[
 \begin{aligned}
 S_A&=K[x_0,\ldots,x_{m-1}],&
 \varphi_A&:S_A\longrightarrow K[u,v],\\
 \varphi_A(x_i)&=u^{D-a_i}v^{a_i},&
 I_A&=\ker\varphi_A,
 \end{aligned}
\]
and set $R_A=S_A/I_A$, $C_A=\operatorname{Proj}R_A$, and
$H_{C_A}(q)=\dim_K(R_A)_q$.  Distinct image monomials are indexed by
$qA$, so
\begin{equation}\label{eq:eliasbridge}
 |hA|=H_{C_A}(h).
\end{equation}
Those works use Hilbert data to study fixed-set growth or to construct
extremizers.  Here, by contrast, $|A|=k$ is fixed and $A$ varies: how many
degree-$h$ Hilbert values occur?  Theorem~\ref{thm:endpoint-spectrum} proves
that the full ambient exponent is already attained within one explicit
arithmetically Cohen--Macaulay type-two family.

Fox--Kravitz--Zhang also study a different efficiency problem: the least
cardinality or diameter of pairs realizing arbitrary prescribed finite-window
sign patterns.  Our parameter $N(h,k)$ fixes the cardinality $k$ and instead
asks for one interval containing a representative of every degree-$h$ label in
$\Rcal(h,k)$.  Thus the two compression questions are complementary:
finite-window profile complexity versus universal fixed-cardinality spectrum
compression.  Theorem~\ref{thm:active-structure-intro}\textup{(ii)} determines
the latter at the optimal order $\Theta_k(h^{k-2})$.

There is also a precise methodological point of contact.
Fox--Kravitz--Zhang use bounded relation subspaces to encode finite profiles
\cite[Lemma~8]{FoxKravitzZhang2026}.  Here the signed observation $\mathsf O_h(A)$
records the complete equality-and-order pattern of one fixed-cardinality set
through degree $h$, and its sweep face is used to extract active rank, filtered
compression, and exact lattice-point frequencies.

The compression and polyhedral components refine established qualitative
frameworks.  At the additive equality level, finite-profile transfer from
torsion-free groups to $\mathbb Z$ is the classical finite Freiman-model
principle \cite[Part~II, Lemma~2.3.4]{GeroldingerRuzsa2009}.  Nathanson proved
qualitative integer realizability \cite{NathansonMSTD2018} and later the bound
$N(h,k)<4(8h)^{k-1}$ \cite{NathansonCompression};
Amirkhanyan--Bush--Croot developed short order-preserving Freiman models for
structured subsets \cite{AmirkhanyanBushCroot2018}.  Our
theorem models the whole tuple without a small-doubling hypothesis, preserves
every equality and strict comparison through degree $h$, and sharpens this to
the exact polynomial order $N(h,k)=\Theta_k(h^{k-2})$.  In the complementary
fixed-$h$ regime, Rajagopal's guest appendix to Gowers's post gives
$N(h,k)=O_h(k^{10h^3})$ \cite{GowersRajagopal2026}.  Prior work likewise
determines sweep faces, strict-comparison arrangements, and the exponent of
their unlabelled count
\cite{PadrolPhilippe2024,Vietri2002,BaranyBureauxLund2018}; our refinement
retains ties, relation-birth data, Hilbert labels, and filtered short integral
normals, and determines exact quasipolynomial weak-type frequencies and their
fixed-label aggregates.
O'Bryant's fixed-degree weak-type transfer and Elias's exact monomial-curve
Hilbert identity give two further points of contact
\cite{OBryant2025,Elias2022}.  Corollary~\ref{cor:finite-profile-transfer}
strengthens the classical additive equality transfer by incorporating the
sharp filtered ordered shadow and the multiplicative and Hilbert
correspondences.  Complete truncated profiles transfer simultaneously,
while an integral exponent representative preserves order, active rank, and
every birth visible in the truncation at the optimal universal height
$h^{k-2}$.

\paragraph{The amplification theorem and its antecedents.}
Cyclic suspension is a projective toric extension and homogeneous
one-generator gluing; its formal Hilbert and resolution transforms are
classical \cite{Sahin2011,GimenezSrinivasan2019,GimenezSrinivasan2025}.
The analytic estimates build on the cubic moment and restriction methods of
Vaughan and Hughes--Wooley \cite{Vaughan1986,HughesWooley2022}.  The new
ingredient is a finite-degree quantitative combination of these algebraic
and analytic tools: prime extensions at the observation scale provide a
recoverable divisor label; bounded overlap between prime tuples combines
that label with a coefficient-uniform cubic energy bound; and the resulting
amplification propagates one four-point Cohen--Macaulay chart to every fixed
set size $k\geq4$.  More specialized antecedents for the Farey, popularity,
lattice, and regularity consequences are recorded where those results are
proved.

\subsection*{Organization}

Part~\ref{part:spectrum}, Sections~
\ref{sec:suspension-calculus}--\ref{sec:endpoint-spectrum}, develops the
suspension and collision calculus, the four-point Cohen--Macaulay seed, and
the prime-coded amplification that constructs the endpoint family.
Part~\ref{part:structure}, Sections~
\ref{sec:root-sweep-dictionary}--\ref{sec:rank-duality}, develops the
finite-observation geometry governing active rank, optimal integral models,
shape, type, and exact frequency.  Part~\ref{part:rigidity} combines these
analyses to prove Theorem~\ref{thm:maximal-rank-saturation}.  Subsequent
sections derive the four-point rigidity, defect-two, popularity, limit-law,
and transfer consequences.

\part{Spectrum abundance and prime-coded amplification}
\label{part:spectrum}

This part constructs a chosen endpoint witness for each resulting label.
The witnesses carry bounded addition tables, relation-birth filtrations,
normalized diameters, and toric resolutions; Part~\ref{part:structure}
analyzes their table, rank, birth, and compression geometry.

\section{Cyclic suspension and collision calculus}
\label{sec:suspension-calculus}

We isolate the operation that propagates the four-point endpoint chart.
Algebraically it is a projective toric extension in the sense of \c{S}ahin
and a particularly explicit homogeneous one-generator gluing
\cite{Sahin2011,GimenezSrinivasan2019,GimenezSrinivasan2025}.  The extension
and its formal Hilbert and resolution transforms are classical.  The point
here is the exact finite-observation calculus and its later use as a
prime-coded spectrum amplifier.

Fix an arbitrary coefficient field $K$ for the homological statements below,
including those in the later amplification results.  These assertions therefore
hold separately over every field; the additive, Hilbert-function, relation, and
diameter statements are field-free.

\subsection{Exact cyclic suspension}

Let
\[
 A=\{0=a_0<a_1<\cdots<a_{m-1}\}\subset\mathbb Z_{\geq0}
\]
be primitive, with $m\geq2$.  Fix an integer $\ell\geq2$ and a representation
\begin{equation}\label{eq:suspension-representation}
 c=\sum_{i=0}^{m-1}\lambda_i a_i,\qquad
 \lambda_i\in\mathbb Z_{\geq0},\qquad
 \sum_i\lambda_i=\ell,\qquad (c,\ell)=1.
\end{equation}
Define the \emph{cyclic suspension}
\[
 \Sigma_{\ell,c}A=\ell\mathbin{\cdot}A\cup\{c\}.
\]
Every point of $\ell\mathbin{\cdot}A$ is divisible by $\ell$, whereas
$(c,\ell)=1$; thus the new point is distinct, and the same coprimality
makes the suspension primitive.  If $\ell$ is prime, a choice always exists:
some nonzero $a_j$ is not divisible by $\ell$, and one may take
$c=a_j$, $\lambda_j=1$, and $\lambda_0=\ell-1$.

\begin{theorem}[Exact cyclic-suspension calculus]
\label{thm:cyclic-suspension}
Put $C=\Sigma_{\ell,c}A$ and $D=\max A$.  For
$q\in\mathbb Z_{\geq0}$, let $L_q(A)$ be the real span of the homogeneous
relation vectors of relation degree at most $q$, and put
$r_q(A)=\dim L_q(A)$.  Then:
\begin{enumerate}[label=\textup{(\roman*)}]
\item For every $q\in\mathbb Z_{\geq0}$,
\begin{equation}\label{eq:suspension-profile}
 |qC|=\sum_{j=0}^{\min(q,\ell-1)}|(q-j)A|.
\end{equation}
Equivalently, for $X\in\{A,C\}$ put
\[
 \mathcal H_X(z)=\sum_{q\geq0}|qX|z^q.
\]
Then
\begin{equation}\label{eq:suspension-Hilbert-series}
 \mathcal H_C(z)=\mathcal H_A(z)[\ell]_z,\qquad
 [\ell]_z=1+z+\cdots+z^{\ell-1}.
\end{equation}

\item Let
\[
 \begin{aligned}
 S_A&=K[x_0,\ldots,x_{m-1}],&
 \psi_A&:S_A\longrightarrow K[u,v],\\
 \psi_A(x_i)&=u^{D-a_i}v^{a_i},&
 I_A&=\ker\psi_A,\qquad R_A=S_A/I_A.
 \end{aligned}
\]
be the standard-graded projective semigroup ring of $A$.  Put
$S_C=S_A[y]$, and let $I_C$ be the kernel of the homogeneous toric map
\[
 x_i\longmapsto u^{\ell(D-a_i)}v^{\ell a_i},\qquad
 y\longmapsto u^{\ell D-c}v^c,
\]
so that $R_C=S_C/I_C$.  Then
\begin{equation}\label{eq:suspension-toric}
 R_C\cong
 R_A[y]\big/\bigl(y^\ell-x_0^{\lambda_0}\cdots
 x_{m-1}^{\lambda_{m-1}}\bigr).
\end{equation}
The displayed polynomial is homogeneous and regular.  In particular, if
$R_A$ is Cohen--Macaulay, then so is $R_C$.

\item Define the ambient $K$-polynomials by
\[
 \mathcal K_A(z)=(1-z)^m\mathcal H_A(z),\qquad
 \mathcal K_C(z)=(1-z)^{m+1}\mathcal H_C(z).
\]
Then
\begin{equation}\label{eq:suspension-K}
 \mathcal K_C(z)=\mathcal K_A(z)(1-z^\ell).
\end{equation}
If $\beta^A_{i,j}$ and $\beta^C_{i,j}$ are the standard-graded Betti
numbers over $S_A$ and $S_C$, respectively, extended by zero outside their
natural homological and internal degree ranges, then
\begin{equation}\label{eq:suspension-Betti}
 \beta^C_{i,j}
 =\beta^A_{i,j}+\beta^A_{i-1,j-\ell}.
\end{equation}
Equivalently, with $\xi$ an indeterminate recording homological degree, put
\[
 \mathcal B_A(\xi,z)=\sum_{i,j}\beta^A_{i,j}\xi^iz^j,
 \qquad
 \mathcal B_C(\xi,z)=\sum_{i,j}\beta^C_{i,j}\xi^iz^j.
\]
Then
\[
 \mathcal B_C(\xi,z)=\mathcal B_A(\xi,z)(1+\xi z^\ell).
\]

\item Label the coordinates of $C$ by the tuple
$(\ell a_0,\ldots,\ell a_{m-1},c)$.  They need not be in increasing order;
this changes $L_q(C)$ only by a coordinate permutation.  Put
\[
 w_{\ell,c}=(\lambda_0,\ldots,\lambda_{m-1},-\ell).
\]
Under the old-coordinate embedding,
\begin{equation}\label{eq:suspension-filter}
 L_q(C)=
 \begin{cases}
  L_q(A),&q<\ell,\\[2mm]
  L_q(A)\oplus\mathbb Rw_{\ell,c},&q\geq\ell.
 \end{cases}
\end{equation}
Hence
\begin{equation}\label{eq:suspension-rank}
 r_q(C)=r_q(A)+
 \begin{cases}
  0,&q<\ell,\\
  1,&q\geq\ell,
 \end{cases}
\end{equation}
and the multiset of active-relation birth degrees is the old multiset with
$\ell$ adjoined.

\item Define the order-$q$ Freiman dimension by
\[
 \operatorname{fdim}_q(A)=m-1-r_q(A).
\]
Then
\begin{equation}\label{eq:suspension-Freiman-dimension}
 \operatorname{fdim}_q(C)=
 \begin{cases}
  \operatorname{fdim}_q(A)+1,&q<\ell,\\
  \operatorname{fdim}_q(A),&q\geq\ell.
 \end{cases}
\end{equation}

\item One has
\begin{equation}\label{eq:suspension-diameter}
 \max C=\ell D,\qquad
 \ndiam(C)=\ell\ndiam(A).
\end{equation}
\end{enumerate}
\end{theorem}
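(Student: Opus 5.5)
The whole theorem rests on one combinatorial decomposition, which I prove first. Every element of $qC$ is a sum $\ell x + jc$ with $x\in(q-j)A$ and $0\le j\le q$. If $j\ge\ell$, then $\ell c=\sum_i\lambda_i(\ell a_i)$ is itself a $\ell$-term sum of elements of $\ell\mathbin{\cdot}A$. Substituting it lets us rewrite $\ell x+jc$ with $j$ replaced by $j-\ell$, so we may assume $0\le j\le\min(q,\ell-1)$. Distinct $j$ in this range give disjoint sets: reducing modulo $\ell$, the value is $jc\bmod \ell$, and $(c,\ell)=1$ separates them. For fixed $j$, the map $x\mapsto\ell x+jc$ is injective. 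This gives \eqref{eq:suspension-profile}, and \eqref{eq:suspension-Hilbert-series} is its generating-function form.

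Part (vi) is immediate: $C$ is primitive, its minimum is $0$ (since $c\ge0$ and $0\in\ell\mathbin{\cdot}A$), and $c\le\ell D$ because $c$ is an $\ell$-term sum of elements $\le D$. Part (iii) for $K$-polynomials follows by multiplying \eqref{eq:suspension-Hilbert-series} by $(1-z)^{m+1}$.

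For (ii), I will show that the surjection $R_A[y]/(f)\to R_C$, with $f=y^\ell-x^\lambda$, is an isomorphism.
\begin{itemize}
\item First I check that it is well defined. Since $\psi_C(x_i)$ is the $\ell$-th power of $\psi_A(x_i)$, rescaling the grading on $K[u,v]$ shows $I_A S_C\subseteq I_C$. The binomial $f$ lies in $I_C$ because the exponents of $y^\ell$ and $x^\lambda$ agree.
\item Next I show $f$ is a nonzerodivisor on $R_A[y]$. The ring $R_A$ is a domain, and $f$ is monic in $y$.
\item Then I compare Hilbert series. That of $R_A[y]/(f)$ is $\mathcal H_A(z)(1-z^\ell)/(1-z)=\mathcal H_A(z)[\ell]_z$, and by (i) this equals $\mathcal H_C$, the Hilbert series of $R_C$ via \eqref{eq:eliasbridge}. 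A graded surjection with equal Hilbert series is an isomorphism.
\end{itemize}
Cohen--Macaulayness transfers because $R_A[y]$ is Cohen--Macaulay and $f$ is a homogeneous regular element. For the Betti numbers, the minimal resolution of $R_C$ over $S_C$ is the mapping cone of multiplication by $f$ on $F_\bullet\otimes S_A[y]$, where $F_\bullet$ is the minimal free resolution of $R_A$ over $S_A$. This cone is a tensor product with the Koszul complex on $f$, and it is exact because $f$ is regular. It stays minimal because $f$ has positive degree $\ell$, so all new differentials have entries in the maximal ideal. This gives \eqref{eq:suspension-Betti} and the factor $(1+\xi z^\ell)$.

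For (iv), write a homogeneous relation on $C$ as $(z,t)$ with $t$ the $y$-coordinate, so $\ell(z\cdot A)+tc=0$. Coprimality gives $\ell\mid t$. Put $(z,t)=(z+(t/\ell)\lambda,0)+(t/\ell)\cdot(-w_{\ell,c})$; the first piece is a relation on $A$.
\begin{itemize}
\item If $t=0$, then $(z,t)$ is just a relation on $A$ with the same degree $\rho$.
\item If $t\neq0$, then $|t|\ge\ell$, so $\rho(z,t)\ge\ell$. Hence below degree $\ell$ only old relations appear, which proves the case $q<\ell$.
\item For $q\ge\ell$, $w_{\ell,c}$ has degree exactly $\ell$ and lies outside the old coordinate span since its last entry is nonzero, giving the inclusion $\supseteq$.
\end{itemize}

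The main obstacle is the reverse inclusion for $q\ge\ell$: the decomposed pieces might have degree exceeding $q$. I expect to handle this not degree by degree but by an argument on spans. Take any relation $(z,t)$ of degree $\le q$. Modulo the line $\mathbb Rw_{\ell,c}$ it projects to the relation $z+(t/\ell)\lambda$ on $A$. I then need to show this projection lies in $L_q(A)$.

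If a direct degree bound fails, I will fall back on a dimension count. From \eqref{eq:suspension-profile} and the defect identity \eqref{eq:defect-rank-nullity-intro}, I will compare $M_{q,m+1}-|qC|$ with the $A$-data. The other route is to bound the rank through the sign-cone/face description of Part~\ref{part:structure}.

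Once \eqref{eq:suspension-filter} is established, \eqref{eq:suspension-rank} and the birth-degree statement follow at once. Part (v) then follows by substituting into $\operatorname{fdim}_q(C)=m-r_q(C)$.
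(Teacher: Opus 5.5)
Your parts (i), (iii), (vi), the case $q<\ell$ of (iv), and (ii) are fine. For (ii) you argue differently from the paper: you take the graded surjection $R_A[y]/(f)\to R_C$ and show it is an isomorphism because its source has Hilbert series $\mathcal H_A(z)[\ell]_z=\mathcal H_C(z)$ by (i). The paper instead computes the kernel directly, reducing modulo $f$ to $\sum_{e<\ell}y^ef_e(x)$ and separating the residues $ec \bmod \ell$. Both routes are valid.

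\textbf{The gap.} You leave the inclusion $L_q(C)\subseteq L_q(A)\oplus\mathbb R w_{\ell,c}$ for $q\geq\ell$ unproved, and neither fallback would close it.
\begin{itemize}
\item The defect identity gives $\Delta_q(C)=r_q(C)+\operatorname{cnul}_q(C)$, where the nullity is unknown. So no count derived from \eqref{eq:suspension-profile} can isolate $r_q(C)$. The paper uses the rank formula to determine $\operatorname{cnul}_q(C)$, not the other way round.
\item The sign-cone description only identifies the rank with a face dimension; it does not compute that rank for a suspension.
\end{itemize}

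\textbf{The missing observation.} The obstacle you anticipate does not occur: the projection never has larger degree than the original relation. Suppose $t=\alpha\ell>0$. Then
\[
 z+\alpha\lambda=(z^++\alpha\lambda)-z^-
\]
is a difference of two nonnegative vectors. Since $\lambda\geq0$ and $\sum_i\lambda_i=\ell$, each has total mass $\sum_i z_i^+ + t=\rho(z,t)$. Because $(P-N)^+\leq P$ coordinatewise, this gives $\rho(z+\alpha\lambda)\leq\rho(z,t)\leq q$. The case $t<0$ is symmetric. So every relation of $C$ of degree at most $q$ is a relation of $A$ of degree at most $q$ plus a multiple of $w_{\ell,c}$.

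This is exactly the paper's argument in lattice form. There, each monomial of a kernel binomial $x^uy^a-x^vy^b$ of degree $d\leq q$ is normalized modulo the homogeneous $f=y^\ell-x^\lambda$. After cancelling $y^e$, what remains is an old binomial of degree $d-e\leq q$, whose relation vector differs from the original by a multiple of $w_{\ell,c}$.

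With this step in place, (iv), the birth-degree statement, and (v) follow as you indicate.
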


\begin{proof}
Every $q$-sum from $C$ uses some number of copies of $c$.
Replace each block of $\ell$ copies by the $\ell$ scaled old points prescribed
by \eqref{eq:suspension-representation}.  The residual layer with
$0\leq j<\ell$ copies of $c$ is
\[
 jc+\ell\mathbin{\cdot}((q-j)A).
\]
Distinct layers are disjoint modulo $\ell$, because $(c,\ell)=1$.
This proves \eqref{eq:suspension-profile} and
\eqref{eq:suspension-Hilbert-series}.

For the toric presentation, reduce a polynomial modulo
\[
 f=y^\ell-x_0^{\lambda_0}\cdots x_{m-1}^{\lambda_{m-1}}
\]
to $\sum_{e=0}^{\ell-1}y^ef_e(x)$.  Under the toric map, distinct $e$ lie
in distinct residue classes $ec\pmod\ell$; within one residue class the
remaining kernel is precisely $I_A$.  This proves
\eqref{eq:suspension-toric}.
The polynomial $f$ is monic in $y$, hence is a nonzerodivisor over $R_A[y]$.
Equations \eqref{eq:suspension-K} and \eqref{eq:suspension-Betti} follow from
the minimal mapping cone of multiplication by $f$.  The cone is minimal
because all its entries lie in the homogeneous maximal ideal.

It remains to check the filtration.  In a degree-$d\leq q$ kernel binomial
\[
 x^u y^a-x^v y^b,
\]
the residue argument gives $a\equiv b\pmod\ell$.  Write
$a=\alpha\ell+e$ and $b=\beta\ell+e$, where $0\leq e<\ell$.
Normalizing both monomials modulo $f$ and cancelling $y^e$ leaves the old
binomial
\[
 x^{u+\alpha\lambda}-x^{v+\beta\lambda}
\]
of degree $d-e\leq q$.  Its relation vector differs from the original one
by a multiple of $w_{\ell,c}$.  If $q<\ell$, the two original
$y$-exponents are below $\ell$, so they are equal and no new component
occurs.  This proves \eqref{eq:suspension-filter}; the rank and Freiman
dimension formulas follow.

Finally, \eqref{eq:suspension-representation} gives $c\leq\ell D$, while
$\ell D\in C$.  Both sets are primitive and contain zero, which proves
\eqref{eq:suspension-diameter}.
\end{proof}

\begin{remark}[Visible and free regimes]
\label{rem:suspension-visible-free}
At observation order $q$, a suspension with $\ell\leq q$ is
\emph{visible}: its new relation has been born and the order-$q$ Freiman
dimension is unchanged.  A suspension with $\ell>q$ is \emph{free at order
$q$}: it adds one Freiman dimension and no active relation.  This dichotomy
is exact, not asymptotic.
\end{remark}

\subsection{Collision directions and collision nullity}

For an ordered finite integer set
$A=\{a_0<a_1<\cdots<a_{m-1}\}$ and
$q\in\mathbb Z_{\geq0}$, put
\[
 \mathcal U_{q,m}
 =\left\{u\in\mathbb Z_{\geq0}^m:\sum_i u_i=q\right\},
 \qquad
 M_{q,m}=|\mathcal U_{q,m}|=\binom{q+m-1}{m-1}.
\]
Define
\[
 \Pi_{A,q}:\mathbb R^{\mathcal U_{q,m}}\longrightarrow\mathbb R^{qA},
 \qquad e_u\longmapsto e_{u\cdot A},
 \qquad
 \mathcal C_q(A)=\ker\Pi_{A,q}.
\]
Thus $\mathcal C_q(A)$ is the degree-$q$ collision space in the monomial
basis.  Translating $A$ merely translates every element of $qA$ by the same
integer, so $\mathcal C_q(A)$ and the nullity defined below are translation
invariant (with the induced ordering).

\begin{proposition}[Collision exact sequence]
\label{prop:collision-exact-sequence}
The map
\[
 \partial_q:\mathcal C_q(A)\longrightarrow\mathbb R^m,
 \qquad
 \partial_q\!\left(\sum_u\gamma_ue_u\right)=\sum_u\gamma_u u,
\]
has image $L_q(A)$, contained in the coordinate-sum-zero subspace.  With
\[
 \mathcal N_q(A)=\ker\partial_q,
 \qquad \operatorname{cnul}_q(A)=\dim\mathcal N_q(A),
\]
there is an exact sequence
\begin{equation}\label{eq:collision-exact-sequence}
 0\longrightarrow\mathcal N_q(A)
 \longrightarrow\mathcal C_q(A)
 \xrightarrow{\partial_q}L_q(A)
 \longrightarrow0.
\end{equation}
Consequently, if $\Delta_q(A)=M_{q,m}-|qA|$, then
\begin{equation}\label{eq:defect-rank-nullity}
 \boxed{\Delta_q(A)=r_q(A)+\operatorname{cnul}_q(A).}
\end{equation}
For $C=\Sigma_{\ell,c}A$,
\begin{equation}\label{eq:suspension-defect-transform}
 \Delta_q(C)
 =\sum_{j=0}^{\min(q,\ell-1)}\Delta_{q-j}(A)
  +\sum_{j=\ell}^{q}M_{q-j,m},
\end{equation}
where the second sum is empty for $q<\ell$.
\end{proposition}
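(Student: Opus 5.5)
The plan has three parts: identify the image of $\partial_q$, read off the dimension count from rank--nullity, and then transport the defect through the suspension profile of Theorem~\ref{thm:cyclic-suspension}(i).

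\emph{Image of $\partial_q$.} I would first show that $\partial_q(\mathcal C_q(A))=L_q(A)$, and split this into two inclusions.

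For the inclusion $\supseteq$, take an integer relation $z$ with $z\cdot A=0$, $\sum_iz_i=0$ and $\rho(z)\le q$. Write $z=z^+-z^-$; both parts have coordinate sum $\rho(z)\le q$. Pad both by $q-\rho(z)$ in a common coordinate, say the first, to get $u=z^++(q-\rho)e_1$ and $v=z^-+(q-\rho)e_1$ in $\mathcal U_{q,m}$. Then $u\cdot A=v\cdot A$, so $e_u-e_v\in\mathcal C_q(A)$ and $\partial_q(e_u-e_v)=z$. Taking spans gives $L_q(A)\subseteq\operatorname{im}\partial_q$.

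For the inclusion $\subseteq$, note that $\mathcal C_q(A)$ is spanned by the differences $e_u-e_v$ with $u\cdot A=v\cdot A$. This holds because the kernel of $\Pi_{A,q}$ splits over the fibres of $u\mapsto u\cdot A$, and within each fibre the sum-zero vectors are spanned by such differences. The vector $u-v$ is an integer zero-sum relation with $(u-v)\cdot A=0$. Moreover $\rho(u-v)\le q$, since $(u-v)^+\le u$ coordinatewise. Hence $\partial_q(e_u-e_v)=u-v\in L_q(A)$.

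The containment of the image in the coordinate-sum-zero subspace is then immediate. Exactness of \eqref{eq:collision-exact-sequence} holds by definition of $\mathcal N_q(A)$ as $\ker\partial_q$.

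\emph{Defect formula.} Since $\Pi_{A,q}$ is surjective onto $\mathbb R^{qA}$, we have $\dim\mathcal C_q(A)=M_{q,m}-|qA|=\Delta_q(A)$. Rank--nullity applied to \eqref{eq:collision-exact-sequence} then gives \eqref{eq:defect-rank-nullity}.

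\emph{Suspension transform.} By \eqref{eq:suspension-profile}, $|qC|=\sum_{j=0}^{\min(q,\ell-1)}|(q-j)A|$, and $C$ has $m+1$ points. The task is to write $M_{q,m+1}$ as a matching sum. Use the hockey-stick identity $M_{q,m+1}=\sum_{j=0}^{q}M_{q-j,m}$, which counts monomials in $m+1$ variables by the exponent $j$ of the new variable. Split this sum at $j=\ell$. Pairing the terms $j\le\min(q,\ell-1)$ with the profile terms gives $\sum_{j}\Delta_{q-j}(A)$ over that range. The leftover terms $j\ge\ell$ contribute $\sum_{j=\ell}^qM_{q-j,m}$, and this sum is empty when $q<\ell$. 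This is exactly \eqref{eq:suspension-defect-transform}.

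The only step needing real care is the spanning claim for $\mathcal C_q(A)$ together with the degree bound $\rho(u-v)\le q$; everything else is bookkeeping.
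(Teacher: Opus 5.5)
Your proposal is correct and follows essentially the same route as the paper: padding $z^{+}$ and $z^{-}$ by a common vector of mass $q-\rho(z)$ gives $L_q(A)\subseteq\operatorname{im}\partial_q$; the fibre differences $e_u-e_v$ span $\mathcal C_q(A)$ and map into $L_q(A)$; surjectivity of $\Pi_{A,q}$ gives $\dim\mathcal C_q(A)=M_{q,m}-|qA|$; and the suspension formula comes from inserting \eqref{eq:suspension-profile} into the hockey-stick decomposition of $M_{q,m+1}$. Your explicit check that $\rho(u-v)\le q$, via $(u-v)^+\le u$ coordinatewise, fills in a point the paper leaves implicit.
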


\begin{proof}
Let $\zeta=\sum_u\gamma_ue_u\in\mathcal C_q(A)$.  The coefficient sum is
zero inside each fibre of $u\mapsto u\cdot A$, and hence
\[
 \sum_i(\partial_q\zeta)_i=q\sum_u\gamma_u=0,
 \qquad
 (\partial_q\zeta)\cdot A
 =\sum_{s\in qA}s\!\sum_{u\cdot A=s}\gamma_u=0.
\]
Moreover, $\mathcal C_q(A)$ is spanned by the fibre differences $e_u-e_v$
with $u\cdot A=v\cdot A$, and
$\partial_q(e_u-e_v)=u-v\in L_q(A)$.  Thus
$\partial_q(\mathcal C_q(A))\subseteq L_q(A)$.

Conversely, let $z$ be an integral homogeneous relation of relation degree
$\rho(z)\leq q$.  Choose $w\in\mathbb Z_{\geq0}^m$ with
$\sum_iw_i=q-\rho(z)$.  Then
$z^++w,z^-+w\in\mathcal U_{q,m}$ lie in the same fibre, and
\[
 \partial_q\bigl(e_{z^++w}-e_{z^-+w}\bigr)=z.
\]
Since such integral relations span $L_q(A)$, one has
$\operatorname{im}\partial_q=L_q(A)$, proving
\eqref{eq:collision-exact-sequence}.  Now
\[
 \dim\mathcal C_q(A)=M_{q,m}-|qA|,
\]
so dimensions give \eqref{eq:defect-rank-nullity}.

For the last formula, insert \eqref{eq:suspension-profile} into
$\Delta_q(C)=M_{q,m+1}-|qC|$ and use the hockey-stick identity
\[
 M_{q,m+1}=\sum_{j=0}^{q}M_{q-j,m}.
\]
\end{proof}

\begin{remark}\label{rem:collision-nullity-scope}
The quantity $r_q(A)$ counts independent equality directions, while
$\operatorname{cnul}_q(A)$ counts linear redundancy among the degree-$q$
collision equations.  Formula \eqref{eq:suspension-defect-transform},
together with \eqref{eq:suspension-rank}, determines
$\operatorname{cnul}_q(C)$.  In general, it is not obtained by simply
convolving the old nullities.
\end{remark}

\begin{corollary}[Iterated suspension]
\label{cor:iterated-suspension}
Let $t\in\mathbb Z_{\geq0}$ and put $A^{(0)}=A$.  For each $1\leq i\leq t$, choose an
integer $\ell_i\geq2$ and a specified $c_i$ having a length-$\ell_i$
representation by points of $A^{(i-1)}$ with $\gcd(c_i,\ell_i)=1$, and set
$A^{(i)}=\Sigma_{\ell_i,c_i}A^{(i-1)}$.  Put
$R=\prod_{i=1}^t\ell_i$, with the empty product equal to one.  Then, for
$q\in\mathbb Z_{\geq0}$,
\begin{align}
 \mathcal H_{A^{(t)}}(z)
 &=\mathcal H_A(z)\prod_{i=1}^t[\ell_i]_z,
 \label{eq:iterated-suspension-H}\\
 |qA^{(t)}|
 &=\sum_{\substack{0\leq j_i<\ell_i\\
                    j_1+\cdots+j_t\leq q}}
   |(q-j_1-\cdots-j_t)A|,
 \label{eq:iterated-suspension-profile}\\
 \mathcal K_{A^{(t)}}(z)
 &=\mathcal K_A(z)\prod_{i=1}^t(1-z^{\ell_i}),
 \label{eq:iterated-suspension-K}\\
 \mathcal B_{A^{(t)}}(\xi,z)
 &=\mathcal B_A(\xi,z)\prod_{i=1}^t(1+\xi z^{\ell_i}),
 \label{eq:iterated-suspension-Betti}\\
 \ndiam(A^{(t)})&=R\ndiam(A).
 \label{eq:iterated-suspension-diameter}
\end{align}
If every $\ell_i\leq q$, then
\[
 r_q(A^{(t)})=r_q(A)+t,\qquad
 \operatorname{fdim}_q(A^{(t)})=\operatorname{fdim}_q(A).
\]
If the starting projective coordinate ring is ACM, then the ACM property
and Cohen--Macaulay type are preserved.
\end{corollary}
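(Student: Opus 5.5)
The plan is induction on $t$, applying Theorem~\ref{thm:cyclic-suspension} at each stage. The base case $t=0$ is trivial: the empty products equal one and $R=1$. For the inductive step, assume every identity holds for $A^{(t-1)}$. The only hypotheses of Theorem~\ref{thm:cyclic-suspension} are that $A^{(t-1)}$ is primitive and that $c_t$ has a length-$\ell_t$ representation with $\gcd(c_t,\ell_t)=1$. Primitivity of each suspension was noted just before that theorem, so it propagates along the chain. We also translate $A^{(t-1)}$ so that its minimum is $0$; this changes no sumset cardinality, relation space or diameter. The theorem's conventions, such as $\min=0$, then apply.

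The generating-function identities come first. Part~(i) gives $\mathcal H_{A^{(t)}}=\mathcal H_{A^{(t-1)}}[\ell_t]_z$, and induction yields \eqref{eq:iterated-suspension-H}. Extracting the coefficient of $z^q$ from $\mathcal H_A(z)\prod_i[\ell_i]_z$ gives \eqref{eq:iterated-suspension-profile}: each factor $[\ell_i]_z$ contributes $z^{j_i}$ with $0\le j_i<\ell_i$, and the requirement $q-\sum j_i\ge0$ is exactly the constraint $j_1+\cdots+j_t\le q$. For the $K$-polynomial, the ambient number of variables grows by one at each step, matching the extra factor $(1-z)$ in the definition of $\mathcal K$. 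Hence part~(iii) iterates directly to \eqref{eq:iterated-suspension-K}, and the Betti-polynomial relation $\mathcal B_C=\mathcal B_A(1+\xi z^{\ell})$ iterates to \eqref{eq:iterated-suspension-Betti}. Part~(vi) gives $\ndiam(A^{(i)})=\ell_i\ndiam(A^{(i-1)})$, and the product of these is $R\ndiam(A)$.

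For the rank statement, if every $\ell_i\le q$ then each step is in the regime $q\ge\ell_i$ of \eqref{eq:suspension-rank}. Each step therefore adds exactly one to $r_q$, giving $r_q(A)+t$. The point count also grows by one per step, so $\operatorname{fdim}_q=m-1-r_q$ is unchanged, in agreement with \eqref{eq:suspension-Freiman-dimension}. One subtlety is that the coordinates of $C$ are relabelled (unordered) at each step. Part~(iv) notes that this only permutes coordinates of $L_q$, so dimensions are unaffected.

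For the ACM statement, part~(ii) gives $R_C\cong R_A[y]/(f)$ with $f$ regular and homogeneous, so Cohen--Macaulayness of $R_A$ passes to $R_C$. For the type, the key is that $\beta^C_{i,j}=\beta^A_{i,j}+\beta^A_{i-1,j-\ell}$. The codimension also rises by one, since the ring dimension stays $2$ while one variable is added. The last Betti number of $C$ therefore sits in homological degree $\operatorname{pd}(A)+1$, and $\sum_j\beta^C_{\operatorname{pd}+1,j}=\sum_j\beta^A_{\operatorname{pd},j}$. By Auslander--Buchsbaum this is the Cohen--Macaulay type, so the type is preserved, and induction finishes the proof. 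The main obstacle is keeping this codimension and projective-dimension bookkeeping correct, so that the ``last'' Betti number is indeed the one shifted by the factor $(1+\xi z^\ell)$; everything else is formal iteration.
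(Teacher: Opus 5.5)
Your proposal is correct and follows the paper's own proof. Like the paper, you apply Theorem~\ref{thm:cyclic-suspension} successively, iterate its profile, Hilbert-series, $K$-polynomial, Betti and diameter transforms, use the rank and Freiman-dimension formulas at each visible step, and deduce the ACM property and the type from the regular extension and the preserved rank of the top free module. Your translation step is harmless but unnecessary, since $0\in A$ and nonnegative representations force $0=\min A^{(i)}$ at every stage.
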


\begin{proof}
Apply Theorem~\ref{thm:cyclic-suspension} successively to
$A^{(0)},\ldots,A^{(t-1)}$; its profile, Hilbert-series,
$K$-polynomial, Betti, and diameter formulas iterate to the displayed
identities.  If every $\ell_i\leq q$, its rank and Freiman-dimension
formulas apply at each step.  Its regular-extension statement preserves
the ACM property, while the Betti formula preserves the rank of the final free
module and hence the Cohen--Macaulay type.  The case $t=0$ is covered by
the stated empty-product conventions.
\end{proof}

\begin{corollary}[Critical-core conservation]
\label{cor:suspension-critical-core}
Suppose that $A$ is a primitive $m$-set of maximal active rank $m-2$ at an
integer order $q\geq0$.  Let
\[
 p_1(A)\leq\cdots\leq p_{m-2}(A)
\]
be its successive relation-birth degrees.  If $C=\Sigma_{\ell,c}A$ with
$\ell\leq q$, then $C$ has maximal active rank $m-1$, its birth-degree
multiset is
\[
 \{p_1(C),\ldots,p_{m-1}(C)\}
 =\{p_1(A),\ldots,p_{m-2}(A),\ell\},
\]
and
\begin{equation}\label{eq:critical-core-conservation}
 \frac{\ndiam(C)}{\prod_{i=1}^{m-1}p_i(C)}
 =
 \frac{\ndiam(A)}{\prod_{i=1}^{m-2}p_i(A)}.
\end{equation}
The same ratio is preserved under any tower of visible suspensions.
\end{corollary}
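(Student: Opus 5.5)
The plan is to read everything off Theorem~\ref{thm:cyclic-suspension}, applied at the fixed order $q$.

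\textbf{Rank and birth degrees.} Since $A$ has maximal active rank at order $q$, we have $r_q(A)=m-2$. Because $\ell\leq q$, formula \eqref{eq:suspension-rank} gives $r_q(C)=m-1$. The set $C$ has $m+1$ points, so $m-1$ is its maximal rank.

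For the birth degrees, use the filtration \eqref{eq:suspension-filter}. For every $p\geq1$,
\[
 \dim L_p(C)=\dim L_p(A)+\mathbf 1_{\{p\geq\ell\}}.
\]
The birth degree $p_i(C)$ is the least $p$ with $\dim L_p(C)\geq i$. The step function $p\mapsto\dim L_p(C)$ is the sum of two step functions:
\begin{itemize}
 \item the old one, with jumps at $p_1(A),\ldots,p_{m-2}(A)$;
 \item a single unit jump at $\ell$.
\end{itemize}
The jump locations of a sum of nondecreasing integer step functions form the union, as multisets, of the individual jump locations. This is exactly the multiset statement of part~(iv), with $\ell$ adjoined.

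Every old birth degree satisfies $p_i(A)\leq q$, since the rank is already $m-2$ at order $q$. Hence all these degrees are finite. Ordering the new multiset changes nothing in the product, so
\[
 \prod_{i=1}^{m-1}p_i(C)=\ell\prod_{i=1}^{m-2}p_i(A).
\]

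\textbf{Diameter.} By \eqref{eq:suspension-diameter}, $\ndiam(C)=\ell\,\ndiam(A)$. Dividing this by the product identity gives \eqref{eq:critical-core-conservation}.

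\textbf{Towers.} For a tower of visible suspensions $A^{(i)}=\Sigma_{\ell_i,c_i}A^{(i-1)}$ with every $\ell_i\leq q$, induct on $i$. At each stage $A^{(i-1)}$ is primitive, as noted before Theorem~\ref{thm:cyclic-suspension}, and has maximal rank at order $q$ by the previous step. So the hypotheses recur, and the ratio is preserved step by step. Alternatively, one can combine \eqref{eq:iterated-suspension-diameter} with the iterated birth-multiset statement.

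\textbf{Main obstacle.} The only point needing care is the coordinate labelling. The elements $\ell a_i$ and $c$ of $C$ need not be in increasing order, so $L_q(C)$ is computed in permuted coordinates. A coordinate permutation preserves $\dim L_p$ for every $p$, so the birth degrees, which depend only on these dimensions, are unaffected. The second small point is primitivity of $C$, which the paper already notes follows from $(c,\ell)=1$.
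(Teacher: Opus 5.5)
Your proof is correct and follows the paper's argument. The direct-sum filtration in Theorem~\ref{thm:cyclic-suspension}(iv) adjoins exactly one birth at $\ell$ without disturbing the old births, the diameter formula in part (vi) multiplies $\ndiam$ by the same factor $\ell$, and iteration handles towers; your remarks on jump multisets, coordinate permutations, and primitivity at each stage only make explicit what the paper's short proof leaves implicit.
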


\begin{proof}
The direct-sum filtration in \eqref{eq:suspension-filter} appends exactly one
relation direction born at degree $\ell$; it neither removes nor changes an
old birth.  Equation~\eqref{eq:suspension-diameter} multiplies the normalized
diameter by the same factor $\ell$.  Taking products proves
\eqref{eq:critical-core-conservation}, and iteration gives the final claim.
\end{proof}

\begin{corollary}[The three-point skeleton]
\label{cor:three-point-suspension-skeleton}
For every $h\geq2$,
\begin{equation}\label{eq:three-point-spectrum}
 \Rcal(h,3)
 =\left\{\binom{h+2}{2}\right\}
 \cup
 \left\{
  \binom{h+2}{2}-\binom{h-\ell+2}{2}:2\leq\ell\leq h
 \right\}.
\end{equation}
Every value in \eqref{eq:three-point-spectrum} is realized by the one-step
suspension
\[
 \Sigma_{\ell,1}\{0,1\}=\{0,1,\ell\};
\]
degrees $\ell>h$ all realize the maximum $\binom{h+2}{2}$.
\end{corollary}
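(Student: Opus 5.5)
The plan is to prove Corollary~\ref{cor:three-point-suspension-skeleton} in two halves: realization through the suspension calculus of Theorem~\ref{thm:cyclic-suspension}, and exhaustiveness through Nathanson's three-point formula quoted in the introduction.

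\emph{Realization.} Start from $A=\{0,1\}$, which is primitive with $m=2$ and satisfies $|jA|=j+1$ for every $j\geq0$. For $\ell\geq2$, take $c=1$ with the representation $\lambda_0=\ell-1$, $\lambda_1=1$; then $(1,\ell)=1$, so $C=\Sigma_{\ell,1}A=\{0,\ell\}\cup\{1\}=\{0,1,\ell\}$. By \eqref{eq:suspension-profile},
\[
 |hC|=\sum_{j=0}^{\min(h,\ell-1)}(h-j+1).
\]
If $\ell>h$, the sum runs over all $0\leq j\leq h$ and equals $\binom{h+2}{2}$, the maximum. If $2\leq\ell\leq h$, the sum equals $\binom{h+2}{2}$ minus the omitted tail $\sum_{j=\ell}^{h}(h-j+1)=\binom{h-\ell+2}{2}$. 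Equivalently, one can use \eqref{eq:suspension-defect-transform} with $\Delta_q(A)=0$, since $M_{q,2}=q+1$, together with the hockey-stick identity. Either route shows that every displayed value is attained by $\{0,1,\ell\}$.

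\emph{Exhaustiveness.} Nathanson's exact formula gives $\Rcal(h,3)=\{\binom{h+2}{2}-\binom{\ell'}{2}:1\leq\ell'\leq h\}$. Reindex with $\ell'=h-\ell+2$. The range $2\leq\ell\leq h$ corresponds to $2\leq\ell'\leq h$, and the remaining value $\ell'=1$ gives $\binom{h+2}{2}$, since $\binom12=0$. This reindexing is the whole reduction, so the two sets coincide.

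A self-contained alternative avoids citing Nathanson. Normalize a three-set to a primitive $\{0,a,b\}$ with $0<a<b$. The homogeneous relation lattice has rank one, generated by a primitive $z$ proportional to $(a-b,\,b,\,-a)$ after dividing by $g=\gcd(a,b)=1$. So $\rho(z)=b$. By \eqref{eq:defect-rank-nullity}, or by the rank-one fibre structure in which each fibre is a lattice chain along $z$, the defect is $\Delta_h=M_{h-\rho,3}=\binom{h-\rho+2}{2}$ when $\rho\leq h$ and $0$ otherwise. This yields the same set with $\ell=\rho\geq2$. I expect the only real care point to be the chain-counting identity $\Delta_h=M_{h-\rho,3}$: the map $u\mapsto u+z^{-}$-shift must biject non-initial fibre elements with $\mathcal U_{h-\rho,3}$. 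Given the earlier results, however, the citation route through Nathanson is essentially immediate.
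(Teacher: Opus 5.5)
Your proof is correct. The realization half matches the paper: the one-step suspension profile \eqref{eq:suspension-profile} applied to $\{0,1\}$. Your exhaustiveness argument, however, takes a different route.

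The paper does not cite Nathanson. It observes that the projective toric ideal of any three-point set is a height-one homogeneous prime in a polynomial ring, hence principal, hence generated by a single binomial of some degree $\ell\geq2$. So the Hilbert series is $(1-z^\ell)/(1-z)^3$, and the degree-$h$ coefficient is read off directly. This rederives Nathanson's formula rather than reindexing it, which is what justifies calling the suspensions a skeleton: every three-point set has the Hilbert series of some $\{0,1,\ell\}$.

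Your primary route is logically valid, but it turns the corollary into a relabelling of the quoted theorem. Your self-contained alternative is closer in spirit to the paper and more elementary. Instead of primality and factoriality, you compute the relation lattice explicitly as $\mathbb Z(a-b,b,-a)$ and count fibres as lattice chains. This is the $T_0-T_1$ computation of Theorem~\ref{thm:rankone-multiplicity}, with $T_1=M_{h-\rho,3}$ via $w\mapsto w+z^-$. It also identifies the degree $\ell$ with the normalized diameter $b$, which the paper leaves implicit.

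One correction: \eqref{eq:defect-rank-nullity} alone does not give $\Delta_h=M_{h-\rho,3}$, because it leaves the collision nullity undetermined. It is the chain-counting (or principal-ideal) argument that actually computes the defect.
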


\begin{proof}
The projective toric ideal of any three-point set is a height-one homogeneous
prime in a polynomial ring.  It is therefore principal, generated by a
binomial of some degree $\ell\geq2$, and its Hilbert series is
$(1-z^\ell)/(1-z)^3$.  The coefficient of $z^h$ is the right-hand side of
\eqref{eq:three-point-spectrum} when $\ell\leq h$, and is the maximum when
$\ell>h$.  Conversely, the displayed suspensions realize every such degree.
\end{proof}

\begin{remark}[Homological transition]
\label{rem:suspension-homological-transition}
Starting from $\{0,1\}$, iterated cyclic suspensions form
complete-intersection towers: their minimal resolutions are the Koszul
complexes on the successive suspension equations.  This accounts for the
entire three-point cardinality spectrum, but is not a classification of all
higher-cardinality sets.  At four points, the three-generator
codimension-two Hilbert--Burch seed used here is genuinely
non-complete-intersection
and has projective Cohen--Macaulay type two.  Further suspensions tensor its
resolution with the two-term Koszul factors
$1+\xi z^{\ell_i}$ and preserve type two.  Thus the endpoint propagation
retains, rather than erases, the first non-complete-intersection homological
mechanism.
\end{remark}

\section{Four-point relation and Hilbert preliminaries}\label{sec:relations}

\subsection{Homogeneous relations}

Let
\[
 A=\{0,a,b,d\},\qquad 0<a<b<d,\qquad \gcd(a,b,d)=1.
\]
Its homogeneous relation lattice is
\begin{equation}\label{eq:lattice}
 L_A=\left\{z=(z_0,z_1,z_2,z_3)\in\Z^4:
 \sum_{i=0}^3z_i=0,\quad az_1+bz_2+dz_3=0\right\}.
\end{equation}
It has rank two.  For $z\in L_A$, define its relation degree by
\begin{equation}\label{eq:rho}
 \rho(z)=\sum_i z_i^+=\sum_i z_i^-=\frac12\|z\|_1.
\end{equation}
We call $z$ $h$-active if $\rho(z)\leq h$.
We shall repeatedly use the following elementary consequence of the zero-sum
condition: if $0\leq c_i\leq C$, then
\begin{equation}\label{eq:weighteddegree}
 \left|\sum_i c_i z_i\right|\leq C\rho(z).
\end{equation}
Indeed, each of $\sum_i c_i z_i^+$ and $\sum_i c_i z_i^-$ lies between
$0$ and $C\rho(z)$.

A collision between two degree-$h$ monomials in four variables cancels to
an $h$-active relation.  Conversely, every active relation can be padded
to degree $h$ and gives a collision.  This elementary correspondence is
the lattice form of the toric ideal used below.

\begin{lemma}[Product bound]\label{lem:product}
If $z,w\in L_A$ are linearly independent, then
\begin{equation}\label{eq:product}
 d\leq \rho(z)\rho(w).
\end{equation}
Consequently, for every integer $h\geq1$, if $d>h^2$, the $h$-active
relations span a space of rank at most one.
\end{lemma}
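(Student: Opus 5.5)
Since $z$ and $w$ are linearly independent in the rank-two lattice $L_A$, I would bound $d$ by a $2\times2$ determinant built from them. The natural quantity is the index of the sublattice $\mathbb Z z+\mathbb Z w$ in $L_A$, and I plan to relate $d$ to $L_A$ through its covolume.

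The first step is to project away the coordinates $z_0,z_1$. Every $z\in L_A$ is determined by $(z_2,z_3)$: once these are fixed, $az_1=-bz_2-dz_3$ determines $z_1$, and the zero-sum condition then determines $z_0$. So the projection $\pi(z)=(z_2,z_3)$ is injective on $L_A$. Its image is the sublattice
\[
\Gamma=\{(x,y)\in\mathbb Z^2: bx+dy\equiv0\pmod a\},
\]
where any integer $z_1$ produces an integer $z_0$. This is the wrong pair to use, since its index is $a/\gcd(a,b,d)$-ish and involves $a$, not $d$.

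Instead I would project to coordinates $(z_0,z_3)$. Rewrite the relation as $a z_1+bz_2=-dz_3$, together with $z_1+z_2=-z_0-z_3$; the latter lets me eliminate $z_1=-z_0-z_3-z_2$. Substituting gives
\[
(b-a)z_2 = a z_0 + (a-d)z_3 .
\]
That index involves $b-a$, so this projection is also wrong.

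The cleaner route is to use the cross product directly. For $z,w\in L_A$, both vectors are orthogonal to $(1,1,1,1)$ and to $(0,a,b,d)$. Hence their exterior product $z\wedge w$ is proportional to $(1,1,1,1)\wedge(0,a,b,d)$ under the Hodge star, and its Plücker coordinates $P_{ij}=z_iw_j-z_jw_i$ are proportional to the complementary minors of the matrix with rows $(1,1,1,1)$ and $(0,a,b,d)$. In particular,
\[
(P_{01},P_{02},P_{03},P_{12},P_{13},P_{23})=\lambda\,(d-b,\;-(d-a),\;b-a,\;d,\;-b,\;a)
\]
up to the usual sign conventions. Because $\gcd$ of these minors divides $\gcd(a,b,d)=1$, the vector $(d-b,\dots,a)$ is primitive. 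Since $z\wedge w$ is a nonzero integer vector, $\lambda$ is a nonzero integer.

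This gives $|P_{12}|=|\lambda| d\geq d$. I would then pick the coordinate pair more cleverly, since $d$ is the largest of the six minors, and bound $|P_{12}|=|z_1w_2-z_2w_1|$ by $\rho(z)\rho(w)$. The crude bound $|z_1||w_2|+|z_2||w_1|$ is not enough on its own. I would write $|z_1w_2-z_2w_1|\le (|z_1|+|z_2|)(|w_1|+|w_2|)$ only when the signs cooperate, which is again not automatic. A better plan is to use the identity $d\cdot(z\wedge w)$-coordinates and compute $\sum$ via \eqref{eq:weighteddegree}. Writing $0=z\cdot A$ and using the weights $c=(0,a,b,d)$, one gets $\lambda d = \det$ of a suitable $2\times2$ system. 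I expect to verify directly that $|\lambda|d\le\rho(z)\rho(w)$, via the standard fact that a $2\times2$ minor of two zero-sum vectors is bounded by the product of their half-$\ell^1$ norms.

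That last fact is the main obstacle. My approach: decompose $z=\sum_k(e_{i_k}-e_{j_k})$ into $\rho(z)$ elementary differences, and do the same for $w$. By bilinearity, $P_{12}$ becomes a sum of $\rho(z)\rho(w)$ terms, each being the $(1,2)$ minor of $(e_i-e_j)\wedge(e_{i'}-e_{j'})$, which lies in $\{-1,0,1\}$. This gives $d\le|P_{12}|\le\rho(z)\rho(w)$.

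For the consequence, suppose $d>h^2$. Any two independent $h$-active relations would satisfy $d\le h^2$, a contradiction, so the active span has rank at most one.
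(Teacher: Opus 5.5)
Your final argument is correct and is essentially the paper's proof. The paper drops coordinate $0$ and writes $\bar z\times\bar w=m(a,b,d)$ in $\mathbb R^3$, whose third coordinate is exactly your Plücker minor $P_{12}=z_1w_2-z_2w_1=\pm\lambda d$. It then bounds this minor by $\rho(z)\rho(w)$ using the same elementary-transfer decomposition and the fact that the relevant $2\times2$ transfer minors lie in $\{0,\pm1\}$.
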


\begin{proof}
Drop the zeroth coordinate and write
\[
 \bar z=(z_1,z_2,z_3),\qquad \bar w=(w_1,w_2,w_3).
\]
Both vectors are perpendicular to the primitive vector $(a,b,d)$, so
\[
 \bar z\times\bar w=m(a,b,d)
\]
for a nonzero integer $m$.  Looking at the third coordinate gives
\[
 d\leq |m|d=|z_1w_2-z_2w_1|
 \leq\rho(z)\rho(w).
\]
For completeness, write $z$ as a sum of $\rho(z)$ elementary transfers
$e_i-e_j$, and similarly write $w$ as a sum of $\rho(w)$ such transfers.
After the zeroth and third coordinates are deleted, every $2\times2$
determinant of two such transfers is $0$ or $\pm1$.  Bilinearity therefore
gives the last inequality.  This
proves \eqref{eq:product}; the final assertion is immediate.
\end{proof}

\begin{remark}
Lemma~\ref{lem:product} also has the standard projective B\'{e}zout
interpretation familiar from toric geometry; see, for example,
\cite{Sturmfels1996}.
The monomial curve $C_A\subset\mathbb P^3$ has degree $d$, while a
homogeneous relation of degree $r$ gives a binomial hypersurface of degree
$r$ containing it.  Two independent relations cut the projective torus in
codimension two, leading to the same product bound.  We retain the
elementary determinant proof because it records the sharp constant and the
active-degree filtration directly.
\end{remark}

\subsection{The Hilbert function in active rank at most one}

Let $K$ be a field and $S=K[x_0,x_1,x_2,x_3]$ with its standard grading.
Define
\begin{equation}\label{eq:toricmap}
 \varphi_A:S\longrightarrow K[u,v],\qquad
 x_i\longmapsto u^{d-a_i}v^{a_i},
\end{equation}
where $(a_0,a_1,a_2,a_3)=(0,a,b,d)$, and put
$I_A=\ker\varphi_A$.  The degree-$h$ monomials map to monomials indexed
by the elements of $hA$, and hence
\begin{equation}\label{eq:Hilbert}
 |hA|=\dim_K(S/I_A)_h.
\end{equation}

\begin{proposition}[Rank-zero and rank-one formula]\label{prop:rankone}
Let $h\geq1$ be an integer.  Suppose the $h$-active relations of $A$ have
rank at most one.
If their rank is zero, then $|hA|=M_h$.  If their rank is one and the
primitive active direction has degree $r$, then $2\leq r\leq h$ and
\begin{equation}\label{eq:rankone}
 |hA|=M_h-M_{h-r}.
\end{equation}
\end{proposition}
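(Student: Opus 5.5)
The plan is to count the fibres of $u\mapsto u\cdot A$ on $\mathcal U_{h,4}$ directly. Two degree-$h$ exponent vectors $u,v$ collide exactly when $u-v$ is an $h$-active relation, i.e.\ $u-v\in L_A$ with $\rho(u-v)\leq h$ (since $\rho(u-v)\leq h$ automatically for $u,v\in\mathcal U_{h,4}$).

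\emph{Rank zero.} There are no nonzero $h$-active relations, so $u\mapsto u\cdot A$ is injective on $\mathcal U_{h,4}$. Hence $|hA|=M_h$.

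\emph{Rank one.} Let $z$ be the primitive integral vector spanning $L_h(A)\cap\mathbb Z^4$, with $r=\rho(z)$.

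First we bound $r$. Every $h$-active relation is an integer multiple $mz$, since the span is one-dimensional and $z$ is primitive in it. Because some nonzero active relation exists, $r\leq\rho(mz)=|m|r\leq h$. Also $r\geq2$: a relation of degree one would be $e_i-e_j$, forcing $a_i=a_j$.

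Next we describe the fibres. Within a fibre, all differences are multiples of $z$, so each fibre is a set $\{u_0+tz\}$ lying on a line. The fibre is also convex in $t$: if $u$ and $u+mz$ both lie in $\mathbb Z_{\geq0}^4$, then so does every intermediate $u+tz$ with $0\leq t\leq m$, because each coordinate is affine in $t$. Thus each fibre is a lattice chain $u,u+z,\ldots,u+mz$ inside $\mathcal U_{h,4}$.

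Now we count. The number of fibres is the number of $u\in\mathcal U_{h,4}$ minus the number of consecutive pairs $(u,u+z)$ lying in $\mathcal U_{h,4}$. The condition $u+z\geq0$ together with $u\geq0$ is equivalent to $u\geq z^-$ coordinatewise. Writing $u=z^-+w$ with $w\geq0$ and $\sum w=h-r$, the pairs are in bijection with $\mathcal U_{h-r,4}$. Hence
\[
|hA|=M_h-M_{h-r}.
\]

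The main subtlety is justifying that every collision difference is an integer multiple of $z$, and not merely a real multiple. This holds because the difference is an integral vector on the line $\mathbb R z$ and $z$ is primitive. Once that is settled, the rest is the padding argument already used in the proof of Proposition~\ref{prop:collision-exact-sequence}.
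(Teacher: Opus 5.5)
Your proof is correct, but it takes a different route from the paper's proof of this proposition.

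\textbf{The paper's route.} The paper argues algebraically. It shows that the degree-$h$ toric piece is principal, $(I_A)_h=(f_z)_h$ with $f_z=x^{z^+}-x^{z^-}$. The reason is that every degree-$h$ fibre binomial, after its monomial gcd is cancelled, is a monomial multiple of $(x^{z^+})^{|n|}-(x^{z^-})^{|n|}$, and this is divisible by $f_z$. Multiplication by $f_z$ injects $S_{h-r}$ into $S_h$, which gives $\dim_K(I_A)_h=M_{h-r}$. The Hilbert identity \eqref{eq:Hilbert} then converts this into $|hA|$.

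\textbf{Your route.} You count the fibres of $u\mapsto u\cdot A$ directly. Each fibre is a lattice chain in the direction $z$. Hence $M_h-|hA|$ equals the number of consecutive pairs $(u,u+z)$ in $\mathcal U_{h,4}$, and the padding bijection $u=z^-+w$ identifies these pairs with $\mathcal U_{h-r,4}$. The one step you leave implicit is that $z$ itself satisfies $z\cdot A=0$. This holds because $z$ lies on the real span of relations, and it guarantees that every such pair, and every intermediate chain point, stays in a single fibre.

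Your count is exactly the case $|hA|=T_0-T_1$ of the pair counts $T_m$ that the paper uses later, in its proof of Theorem~\ref{thm:rankone-multiplicity}.

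\textbf{What each route buys.} Your route is elementary and field-free. It needs neither the fact that $(I_A)_h$ is spanned by fibre binomials nor the domain property of $S$. Counting pairs $(u,u+mz)$ for every $m$ also upgrades it, at no extra cost, to the full representation histogram. The paper's route gives the stronger structural statement $(I_A)_h=f_zS_{h-r}$ over every field. That statement fits the Hilbert-function framework of the paper, and its injection-by-a-low-degree-binomial mechanism is reused later, for instance in Theorem~\ref{thm:defecttwodiameter}.
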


\begin{proof}
If there is no active relation, then $(I_A)_h=0$, and \eqref{eq:Hilbert}
gives $|hA|=\dim_K S_h=M_h$.

Suppose the active span has rank one.  Let $V$ be its rational span, and
let $z$ be the primitive generator of $V\cap\Z^4$, with sign chosen
arbitrarily.  Since $V$ lies in the two kernels defining \eqref{eq:lattice},
we have $z\in L_A$.  Some nonzero active relation is an integral multiple
of $z$, so $r=\rho(z)\leq h$.  Also $r\geq2$, since a relation of degree
one would identify two elements of $A$.

Set
\[
 f_z=x^{z^+}-x^{z^-}\in I_A.
\]
The toric ideal component $(I_A)_h$ is spanned by degree-$h$ fibre
binomials.  After cancelling the monomial gcd from such a binomial, its
exponent difference is $nz$ for some integer $n$.  Reversing $z$ if
necessary, the remaining binomial is a monomial multiple of
\[
 (x^{z^+})^{|n|}-(x^{z^-})^{|n|},
\]
which is divisible by $f_z$.  Conversely, every degree-$h$ multiple of
$f_z$ belongs to $I_A$.  Therefore
\begin{equation}\label{eq:principalpiece}
 (I_A)_h=(f_z)_h.
\end{equation}
Multiplication by the nonzero degree-$r$ polynomial $f_z$ injects
$S_{h-r}$ into $S_h$.  Thus
\[
 \dim_K(I_A)_h=M_{h-r},
\]
and \eqref{eq:rankone} follows.
\end{proof}

Formula~\eqref{eq:rankone} is O'Bryant's first two successive-minima formula
\cite[Theorem~2]{OBryant2025}.  That theorem, together with the explicit
family in his Lemma~1, already realizes every tetrahedral-difference value in
the list below by witnesses of arbitrarily large diameter; Nathanson gives
another explicit $h$-adic realization
\cite{NathansonTriangular,NathansonTetrahedral}.  The converse proved below is
that once normalized diameter exceeds $h^2$, no other
cardinalities occur.

\section{Four-point Cohen--Macaulay charts and cubic spectrum growth}
\label{sec:family}

We now construct the Hilbert functions that will fill the quadratic core.
The four-parameter formulation makes the determinantal structure
transparent and supplies the four independent variables needed for the
positive-density argument.
The determinantal and arithmetically Cohen--Macaulay mechanisms are classical
in the theory of projective monomial curves; see, for example,
\cite{BresinskyRenschuch1980,LiPatilRoberts2012,Morales1987,
MoralesSimis1993,PatilRoberts2003}.  The contribution here is the
four-parameter additive chart and the collision geometry of its Hilbert
values.

Let $p,q,r,s$ be positive integers such that
\begin{equation}\label{eq:familyhyp}
 1\leq q<p,\qquad 1\leq r<p,
 \qquad (p,q)=1,
 \qquad q+s\geq p-r+1.
\end{equation}
Set
\begin{equation}\label{eq:familyA}
 d=ps+qr,
 \qquad
 A_{p,q,r,s}=\{0,q,p,d\}.
\end{equation}
Then $0<q<p<d$ and $A_{p,q,r,s}$ is primitive.

As before, let $S=K[x_0,x_1,x_2,x_3]$ and let $I_A$ be the kernel of the
map \eqref{eq:toricmap} for $A=A_{p,q,r,s}$.

\begin{theorem}[Hilbert series of the family]\label{thm:family}
Under \eqref{eq:familyhyp}, put $\alpha=q+s-p+r-1$.  The toric ideal $I_A$
is generated by
\begin{align}
 f_1&=x_1^p-x_0^{p-q}x_2^q,\label{eq:f1}\\
 f_2&=x_2^{q+s}-x_0^\alpha x_1^{p-r}x_3,\label{eq:f2}\\
 f_3&=x_1^r x_2^s-x_0^{r+s-1}x_3.\label{eq:f3}
\end{align}
Its Hilbert series is
\begin{equation}\label{eq:HSfamily}
 \HS_{S/I_A}(t)=
 \frac{1-t^p-t^{q+s}-t^{r+s}+t^{p+s}+t^{q+r+s}}
 {(1-t)^4}.
\end{equation}
Consequently, if
\[
 \widetilde M_j=\begin{cases}
 \binom{j+3}{3},&j\geq0,\\
 0,&j<0,
 \end{cases}
\]
then, for every integer $h\geq0$,
\begin{equation}\label{eq:Hfamily}
 |hA|=\widetilde M_h-\widetilde M_{h-p}
 -\widetilde M_{h-q-s}-\widetilde M_{h-r-s}
 +\widetilde M_{h-p-s}+\widetilde M_{h-q-r-s}.
\end{equation}
\end{theorem}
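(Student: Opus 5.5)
The plan is to exhibit a Hilbert--Burch presentation for $J=(f_1,f_2,f_3)$, read off the Hilbert series \eqref{eq:HSfamily} from the resulting resolution, and then prove $J=I_A$ by a multiplicity comparison.

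\emph{Step 1: the generators lie in $I_A$.} Under \eqref{eq:familyhyp} we have $\alpha\ge0$, so each $f_i$ is an honest binomial. Each is homogeneous, of degree $p$, $q+s$, and $r+s$ respectively. Each lies in $I_A$ by the exponent checks
\[
pq=(p-q)\cdot 0+qp,\qquad (q+s)p=q(p-r)+d,\qquad rq+sp=d .
\]

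\emph{Step 2: the two syzygies.} A direct cancellation gives
\[
x_2^s f_1+x_0^{p-q}f_2-x_1^{p-r}f_3=0,\qquad
x_0^{\alpha}x_3 f_1+x_1^r f_2-x_2^q f_3=0,
\]
using $r+s-1-(p-q)=\alpha$. These are homogeneous of degrees $p+s$ and $q+r+s$. Let $\Phi$ be the $3\times2$ matrix whose columns are these two syzygies. I will verify that the $2\times2$ minors of $\Phi$ are $\pm f_1,\pm f_2,\pm f_3$; this is a routine expansion using $\alpha+p-q=r+s-1$.

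\emph{Step 3: exactness and the Hilbert series.} I will show $\operatorname{ht}J\ge2$. The binomial $f_1=x_1^p-x_0^{p-q}x_2^q$ is irreducible because $\gcd(p,p-q,q)=1$, and $f_3$ does not involve a multiple of $f_1$ (it contains $x_3$ while $f_1$ does not, and $f_3\notin(f_1)$). Hence $J$ is not contained in any height-one prime. Also $J\subseteq I_A$, which is a height-two prime, so $\operatorname{ht}J=2$. By Hilbert--Burch (Buchsbaum--Eisenbud), the complex
\[
0\to S(-p-s)\oplus S(-q-r-s)\xrightarrow{\Phi}S(-p)\oplus S(-q-s)\oplus S(-r-s)\to S
\]
resolves $S/J$, so $S/J$ is Cohen--Macaulay of codimension two with Hilbert series \eqref{eq:HSfamily}.

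\emph{Step 4: identifying $J$ with $I_A$.} This is the step I expect to need most care. The multiplicity of $S/J$ is $N''(1)/2$, where $N$ is the numerator in \eqref{eq:HSfamily}, namely
\[
e(S/J)=\tbinom{p+s}{2}+\tbinom{q+r+s}{2}-\tbinom{p}{2}-\tbinom{q+s}{2}-\tbinom{r+s}{2}.
\]
Expanding, this equals $ps+qr=d$. On the other hand $e(S/I_A)=d$, since $I_A$ defines the degree-$d$ monomial curve.

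Since $S/J$ is Cohen--Macaulay, $J$ is unmixed of height two. By the associativity formula,
\[
d=e(S/J)=\sum_{P}\ell\bigl((S/J)_P\bigr)\,e(S/P),
\]
summed over the height-two minimal primes $P$ of $J$. The term $P=I_A$ already contributes at least $d$. Equality therefore forces $I_A$ to be the only associated prime of $J$, with length one. Hence $J$ is $I_A$-primary and $J_{I_A}=(I_A)_{I_A}$, which gives $J=I_A$.

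\emph{Step 5: the cardinality formula.} Extracting the coefficient of $t^h$ from \eqref{eq:HSfamily} via $1/(1-t)^4=\sum_j\widetilde M_j t^j$, and combining with \eqref{eq:Hilbert}, gives \eqref{eq:Hfamily}.

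As a fallback for Step 4, one can show that $\{f_1,f_2,f_3\}$ is a Gröbner basis for a suitable order, giving the L-shaped initial ideal. One then counts standard monomials against $|hA|$ to show that nothing is missing from $J$.
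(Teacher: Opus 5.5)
Your proof is correct, but it reaches the Cohen--Macaulay property and the Hilbert series by a different route from the paper. The paper does not invoke the converse of Hilbert--Burch. It orders monomials by total degree and then by the weight $(0,\lambda,1,0)$ with $q/p<\lambda<(q+s)/(p-r)$, which makes the leading terms $x_1^p$, $x_2^{q+s}$, $x_1^rx_2^s$. Your two syzygies are exactly its $S$-pair reductions \eqref{eq:S1}--\eqref{eq:S2}, and the remaining pair has coprime leading terms. Buchberger's criterion then gives $\inideal(J)=(x_1^p,x_2^{q+s},x_1^rx_2^s)$. The L-shaped standard set $\mathcal B$ exhibits $S/J$ as a free $K[x_0,x_3]$-module of rank $|\mathcal B|=ps+qr=d$. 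This yields Cohen--Macaulayness, multiplicity $d$, and the Hilbert series in one stroke, with no height computation and no irreducibility argument. Your route instead needs three separate inputs: the grade-two check (irreducibility of $f_1$ via its primitive exponent vector, and $f_1\nmid f_3$), the Buchsbaum--Eisenbud acyclicity criterion, and the computation $e=N''(1)/2=d$. In exchange, it produces the graded Hilbert--Burch resolution directly; the paper only records that resolution afterwards and later uses it for the Betti tower. Your Step~4 is the same associativity/unmixedness argument as the paper's. The one input you assert without justification is $e(S/I_A)=d$. The paper proves it through birationality of $t\mapsto[1:t^q:t^p:t^d]$, which uses $\gcd(p,q,d)=1$, so you should note that primitivity is needed there. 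Your proposed fallback is essentially the paper's proof.
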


\begin{proof}
The exponent $\alpha$ in \eqref{eq:f2} is nonnegative by
\eqref{eq:familyhyp}.  Directly comparing total degrees and $A$-weights
shows that the three binomials belong to $I_A$.

Choose a positive real number $\lambda$ such that
\begin{equation}\label{eq:lambda}
 \frac qp<\lambda<\frac{q+s}{p-r}.
\end{equation}
The interval is nonempty because $q(p-r)<p(q+s)$.  Order monomials first by
total degree, then by the weight vector $(0,\lambda,1,0)$, breaking any
remaining ties with an arbitrary monomial order.  The leading monomials of
$f_1,f_2,f_3$
are
\begin{equation}\label{eq:leading}
 x_1^p,\qquad x_2^{q+s},\qquad x_1^r x_2^s.
\end{equation}
The two nontrivial $S$-polynomial identities are
\begin{align}
 x_2^sf_1-x_1^{p-r}f_3&=-x_0^{p-q}f_2,\label{eq:S1}\\
 x_1^rf_2-x_2^qf_3&=-x_0^\alpha x_3f_1.\label{eq:S2}
\end{align}
The leading monomials of $f_1$ and $f_2$ are relatively prime.  Buchberger's
criterion therefore shows that $f_1,f_2,f_3$ form a Gr\"obner basis of the
ideal $J$ they generate, with
\begin{equation}\label{eq:initJ}
 \inideal(J)=(x_1^p,x_2^{q+s},x_1^rx_2^s).
\end{equation}

The standard monomials in $x_1,x_2$ form the L-shaped set
\begin{equation}\label{eq:Lshape}
 \begin{split}
 \mathcal B={}&\{x_1^ix_2^j:0\leq i<p,\ 0\leq j<q+s\}\\
 &\setminus
 \{x_1^ix_2^j:r\leq i<p,\ s\leq j<q+s\}.
 \end{split}
\end{equation}
Its cardinality is
\begin{equation}\label{eq:Lcount}
 |\mathcal B|=p(q+s)-(p-r)q=ps+rq=d.
\end{equation}
It follows from the Gr\"obner normal form that $S/J$ is a free
$K[x_0,x_3]$-module with basis $\mathcal B$.  In particular, $S/J$ is
Cohen--Macaulay of dimension two and degree $d$.

We have $J\subseteq I_A$.  The prime ideal $I_A$ has height two, and the
projective monomial curve $\operatorname{Proj}(S/I_A)$ has degree $d$.
Indeed, on the dense torus its parametrization is
\[
 t\longmapsto[1:t^q:t^p:t^d],
\]
which is birational because $\gcd(p,q,d)=1$.  After extending scalars to an
algebraic closure, degree and Hilbert series are unchanged, while a general
hyperplane is available.  The parametrization is given by degree-$d$ forms
without a common zero, so such a hyperplane pulls back to a divisor of degree
$d$ on $\mathbb P^1$; birationality therefore gives $\deg C_A=d$.

For clarity, we spell out the standard unmixed-degree argument that
$J=I_A$; see also \cite{BrunsHerzog1993} for the associativity formula and
unmixedness facts used here.  Since the ideals have the same height, $I_A$
is a minimal prime
of $J$.  The associativity formula for multiplicities says that its
contribution to $\deg(S/J)$ is at least $\deg(S/I_A)=d$.  Equality
\eqref{eq:Lcount} shows that $I_A$ is the only minimal prime and has
generic multiplicity one, so $J_{I_A}=(I_A)_{I_A}$.  If $I_A/J$ were
nonzero, then, as a submodule of the Cohen--Macaulay and hence unmixed
module $S/J$, it would have an associated prime among the minimal primes
of $J$, necessarily $I_A$.  This contradicts $(I_A/J)_{I_A}=0$.
Therefore $J=I_A$.

The standard monomials give
\begin{align*}
 \HS_{S/I_A}(t)
 &=\frac1{(1-t)^2}
 \left(
 \frac{(1-t^p)(1-t^{q+s})}{(1-t)^2}
 -t^{r+s}\frac{(1-t^{p-r})(1-t^q)}{(1-t)^2}
 \right)\\
 &=\frac{1-t^p-t^{q+s}-t^{r+s}+t^{p+s}+t^{q+r+s}}
 {(1-t)^4},
\end{align*}
which is \eqref{eq:HSfamily}.  Taking the degree-$h$ coefficient and using
\eqref{eq:Hilbert} proves \eqref{eq:Hfamily}.
\end{proof}

The two syzygies in \eqref{eq:S1} and \eqref{eq:S2} are the columns of
\[
 \Psi=
 \begin{pmatrix}
  x_2^s & x_0^\alpha x_3\\
  x_0^{p-q} & x_1^r\\
  -x_1^{p-r} & -x_2^q
 \end{pmatrix}.
\]
Thus Theorem~\ref{thm:family} also gives the minimal Hilbert--Burch
resolution
{\small
\[
 0\longrightarrow S(-p-s)\oplus S(-q-r-s)
 \xrightarrow{\ \Psi\ }
 S(-p)\oplus S(-q-s)\oplus S(-r-s)
 \longrightarrow I_A\longrightarrow0.
\]
}
Every entry of $\Psi$ has positive degree under \eqref{eq:familyhyp}
(in particular, $x_0^\alpha x_3$ has degree $\alpha+1\geq1$), so no scalar
cancellation is possible and the displayed graded resolution is minimal.
This recovers the numerator in \eqref{eq:HSfamily}; the Gr\"obner proof has
the additional advantage of identifying the L-shaped free basis directly.

\begin{remark}[The Euclidean-division slice]
Using generating functions, Chen and Yang obtained the exact specialization
of Theorem~\ref{thm:family} with $q=1$ and positive remainder
\cite[Corollary~1]{ChenYang2026}.  Their set $\{0,1,a,b\}$ with
$b=Qa+R$ becomes
\[
 \{0,1,p,ps+r\}
 \qquad\text{under}\qquad (a,Q,R)=(p,s,r),
\]
and $Q+R\geq a$ becomes $s+r\geq p$, equivalently
$1+s\geq p-r+1$.  Our hypotheses have $r=R>0$; Chen and Yang also treat
$R=0$.  Their six-term formula agrees with \eqref{eq:Hfamily} term by term
after Pascal's identity.  Theorem~\ref{thm:family} extends this slice to
arbitrary coprime $p,q$ and identifies the toric Cohen--Macaulay and
Hilbert--Burch mechanism; the positive-density box used here has
$q\asymp h$ rather than $q=1$.
\end{remark}

Although not needed for the density argument, the following slice isolates
a simpler quadratic--cubic defect formula contained in the same Hilbert
numerator.

\begin{corollary}[Three-term defect]\label{cor:defect}
Let $s=1$ in Theorem~\ref{thm:family}, and suppose
$p,q,r\leq h<q+r$.  Put
\[
 x=h-p,\qquad y=h-q,\qquad z=h-r.
\]
Then
\begin{equation}\label{eq:three-term}
 M_h-|hA_{p,q,r,1}|
 =Q(x)+T(y)+T(z),
\end{equation}
where
\[
 Q(x)=\binom{x+2}{2},\qquad T(y)=\binom{y+2}{3}.
\]
\end{corollary}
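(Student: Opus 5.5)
We specialize \eqref{eq:Hfamily} of Theorem~\ref{thm:family} to $s=1$ and simplify each shifted tetrahedral term using the hypotheses. First we check that $(p,q,r,1)$ satisfies \eqref{eq:familyhyp}: the condition $q+s\geq p-r+1$ becomes $q+r\geq p$. This follows from $p\leq h<q+r$. The constraints $q<p$, $r<p$ and $(p,q)=1$ are assumed as part of the standing hypotheses of Theorem~\ref{thm:family}. With $s=1$, \eqref{eq:Hfamily} reads
\[
 M_h-|hA|=\widetilde M_{h-p}+\widetilde M_{h-q-1}+\widetilde M_{h-r-1}
 -\widetilde M_{h-p-1}-\widetilde M_{h-q-r-1}.
\]

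Next we evaluate the terms. Since $h<q+r$, we have $h-q-r-1\leq-2$, so the last term vanishes. Since $p\leq h$, we have $x=h-p\geq0$, and Pascal's identity $\binom{x+3}{3}-\binom{x+2}{3}=\binom{x+2}{2}$ gives
\[
 \widetilde M_x-\widetilde M_{x-1}=Q(x).
\]
This includes the boundary case $x=0$, where $\widetilde M_{-1}=0$ and $Q(0)=1$. Finally, since $q\leq h$, we have $y=h-q\geq0$, and
\[
 \widetilde M_{y-1}=\binom{y+2}{3}=T(y).
\]
This also holds at $y=0$, where both sides vanish. The same argument with $r$ in place of $q$ gives $\widetilde M_{z-1}=T(z)$. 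Adding the three contributions gives \eqref{eq:three-term}.

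The only delicate point is the boundary bookkeeping. We must confirm that the piecewise convention for $\widetilde M_j$ with $j<0$ agrees with the binomial expressions $Q$ and $T$ at $x=0$ and $y,z=0$, and that no other term survives. This is a direct check.
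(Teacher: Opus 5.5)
Your proof is correct and is essentially the paper's argument: specialize \eqref{eq:Hfamily} to $s=1$, discard the $\widetilde M_{h-q-r-1}$ term because $q+r>h$, and combine the two $p$-terms by Pascal's identity into $Q(x)$. You then identify $\widetilde M_{h-q-1}$ and $\widetilde M_{h-r-1}$ with $T(y)$ and $T(z)$. Your check that $q+s\geq p-r+1$ reduces to $q+r\geq p$, and your handling of the boundary cases $x=0$ and $y,z=0$, are details the paper leaves implicit.
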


\begin{proof}
In \eqref{eq:Hfamily}, the final term vanishes because $q+r>h$.
Moreover,
\[
 \widetilde M_{h-p}-\widetilde M_{h-p-1}
 =\binom{h-p+2}{2}=Q(x),
\]
while
\[
 \widetilde M_{h-q-1}=T(y),\qquad
 \widetilde M_{h-r-1}=T(z).
\]
\end{proof}

\subsection{The cubic-density chart}\label{sec:energy}

We now use all four parameters in Theorem~\ref{thm:family}.  The extra
parameter turns the defect into a signed sum of four independent values of
one cubic polynomial.  Vaughan's sharp unweighted eighth-moment estimate
then gives a critical, no-$\varepsilon$ collision bound.

\subsubsection{A proportional four-parameter box}

For all sufficiently large integers $h$, define
\begin{align}
 U&=\left[\left\lceil\frac h{10}\right\rceil,
          \left\lfloor\frac{9h}{80}\right\rfloor\right]\cap\Z,
 &V&=\left[\left\lceil\frac h{20}\right\rceil,
          \left\lfloor\frac h{16}\right\rfloor\right]\cap\Z,
                                                        \label{eq:UVbox}\\
 B&=C=\left[\left\lceil\frac h5\right\rceil,
          \left\lfloor\frac{17h}{80}\right\rfloor\right]\cap\Z.
                                                        \label{eq:BCbox}
\end{align}
Thus each interval has cardinality $h/80+O(1)$.  For
$(u,v,b,c)\in U\times V\times B\times C$, set
\begin{equation}\label{eq:fullsparams}
 s=u-v,\qquad p=h-u,\qquad
 q=h-b-s,\qquad r=h-c-s,
 \qquad d=ps+qr.
\end{equation}

\begin{lemma}[The full-$s$ defect chart]\label{lem:fullschart}
The parameters in \eqref{eq:fullsparams} satisfy all the inequalities in
\eqref{eq:familyhyp}.  In addition,
\[
 q+r+s>h,
 \qquad 0<q<p<d<\frac{13}{20}h^2.
\]
Whenever $(p,q)=1$, the set $A_{p,q,r,s}$ is primitive and
\begin{equation}\label{eq:fourtermdefect}
 M_h-|hA_{p,q,r,s}|=D(u,v,b,c):=M_u-M_v+M_b+M_c.
\end{equation}
\end{lemma}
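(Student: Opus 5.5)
The plan is a direct verification in three steps: check the interval inequalities, check primitivity, then specialize formula~\eqref{eq:Hfamily} of Theorem~\ref{thm:family}. Everything reduces to linear inequalities in the four box variables $u,v,b,c$, each ranging over an interval of proportional length $h/80$. We first record the ranges of the derived parameters. Since $u\in[h/10,9h/80]$ and $v\in[h/20,h/16]$, we get
\[
 s=u-v\in\left[\tfrac{3h}{80},\tfrac{5h}{80}\right],\qquad
 p=h-u\in\left[\tfrac{71h}{80},\tfrac{9h}{10}\right].
\]
Since $b,c\in[h/5,17h/80]$, we get
\[
 q=h-b-s,\ r=h-c-s\in\left[h-\tfrac{17h}{80}-\tfrac{5h}{80},\ h-\tfrac{16h}{80}-\tfrac{3h}{80}\right]
 =\left[\tfrac{58h}{80},\tfrac{61h}{80}\right],
\]
up to $O(1)$ rounding, which is harmless for large $h$.

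The hypotheses \eqref{eq:familyhyp} then follow. We have $1\leq q<p$ and $1\leq r<p$ because $61/80<71/80$. The condition $q+s\geq p-r+1$ holds because $q+s=h-b\geq 63h/80$, whereas $p-r\leq 9h/10-58h/80=14h/80$. Coprimality $(p,q)=1$ is assumed in the statement.

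Next come the two extra inequalities. We have $q+r+s=2h-b-c-s\geq 2h-34h/80-5h/80=121h/80>h$. The chain $0<q<p$ is already done, and $p<d$ holds since $d=ps+qr\geq qr\gg h^2$. For the upper bound,
\[
 d\leq \tfrac{9}{10}\cdot\tfrac{5}{80}h^2+\left(\tfrac{61}{80}\right)^2h^2
 =\tfrac{360+3721}{6400}h^2\approx0.6377h^2<\tfrac{13}{20}h^2=0.65h^2.
\]
The slack of about $0.012h^2$ absorbs the $O(h)$ rounding errors, but this is the one place where the constants are tight, so I would double-check the arithmetic with exact floor and ceiling endpoints. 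Primitivity is immediate: $\gcd(q,p,d)$ divides $\gcd(p,q)=1$, as already noted after \eqref{eq:familyA}.

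Finally, we plug into \eqref{eq:Hfamily}, using $M_j=\widetilde M_j$ for $j\geq0$ and $M_j=0$ for $j<0$. The six indices are
\[
 h-p=u,\quad h-q-s=b,\quad h-r-s=c,\quad h-p-s=u-s=v,\quad h-q-r-s<0.
\]
The first four are positive and the last vanishes by the inequality $q+r+s>h$ just proved. Hence
\[
 |hA|=M_h-M_u-M_b-M_c+M_v,
\]
which rearranges to \eqref{eq:fourtermdefect}. I expect no real obstacle here. The only step needing care is the numerical margin in $d<\tfrac{13}{20}h^2$ and the requirement that $h$ be large enough for each box interval to be nonempty and for the rounding terms to fit inside the strict inequalities.
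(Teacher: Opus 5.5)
Your proposal is correct and is essentially the paper's proof: the same box bounds for $s,p,q,r$, the same verification of \eqref{eq:familyhyp} and of $q+r+s\geq 121h/80$, the same bound $d\leq\frac{4081}{6400}h^2<\frac{13}{20}h^2$, and the same identification of the six shifted indices in \eqref{eq:Hfamily}. Your rounding caveat is unnecessary: the ceilings and floors in \eqref{eq:UVbox}--\eqref{eq:BCbox} only shrink the intervals, so your real-number bounds hold exactly.
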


\begin{proof}
The box gives
\begin{equation}\label{eq:parameterbounds}
 \frac{3h}{80}\leq s\leq\frac h{16},\qquad
 \frac{71h}{80}\leq p\leq\frac{9h}{10},\qquad
 \frac{29h}{40}\leq q,r\leq\frac{61h}{80}.
\end{equation}
In particular all parameters are positive.  Moreover,
\[
 p-q=b-v>0,\qquad p-r=c-v>0,
\]
so $q,r<p$.  The remaining inequality in \eqref{eq:familyhyp} follows
with a large margin from
\[
 q+s=h-b\geq\frac{63h}{80},\qquad
 p-r+1=c-v+1\leq\frac{13h}{80}+1.
\]
Also
\[
 q+r+s=2h-b-c-s
 \geq2h-\frac{39h}{80}=\frac{121h}{80}>h.
\]
Thus the final term in \eqref{eq:Hfamily} vanishes.  Since
\[
 h-p=u,\qquad h-p-s=v,\qquad h-q-s=b,\qquad h-r-s=c,
\]
formula \eqref{eq:Hfamily} is exactly \eqref{eq:fourtermdefect}.

Since $s\geq1$, one has $d=ps+qr>p$.  Finally,
\[
 d\leq\left(\frac9{10}\frac1{16}
       +\left(\frac{61}{80}\right)^2\right)h^2
 =\frac{4081}{6400}h^2<\frac{13}{20}h^2.
\]
When $(p,q)=1$, primitivity follows from
$\gcd(p,q,d)=\gcd(p,q,ps+qr)=1$.
\end{proof}

\begin{lemma}[Primitive parameter count]\label{lem:fullsprimitive}
The number of quadruples in $U\times V\times B\times C$ for which
$(p,q)=1$ is $\gg h^4$.
\end{lemma}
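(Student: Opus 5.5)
The plan is to reduce coprimality of $(p,q)$ to coprimality of a simple linear combination of the box variables, and then count with a standard Möbius sieve. By \eqref{eq:fullsparams},
\[
 p=h-u,\qquad q=h-b-u+v,\qquad p-q=b-v .
\]
So $(p,q)=(h-u,\,b-v)$. Fix $v\in V$ and $b\in B$, and put $m=b-v$. Since $b\geq h/5$ and $v\leq h/16$, we have $m\asymp h$, and in fact $m\in[11h/80,\,13h/80+O(1)]$. The variable $c$ does not enter the condition at all, so it contributes a free factor $|C|\gg h$.

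It therefore suffices to show that
\[
 \sum_{v\in V}\sum_{b\in B}\#\{u\in U:(h-u,\,b-v)=1\}\gg h^3 .
\]
As $u$ ranges over $U$, the value $p=h-u$ runs through an interval $I$ of length $\asymp h$, namely $[71h/80,\,9h/10]$ up to rounding. For a fixed positive integer $m$, Möbius inversion gives
\[
 \#\{p\in I:(p,m)=1\}=\sum_{e\mid m}\mu(e)\left(\frac{|I|}{e}+O(1)\right)=|I|\frac{\varphi(m)}{m}+O(\tau(m)).
\]

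The main obstacle is that $\varphi(m)/m$ is not bounded below uniformly, so I will average over the pairs $(v,b)$ rather than bound each one. The pairs $(v,b)$ map onto the values $m=b-v$ with multiplicity $O(h)$. For a lower bound, restrict to pairs whose difference $m$ lies in a fixed subinterval $J$ of length $\asymp h$; each $m\in J$ is then hit by $\gg h$ pairs. The standard averages give
\[
 \sum_{m\in J}\frac{\varphi(m)}{m}=\frac{6}{\pi^2}|J|+O(\log h),\qquad \sum_{m\leq h}\tau(m)=O(h\log h).
\]
Putting these together, the count is at least
\[
 \gg h\cdot\Bigl(h\sum_{m\in J}\frac{\varphi(m)}{m}-O(h\log h)\Bigr)\gg h^3 .
\]
Multiplying by the free factor $|C|\gg h$ gives $\gg h^4$ quadruples.

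A cruder alternative avoids the arithmetic averages entirely. Count the pairs $(p,m)$ that share a common prime $\ell$. The number of such $u,m$ pairs is at most
\[
 \sum_{\ell}(|I|/\ell+1)(|J|/\ell+1)\leq |I||J|\sum_\ell \ell^{-2}+O(h\log h).
\]
Since $\sum_\ell \ell^{-2}<0.46<1$, a positive proportion of pairs remains coprime. I would present this version because it is self-contained.
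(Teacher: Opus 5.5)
Your proposal is correct and follows essentially the paper's route: the identity $(p,q)=(h-u,b-v)$, the free factor $|C|$, and an elementary sieve in the remaining three variables. The differences are only in bookkeeping. The paper applies M\"obius inversion on the full box as $\sum_{\ell\leq h}\mu(\ell)A_\ell B_\ell$, with $B_\ell=\#\{(v,b)\in V\times B:\ell\mid b-v\}$ counted directly, and so obtains the asymptotic $|U||V||B|/\zeta(2)$. You instead fix $m=b-v$ on a central subinterval where each $m$ has $\gg h$ representations, or use the union bound $\sum_\ell\ell^{-2}<1$; this loses a constant but suffices for the lower bound.
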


\begin{proof}
The identities in \eqref{eq:fullsparams} give
\begin{equation}\label{eq:gcdrewrite}
 (p,q)=(h-u,h-b-u+v)=(h-u,b-v).
\end{equation}
For a positive integer $\ell$, put
\[
 A_\ell=\#\{u\in U:\ell\mid h-u\},\qquad
 B_\ell=\#\{(v,b)\in V\times B:\ell\mid b-v\}.
\]
Uniformly for $\ell\leq h$,
\[
 A_\ell=\frac{|U|}{\ell}+O(1),\qquad
 B_\ell=\frac{|V||B|}{\ell}+O(|V|).
\]
Writing $\mu$ for the M\"obius function and $\zeta$ for the Riemann zeta
function, M\"obius inversion gives
\begin{align*}
 &\#\{(u,v,b)\in U\times V\times B:(h-u,b-v)=1\}\\
 &\quad=\sum_{\ell\leq h}\mu(\ell)A_\ell B_\ell\\
 &\quad=|U||V||B|\sum_{\ell\leq h}\frac{\mu(\ell)}{\ell^2}
       +O(h^2\log h)\\
 &\quad=\frac{|U||V||B|}{\zeta(2)}+O(h^2\log h)
 \gg h^3.
\end{align*}
There are $|C|\gg h$ independent choices for $c$.
\end{proof}

\subsubsection{The chart--energy principle}

Both abundance arguments use the same energy mechanism.  The following
lemma separates this analytic estimate from the algebraic construction.

\begin{lemma}[Chart--energy principle]\label{lem:chart-energy}
Let $m\geq2$, let $F:\mathbb Z\to\mathbb Z$, let $I_1,\ldots,I_m$ be
nonempty finite
integer intervals, and let $\sigma_j\in\{1,-1\}$.  For
$\mathcal P\subseteq I_1\times\cdots\times I_m$, put
\[
 \Phi(n_1,\ldots,n_m)=\sum_{j=1}^m\sigma_jF(n_j),\qquad
 S_j(\alpha)=\sum_{n\in I_j}\exp(2\pi i\alpha F(n)).
\]
Then
\begin{equation}\label{eq:chart-energy-principle}
 |\Phi(\mathcal P)|\geq
 \frac{|\mathcal P|^2}
 {\displaystyle\prod_{j=1}^m
  \left(\int_0^1|S_j(\alpha)|^{2m}\,d\alpha\right)^{1/m}}.
\end{equation}
\end{lemma}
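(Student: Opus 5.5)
The plan is the standard Cauchy--Schwarz/energy argument, with the energy bounded through Fourier orthogonality and H\"older's inequality.

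For each integer $N$, let $r(N)=\#\{\mathbf n\in\mathcal P:\Phi(\mathbf n)=N\}$. Then $\sum_N r(N)=|\mathcal P|$, and $r$ is supported on $\Phi(\mathcal P)$. Cauchy--Schwarz gives
\[
 |\mathcal P|^2\leq|\Phi(\mathcal P)|\sum_N r(N)^2 ,
\]
so it suffices to bound the collision energy $E=\sum_N r(N)^2$ by the product in the denominator of \eqref{eq:chart-energy-principle}.

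First, enlarge $\mathcal P$ to the full box. Since $r(N)\leq R(N):=\#\{\mathbf n\in I_1\times\cdots\times I_m:\Phi(\mathbf n)=N\}$ pointwise, we get $E\leq\sum_N R(N)^2$. Next, express this full-box energy by orthogonality of the characters $e(\alpha N)$ on $[0,1]$:
\[
 \sum_N R(N)^2=\int_0^1\Bigl|\sum_{\mathbf n}e\bigl(\alpha\Phi(\mathbf n)\bigr)\Bigr|^2d\alpha
 =\int_0^1\prod_{j=1}^m|S_j(\sigma_j\alpha)|^2\,d\alpha .
\]
The exponential sum factors over coordinates because $\Phi$ is a signed sum of one-variable terms. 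A sign $\sigma_j=-1$ only replaces $S_j(\alpha)$ by its complex conjugate $S_j(-\alpha)$, which has the same modulus, so $|S_j(\sigma_j\alpha)|=|S_j(\alpha)|$ and the signs disappear.

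Then apply H\"older's inequality with $m$ equal exponents $m$ to $\int_0^1\prod_j|S_j(\alpha)|^2\,d\alpha$. This yields
\[
 \int_0^1\prod_{j=1}^m|S_j(\alpha)|^2\,d\alpha\leq\prod_{j=1}^m\Bigl(\int_0^1|S_j(\alpha)|^{2m}\,d\alpha\Bigr)^{1/m}.
\]
Combining this with the Cauchy--Schwarz step gives \eqref{eq:chart-energy-principle}. The integrals are finite because each $S_j$ is a trigonometric polynomial with integer frequencies $F(n)$, which also justifies orthogonality over $[0,1]$.

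There is no genuine obstacle here. The only care needed is the sign bookkeeping via conjugation and the observation that passing from $\mathcal P$ to the full box can only increase the energy. The real difficulty, bounding the $2m$-th moments, is deferred to the later applications through Lemma~\ref{lem:cubiceighth}.
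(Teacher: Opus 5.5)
Your proposal is correct and is essentially the paper's proof: bound the restricted collision count by the full-box count, express the latter by orthogonality as $\int_0^1\prod_j|S_j(\sigma_j\alpha)|^2\,d\alpha$, remove the signs via $|S_j(-\alpha)|=|S_j(\alpha)|$, apply H\"older with $m$ equal exponents, and finish with Cauchy--Schwarz. The denominator is positive, since each $I_j$ is nonempty and the diagonal solutions give $\int_0^1|S_j|^{2m}\,d\alpha\geq1$, so dividing by it is harmless.
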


\begin{proof}
The collision count of $\Phi$ on the full product box is, by
orthogonality,
\[
 \int_0^1\prod_{j=1}^m|S_j(\sigma_j\alpha)|^2\,d\alpha.
\]
H\"older bounds this by the denominator in
\eqref{eq:chart-energy-principle}; since
$|S_j(-\alpha)|=|S_j(\alpha)|$, the signs do not affect this bound.  The
collision count on $\mathcal P$ is no larger, and
Cauchy--Schwarz bounds it below by
$|\mathcal P|^2/|\Phi(\mathcal P)|$.
\end{proof}

\subsubsection{The critical eighth moment}

We use the following classical mean-value theorem.  For the unweighted
sequence, the fixed integral-cubic formulation below is recorded in the
discussion immediately following \cite[Theorem~4.1]{HughesWooley2022},
building on \cite[Theorem~2]{Vaughan1986}.

\begin{lemma}[Vaughan's cubic eighth moment]\label{lem:cubiceighth}
Let $F\in\Z[t]$ be a fixed polynomial of degree three.  Then, for every
integer $N\geq 1$,
\begin{equation}\label{eq:cubiceighth}
 \#\left\{(n_1,\ldots,n_8)\in([-N,N]\cap\Z)^8:
  \sum_{i=1}^4F(n_i)=\sum_{i=5}^8F(n_i)\right\}
 \ll_F N^5.
\end{equation}
\end{lemma}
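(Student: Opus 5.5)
The plan is to reduce \eqref{eq:cubiceighth} to the classical eighth-moment theorem for a fixed integral cubic, and to invoke that theorem rather than reprove it. By orthogonality the left side of \eqref{eq:cubiceighth} equals
\[
 \int_0^1|T(\alpha)|^8\,d\alpha,\qquad
 T(\alpha)=\sum_{|n|\leq N}e(\alpha F(n)).
\]
The bound $N^5$ has exactly the size of the expected major-arc contribution: there are $N^8$ tuples, and the sums $\sum_{i\leq4}F(n_i)$ range over $\asymp N^3$ values. So the content of the lemma is a no-$\varepsilon$ circle-method estimate at the critical exponent $2s=8$.

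\emph{Step 1 (normalize the cubic).} Write $F(t)=at^3+bt^2+ct+e$ with $a\neq0$. The constant $e$ cancels from both sides, since each side has four terms. Put $m=3at+b$. Then $27a^2F(t)=m^3+\kappa m+\kappa'$ for fixed integers $\kappa,\kappa'$. Multiplying the equation by $27a^2$ and cancelling $\kappa'$ in the same way turns every solution of \eqref{eq:cubiceighth} into a solution of
\[
 \sum_{i=1}^4G(m_i)=\sum_{i=5}^8G(m_i),\qquad G(m)=m^3+\kappa m,
\]
with each $m_i$ in the progression $b+3a\mathbb Z$ and $|m_i|\leq N'\ll_F N$. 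This map on tuples is injective. We may enlarge to all integers $|m_i|\leq N'$, and we may split each $m_i$ by sign, since $G$ is odd and a negative term can be moved to the other side. This leaves $O(1)$ counting problems, each of the shape counted by the Hughes--Wooley formulation for the fixed cubic $G$ on $[1,N']$.

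\emph{Step 2 (quote the mean value).} For $G(m)=m^3$, Vaughan's theorem \cite[Theorem~2]{Vaughan1986} gives
\[
 \int_0^1\Bigl|\sum_{m\leq N'}e(\alpha m^3)\Bigr|^8d\alpha\ll N'^5.
\]
The discussion after \cite[Theorem~4.1]{HughesWooley2022} records the same bound for an arbitrary fixed integral cubic, in particular for $m^3+\kappa m$. If one wished to verify this extension directly, the route would be a major/minor arc dissection. On the major arcs, the standard approximation $S(\alpha)\approx q^{-1}S(q,a)v(\beta)$ together with $|S(q,a)|\ll q^{2/3}$ gives absolute convergence of the eighth-power singular series and singular integral, hence a contribution $\ll N^5$. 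On the minor arcs, one combines a Weyl/Weyl-differencing bound for $\sup|S|$ with a sixth-moment-type input.

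\emph{Main obstacle.} The expected obstacle is precisely this minor-arc estimate without an $N^\varepsilon$ loss. Hua's inequality gives only $\int|S|^8\ll N^{5+\varepsilon}$. Removing the $\varepsilon$ is Vaughan's contribution, obtained through a refined treatment of the minor arcs via an auxiliary exponential sum on a pruned set of arcs. Since this is exactly the cited theorem, the proof will consist of the reduction in Step 1 followed by an appeal to \cite{Vaughan1986,HughesWooley2022}. Summing the $O_F(1)$ pieces then gives \eqref{eq:cubiceighth} with an implied constant depending only on $F$.
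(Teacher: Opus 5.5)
Your proposal is correct and takes the paper's route: the paper gives no independent argument for this lemma and simply cites the fixed-integral-cubic form of Vaughan's bound \cite[Theorem~2]{Vaughan1986}, as recorded in the discussion after \cite[Theorem~4.1]{HughesWooley2022}. Your normalization to $G(m)=m^3+\kappa m$ is valid but unnecessary, and it has one small imprecision: after you move negative terms across and drop zeros, an equation can have unequal numbers of terms on its two sides, or fewer than eight terms. This is harmless, because each such count equals $\int_0^1 S^p\overline{S}^{\,q}\,d\alpha$, where $S$ is the Weyl sum of $G$ over $[1,N']$, and $\bigl|\int_0^1 S^p\overline{S}^{\,q}\,d\alpha\bigr|\le\bigl(\int_0^1|S|^8\,d\alpha\bigr)^{(p+q)/8}$ for $p+q\leq8$.
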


For an integer interval $I\subseteq[0,h]$, write
\[
 S_I(\alpha)=\sum_{n\in I}\exp(2\pi i\alpha M_n).
\]
Since
\begin{equation}\label{eq:centeredtetrahedral}
 6M_n=(n+1)(n+2)(n+3)=(n+2)^3-(n+2)
\end{equation}
is an integral cubic, Lemma~\ref{lem:cubiceighth} and orthogonality imply
\begin{equation}\label{eq:intervaleighth}
 \int_0^1|S_I(\alpha)|^8\,d\alpha\ll h^5
\end{equation}
for each interval in \eqref{eq:UVbox}--\eqref{eq:BCbox}.  Indeed, the
 integral counts equal sums of four values of $M_n$ with all variables in
 $I$.  Multiplying the equation by $6$ does not change its solutions, and
 the restricted solutions form a subset of those counted in
 \eqref{eq:cubiceighth} with $N=h$.
The polynomial in \eqref{eq:centeredtetrahedral} is fixed once and for all,
so the implied constant is uniform in $h$ and in the four intervals.

\subsubsection{The spectrum image}

\begin{proof}[Proof of Theorem~\ref{thm:main}]
Let $\mathcal P_h$ be the primitive parameter subset counted in
Lemma~\ref{lem:fullsprimitive}.  For every parameter tuple in this set,
$p=h-u\leq h$ and $r+s=h-c\leq h$.  Thus the independent binomials
$f_1$ and $f_3$ in \eqref{eq:f1} and \eqref{eq:f3} give two active relation
directions.  Every constructed four-set therefore has maximal active rank
two.

Apply Lemma~\ref{lem:chart-energy} with $m=4$, $F(n)=M_n$, signs
$(1,-1,1,1)$, and intervals $(U,V,B,C)$.  Together with
\eqref{eq:intervaleighth}, it gives
\[
 |D(\mathcal P_h)|
 \gg\frac{h^8}{h^5}=h^3.
\]
Every one of these defects lies in a fixed macroscopic window.  Indeed,
monotonicity of $M_x=(x+1)(x+2)(x+3)/6$ on $x\geq0$ and the definitions of
$U,V,B,C$ give
\begin{align*}
 D&\geq M_{h/10}-M_{h/16}+2M_{h/5}
     =\left(\frac1{6000}-\frac1{24576}+\frac1{375}\right)h^3+O(h^2)
     >\frac{h^3}{400},\\
 D&\leq M_{9h/80}-M_{h/20}+2M_{17h/80}
     =0.0034151\ldots\,h^3+O(h^2)
     <\frac{h^3}{250}
\end{align*}
for all sufficiently large $h$.  Thus every value just counted lies in the
stated window, proving the
local assertion \eqref{eq:localmain}.
No injectivity of the parametrization is needed: repeated descriptions
of the same four-set have the same defect by
\eqref{eq:fourtermdefect}, and hence are already charged to the collision
energy.

Lemma~\ref{lem:fullschart} turns distinct defects into distinct members
of $\Rcal(h,4)$ represented by primitive sets of normalized diameter less
than $13h^2/20$.  Thus $|\Rcal(h,4)|\gg h^3$.  The reverse estimate follows
from the ambient interval \eqref{eq:ambient}, which contains $O(h^3)$
integers when $k=4$.  Dividing by
$|[3h+1,\binom{h+3}{3}]\cap\mathbb Z|\sim h^3/6$ proves the final
assertion of Theorem~\ref{thm:main}.
\end{proof}

\section{Prime-coded suspension amplification and the ambient exponent}
\label{sec:endpoint-spectrum}

We first prove the quantitative amplification theorem and then verify its
hypotheses for the stable four-point Hilbert--Burch chart.  The suspension
degrees are primes of order $h$.  Their product gives a recoverable divisor
label with bounded overlap between prime tuples, while exact averaging
reduces the remaining collision estimate to a cubic problem.  This section
proves Theorem~\ref{thm:endpoint-spectrum}.

\subsection{A prime-coded spectrum-amplification theorem}
\label{sec:prime-suspension-amplification}

The endpoint construction is governed by a general exponent-raising
principle.  We formulate it quantitatively, keeping the algebraic,
arithmetic, and energy inputs separate.

The Betti, ACM, and Cohen--Macaulay-type assertions in this subsection are
over the arbitrary fixed coefficient field $K$ declared in
Section~\ref{sec:suspension-calculus}; consequently they hold separately over
every field.

\begin{definition}[Shift-stable binomial chart]
\label{def:shift-stable-chart}
Fix an integer $d\geq1$ and $0<\gamma<1$.  A \emph{shift-stable $d$-chart
of depth $\gamma h$} is a family
$(B_\theta)_{\theta\in\Theta_h}$, indexed by all sufficiently large integers
$h$ with $\Theta_h\ne\varnothing$, of primitive
$(d+1)$-subsets of $\mathbb Z_{\geq0}$ containing zero for which there are
fixed integers $s\geq1$ and $c_1,\ldots,c_s$, and integer residuals
$x_\nu(\theta,h)$, such that, uniformly for $\theta\in\Theta_h$ and
$J\in\mathbb Z$ with $0\leq J\leq\gamma h$,
\begin{equation}\label{eq:stable-chart-general}
 |(h-J)B_\theta|
 =\sum_{\nu=1}^{s}c_\nu
  \binom{x_\nu(\theta,h)-J+d}{d},
 \qquad x_\nu(\theta,h)-J\geq0.
\end{equation}
Terms for which $x_\nu(\theta,h)-J<0$ throughout the shift window are
omitted.  This condition is satisfied, in particular, when the family
remains in one finite Hilbert-numerator chamber throughout the window.
Here $d=|B_\theta|-1$ is the dimension of the ambient simplex and the degree
of the binomial coefficient as a polynomial in its upper index; it is not
the dimension of the associated projective curve.
\end{definition}

The next identity is the arithmetic source of the prime labels.  It applies
to every simplex term, independently of the geometry which produced it.

\begin{lemma}[Factorial box divisibility]
\label{lem:factorial-box-divisibility}
Let $d,t\geq0$ be integers, let $\ell_1,\ldots,\ell_t$ be positive
integers, put $R=\ell_1\cdots\ell_t$, and, for an indeterminate $x$, set
\[
 \Psi_{\boldsymbol\ell,d}(x)
 =\sum_{0\leq j_i<\ell_i}
   \binom{x-j_1-\cdots-j_t+d}{d}.
\]
Here $\binom{X}{d}=X(X-1)\cdots(X-d+1)/d!$ is the generalized binomial
polynomial (with $\binom{X}{0}=1$).
There is a universal polynomial
\[
 F_{d,t}(X,L_1,\ldots,L_t)
 \in\mathbb Z[X,L_1,\ldots,L_t]
\]
of degree $d$ in $X$ such that, after the specialization
$X=x$ and $L_i=\ell_i$,
\begin{equation}\label{eq:factorial-box-divisibility}
 (d+t)!\,\Psi_{\boldsymbol\ell,d}(x)
 =R F_{d,t}(x,\ell_1,\ldots,\ell_t).
\end{equation}
Its leading coefficient in $X$ is $(d+t)!/d!$.
\end{lemma}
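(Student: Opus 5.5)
We argue by induction on $t$, with $d$ arbitrary, using the one-variable summation identity underlying the hockey-stick formula. The base case $t=0$ is immediate. There $\Psi_{\boldsymbol\ell,d}(x)=\binom{x+d}{d}$ and $R=1$, and
\[
 d!\binom{X+d}{d}=(X+d)(X+d-1)\cdots(X+1)
\]
is a monic integer polynomial of degree $d$ in $X$. So we take $F_{d,0}(X)=(X+1)\cdots(X+d)$, whose leading coefficient is $1=d!/d!$.

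For the inductive step, sum over the last index $j_t$ first. By the polynomial identity
\[
 \sum_{0\leq j<L}\binom{Y-j+d}{d}=\binom{Y+d+1}{d+1}-\binom{Y-L+d+1}{d+1},
\]
which holds for every integer $L\geq1$ as a polynomial identity in $Y$ (it telescopes via Pascal's rule), we obtain
\[
 \Psi_{\boldsymbol\ell,d}(x)=\Psi_{\boldsymbol\ell',d+1}(x)-\Psi_{\boldsymbol\ell',d+1}(x-\ell_t),
\]
where $\boldsymbol\ell'=(\ell_1,\ldots,\ell_{t-1})$. The induction hypothesis at $(d+1,t-1)$ has factorial $(d+1+t-1)!=(d+t)!$, so
\[
 (d+t)!\,\Psi_{\boldsymbol\ell,d}(x)=R'\bigl(F_{d+1,t-1}(x,\boldsymbol\ell')-F_{d+1,t-1}(x-\ell_t,\boldsymbol\ell')\bigr),
\]
with $R'=\ell_1\cdots\ell_{t-1}$. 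Consequently the induction must be on $t$ with $d$ allowed to vary; we set it up that way from the start.

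The remaining point is the factor $\ell_t$. For any integer polynomial $G(X)$, the difference $G(X)-G(X-L)$ is divisible by $L$ in $\mathbb Z[X,L]$, because $X^n-(X-L)^n=L\sum_{a+b=n-1}X^a(X-L)^b$. We therefore define
\[
 F_{d,t}(X,L_1,\ldots,L_t)=\frac{F_{d+1,t-1}(X,L_1,\ldots,L_{t-1})-F_{d+1,t-1}(X-L_t,L_1,\ldots,L_{t-1})}{L_t}\in\mathbb Z[X,L_1,\ldots,L_t].
\]
This is a genuine polynomial identity, so specialization gives \eqref{eq:factorial-box-divisibility}, and $F_{d,t}$ is universal.

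For the degree claim, note that $F_{d+1,t-1}$ has degree $d+1$ in $X$ with leading coefficient $c=(d+t)!/(d+1)!$. In the difference, the $X^{d+1}$ terms cancel and the $X^d$ coefficient is $c(d+1)L_t$. Lower-degree terms of $F_{d+1,t-1}$ contribute only to lower powers of $X$. After dividing by $L_t$, the leading coefficient of $F_{d,t}$ is therefore $c(d+1)=(d+t)!/d!$, as claimed.

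The main obstacle is bookkeeping rather than any conceptual step. We must treat the summation identity as a polynomial identity, valid for all $x$ including negative arguments, which it is because each side is a polynomial in $Y$ for fixed integer $L$. We also need to keep the divisibility by $L_t$ at the level of $\mathbb Z[X,L]$ instead of after specialization.
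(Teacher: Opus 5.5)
Your proof is correct and uses the same mechanism as the paper: repeated hockey-stick summation turns the box sum into $t$ finite differences of $\binom{X+d+t}{d+t}$, and the factors $L_i$ are then extracted from those differences. The paper unrolls everything at once into the alternating sum over $J\subseteq[t]$ and gets divisibility by each $L_i$ from vanishing at $L_i=0$ plus coprimality in the UFD $\mathbb Z[X,L_1,\ldots,L_t]$; you instead peel off one $L_t$ at a time via $X^n-(X-L)^n=L\sum_{a+b=n-1}X^a(X-L)^b$ (valid over the coefficient ring $\mathbb Z[L_1,\ldots,L_{t-1}]$, not just $\mathbb Z$), which avoids the UFD step and turns the paper's one-line leading-coefficient claim into an explicit induction.
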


\begin{proof}
Repeated hockey-stick summation, interpreted as a polynomial identity,
gives, with $[t]=\{1,\ldots,t\}$,
\begin{equation}\label{eq:factorial-box-inclusion-exclusion}
 \Psi_{\boldsymbol\ell,d}(x)
 =\sum_{J\subseteq[t]}(-1)^{|J|}
  \binom{x-\sum_{j\in J}\ell_j+d+t}{d+t}.
\end{equation}
After multiplication by $(d+t)!$, replace $x,\boldsymbol\ell$ by independent
indeterminates $X,\mathbf L$ and call the resulting integer polynomial
$P(X,\mathbf L)$.  On setting any $L_i=0$, subsets
containing $i$ cancel in pairs with those not containing $i$.  Hence every
$L_i$ divides $P$.  These indeterminates are pairwise coprime in the
polynomial UFD, so $\prod_iL_i$ divides $P$.  Finally, the $t$ finite
differences in \eqref{eq:factorial-box-inclusion-exclusion} lower the degree in $X$ from
$d+t$ to $d$ and give the asserted leading coefficient.
\end{proof}

Fix an integer $t\geq0$, fix pairwise disjoint intervals, and write
\begin{equation}\label{eq:amplifier-prime-windows}
 I_i=[\alpha_i,\beta_i],\qquad
 0<\alpha_i<\beta_i<1,
 \qquad \sum_{i=1}^{t}\beta_i<\gamma,
\end{equation}
and put $\mathbf I=(I_1,\ldots,I_t)$ and
\[
 \mathfrak P_i(h)
 =\{\ell\text{ prime}:\alpha_i h\leq\ell\leq\beta_i h\}.
\]
When $t=0$, products over $i$ and the prime-tuple product are interpreted as
the singleton empty tuple; in particular $R=1$.  For every
$\boldsymbol\ell\in\prod_i\mathfrak P_i(h)$, including the empty tuple when
$t=0$, let $\Theta_h(\boldsymbol\ell)\subseteq\Theta_h$ be a finite nonempty
retained subfamily for which the required suspension choices have been fixed.
We call
$(\theta,\boldsymbol\ell)$ \emph{admissible} when
$\boldsymbol\ell\in\prod_i\mathfrak P_i(h)$ and
$\theta\in\Theta_h(\boldsymbol\ell)$.  Define
\begin{align}
 V_{\theta,\boldsymbol\ell}(h)
 &=\sum_{0\leq j_i<\ell_i}
   |(h-j_1-\cdots-j_t)B_\theta|,
 \label{eq:amplifier-value}\\
 E_{\boldsymbol\ell}(h)
 &=\#\{(\theta,\theta')\in\Theta_h(\boldsymbol\ell)^2:
 V_{\theta,\boldsymbol\ell}(h)
 =V_{\theta',\boldsymbol\ell}(h)\}.
 \label{eq:amplifier-energy}
\end{align}
Under the empty-tuple convention this gives
$V_{\theta,\varnothing}(h)=|hB_\theta|$.  Thus the ambient chart itself may be
infinite, but every family to which the finite energy argument is applied is
explicitly finite.
For sufficiently large $h$, when every nonvacuous prime window is nonempty,
put
\[
 N_h=\min_{\boldsymbol\ell\in\prod_i\mathfrak P_i(h)}
       |\Theta_h(\boldsymbol\ell)|,
 \qquad
 E_h=\max_{\boldsymbol\ell\in\prod_i\mathfrak P_i(h)}
       E_{\boldsymbol\ell}(h).
\]

\begin{theorem}[Prime-coded suspension amplification]
\label{thm:prime-filter-amplification}
Fix integers $d\geq1$ and $t\geq0$, and a shift-stable $d$-chart.  For all
sufficiently large integers $h$ and every admissible pair
$(\theta,\boldsymbol\ell)$, the value
$V_{\theta,\boldsymbol\ell}(h)$ in \eqref{eq:amplifier-value} is realized as
$|hC|$ by a primitive $(d+1+t)$-set obtained from $B_\theta$ by $t$ cyclic suspensions,
and
\begin{equation}\label{eq:prime-suspension-quantitative}
 |\Rcal(h,d+1+t)|
 \gg_{d,t,I_1,\ldots,I_t}
 \left(\frac h{\log h}\right)^t\frac{N_h^2}{E_h}.
\end{equation}
If, throughout the shift window,
\begin{equation}\label{eq:amplifier-macroscopic-seed}
 c_0 h^d\leq |(h-J)B_\theta|\leq C_0 h^d
\end{equation}
for fixed $0<c_0<C_0$, then every value used to prove
\eqref{eq:prime-suspension-quantitative} lies in a fixed interval
\begin{equation}\label{eq:amplifier-macroscopic-output}
[c_{d,t,\mathbf I}h^{d+t},C_{d,t,\mathbf I}h^{d+t}],
\end{equation}
where these constants also depend on the fixed bounds $c_0,C_0$ in
\eqref{eq:amplifier-macroscopic-seed}.

Every counted value admits a witness $C$ satisfying
\begin{align}
 \mathcal H_C(z)
 &=\mathcal H_{B_\theta}(z)\prod_{i=1}^t[\ell_i]_z,
 \label{eq:amplifier-Hilbert-transport}\\
 \mathcal K_C(z)
 &=\mathcal K_{B_\theta}(z)\prod_{i=1}^t(1-z^{\ell_i}),
 \label{eq:amplifier-K-transport}\\
 \mathcal B_C(\xi,z)
 &=\mathcal B_{B_\theta}(\xi,z)\prod_{i=1}^t(1+\xi z^{\ell_i}),
 \label{eq:amplifier-Betti-transport}\\
 r_q(C)&=r_q(B_\theta)+\#\{i:\ell_i\leq q\}
 \qquad(q\in\mathbb Z_{\geq0}),
 \label{eq:amplifier-rank-transport}\\
 \ndiam(C)&=\left(\prod_{i=1}^t\ell_i\right)\ndiam(B_\theta).
 \label{eq:amplifier-diameter-transport}
\end{align}
If the seed projective coordinate ring is Cohen--Macaulay, then projective
Cohen--Macaulayness and Cohen--Macaulay type are preserved.  Since every
$\ell_i<h$, maximal degree-$h$ active rank propagates from the seed to $C$.
\end{theorem}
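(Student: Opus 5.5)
The plan has three stages. First, realize each value by a suspension tower. Then count distinct labels through the divisor $R=\prod_i\ell_i$, which is recoverable from the label, together with the energy bound within each fixed tuple. The transport identities then come from iterating the suspension calculus.

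\emph{Realization and transport.} Fix an admissible pair $(\theta,\boldsymbol\ell)$ and build the tower $C=A^{(t)}$ of Corollary~\ref{cor:iterated-suspension}, starting from $A^{(0)}=B_\theta$ with degrees $\ell_1,\dots,\ell_t$. Each $\ell_i$ is a prime of order $h$, and $A^{(i-1)}$ is primitive. So the construction after \eqref{eq:suspension-representation} supplies $c_i=a_j$ with $\ell_i\nmid a_j$, which gives $\gcd(c_i,\ell_i)=1$; these are the fixed suspension choices. The distinct-prime condition holds automatically because the windows $I_i$ are disjoint. Formula \eqref{eq:iterated-suspension-profile} with $q=h$ gives $|hC|=V_{\theta,\boldsymbol\ell}(h)$, since $\sum_i j_i<\sum_i\beta_ih<\gamma h\le h$ keeps every shift inside the stable window. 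The Hilbert, $K$-polynomial, Betti and diameter identities \eqref{eq:amplifier-Hilbert-transport}--\eqref{eq:amplifier-diameter-transport} follow directly from \eqref{eq:iterated-suspension-H}--\eqref{eq:iterated-suspension-diameter}. The rank identity \eqref{eq:amplifier-rank-transport} follows by iterating \eqref{eq:suspension-rank} one step at a time. Preservation of the ACM property and of the Cohen--Macaulay type is the last clause of Corollary~\ref{cor:iterated-suspension}. Finally, each $\ell_i<h$, so every new relation is visible at order $h$. Hence if $r_h(B_\theta)=d-1$, then $r_h(C)=d-1+t$, which is maximal for a $(d+1+t)$-set.

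\emph{Macroscopic window.} The tuple-sum $V$ has $R\asymp h^t$ terms. Each term is squeezed between $c_0h^d$ and $C_0h^d$ by \eqref{eq:amplifier-macroscopic-seed}. Since $\alpha_ih\le\ell_i\le\beta_ih$, this gives \eqref{eq:amplifier-macroscopic-output} with constants depending on the $\alpha_i,\beta_i,c_0,C_0$.

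\emph{Counting via the prime label.} Insert \eqref{eq:stable-chart-general} into \eqref{eq:amplifier-value} and apply Lemma~\ref{lem:factorial-box-divisibility} to each term $\nu$. This yields
\[
(d+t)!\,V_{\theta,\boldsymbol\ell}(h)=R\sum_\nu c_\nu F_{d,t}(x_\nu,\boldsymbol\ell),
\]
so $R$ divides $(d+t)!\,V$ for every value produced from $\boldsymbol\ell$.

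Now fix a value $n$ and count the tuples that can produce it. Every prime $\ell_i$ exceeds $(d+t)!$ for large $h$, so $\ell_i\mid n$ for each $i$. Also $n\le C h^{d+t}$, so $n$ has $O_{d,t}(1)$ prime divisors of size at least $\min_i\alpha_i h$. Hence $n$ arises from at most $K=O_{d,t,\mathbf I}(1)$ tuples.

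For each tuple, Cauchy--Schwarz gives $|\{V_{\theta,\boldsymbol\ell}:\theta\in\Theta_h(\boldsymbol\ell)\}|\ge |\Theta_h(\boldsymbol\ell)|^2/E_{\boldsymbol\ell}\ge N_h^2/E_h$. Summing over tuples and dividing by the overlap bound $K$ gives
\[
|\Rcal(h,d+1+t)|\ge K^{-1}\prod_i|\mathfrak P_i(h)|\,\frac{N_h^2}{E_h}.
\]
The prime number theorem gives $|\mathfrak P_i(h)|\gg h/\log h$, which yields \eqref{eq:prime-suspension-quantitative}.

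The main obstacle is the overlap step. The divisibility must pin down the tuple with only bounded multiplicity, and I would handle this exactly through the factor $(d+t)!$ and the size bound $n\ll h^{d+t}$ just described. The energy input needs no uniformity beyond the quantity $E_h$ assumed in the statement.
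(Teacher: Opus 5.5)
Your proposal is correct and follows the paper's proof essentially step by step. The matching steps are: realizing each value by the suspension tower, with coprime residue points guaranteed by primality; the factorial box identity forcing $\ell_i\mid V$ once $\ell_i>(d+t)!$; bounded overlap of prime tuples from the size of the label; Cauchy--Schwarz within each fixed tuple; and the prime number theorem. Make explicit that $n\le Ch^{d+t}$ holds unconditionally from the ambient bound $n\le\binom{h+d+t}{d+t}$, as in the paper, so the overlap step does not depend on the optional hypothesis \eqref{eq:amplifier-macroscopic-seed}.
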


\begin{proof}
Primitivity is preserved under cyclic suspension.  At each step, if a
residue representative has not already been prescribed, primitivity of the
current set and primality of $\ell_i$ guarantee a nonzero old point not
divisible by $\ell_i$; choose such a point as the residue representative.
The exact profile
formula in Theorem~\ref{thm:cyclic-suspension} gives
\eqref{eq:amplifier-value}; the prime-window inequality keeps every term
inside the chart.

Apply Lemma~\ref{lem:factorial-box-divisibility} to every term of
\eqref{eq:stable-chart-general}.  With $R=\prod_i\ell_i$ this gives
\begin{equation}\label{eq:amplifier-divisor-identity}
 (d+t)!\,V_{\theta,\boldsymbol\ell}(h)
 =R G_{\theta,\boldsymbol\ell}(h),
 \qquad G_{\theta,\boldsymbol\ell}(h)\in\mathbb Z.
\end{equation}
For large $h$, the distinct selected primes exceed $(d+t)!$, whence
\begin{equation}\label{eq:amplifier-R-divides-V}
 R\mid V_{\theta,\boldsymbol\ell}(h).
\end{equation}

For one fixed prime tuple, Cauchy--Schwarz applied to the multiplicity
function of $\theta\mapsto V_{\theta,\boldsymbol\ell}(h)$ gives at least
$N_h^2/E_h$ distinct values.  The prime number theorem in fixed proportional
intervals supplies $\asymp_{t,I_1,\ldots,I_t}(h/\log h)^t$ tuples.  If an
integer $v$ is produced by one of them, all selected primes divide $v$ by
\eqref{eq:amplifier-R-divides-V}, whereas
\[
 v\leq\binom{h+d+t}{d+t}=O_{d,t}(h^{d+t}).
\]
Every admissible prime is bounded below by a positive constant times $h$,
so $v$ has only $O_{d,t,I_1,\ldots,I_t}(1)$ admissible prime divisors.  The
windows are disjoint; hence only $O_{d,t,I_1,\ldots,I_t}(1)$ ordered tuples
can produce $v$.  Summing the fixed-label images and dividing by this bounded
overlap proves \eqref{eq:prime-suspension-quantitative}.

There are exactly $R\asymp_{t,I_1,\ldots,I_t}h^t$ summands in
\eqref{eq:amplifier-value}; thus \eqref{eq:amplifier-macroscopic-seed}
implies \eqref{eq:amplifier-macroscopic-output}.  The transport formulas are
the iterated forms of Theorem~\ref{thm:cyclic-suspension} and
Corollary~\ref{cor:iterated-suspension}.
\end{proof}

\begin{definition}[Prime-energy stability]
\label{def:prime-energy-stability}
A shift-stable $d$-chart is \emph{prime-energy stable with exponents
$(a,\mu)$} if, for every fixed $t\in\mathbb Z_{\geq0}$ and every fixed system of windows
satisfying \eqref{eq:amplifier-prime-windows}, admissible suspension choices
can be made so that, uniformly over the resulting prime tuples,
\[
 N_h\geq h^{a-o(1)},\qquad E_h\leq h^{\mu+o(1)}.
\]
\end{definition}

\begin{corollary}[Energy-stable endpoint principle]
\label{cor:hereditary-endpoint}
Suppose the seed chart is prime-energy stable with exponents $(a,\mu)$.
Then, for every fixed $t\in\mathbb Z_{\geq0}$,
\begin{equation}\label{eq:amplifier-exponent}
 |\Rcal(h,d+1+t)|\geq h^{2a-\mu+t-o(1)}.
\end{equation}
Thus the deficit $d-(2a-\mu)$ from the ambient exponent is invariant throughout
this prime-energy-stable suspension tower.  In particular, the endpoint
identity $2a-\mu=d$ implies
\begin{equation}\label{eq:one-cardinality-propagation}
 |\Rcal(h,d+1+t)|=h^{d+t+o(1)}
\end{equation}
for every fixed $t\in\mathbb Z_{\geq0}$.
\end{corollary}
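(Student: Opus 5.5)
The plan is to combine Theorem~\ref{thm:prime-filter-amplification} with the definition of prime-energy stability and then perform an exponent bookkeeping step. Fix $t\geq0$ and choose once and for all a system of pairwise disjoint windows $I_1,\ldots,I_t$ satisfying \eqref{eq:amplifier-prime-windows}. Such windows exist because $\gamma>0$: for example, take $I_i=[\gamma(2i-1)/(4t),\gamma(2i)/(4t)]$, whose right endpoints sum to less than $\gamma$. When $t=0$ the convention $R=1$ applies, and the statement reduces to the seed estimate.

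By Definition~\ref{def:prime-energy-stability}, admissible suspension choices exist for these windows with $N_h\geq h^{a-o(1)}$ and $E_h\leq h^{\mu+o(1)}$, uniformly over the prime tuples. For large $h$ the prime number theorem in fixed proportional intervals makes every window $\mathfrak P_i(h)$ nonempty, so $N_h$ and $E_h$ are defined. Inserting these bounds into \eqref{eq:prime-suspension-quantitative} gives
\[
 |\Rcal(h,d+1+t)|\gg_{d,t,\mathbf I}\Bigl(\frac h{\log h}\Bigr)^t h^{2a-o(1)}h^{-\mu-o(1)}.
\]
The factor $(\log h)^{-t}$ and the implied constant are both $h^{-o(1)}$ because $t$ is fixed, so this is \eqref{eq:amplifier-exponent}.

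Invariance of the deficit is then immediate. The ambient exponent at cardinality $d+1+t$ is $d+t$, since the upper end of \eqref{eq:ambient} is $\binom{h+d+t}{d+t}=O(h^{d+t})$. The lower exponent is $2a-\mu+t$, so the gap is $d-(2a-\mu)$, independent of $t$.

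For \eqref{eq:one-cardinality-propagation}, the hypothesis $2a-\mu=d$ gives the lower bound $h^{d+t-o(1)}$. The matching upper bound $|\Rcal(h,d+1+t)|\leq\binom{h+d+t}{d+t}\ll h^{d+t}$ comes from \eqref{eq:ambient}, and together these give $h^{d+t+o(1)}$.

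The only delicate point is the uniformity of the $o(1)$ terms in the energy hypotheses. Definition~\ref{def:prime-energy-stability} asserts this uniformity for each fixed $t$ and each fixed window system, and that is all that is needed here, since $t$ and the windows are fixed before $h\to\infty$. There is no genuine obstacle beyond this.
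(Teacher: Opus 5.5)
Your proposal is correct and follows the paper's proof exactly: substitute $N_h\geq h^{a-o(1)}$ and $E_h\leq h^{\mu+o(1)}$ into \eqref{eq:prime-suspension-quantitative}, absorb $(\log h)^{-t}$ and the implied constant into $h^{-o(1)}$, and take the matching upper bound from the ambient interval \eqref{eq:ambient}. One minor slip: your sample windows have right endpoints summing to $\gamma(t+1)/4$, which is not below $\gamma$ once $t\geq3$; scaling them down, for example to $I_i=[(2i-1)\gamma/(4t^2),\,2i\gamma/(4t^2)]$, fixes this.
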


\begin{proof}
Insert the two hypotheses into
\eqref{eq:prime-suspension-quantitative} and absorb the fixed power of
$\log h$ into $h^{o(1)}$.  The upper bound in
\eqref{eq:one-cardinality-propagation} is the ambient bound.
\end{proof}

\begin{remark}[Scope]
\label{rem:amplifier-scope}
The theorem propagates witnesses and their exact Hilbert and homological
profiles.  It does not exclude unrelated representations of the same
cardinality, and its energy bound is not a pointwise frequency theorem.
Moreover, the prime windows contribute only $(h/\log h)^t$ labels, so this
mechanism alone does not prove positive density for $k\geq5$.
\end{remark}

\subsection{A moving-coefficient cubic eighth moment}

For $\nu\geq2$, let $\tau_\nu(n)$ be the number of ordered factorizations
of $n$ into $\nu$ positive factors.  For $X\geq2$, put
\begin{equation}\label{eq:maximal-divisor-function}
 \Delta_\nu(X)=\max_{1\leq n\leq X}\tau_\nu(n),
\end{equation}
and use the classical maximal-order estimate
\begin{equation}\label{eq:maximal-divisor-order}
 \Delta_\nu(X)\leq
 \exp\!\left(O_\nu\!\left(\frac{\log X}{\log\log X}\right)\right).
\end{equation}
See, for example, \cite[Chapter~I.5]{Tenenbaum2015}.

\begin{lemma}[Uniform moving-$L$ cubic eighth moment]
\label{lem:uniform-moving-cubic}
Fix $C\geq1$.  There is $C_1=C_1(C)$ such that, uniformly for $N\geq3$,
integers $L$ satisfying $|L|\leq CN^2$, and arbitrary sets
$E\subset[-CN,CN]\cap\mathbb Z$, one has
\begin{equation}\label{eq:uniform-moving-cubic}
 \int_0^1\left|
  \sum_{x\in E}\exp\bigl(2\pi i\alpha(x^3+Lx)\bigr)
 \right|^8d\alpha
 \ll_C N|E|^4\Delta_3(C_1N^3).
\end{equation}
In particular, when $|E|\ll_C N$, the right side is
\begin{equation}\label{eq:uniform-moving-cubic-maxorder}
 N^5\exp\!\left(O_C\!\left(
  \frac{\log N}{\log\log N}\right)\right).
\end{equation}
\end{lemma}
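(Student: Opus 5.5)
The plan is a Hua-type differencing argument. The point is that two Weyl differences annihilate the moving linear term $Lx$, which is what makes the bound uniform in $L$. The only arithmetic input is then a divisor bound for a trilinear form, plus a separate treatment of the degenerate (zero) values.

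Put $f(x)=x^3+Lx$ and $S(\alpha)=\sum_{x\in E}e(\alpha f(x))$. All variables lie in $[-CN,CN]$, so every value of $f$ and every difference of such values is $O_C(N^3)$. Write
\[
|S(\alpha)|^2=\sum_{h}\sum_x 1_E(x)1_E(x+h)\,e(\alpha\Delta_hf(x)).
\]
The shift $h$ ranges over $|h|\le 2CN$. Cauchy--Schwarz in $h$ gives
\[
|S(\alpha)|^4\le(4CN+1)\sum_{h}\Bigl|\sum_x 1_E(x)1_E(x+h)\,e(\alpha\Delta_hf(x))\Bigr|^2
=(4CN+1)\sum_{h,g,x}w(h,g,x)\,e\bigl(\alpha\Delta_{g}\Delta_hf(x)\bigr).
\]
Here $w(h,g,x)=1_E(x)1_E(x+h)1_E(x+g)1_E(x+h+g)\in\{0,1\}$. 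The term $Lx$ contributes $Lh$ after one difference and $0$ after two, so $\Delta_g\Delta_hf(x)=3hg(2x+h+g)$, independent of $L$.

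Multiply by $|S(\alpha)|^4$ and integrate. By orthogonality,
\[
\int_0^1|S|^8\,d\alpha\le(4CN+1)\sum_{h,g,x}w(h,g,x)\,P\bigl(3hg(2x+h+g)\bigr),
\]
where $P(m)=\#\{\mathbf y\in E^4:f(y_1)+f(y_2)-f(y_3)-f(y_4)=m\}$. Split according to whether $m=3hg(2x+h+g)$ vanishes.

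For the nondegenerate part, exchange the order of summation and fix $\mathbf y\in E^4$, which gives at most $|E|^4$ choices. The resulting $m$ is nonzero and satisfies $|m|\le C_1N^3$. The triples $(h,g,x)$ with $3hg(2x+h+g)=m$ correspond injectively to ordered factorizations of $|m|/3$ into three factors, up to $O(1)$ sign choices. Hence there are at most $O(\Delta_3(C_1N^3))$ of them, and this part contributes $\ll N|E|^4\Delta_3(C_1N^3)$.

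For the degenerate part, note that $h=0$, $g=0$, or $x=-(h+g)/2$. Using the weights, each such family has at most $O(|E|^2)$ triples with $w\neq0$: for $h=0$ choose $x$ and $x+g$ in $E$, and similarly for $2x+h+g=0$. So this part is $\ll N|E|^2P(0)$. It remains to show $P(0)\ll|E|^2\Delta_3(C_1N^3)$, which I expect to be the main point requiring care. For this, substitute $s=y_1+y_2$ and $t=y_1-y_2$, so that
\[
4\bigl(f(y_1)+f(y_2)\bigr)=s\,(s^2+3t^2+4L).
\]
Now fix the pair $(y_3,y_4)$, giving at most $|E|^2$ choices, and let $m'$ be the common value $f(y_3)+f(y_4)$. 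Two cases arise:
\begin{enumerate}[label=\textup{(\alph*)}]
\item If $m'\ne0$, then $s$ is a divisor of $4m'$ and $t^2$ is then determined, so there are $\ll\tau(4|m'|)\le\Delta_3(C_1N^3)$ pairs $(y_1,y_2)$.
\item If $m'=0$, then either $s=0$, giving at most $|E|$ pairs, or $s^2+3t^2=-4L$. The number of representations of $-4L$ by the form $s^2+3t^2$ is $\ll\tau(4|L|)\ll\Delta_3(C_1N^3)$.
\end{enumerate}
In case (b) the pairs $(y_3,y_4)$ themselves satisfy the same constraint. Counting both pairs this way gives $P(0)\ll|E|^2\Delta_3+|E|^2\ll|E|^2\Delta_3(C_1N^3)$, after enlarging $C_1$ so that the relevant integers, including $4|L|$, are at most $C_1N^3$. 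Therefore the degenerate part is $\ll N|E|^4\Delta_3(C_1N^3)$ as well.

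Adding the two parts proves \eqref{eq:uniform-moving-cubic}. The specialization \eqref{eq:uniform-moving-cubic-maxorder} follows at once from $|E|\ll_CN$ and the maximal-order estimate \eqref{eq:maximal-divisor-order} with $X=C_1N^3$.
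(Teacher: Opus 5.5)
Your argument is correct and is essentially the paper's proof, written in Weyl-differencing language. Since $\sum_h|T_h(\alpha)|^2=\sum_m c_2(m)e(\alpha m)$, where $c_2(m)$ counts quadruples $(x,x+h,x+g,x+h+g)\in E^4$ with $\Delta_g\Delta_hf(x)=m$, your Cauchy--Schwarz over the $O(N)$ shifts $h$ gives exactly the paper's inequality $\int_0^1|S|^8\,d\alpha\ll N\sum_m c_2(m)c_2'(-m)$. The paper reaches the same inequality through the Hughes--Wooley foliation and Fourier positivity in $\beta$. Your $L$-free factorization $3hg(2x+h+g)$ is the paper's $-3e_1e_2(2y+e_1+e_2)$, and from there the nondegenerate step and the bound $c_2(0)\le3|E|^2$ coincide with the paper's.

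The only divergence is the zero fibre $P(0)=c_2'(0)$. The paper pairs by \emph{differences}, $\phi_L(x)-\phi_L(y)=(x-y)(x^2+xy+y^2+L)$. The zero value then has at most three solutions $x$ for each $y$, so $c_2'(0)=\sum_m r(m)^2\ll|E|^2\Delta_3(C_1N^3)$ without any quadratic-form input. Your pairing by sums also works, but it needs the $\ll\tau(n)$ bound for representations by $s^2+3t^2$.

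There is one slip in case (b). Let $Z$ count the pairs with $f(y_1)+f(y_2)=0$. Your argument gives $Z\le|E|+O(\Delta_3(C_1N^3))$, hence only $Z^2\ll|E|^2+\Delta_3(C_1N^3)^2$. The $\Delta_3^2$ term is not $\ll|E|^2\Delta_3$ when $|E|$ is small. The fix is to add the trivial bound $Z\le|E|^2$. If $Z\le2|E|$, then $Z^2\le4|E|^2$. Otherwise $Z\ll\Delta_3(C_1N^3)$, so $Z^2\le|E|^2Z\ll|E|^2\Delta_3(C_1N^3)$.
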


\begin{proof}
Put $\phi_L(x)=x^3+Lx$ and $A=|E|$.  The proof follows the foliation
underlying the univariate-polynomial restriction estimate of
Hughes--Wooley \cite[Theorem~4.1]{HughesWooley2022}; we give the details
that make uniformity in $L$ explicit.

For $m\in\mathbb Z$, let $c_2(m)$ count quadruples in $E^4$ satisfying
\begin{equation}\label{eq:restricted-two-pair}
 x_1+x_2=y_1+y_2,\qquad
 \phi_L(x_1)+\phi_L(x_2)-\phi_L(y_1)-\phi_L(y_2)=m,
\end{equation}
and let $c'_2(m)$ count quadruples satisfying only the second equation.
Writing
\[
 x_1=y+e_1,\qquad x_2=y+e_2,
 \qquad y_2=y+e_1+e_2,
\]
one obtains the coefficient-free factorization
\begin{equation}\label{eq:restricted-cubic-factorization}
 \phi_L(x_1)+\phi_L(x_2)-\phi_L(y)-\phi_L(y_2)
 =-3e_1e_2(2y+e_1+e_2).
\end{equation}
Thus
\begin{equation}\label{eq:c2-bounds}
 c_2(0)\leq3A^2,\qquad
 c_2(m)\ll\tau_3(|m|)\quad(m\ne0).
\end{equation}
Indeed, when the right side of
\eqref{eq:restricted-cubic-factorization} vanishes, one of its three
factors vanishes, and each case has at most $A^2$ solutions.  When it is
nonzero, an ordered signed factorization determines $e_1,e_2,y$ and hence
all four variables.

To control $c'_2(0)$, let $r(m)$ count pairs $(x,y)\in E^2$ with
$\phi_L(x)-\phi_L(y)=m$.  The identity
\begin{equation}\label{eq:first-cubic-factorization}
 \phi_L(x)-\phi_L(y)
 =(x-y)(x^2+xy+y^2+L)
\end{equation}
shows that, for each fixed $y$, the equation
$\phi_L(x)=\phi_L(y)$ has at most three solutions in $x$.  Thus
$r(0)\leq3A$, and
$r(m)\ll\tau_2(|m|)$ for $m\ne0$.  For the latter assertion, after fixing
the signed divisor $d=x-y$, the remaining equation is quadratic in $y$.
The involution $(x,y)\mapsto(y,x)$ gives $r(-m)=r(m)$, and
$\sum_m r(m)=A^2$.  Hence
\[
 c'_2(0)=\sum_m r(m)r(-m)=\sum_m r(m)^2.
\]
Since $|\phi_L(x)-\phi_L(y)|\ll_C N^3$, the zero fibre contributes
$r(0)^2\leq9A^2$, while
\[
 \sum_{m\ne0}r(m)^2
 \leq\left(\max_{m\ne0}r(m)\right)\sum_mr(m)
 \ll_C A^2\Delta_2(C_1N^3)
 \leq A^2\Delta_3(C_1N^3).
\]
Consequently,
\begin{equation}\label{eq:c2prime-zero}
 c'_2(0)\ll_C A^2\Delta_3(C_1N^3).
\end{equation}
Also, trivially,
\begin{equation}\label{eq:c2prime-mass}
 \sum_m c'_2(m)=A^4.
\end{equation}

It remains to recall the short foliation step.  If
\[
 S(\alpha)=\sum_{x\in E}\exp(2\pi i\alpha\phi_L(x)),\qquad
 T(\alpha,\beta)=
 \sum_{x\in E}\exp(2\pi i(\alpha\phi_L(x)+\beta x)),
\]
put
\[
 F(\beta)=\int_0^1|T(\alpha,\beta)|^4|S(\alpha)|^4\,d\alpha.
\]
Then $F$ is nonnegative and has a finite Fourier expansion.  Since
$T(\alpha,0)=S(\alpha)$, expansion at $\beta=0$ gives
\[
 \int_0^1|S(\alpha)|^8\,d\alpha=F(0)=\sum_n\widehat F(n).
\]
The index $n=(x_1-y_1)+(x_2-y_2)$ ranges over only $O_C(N)$ integers, and
nonnegativity gives $|\widehat F(n)|\leq\widehat F(0)$.  Orthogonality in
both variables identifies
\[
 \widehat F(0)=\sum_m c_2(m)c'_2(-m).
\]
Consequently
\begin{equation}\label{eq:cubic-foliation}
 \int_0^1|S(\alpha)|^8d\alpha
 \ll_C N\sum_m c_2(m)c'_2(-m).
\end{equation}
By \eqref{eq:c2-bounds}--\eqref{eq:c2prime-mass},
\[
 \sum_m c_2(m)c'_2(-m)
 \ll_C A^4\Delta_3(C_1N^3).
\]
This proves \eqref{eq:uniform-moving-cubic}.  Notice that $L$ cancels
completely from \eqref{eq:restricted-cubic-factorization}; its size is used
only to keep the integers in the divisor estimates of order $O_C(N^3)$.
\end{proof}

\subsection{The four-point endpoint seed}

Fix once and for all $\gamma_0=1/20$.  For each fixed integer $t\geq1$,
choose constants
\begin{equation}\label{eq:prime-window-constants}
 0<a_1<b_1<a_2<b_2<\cdots<a_t<b_t,
 \qquad \sum_{i=1}^t b_i<\gamma_0.
\end{equation}
For fixed $t$, regard these window endpoints as fixed data; constants
subscripted by $t$ may depend on them.  Put
\begin{equation}\label{eq:prime-windows}
 \mathfrak P_i(h)=\{\ell\text{ prime}:a_ih\leq\ell\leq b_ih\}.
\end{equation}
The prime number theorem in fixed proportional intervals gives
\begin{equation}\label{eq:prime-tuple-count}
 \prod_{i=1}^t|\mathfrak P_i(h)|\asymp_t(h/\log h)^t.
\end{equation}

For $\boldsymbol\ell=(\ell_1,\ldots,\ell_t)$ in the product of these
sets, retain the four-point parameters from
\eqref{eq:UVbox}--\eqref{eq:fullsparams}, but restrict to
\begin{equation}\label{eq:endpoint-parameter-set}
 \mathcal P_h(\boldsymbol\ell)=
 \left\{(u,v,b,c)\in U\times V\times B\times C:
  (p,q)=1,\quad \ell_i\nmid p\ (1\leq i\leq t)\right\}.
\end{equation}
Here and below $p,q,r,s$ are the functions of $(u,v,b,c)$ in
\eqref{eq:fullsparams}.  To identify this concrete family with the abstract
amplifier notation, take
\[
 \begin{aligned}
  \Theta_h
   &=\{(u,v,b,c)\in U\times V\times B\times C:(p,q)=1\},\\
  B_{(u,v,b,c)}&=A_{p,q,r,s},\qquad
  \Theta_h(\boldsymbol\ell)&=\mathcal P_h(\boldsymbol\ell).
 \end{aligned}
\]
Starting with $A_0=A_{p,q,r,s}$, define recursively
\begin{equation}\label{eq:iterated-prime-suspension}
 A_i=\Sigma_{\ell_i,p}A_{i-1}
 =\ell_i\mathbin{\cdot}A_{i-1}\cup\{p\},\qquad(1\leq i\leq t).
\end{equation}
The point $p$ belongs to every $A_i$, and
$p=(\ell_i-1)0+p$ is a representation of length $\ell_i$ by old points.
Thus Theorem~\ref{thm:cyclic-suspension} and
Corollary~\ref{cor:iterated-suspension} apply at every step.

\begin{lemma}[Stable suspended chart]
\label{lem:stable-suspended-chart}
Uniformly in the prime tuple,
\begin{equation}\label{eq:endpoint-parameter-count}
 |\mathcal P_h(\boldsymbol\ell)|\gg_t h^4.
\end{equation}
Every $A_t$ in \eqref{eq:iterated-prime-suspension} is a primitive
$(4+t)$-set with
\begin{equation}\label{eq:suspended-geometry}
 r_h(A_t)=t+2,\qquad
 \ndiam(A_t)=\left(\prod_{i=1}^t\ell_i\right)(ps+qr)
 \asymp_t h^{t+2}.
\end{equation}
If
$p_1(A_t)\leq\cdots\leq p_{t+2}(A_t)$ are its successive relation-birth
degrees, then
\begin{equation}\label{eq:endpoint-critical-birth-scales}
 p_j(A_t)\asymp_t h\quad(1\leq j\leq t+2),
 \qquad
 \ndiam(A_t)\asymp_t\prod_{j=1}^{t+2}p_j(A_t).
\end{equation}
Its projective coordinate ring is arithmetically Cohen--Macaulay of type two.
Moreover, if
\begin{equation}\label{eq:JRS-definitions}
 R=\prod_{i=1}^t\ell_i,\qquad
 J=j_1+\cdots+j_t,\qquad 0\leq j_i<\ell_i,
\end{equation}
then
\begin{equation}\label{eq:suspended-chart-convolution}
 |hA_t|=
 \sum_{0\leq j_i<\ell_i}
 \bigl(M_{h-J}-M_{u-J}+M_{v-J}-M_{b-J}-M_{c-J}\bigr),
\end{equation}
where $M_x=\binom{x+3}{3}$ for $x\geq0$ and $M_x=0$ for $x<0$.
\end{lemma}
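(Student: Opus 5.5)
The plan is to reduce everything to the seed chart of Lemma~\ref{lem:fullschart}, and then transport each property through the $t$ suspensions using Theorem~\ref{thm:cyclic-suspension} and Corollary~\ref{cor:iterated-suspension}. The steps are: the parameter count, then admissibility of the suspensions, then the seed invariants, then their transport, and finally the convolution formula.

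\emph{Parameter count.} Lemma~\ref{lem:fullsprimitive} gives $\gg h^4$ quadruples with $(p,q)=1$. From this I remove those with $\ell_i\mid p=h-u$ for some $i$. Since $\ell_i\geq a_ih$ and $U$ has length $h/80+O(1)$, at most $O(1/a_i)+1$ values of $u$ satisfy $\ell_i\mid h-u$. Each such $u$ accounts for $O(h^3)$ quadruples, so only $O_t(h^3)$ quadruples are removed in total, uniformly in $\boldsymbol\ell$. This gives \eqref{eq:endpoint-parameter-count}.

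\emph{Admissibility.} The point $p$ lies in $A_0$. It is also fixed by each step, because it is adjoined as the new point of every suspension. Since $\ell_i$ is prime and $\ell_i\nmid p$, we have $\gcd(p,\ell_i)=1$. Hence the representation $p=(\ell_i-1)\cdot0+p$ is legitimate at every stage. Primitivity is then inherited at each step.

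\emph{Seed invariants.} For the seed I use $d=ps+qr\asymp h^2$ from \eqref{eq:parameterbounds}. The binomials $f_1,f_3$ of Theorem~\ref{thm:family} have degrees $p\leq h$ and $r+s=h-c\leq h$, and they are independent. So $r_h(A_0)=2$, and the births satisfy $p_1(A_0)\leq p_2(A_0)\leq h$. Lemma~\ref{lem:product} gives $p_1p_2\geq d\gg h^2$, which forces $p_1\gg h$. Thus both births of the seed are $\asymp h$, and $d\asymp p_1p_2$. The Hilbert--Burch resolution following Theorem~\ref{thm:family} has last free module of rank two, so the seed is ACM of type two.

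\emph{Transport.} All $\ell_i<\gamma_0h<h$, so the suspensions are visible at order $h$. Corollary~\ref{cor:iterated-suspension} gives $r_h(A_t)=t+2$ and preservation of ACM type two. Corollary~\ref{cor:suspension-critical-core} appends the births $\ell_i\asymp_t h$ and preserves the ratio of $\ndiam$ to the product of the births. Together with \eqref{eq:iterated-suspension-diameter}, this yields \eqref{eq:suspended-geometry} and \eqref{eq:endpoint-critical-birth-scales}.

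\emph{Convolution formula.} By \eqref{eq:iterated-suspension-profile}, $|hA_t|$ is the sum of $|(h-J)A_0|$ over the box. Since $J\leq\sum_i(\ell_i-1)<h/20$, no constraint $J\leq h$ intervenes. For each such $J$ I will rerun the proof of Lemma~\ref{lem:fullschart} with $h$ replaced by $h-J$. The parameters $p,q,r,s$ are unchanged, and the shifted indices become $u-J,\ v-J,\ b-J,\ c-J$. All of these are nonnegative because $v\geq h/20>J$. The last term of \eqref{eq:Hfamily} stays zero because $q+r+s>h\geq h-J$. This gives the summand $M_{h-J}-M_{u-J}+M_{v-J}-M_{b-J}-M_{c-J}$.

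The main point needing care is this last step, namely that the same six-term chamber of \eqref{eq:Hfamily} persists for every shift $J$ in the window. That is exactly where the constraint $\sum b_i<\gamma_0=1/20\leq\min V/h$ is used. The remaining steps are bookkeeping with the earlier transport results.
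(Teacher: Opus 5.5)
Your proposal is correct and follows the paper's proof essentially step for step: you count parameters by discarding the $O_t(1)$ bad values of $u$, transport rank, births, diameter and type through visible suspensions via Corollary~\ref{cor:iterated-suspension} and Corollary~\ref{cor:suspension-critical-core}, and get the convolution from \eqref{eq:iterated-suspension-profile} using $J<h/20\leq\min\{u,v,b,c\}$. The only deviation is the lower bound for the seed's first birth: you use the product bound $d\leq p_1(A_0)p_2(A_0)$ with $p_2(A_0)\leq h$ and $d\gg h^2$, whereas the paper reads $p_1(A_0)\geq 63h/80$ off the minimal generator degrees $p$, $q+s$, $r+s$ of $I_{A_0}$; both arguments are valid, and yours uses only the maximal rank and the quadratic diameter of the seed.
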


\begin{proof}
Lemma~\ref{lem:fullsprimitive} supplies $\gg h^4$ tuples with $(p,q)=1$.
For a fixed prime $\ell_i\gg_t h$, the interval of possible values of
$p=h-u$ contains only $O_t(1)$ multiples of $\ell_i$.  Removing these for
all $i$ discards only $O_t(h^3)$ quadruples, which proves
\eqref{eq:endpoint-parameter-count}.

The two active relations of the seed have degrees $p$ and $r+s$, by
\eqref{eq:f1} and \eqref{eq:f3}, and both degrees are at most $h$.
Every $\ell_i<h$, so the rank assertion follows from
\eqref{eq:suspension-rank}.  The box inequalities give
$ps+qr\asymp h^2$, and
\eqref{eq:iterated-suspension-diameter} proves the diameter assertion.
The seed toric ideal is minimally generated in degrees
$p$, $q+s=h-b$, and $r+s=h-c$.  The box bounds give, uniformly over all
retained parameter tuples,
\[
 \frac{63h}{80}\leq\min\{p,q+s,r+s\}
 \leq\max\{p,q+s,r+s\}\leq h.
\]
Hence the seed has no nonzero relation of degree below $63h/80$, while
$f_1$ and $f_3$ give two independent directions by degree $h$; its two
successive births lie in $[63h/80,h]$ and are therefore $\asymp h$.
The exact filtration in
\eqref{eq:suspension-filter} appends the births
$\ell_1,\ldots,\ell_t\asymp_t h$.  This proves the first assertion in
\eqref{eq:endpoint-critical-birth-scales}, and the second follows either
from \eqref{eq:suspended-geometry} or from
Corollary~\ref{cor:suspension-critical-core}.
The Hilbert--Burch resolution following Theorem~\ref{thm:family} has final
free rank two.  Projective ACM and type two are preserved by
Corollary~\ref{cor:iterated-suspension}.

Equation~\eqref{eq:iterated-suspension-profile} gives
\[
 |hA_t|=\sum_{0\leq j_i<\ell_i}|(h-J)A_0|,
\]
because every displayed $J$ is at most $h$.  More precisely,
\[
 0\leq J\leq\sum_i(\ell_i-1)<h/20\leq\min\{u,v,b,c\}.
\]
Thus all five residuals in
\eqref{eq:suspended-chart-convolution} are nonnegative.  Formula
\eqref{eq:Hfamily} at degree $h-J$ gives exactly its summand; the sixth
Hilbert-numerator term still vanishes because $q+r+s>h$.
\end{proof}

The next identity is the reason the whole iteration remains a cubic
problem.  The factorial normalization $2^3\,3!=48$ turns a tetrahedral
number into a centered odd cubic.

\begin{lemma}[Factorial centered-cubic label]
\label{lem:centered-cubic-label}
Put
\begin{equation}\label{eq:S1S2YL-definitions}
 S_1=\sum_{i=1}^t(\ell_i-1),\qquad
 S_2=\sum_{i=1}^t(\ell_i^2-1),\qquad
 Y_x=2x+4-S_1,\qquad L=S_2-4,
\end{equation}
and define
\begin{equation}\label{eq:moving-cubic-polynomial}
 P_{\boldsymbol\ell}(x)=Y_x^3+LY_x.
\end{equation}
Then
\begin{equation}\label{eq:averaged-centered-cubic}
 48\sum_{0\leq j_i<\ell_i}M_{x-J}
 =R P_{\boldsymbol\ell}(x)
\end{equation}
whenever all $x-J\geq0$.  Consequently every set in
Lemma~\ref{lem:stable-suspended-chart} satisfies the exact integer identity
\begin{equation}\label{eq:factorial-prime-label}
 48|hA_t|=R\,G_{\boldsymbol\ell}(u,v,b,c),
\end{equation}
where
\begin{equation}\label{eq:G-prime-label}
 G_{\boldsymbol\ell}(u,v,b,c)=
 P_{\boldsymbol\ell}(h)-P_{\boldsymbol\ell}(u)
 +P_{\boldsymbol\ell}(v)-P_{\boldsymbol\ell}(b)
 -P_{\boldsymbol\ell}(c)\in\mathbb Z_{>0}.
\end{equation}
Uniformly on the construction,
\begin{equation}\label{eq:endpoint-macroscopic-window}
 |hA_t|\asymp_t Rh^3\asymp_t h^{t+3}.
\end{equation}
\end{lemma}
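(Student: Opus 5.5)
The plan has four steps: prove the centered identity \eqref{eq:averaged-centered-cubic} by induction on $t$, sum it over the five tetrahedral terms of \eqref{eq:suspended-chart-convolution}, settle integrality and positivity of $G_{\boldsymbol\ell}$, and then read off the size bound \eqref{eq:endpoint-macroscopic-window}.

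For $t=0$ the identity is \eqref{eq:centeredtetrahedral} multiplied by $8$: with $Y=2x+4$,
\[
48M_x=8\bigl((x+2)^3-(x+2)\bigr)=Y^3-4Y,
\]
which is $P(x)$ with $L=-4$ and $S_1=0$. For the inductive step, suppose the identity holds for a tuple and a prime $\ell$ is adjoined. The new sum is
\[
\sum_{j=0}^{\ell-1}R\,P_{\text{old}}(x-j).
\]
Substituting $Y=Y_x^{\text{old}}-2j$ and using $\sum_j 1=\ell$, $\sum_j 2j=\ell(\ell-1)$, $\sum_j(2j)^2=\tfrac{2}{3}(\ell-1)\ell(2\ell-1)$, together with odd symmetry about the centre $Y_x^{\text{old}}-(\ell-1)$, collapses the sum to
\[
\ell\bigl(Y'^3+(L_{\text{old}}+\ell^2-1)Y'\bigr),\qquad Y'=Y_x^{\text{old}}-(\ell-1).
\]
This is precisely the update $S_1\mapsto S_1+(\ell-1)$, $S_2\mapsto S_2+(\ell^2-1)$. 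Concretely, averaging the odd cubic over the arithmetic progression $Y'+(\ell-1-2j)$ adds the variance term $\tfrac{\ell^2-1}{3}\cdot 3Y'$, so the verification is elementary. The hypothesis that every $x-J\geq0$ is needed so that each $M_{x-J}$ is given by the polynomial formula and no truncated zero term intervenes.

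Applying this identity to each of the five terms of \eqref{eq:suspended-chart-convolution} gives \eqref{eq:factorial-prime-label}. That is legitimate because Lemma~\ref{lem:stable-suspended-chart} already checked that all residuals $h-J,u-J,v-J,b-J,c-J$ are nonnegative, since $J<h/20\leq\min\{u,v,b,c\}$.

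Integrality of $G_{\boldsymbol\ell}$ is automatic, since each $P_{\boldsymbol\ell}(x)$ is an integer polynomial in integers. Positivity then follows because $48|hA_t|>0$ and $R>0$.

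For \eqref{eq:endpoint-macroscopic-window}, I would bound each summand directly rather than expand $G$. Each summand $|(h-J)A_0|$ is the $t=0$ chart value at degree $h-J$. Since $J\leq h/20$, the degree $h-J$ lies in $[19h/20,h]$. Rerunning the window computation from the proof of Theorem~\ref{thm:main} with $h$ replaced by $h-J$ and the residuals shifted by $J$ yields
\[
M_{h-J}-\bigl(M_{u-J}-M_{v-J}+M_{b-J}+M_{c-J}\bigr)\asymp h^3.
\]
The upper bound $\leq M_h$ is trivial. For the lower bound, the defect is at most
\[
M_{9h/80}+2M_{17h/80}=O(h^3/100),
\]
which is far below $M_{19h/20}\approx 0.14h^3$. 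Summing over the $R$ summands gives $|hA_t|\asymp Rh^3$, and $R\asymp_t h^t$ because each $\ell_i\asymp h$.

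The main obstacle is the inductive averaging computation, specifically keeping the sign conventions of $Y_x$ (the shift by $-(\ell-1)$) and of $L$ consistent. I would first verify it carefully for $t=1$ and then present the general step as a convolution of centered odd cubics.
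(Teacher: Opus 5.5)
Your proposal is correct and is essentially the paper's proof: the paper averages $(Y_x-Z)^3-4(Y_x-Z)$ over all $t$ coordinates at once using the symmetric variable $Z=2J-S_1$, with $\mathbb E Z=\mathbb E Z^3=0$ and $\mathbb E Z^2=S_2/3$, and your induction adjoining one prime at a time is exactly this computation unrolled. Your treatment of integrality and positivity of $G_{\boldsymbol\ell}$ and of the size bound also matches the paper: discard $M_{v-J}$, compare the three negative residual terms with $M_{h-J}$ for $h-J\geq 19h/20$, and multiply by $R\asymp_t h^t$.
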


\begin{proof}
The elementary identity
\begin{equation}\label{eq:centered-tetrahedral-48}
 48M_n=(2n+4)^3-4(2n+4)
\end{equation}
is the centered form of $M_n=(n+1)(n+2)(n+3)/3!$.  Let the $j_i$ be
independent and uniform on $\{0,\ldots,\ell_i-1\}$, and put
$Z=2J-S_1$.  The distribution of $Z$ is symmetric, and
\[
 \mathbb EZ=\mathbb EZ^3=0,\qquad
 \mathbb EZ^2=\frac13\sum_i(\ell_i^2-1)=\frac{S_2}{3}.
\]
Since $2(x-J)+4=Y_x-Z$, averaging
\eqref{eq:centered-tetrahedral-48} gives
\[
 \mathbb E\bigl((Y_x-Z)^3-4(Y_x-Z)\bigr)
 =Y_x^3+(S_2-4)Y_x.
\]
Multiplication by the number $R$ of tuples $(j_1,\ldots,j_t)$ proves
\eqref{eq:averaged-centered-cubic}; inserting it into
\eqref{eq:suspended-chart-convolution} proves
\eqref{eq:factorial-prime-label}.

For the final assertion, each summand $|(h-J)A_0|$ in the residue
convolution is $\asymp h^3$, uniformly on the fixed box and on
$0\leq J<h/20$.  Indeed, in
\eqref{eq:suspended-chart-convolution} the leading term has
$h-J\geq19h/20$, whereas the three negative residuals satisfy
$u\leq9h/80$ and $b,c\leq17h/80$; the positive $M_{v-J}$ may be discarded
for a lower bound.  There are exactly $R$ summands, and
$R\asymp_t h^t$ by the prime windows.
\end{proof}

\begin{lemma}[Prime-energy stability of the four-point chart]
\label{lem:four-point-prime-energy-stability}
The four-point seed is a shift-stable $3$-chart of depth $\gamma_0h$.
For every fixed $t\in\mathbb Z_{\geq0}$ and every fixed system of prime
windows satisfying \eqref{eq:prime-window-constants},
the suspended four-point chart satisfies, uniformly in the prime tuple,
\begin{equation}\label{eq:four-point-prime-energy-stability}
 N_h\gg_t h^4,
 \qquad
 E_h\leq h^5\exp\!\left(O_t\!\left(
  \frac{\log h}{\log\log h}\right)\right).
\end{equation}
In particular it is prime-energy stable with exponents $(a,\mu)=(4,5)$.
\end{lemma}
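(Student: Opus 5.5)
The proof has three parts: shift-stability of the seed, the lower bound on $N_h$, and the energy bound $E_h$. The energy bound is the only real obstacle.

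\emph{Shift-stability.} I take $d=3$ and $s=5$ terms with coefficients $(1,-1,1,-1,-1)$. The residuals are $x_\nu=h,u,v,b,c$, which are functions of $\theta=(u,v,b,c)$. For $0\leq J\leq\gamma_0h=h/20$, formula \eqref{eq:Hfamily} at degree $h-J$ equals
\[
M_{h-J}-M_{u-J}+M_{v-J}-M_{b-J}-M_{c-J}.
\]
This holds because $q+r+s>h$ kills the sixth numerator term. Also every residual satisfies $x_\nu-J\geq h/20-J\geq0$, since $\min\{u,v,b,c\}\geq h/20$. This is exactly \eqref{eq:stable-chart-general}, and it was already checked inside the proof of Lemma~\ref{lem:stable-suspended-chart}.

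\emph{Lower bound on $N_h$.} This is \eqref{eq:endpoint-parameter-count}, which is uniform in the prime tuple. The same statement covers $t=0$, where it reduces to Lemma~\ref{lem:fullsprimitive}.

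\emph{Energy bound: reduction to a moving cubic.} Fix $\boldsymbol\ell$. By \eqref{eq:factorial-prime-label}, equality $V_{\theta,\boldsymbol\ell}=V_{\theta',\boldsymbol\ell}$ is equivalent to $G_{\boldsymbol\ell}(\theta)=G_{\boldsymbol\ell}(\theta')$. The term $P_{\boldsymbol\ell}(h)$ is common to both sides and cancels. So $E_{\boldsymbol\ell}(h)$ is at most the number of solutions in $(U\times V\times B\times C)^2$ of
\[
-P(u)+P(v)-P(b)-P(c)=-P(u')+P(v')-P(b')-P(c').
\]
Since $P(x)=Y_x^3+LY_x$, I substitute $y=Y_x=2x+4-S_1$. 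This change of variables is injective and maps each interval $I\in\{U,V,B,C\}$ onto a set $E_I\subset[-Ch,Ch]$ with $|E_I|\ll h$. Here $S_1\leq h/20$. The coefficient satisfies $|L|=|S_2-4|\ll_t h^2$, because each $\ell_i\leq h$.

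\emph{Energy bound: the moment estimate.} By orthogonality, the solution count equals
\[
\int_0^1\prod_I|S_{E_I}(\alpha)|^2\,d\alpha,
\qquad
S_E(\alpha)=\sum_{y\in E}e(\alpha(y^3+Ly)).
\]
The signs are harmless because $|S(-\alpha)|=|S(\alpha)|$. Applying Hölder's inequality with four factors, as in Lemma~\ref{lem:chart-energy}, bounds this by $\prod_I\bigl(\int|S_{E_I}|^8\bigr)^{1/4}$. Lemma~\ref{lem:uniform-moving-cubic} with $N\asymp h$ and $C=C(t)$ bounds each eighth moment by
\[
h^5\exp\!\bigl(O_t(\log h/\log\log h)\bigr),
\]
uniformly in $L$ and therefore uniformly in $\boldsymbol\ell$. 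Taking the maximum over tuples gives \eqref{eq:four-point-prime-energy-stability}.

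The only delicate point is this uniformity in the moving coefficient $L$, and that is exactly what Lemma~\ref{lem:uniform-moving-cubic} was built to supply. For $t=0$ one may instead invoke Lemma~\ref{lem:cubiceighth} directly. The exponents are then $(a,\mu)=(4,5)$, with $2a-\mu=3=d$.
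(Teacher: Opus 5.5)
Your proposal is correct and matches the paper's proof essentially step for step: shift-stability comes from the box bounds and the vanishing sixth Hilbert-numerator term, $N_h$ from \eqref{eq:endpoint-parameter-count}, and the energy bound from the factorial label \eqref{eq:factorial-prime-label}, cancellation of $P_{\boldsymbol\ell}(h)$, the injective substitution $x\mapsto Y_x$, orthogonality plus H\"older, and Lemma~\ref{lem:uniform-moving-cubic}, which is uniform in $L$. The only difference is cosmetic: the paper handles $t=0$ by citing the original four-cubic energy estimate from the proof of Theorem~\ref{thm:main}, which is the alternative you also mention.
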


\begin{proof}
The Hilbert formula \eqref{eq:Hfamily} and the box bounds
\eqref{eq:UVbox}--\eqref{eq:BCbox} show that the five retained residual
indices stay nonnegative for $0\leq J\leq\gamma_0h$, while the sixth lower
index in \eqref{eq:Hfamily} remains negative.  Thus the seed is
shift-stable at the asserted fixed depth.
For $t=0$, this is the original four-cubic energy estimate in the proof of
Theorem~\ref{thm:main}.  Assume $t\geq1$.  The parameter estimate is
\eqref{eq:endpoint-parameter-count}.  For fixed
$\boldsymbol\ell$, the constant term
$P_{\boldsymbol\ell}(h)$ does not affect collisions.  The maps
$x\mapsto Y_x$ send each of $U,V,B,C$ injectively to a length-$O(h)$
subset of one parity class in $[-C_th,C_th]$, and
$|L|=|S_2-4|\ll_th^2$.  Orthogonality, followed by H\"older, bounds the
collision count of $G_{\boldsymbol\ell}$ on the full product box by
\begin{equation}\label{eq:fixed-label-energy}
 \prod_{X\in\{U,V,B,C\}}
 \left(
  \int_0^1\left|
   \sum_{x\in X}
   \exp\bigl(2\pi i\alpha P_{\boldsymbol\ell}(x)\bigr)
  \right|^8d\alpha
 \right)^{1/4}.
\end{equation}
Lemma~\ref{lem:uniform-moving-cubic} shows that
\eqref{eq:fixed-label-energy} is at most
\[
 h^5\exp\!\left(O_t\!\left(
  \frac{\log h}{\log\log h}\right)\right).
\]
The collision count on the restricted set
$\mathcal P_h(\boldsymbol\ell)$ is no larger, proving
\eqref{eq:four-point-prime-energy-stability}.  The bound is uniform for every
fixed suspension depth and every fixed window system satisfying
\eqref{eq:prime-window-constants}.  Since $\gamma_0$ was fixed before the
window system, the preceding uniformity is exactly the
prime-energy-stability requirement in
Definition~\ref{def:prime-energy-stability}.
\end{proof}

\begin{proof}[Proof of Theorem~\ref{thm:endpoint-spectrum}]
The case $k=4$ is the four-point construction proving
Theorem~\ref{thm:main}.  Its fixed window is macroscopic on the $h^3$
scale, and the primitive chart points have active rank two.  In the notation
of Lemma~\ref{lem:fullschart}, their diameter is
$\ndiam(A_0)=ps+qr\asymp h^2$.  The Hilbert--Burch resolution shows that their
projective ACM coordinate rings have type two.  As in the proof of
Lemma~\ref{lem:stable-suspended-chart}, the three minimal generator degrees
are $\asymp h$, so both successive relation births are $\asymp h$ and
$\ndiam(A_0)\asymp p_1(A_0)p_2(A_0)$.  Suppose now that $k\geq5$,
and set $t=k-4$.

The fixed choice of $\gamma_0$ and \eqref{eq:prime-window-constants} make
Definition~\ref{def:shift-stable-chart} apply with $d=3$ and depth
$\gamma_0h$.  Then
Lemma~\ref{lem:four-point-prime-energy-stability} makes the four-point family
a prime-energy-stable endpoint seed with
\begin{equation}\label{eq:four-point-endpoint-triple}
 (d,a,\mu)=(3,4,5),\qquad 2a-\mu=d.
\end{equation}
The quantitative conclusion of
Theorem~\ref{thm:prime-filter-amplification}, with $t=k-4$, gives
\begin{align*}
 |\Rcal(h,k)|
 &\geq
 (h/\log h)^t h^3
 \exp\!\left(-O_k\!\left(
  \frac{\log h}{\log\log h}\right)\right)\\
 &=h^{k-1}
 \exp\!\left(-O_k\!\left(
   \frac{\log h}{\log\log h}\right)\right).
\end{align*}
The localization follows from \eqref{eq:endpoint-macroscopic-window};
primitivity, maximal active rank, diameter, projective
Cohen--Macaulayness, and type two follow from
Lemma~\ref{lem:stable-suspended-chart}.
Finally, the elementary ambient bound
$|\Rcal(h,k)|\leq\binom{h+k-1}{k-1}$, together with the lower bound just
proved, gives \eqref{eq:ambient-exponent}.
\end{proof}

\begin{proposition}[Fixed-scale divisor barrier for the suspended endpoint chart]
\label{prop:stable-factor-barrier}
Fix $k\geq5$, put $t=k-4$, and let $\mathcal L^{\mathrm{st}}_{h,k}$ be the
following explicitly defined set of labels.  Choose
$(u,v,b,c)\in U\times V\times B\times C$, define $p,q,r,s$ by
\eqref{eq:fullsparams}, require $(p,q)=1$, and choose integer degrees
$a_i h\leq\ell_i\leq b_i h$ with $\gcd(p,\ell_i)=1$ for
$1\leq i\leq t$, where the fixed windows satisfy
\eqref{eq:prime-window-constants}.  Starting from
$A_0=A_{p,q,r,s}$, define
\begin{equation}\label{eq:stable-integer-suspension}
 A_i=\Sigma_{\ell_i,p}A_{i-1}
 =\ell_i\mathbin{\cdot}A_{i-1}\cup\{p\}
 \qquad(1\leq i\leq t),
\end{equation}
and let $\mathcal L^{\mathrm{st}}_{h,k}$ be the set of all resulting labels
$|hA_t|$.  Put
\[
 \delta=1-\frac{1+\log\log2}{\log2}=0.086071\ldots.
\]
Then
\begin{equation}\label{eq:stable-factor-barrier}
 |\mathcal L^{\mathrm{st}}_{h,k}|
 \ll_k
 \frac{h^{k-1}}
 {(\log h)^\delta(\log\log h)^{3/2}}
 =o_k(h^{k-1}).
\end{equation}
Thus no refinement of collision estimates alone can give positive ambient
density from this stable full-residue chart.  This is a limitation of the
chart, not an upper bound for the full spectrum $\Rcal(h,k)$.
\end{proposition}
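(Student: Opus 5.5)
The plan is to show that every label in $\mathcal L^{\mathrm{st}}_{h,k}$, after multiplication by $48$, is an integer of size $O_k(h^{k-1})$ with a divisor in the fixed proportional window $[a_1h,b_1h]$. The count then follows from Ford's theorem on integers having a divisor in a dyadic interval. The point is that the recoverable divisor label, which drove the lower bound in Theorem~\ref{thm:prime-filter-amplification}, becomes a sieve-type restriction when turned around.

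\textbf{Step 1 (the construction is legitimate and carries the divisor label).}
For integer, not necessarily prime, degrees $\ell_i$ with $\gcd(p,\ell_i)=1$, the representation $p=(\ell_i-1)\cdot0+p$ has length $\ell_i$ and uses old points, since $0,p\in A_{i-1}$. Hence Theorem~\ref{thm:cyclic-suspension} and Corollary~\ref{cor:iterated-suspension} apply at each step. Because $\sum_i b_i<\gamma_0=1/20$, the bound $J<h/20$ holds exactly as in Lemma~\ref{lem:stable-suspended-chart}, so the residue convolution \eqref{eq:suspended-chart-convolution} remains valid with all five residuals nonnegative.

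The proof of Lemma~\ref{lem:centered-cubic-label} never uses primality. It only uses that each $j_i$ is uniform on $\{0,\dots,\ell_i-1\}$ together with the centered identity \eqref{eq:centered-tetrahedral-48}. Therefore
\[
48|hA_t|=R\,G_{\boldsymbol\ell}(u,v,b,c),\qquad R=\prod_i\ell_i,
\]
with $G_{\boldsymbol\ell}$ an integer, because $P_{\boldsymbol\ell}$ has integer coefficients and is evaluated at integers. In particular $\ell_1\mid 48n$ for every $n\in\mathcal L^{\mathrm{st}}_{h,k}$. Moreover $n\le\binom{h+k-1}{k-1}$, so $48n\le X:=C_kh^{k-1}$.

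\textbf{Step 2 (reduction to a divisor-in-an-interval count).}
The map $n\mapsto 48n$ is injective. It therefore suffices to bound the number of $m\le X$ that have a divisor in $[a_1h,b_1h]$. Cover this window by $O_k(1)$ intervals of the form $(y_j,2y_j]$ with $y_j\asymp_k h$. This reduces the count to $O_k(1)$ instances of Ford's function
\[
H(x,y,2y)=\#\{m\le x:\exists\, d\mid m,\ y<d\le 2y\}.
\]

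\textbf{Step 3 (Ford's theorem).}
Ford's theorem gives, for $3\le y\le\sqrt x$,
\[
H(x,y,2y)\asymp \frac{x}{(\log y)^{\delta}(\log\log y)^{3/2}},\qquad \delta=1-\frac{1+\log\log2}{\log2}.
\]
Here $y\asymp h$ and $x=C_kh^{k-1}$ with $k\ge5$, so $y\le\sqrt x$ holds comfortably for large $h$. Since $\log y\sim\log h$, summing over the $O_k(1)$ dyadic pieces yields \eqref{eq:stable-factor-barrier}. Only the upper-bound half of Ford's theorem is needed.

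\textbf{Main obstacle.}
The analytic input is the deep step, and it is imported as a black box. What I expect to need care is confirming that the cited form of Ford's theorem applies uniformly in the range $y\asymp h$, $x\asymp h^{k-1}$, and that endpoint and rounding issues at the window boundaries only cost $O_k(1)$ dyadic pieces. It is also worth noting why the conclusion does not contradict the lower bound: only one of the $t$ divisors is used, and the barrier concerns this chart alone. The lower bound $h^{k-1-o(1)}$ loses exactly the factor $(\log h)^{t}$ from prime density, and the real gap is the much smaller Ford factor.
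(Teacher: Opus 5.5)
Your proposal is correct and follows the paper's route. The residue convolution and the centered-cubic identity, neither of which needs primality, give $48N=RG$; Ford's divisor-in-an-interval bound is then applied to $48N\le C_kh^{k-1}$, after covering a fixed-ratio divisor window by $O_k(1)$ dyadic intervals.

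The only difference is which divisor you use. The paper takes $d_*=\min(R,G)\asymp h^{\min(t,3)}$, which lies below $\sqrt{48N}$ automatically. You take $\ell_1\asymp h$ and check $y\le\sqrt{X}$ by size. Ford's bound depends on $y$ only through $\log y\asymp\log h$, so both choices give the same estimate, and yours is slightly simpler.
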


\begin{proof}
For $N=|hA_t|\in\mathcal L^{\mathrm{st}}_{h,k}$, put
\[
 R=\prod_{i=1}^t\ell_i,
 \qquad G=G_{\boldsymbol\ell}(u,v,b,c)
\]
with $G_{\boldsymbol\ell}$ as in \eqref{eq:G-prime-label}.  The window bound
gives
\[
 0\leq\sum_i(\ell_i-1)<h/20\leq\min\{u,v,b,c\},
\]
so the complete residue box used in
\eqref{eq:suspended-chart-convolution} is valid.  The residue convolution and
the calculation in the proof of Lemma~\ref{lem:centered-cubic-label}---which
use only these window and coprimality conditions, not primality---therefore give
\[
 48N=R G,\qquad R\asymp_k h^t,\qquad G\asymp_k h^3,\qquad
 N\asymp_k h^{t+3}.
\]
Set $\sigma=\min(t,3)$ and $d_*=\min(R,G)$.  For suitable constants depending
only on $k$, one has $c_kh^\sigma\leq d_*\leq C_kh^\sigma$, while
$d_*\mid48N$ and $d_*\leq\sqrt{48N}$.  Cover this fixed-ratio interval by
$O_k(1)$ intervals $(y,2y]$.  Ford's divisor-in-an-interval estimate
\cite[Corollary~2]{Ford2008DivisorInterval}, with $c=2$ in Ford's notation, requires
$1\leq y\leq X/2$.  For $X\geq4$, the range $3\leq y\leq\sqrt X$ implies
these hypotheses and has $Y=\min\{y,X/y\}+3=y+3$; hence its logarithmic
factors are comparable with those displayed below.  The corollary gives,
uniformly for
$3\leq y\leq\sqrt X$,
\[
 \#\{m\leq X:m\text{ has a divisor in }(y,2y]\}
 \ll \frac{X}{(\log y)^\delta(\log\log y)^{3/2}}.
\]
Apply this with $m=48N$ and $X=C'_kh^{t+3}$, where $C'_k$ is chosen so
that $48N\leq X$ uniformly.  Since
$\log y\asymp_k\log h$ and $N\mapsto48N$ is injective, summing over the
finitely many intervals proves \eqref{eq:stable-factor-barrier}.
The proof uses only fixed proportional degree scales and the exact
full-residue factorization.
\end{proof}

\begin{corollary}[Prime toric ACM almost-complete-intersection spectrum]
\label{cor:endpoint-acm-type-two}
Fix $k\geq4$.  The lower bound in \eqref{eq:endpoint-local} already counts
distinct degree-$h$ Hilbert values of arithmetically Cohen--Macaulay
projective monomial curves in $\mathbb P^{k-1}$ of Cohen--Macaulay type two.
More precisely, for $t=k-4$ and with $\xi$ recording homological degree, the
witnesses have graded Betti polynomial
\begin{equation}\label{eq:endpoint-betti-tower}
 \begin{split}
 \mathcal B_{A_t}(\xi,z)
 ={}&\bigl(
  1+\xi(z^p+z^{q+s}+z^{r+s})
  +\xi^2(z^{p+s}+z^{q+r+s})
 \bigr)\\
 &\hspace{18mm}\cdot
 \prod_{i=1}^t(1+\xi z^{\ell_i}).
 \end{split}
\end{equation}
Writing $\beta_i(A_t)=\sum_j\beta^{A_t}_{i,j}$, their total Betti polynomial
depends only on $k$:
\begin{equation}\label{eq:endpoint-total-Betti}
 \begin{aligned}
  \sum_{i=0}^{k-2}\beta_i(A_t)\xi^i
  &=\mathcal B_{A_t}(\xi,1)=(1+2\xi)(1+\xi)^{k-3},\\
  \beta_i(A_t)&=\binom{k-3}{i}+2\binom{k-3}{i-1}.
 \end{aligned}
\end{equation}
Here out-of-range binomial coefficients are zero.  In particular, the
homogeneous prime toric ideal $I_{A_t}$ has
\[
 \operatorname{ht}I_{A_t}=k-2,
 \qquad
 \mu(I_{A_t})=k-1=\operatorname{ht}I_{A_t}+1,
\]
so it is an almost complete intersection.  Thus the witnesses have ACM
coordinate rings of type two.  Their defining ideals are prime toric almost
complete intersections, all with the same total Betti format.  As
$h\to\infty$, they realize $h^{k-1+o_k(1)}$ distinct degree-$h$ Hilbert
values; at $k=4$, they realize a positive proportion of the ambient interval.
The internal degree shifts in
\eqref{eq:endpoint-betti-tower} vary with the parameters.
Their Hilbert numerator is obtained by setting $\xi=-1$ in
\eqref{eq:endpoint-betti-tower}; equivalently,
\begin{equation}\label{eq:endpoint-Hilbert-numerator-tower}
 \HS_{R_{A_t}}(z)=
 \frac{
  (1-z^p-z^{q+s}-z^{r+s}+z^{p+s}+z^{q+r+s})
  \prod_{i=1}^t(1-z^{\ell_i})}
 {(1-z)^k}.
\end{equation}
\end{corollary}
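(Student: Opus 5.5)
The plan is to assemble the corollary from the seed resolution and the suspension transport formulas, with essentially no new estimates.

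\emph{Seed resolution.} For $t=0$ the witness is $A_0=A_{p,q,r,s}$ with parameters in $\mathcal P_h(\boldsymbol\ell)$. The minimal Hilbert--Burch resolution displayed after Theorem~\ref{thm:family} has generators in degrees $p$, $q+s$, $r+s$ and syzygies in degrees $p+s$, $q+r+s$. Its graded Betti polynomial is therefore the first factor of \eqref{eq:endpoint-betti-tower}. Minimality was already checked: every entry of $\Psi$ has positive degree.

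\emph{Transport.} Applying \eqref{eq:iterated-suspension-Betti} of Corollary~\ref{cor:iterated-suspension} to the tower \eqref{eq:iterated-prime-suspension} multiplies this by $\prod_i(1+\xi z^{\ell_i})$, which gives \eqref{eq:endpoint-betti-tower}. Each suspension step is legitimate because $p$ has the length-$\ell_i$ representation $(\ell_i-1)\cdot0+p$, and $\gcd(p,\ell_i)=1$ since $\ell_i\nmid p$ with $\ell_i$ prime.

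\emph{Total Betti numbers.} Setting $z=1$ gives
\[
(1+3\xi+2\xi^2)(1+\xi)^t=(1+2\xi)(1+\xi)^{t+1}=(1+2\xi)(1+\xi)^{k-3}.
\]
Extracting the coefficient of $\xi^i$ yields the stated $\beta_i$.

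\emph{Algebraic invariants read off.} From the total Betti numbers:
\begin{itemize}
\item $\mu(I_{A_t})=\beta_1=(k-3)+2=k-1$;
\item the projective dimension is $k-2$;
\item the last Betti number is $\beta_{k-2}=2$.
\end{itemize}
The ideal is the kernel of a monomial map to a domain, so it is prime. The coordinate ring $R_{A_t}$ has Krull dimension two, so $\operatorname{ht}I_{A_t}=k-2$. Since the projective dimension equals the height, $R_{A_t}$ is Cohen--Macaulay by Auslander--Buchsbaum. This is consistent with the preservation statement in Corollary~\ref{cor:iterated-suspension}, and the type equals the last Betti number, $2$. Height $k-2$ together with $\mu=k-1$ gives an almost complete intersection.

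\emph{Hilbert series.} The Hilbert numerator is the alternating sum of Betti numbers, i.e.\ the substitution $\xi=-1$. This gives \eqref{eq:endpoint-Hilbert-numerator-tower}, which also agrees with \eqref{eq:iterated-suspension-K}.

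\emph{Counting.} By Theorem~\ref{thm:endpoint-spectrum} and its proof, every label counted in \eqref{eq:endpoint-local} comes from such an $A_t$. Via \eqref{eq:eliasbridge} it is the degree-$h$ Hilbert value of the curve $C_{A_t}\subset\mathbb P^{k-1}$. This yields $h^{k-1+o_k(1)}$ distinct values, and positive density at $k=4$ by Theorem~\ref{thm:main}.

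The only step needing care is the bookkeeping identifying the counted labels with these specific witnesses. In particular, the $t=0$ case must use the primitive chart points of Theorem~\ref{thm:main}, whose resolution is the unsuspended Hilbert--Burch one. I would address this by citing Lemma~\ref{lem:stable-suspended-chart} and the proof of Theorem~\ref{thm:endpoint-spectrum} directly.
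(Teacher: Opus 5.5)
Your proposal is correct and follows the paper's proof essentially step for step: the seed Hilbert--Burch resolution, transport by \eqref{eq:iterated-suspension-Betti}, specialization at $z=1$ and $\xi=-1$, primality and height from the two-dimensional domain image, $\mu=\beta_1$, and the count via \eqref{eq:eliasbridge} and Theorem~\ref{thm:endpoint-spectrum}. The only small difference is how you obtain Cohen--Macaulayness: you read it off the resolution length via Auslander--Buchsbaum, whereas the paper takes it from the regular-extension statement in Corollary~\ref{cor:iterated-suspension}; both routes rest on the minimality of the mapping-cone resolution.
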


\begin{proof}
The seed Betti polynomial is the Hilbert--Burch resolution following
Theorem~\ref{thm:family}.  Formula
\eqref{eq:iterated-suspension-Betti} gives
\eqref{eq:endpoint-betti-tower}; setting $\xi=-1$ gives
\eqref{eq:endpoint-Hilbert-numerator-tower}.  The top free module has rank
two, so every displayed Cohen--Macaulay coordinate ring has type two.
Setting $z=1$ in \eqref{eq:endpoint-betti-tower} gives
\eqref{eq:endpoint-total-Betti}.  The projective semigroup map has domain a
polynomial ring in $k$ variables and image a two-dimensional domain, so its
homogeneous kernel $I_{A_t}$ is prime of height $k-2$.  Minimality of the
displayed resolution gives
$\mu(I_{A_t})=\beta_1(A_t)=k-1$, proving the almost-complete-intersection
assertion.
Finally, \eqref{eq:eliasbridge} identifies its degree-$h$ Hilbert value with
$|hA_t|$.  The endpoint lower bound and the ambient upper bound give the
stated exponent, while Theorem~\ref{thm:main} gives positive density when
$k=4$.
\end{proof}

\subsection{A complete-intersection comparison tower}

The same amplifier also yields a quantitative complete-intersection tower
from the three-point construction.  This comparison distinguishes the
type-one hypersurface seed from the type-two Hilbert--Burch seed above.

\begin{corollary}[Complete-intersection spectrum tower]
\label{cor:complete-intersection-spectrum-tower}
Fix $k\geq3$, and let $\Rcal_{\mathrm{CI}}(h,k)$ be the set of labels in
$\Rcal(h,k)$ having a primitive witness whose projective monomial curve is a
complete intersection.  There are constants $0<c_k<C_k$ such that, for all
sufficiently large integers $h$,
\begin{equation}\label{eq:complete-intersection-spectrum-tower}
 \left|\Rcal_{\mathrm{CI}}(h,k)
 \cap[c_kh^{k-1},C_kh^{k-1}]\right|
 \gg_k \frac{h^{k-2}}{(\log h)^{k-3}}.
\end{equation}
These witnesses are Gorenstein, have maximal active rank $k-2$, and satisfy
\begin{equation}\label{eq:complete-intersection-critical-core}
 \ndiam(A)=\prod_{i=1}^{k-2}p_i(A)\asymp_k h^{k-2}.
\end{equation}
With $\xi$ again recording homological degree, their Betti and Hilbert
numerators are respectively
\begin{equation}\label{eq:complete-intersection-tower-Betti}
 \prod_{e\in\{p,\ell_1,\ldots,\ell_{k-3}\}}(1+\xi z^e),
 \qquad
 \prod_{e\in\{p,\ell_1,\ldots,\ell_{k-3}\}}(1-z^e).
\end{equation}
\end{corollary}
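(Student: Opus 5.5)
We will run the prime-coded amplifier of Theorem~\ref{thm:prime-filter-amplification}, but with a three-point hypersurface seed in place of the four-point Hilbert--Burch seed. Set $t=k-3$ and fix disjoint prime windows $I_i=[a_ih,b_ih]$ with $\sum_ib_i<\gamma$, for a small fixed $\gamma$. The seed is $B_p=\{0,1,p\}=\Sigma_{p,1}\{0,1\}$ with $p\in[(1-\gamma')h,h]$ ranging over an interval of length $\asymp h$ for a fixed small $\gamma'$. By Corollary~\ref{cor:three-point-suspension-skeleton}, $|(h-J)B_p|=M_{h-J,3}-M_{h-J-p,3}$. Throughout the shift window we have $h-J\geq0$, and we choose the windows so that $h-J-p$ has a fixed sign there. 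The cleanest choice is to take $p$ large enough that $h-J-p<0$ only when the term would vanish. Since that makes the seed profile constant in $p$, we instead let $p$ vary in an interval like $[h/2,3h/4]$ with $\sum b_i<1/4$, so that $x=h-p\geq h/4>J$ and both residuals remain nonnegative. Then the chart is shift-stable with $d=2$, $s=2$, and residuals $h$ and $h-p$.

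For a fixed prime tuple $\boldsymbol\ell$ with $\ell_i\nmid p$, iterate $A_i=\Sigma_{\ell_i,p}A_{i-1}$ as in \eqref{eq:iterated-prime-suspension}. Corollary~\ref{cor:iterated-suspension} gives the Betti numerator $(1+\xi z^p)\prod_i(1+\xi z^{\ell_i})$. This is the Koszul complex on the $k-2$ suspension equations, so the curve is a complete intersection and hence Gorenstein, and the Hilbert numerator follows by setting $\xi=-1$. The active rank is $1+t=k-2$ because $p,\ell_i\leq h$. The birth degrees are $\{p,\ell_1,\ldots,\ell_t\}$, all $\asymp h$. Moreover $\ndiam=p\prod\ell_i$, which is exactly the product of the birth degrees, via \eqref{eq:iterated-suspension-diameter} and Corollary~\ref{cor:suspension-critical-core} with seed ratio $\ndiam\{0,1\}/1=1$.

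For counting, the label is $V=\sum_{J}(M_{h-J,3}-M_{h-p-J,3})$. For fixed $\boldsymbol\ell$ the first sum is constant, and the second is strictly monotone in $p$: it is a sum of $R$ terms, each strictly decreasing in $p$ since $h-p-J\geq0$. So $p\mapsto V$ is injective, giving $N_h^2/E_h=N_h\gg h$ distinct labels per tuple; here no energy estimate is needed. The macroscopic bounds $|(h-J)B_p|\asymp h^2$ come from $M_{h-J,3}-M_{h-p-J,3}\geq M_{h-J,3}-M_{h-J-h/2,3}\gg h^2$, which by \eqref{eq:amplifier-macroscopic-output} place the labels in $[c_kh^{k-1},C_kh^{k-1}]$.

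The overlap bound is the main step to watch. Theorem~\ref{thm:prime-filter-amplification}'s divisibility argument, $R\mid V$ via Lemma~\ref{lem:factorial-box-divisibility}, yields at most $O_k(1)$ tuples per label, so the total is $\gg h\cdot(h/\log h)^{k-3}=h^{k-2}/(\log h)^{k-3}$. The one hypothesis to check is that all selected primes exceed $(2+t)!$, which holds for large $h$. Removing the $p$ divisible by some $\ell_i$ discards only $O_k(1)$ values of $p$.
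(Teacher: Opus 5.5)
Your proposal is correct and is essentially the paper's proof. Both feed the three-point seed $\{0,1,p\}$ into Theorem~\ref{thm:prime-filter-amplification} with $t=k-3$ prime windows, use injectivity in $p$ to get $E_h=N_h\asymp h$ with no energy estimate, and read the Betti, complete-intersection and critical-core claims off Corollaries~\ref{cor:iterated-suspension} and~\ref{cor:suspension-critical-core}.

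The differences are cosmetic. The paper takes $h/4\leq p\leq h/2$ and suspends with $c=1$, which stays in every suspended set, so no coprimality pruning of $p$ is needed. It also gets injectivity from the closed form $V_{p,\boldsymbol\ell}(h)=pR(2h-p+3-S)/2$ rather than from your termwise monotonicity.
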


\begin{proof}
Take
\[
 B_p=\{0,1,p\},\qquad \frac h4\leq p\leq\frac h2.
\]
Its projective toric ideal is generated by
$x_1^p-x_0^{p-1}x_2$, and for $n\geq p$,
\begin{equation}\label{eq:three-point-shift-stable-profile}
 |nB_p|=\binom{n+2}{2}-\binom{n-p+2}{2}
 =\frac{p(2n-p+3)}2.
\end{equation}
This is a shift-stable $d=2$ chart of depth $h/4$, with
$N_h\asymp h$.  Uniformly in this chart,
\begin{equation}\label{eq:three-point-macroscopic-chart}
 \frac18h^2\leq |(h-J)B_p|\leq h^2
 \qquad(0\leq J\leq h/4),
\end{equation}
for all sufficiently large $h$.  Choose $t=k-3$ disjoint
prime windows whose upper endpoints sum to less than $1/4$, and use $c=1$
at every suspension.  If $R=\prod_i\ell_i$ and
$S=\sum_i(\ell_i-1)$, the exact profile transform gives
\begin{equation}\label{eq:three-point-amplified-label}
 V_{p,\boldsymbol\ell}(h)
 =\frac{pR(2h-p+3-S)}2.
\end{equation}
Moreover,
\[
 V_{p+1,\boldsymbol\ell}(h)-V_{p,\boldsymbol\ell}(h)
 =R\left(h+1-p-\frac S2\right)>0.
\]
Thus the fixed-prime-tuple map is injective and $E_h=N_h\asymp h$.
Theorem~\ref{thm:prime-filter-amplification} now gives
\eqref{eq:complete-intersection-spectrum-tower} and its localization.

The seed has one relation born at degree $p$ and diameter $p$.  Each visible
suspension appends the birth $\ell_i$ and multiplies the diameter by
$\ell_i$.  Hence the final rank is $1+t=k-2$ and
\eqref{eq:complete-intersection-critical-core} follows exactly from
Corollary~\ref{cor:suspension-critical-core}.  Finally, the seed is a
hypersurface, and each suspension adjoins a regular equation.  Thus the
resulting coordinate rings are complete intersections and hence Gorenstein;
the mapping-cone formula gives
\eqref{eq:complete-intersection-tower-Betti}.
\end{proof}

\begin{remark}[A homological comparison, not a classification]
\label{rem:homological-comparison-hierarchy}
The complete-intersection seed produces
$h^{k-2-o(1)}$ macroscopic labels with type-one witnesses, one power of $h$
below the ambient label exponent.  The four-point Hilbert--Burch seed attains
the full ambient label exponent.  It produces $h^{k-1-o(1)}$ labels with
type-two witnesses.  This comparison concerns two explicit realization
classes stable under suspension.  It neither asserts that their label sets
are disjoint nor classifies their full fibres or excludes witnesses of other
homological types for the same integer.
\end{remark}

\part{Finite-observation geometry, compression, and frequency}
\label{part:structure}

The endpoint construction has maximal active rank.  We now place the same
marked family in a general finite-observation framework that simultaneously
accounts for its compression scale, type diversity, shape codimension, and
realization frequency.

\section{Finite additive observation and the root-sweep polytope}
\label{sec:root-sweep-dictionary}

The entire bounded addition table is encoded by a canonical polytope.  Put
\[
 E_{h,k}=\mathcal U_{h,k}
 =\left\{u\in\mathbb Z_{\geq0}^k:\sum_{i=1}^ku_i=h\right\},
 \qquad
 \mathsf A_{k-1}=\left\{z\in\mathbb Z^k:\sum_{i=1}^kz_i=0\right\},
\]
and, for $z\in(\mathsf A_{k-1})_{\mathbb R}$, put
\[
 \rho(z)=\sum_i z_i^+=\sum_i z_i^-=\frac12\lVert z\rVert_1,
 \qquad
 \mathsf R_{k-1}=\{z\in(\mathsf A_{k-1})_{\mathbb R}:\rho(z)\leq1\}.
\]
For an ordered real tuple $a=(a_1<\cdots<a_k)$, set
\[
 L_h(a)=\operatorname{span}_{\mathbb R}
 \{z\in\mathsf A_{k-1}:z\cdot a=0,\ \rho(z)\leq h\},
 \qquad r_h(a)=\dim L_h(a).
\]
Choose one vector from each primitive pair $\{z,-z\}$ and define the
\emph{root-sweep zonotope}
\begin{equation}\label{eq:root-sweep-zonotope}
 \mathfrak Z_{h,k}=
 \sum_{\substack{z\in\mathsf A_{k-1}\ \mathrm{primitive}/\{\pm1\}\\
                  \rho(z)\leq h}}[-z,z].
\end{equation}

\begin{proposition}[Hilbert-labelled arithmetic sweeps]
\label{prop:root-sweep-dictionary}
Let $h\geq1$ and $k\geq2$.  One has the exact difference identity
\begin{equation}\label{eq:root-sweep-difference}
 E_{h,k}-E_{h,k}=h\mathsf R_{k-1}\cap\mathsf A_{k-1}.
\end{equation}
For an ordered real tuple $a=(a_1<\cdots<a_k)$, write
\({\bf1}=(1,\ldots,1)\in\mathbb R^k\) and let
$a^\circ=a-k^{-1}(\sum_i a_i){\bf1}$ be its centered representative.  Let
$F_h(a)$ be the face of $\mathfrak Z_{h,k}$ exposed by $a^\circ$.  The
following data determine one another:
\begin{enumerate}[label=\textup{(\roman*)}]
\item the total preorder on $E_{h,k}$ induced by $u\mapsto u\cdot a$
      (equivalently, its ordered weak partition into level sets);
\item the complete ordered $h$-addition table of $a$, including ties;
\item the face $F_h(a)$.
\end{enumerate}
The dimension of $F_h(a)$ is the active relation rank $r_h(a)$.  In the
ambient space ${\bf1}^{\perp}$, its normal cone has dimension
$k-1-r_h(a)$, and the relative interior of that cone consists precisely of
the centered weights inducing the same complete table.  Moreover, the same
face determines every ordered addition table in degrees $1\leq j\leq h$.

Fix any field $K$.  If $a\in\mathbb Z^k$, put
$A=\{a_1,\ldots,a_k\}$ and define
\[
 \varphi_a:K[x_1,\ldots,x_k]\longrightarrow K[u,t,t^{-1}],
 \qquad x_i\longmapsto u t^{a_i},
 \qquad I_a=\ker\varphi_a,
\]
where $\deg u=1$ and $\deg t=0$.  Then the number of blocks in the face
label is the Hilbert value
\begin{equation}\label{eq:root-sweep-Hilbert-label}
 |hA|=\dim_K\bigl(K[x_1,\ldots,x_k]/I_a\bigr)_h.
\end{equation}
Thus the face records the equality rank and Hilbert/block label, while its
normal cone encodes the normalized shape, realization frequency, and integer
compression developed below.
\end{proposition}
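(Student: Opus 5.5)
The plan has three stages: prove the difference identity directly, reduce the equivalences to a statement about which segment directions of the zonotope are orthogonal to the weight, and finish the Hilbert label with the same Elias-type comparison as in the four-point case.

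\textbf{Difference identity.} For $u,v\in E_{h,k}$ the difference $z=u-v$ is integral with zero coordinate sum. Since $z^+\leq u$ and $z^-\leq v$ coordinatewise, we get $\rho(z)\leq h$. Conversely, take $z\in\mathsf A_{k-1}$ with $\rho(z)\leq h$. Choose any $w\in\mathbb Z_{\geq0}^k$ with $\sum_iw_i=h-\rho(z)$ and set $u=z^++w$, $v=z^-+w$. This is the padding argument from the proof of Proposition~\ref{prop:collision-exact-sequence}, and it proves \eqref{eq:root-sweep-difference}.

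\textbf{Equivalence of (i), (ii), (iii).} The key point is that all three are determined by the sign vector $\bigl(\operatorname{sgn}(z\cdot a)\bigr)_z$, where $z$ ranges over primitive vectors of $\mathsf A_{k-1}$ with $\rho(z)\leq h$.

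For (i), the preorder on $E_{h,k}$ records $\operatorname{sgn}((u-v)\cdot a)$ for all pairs $u,v$. By the difference identity, every such $u-v$ is a positive multiple of an admissible primitive $z$. Conversely, every admissible $z$ arises as some $u-v$.

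For (iii), we use the standard fact that for a zonotope $\sum[-z,z]$ the face exposed by a functional $y$ is
\[
\sum_{z\cdot y>0}z-\sum_{z\cdot y<0}z+\sum_{z\cdot y=0}[-z,z].
\]
Centering does not change $z\cdot a$, because $\sum_iz_i=0$. Hence $F_h(a)$ determines, and is determined by, the same sign vector. One direction is immediate from the formula. For the reverse direction, the face determines its normal cone, and the signs of the $z\cdot y$ are constant on the relative interior of that cone.

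For (ii), the ordered degree-$h$ table is the preorder in (i). The tables in degrees $j<h$ follow because $E_{j,k}-E_{j,k}\subseteq E_{h,k}-E_{h,k}$, so their differences are again multiples of admissible primitive $z$ with $\rho\leq j\leq h$.

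\textbf{Dimension and normal cone.} The face is a translate of $\sum_{z\cdot a=0}[-z,z]$, so its dimension is the dimension of the span of the admissible $z$ with $z\cdot a=0$. Because $L_h(a)$ is spanned by integral relations and every integral relation is a multiple of a primitive one, this span is $L_h(a)$, of dimension $r_h(a)$.

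For a polytope in ${\bf1}^\perp$, which has dimension $k-1$, the normal cone of a face of dimension $r$ has dimension $k-1-r$. Its relative interior is exactly the set of functionals exposing precisely this face, which by the previous stage is the set of centered weights inducing the same table.

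\textbf{Hilbert label.} Over any field $K$, the degree-$h$ monomials $x^u$ map to $u^ht^{u\cdot a}$. Since the $t$-exponents determine the image monomials, two monomials have the same image exactly when $u\cdot a=v\cdot a$. The kernel in degree $h$ is then spanned by the binomials $x^u-x^v$ over fibres, because distinct image monomials are linearly independent. Therefore $\dim(S/I_a)_h$ equals the number of fibres, which is $|hA|$, which is the number of blocks. This is the same argument as \eqref{eq:Hilbert}.

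\textbf{Main obstacle.} The step needing most care is the zonotope face/normal-cone correspondence. One must check that the chosen representatives $\pm z$ do not matter, and that the ambient space is ${\bf1}^\perp$. The latter holds because the $z$ span $\mathsf A_{k-1}\otimes\mathbb R$ once $h\geq1$, since the roots $e_i-e_j$ have $\rho=1$. I would invoke the standard support-function description of zonotope faces rather than reprove it.
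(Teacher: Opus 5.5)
Your proposal is correct and follows essentially the same route as the paper. Both arguments use the padding argument for the difference identity, the normal-fan description of the reduced root-sweep zonotope (free segments give dimension $r_h(a)$, plus the usual face--normal-cone correspondence), and the fibre count for the Hilbert label. The differences are minor. The paper cites Padrol--Philippe's sweep--face correspondence, together with normal equivalence after deleting parallel generators, where you argue directly through the sign vector $(\operatorname{sgn}(z\cdot a))_z$. The paper also recovers the lower-degree tables via the padding embedding $u\mapsto u+(h-j)e_1$ rather than your inclusion $E_{j,k}-E_{j,k}\subseteq E_{h,k}-E_{h,k}$.
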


\begin{proof}
If $z=u-v$ with $u,v\in E_{h,k}$, then $\rho(z)\leq h$.  Conversely, if
$z\in\mathsf A_{k-1}$ and $\rho(z)\leq h$, choose
$w\in\mathbb Z_{\geq0}^k$ of total mass $h-\rho(z)$ and write
\[
 z=(z^++w)-(z^-+w).
\]
This proves \eqref{eq:root-sweep-difference}.

A sweep of a point configuration is the ordered weak partition induced by a
linear functional.  Padrol and Philippe identify the sweep poset with the
opposite face poset of the sweep polytope, whose normal fan is the arrangement
orthogonal to the point differences \cite[Sections~2.1 and~2.3, especially
Definition~2.1 and Proposition~2.2]{PadrolPhilippe2024}.  Repeated and
nonprimitive parallel
generators do not change that fan, so \eqref{eq:root-sweep-difference} gives
the normally equivalent reduced zonotope \eqref{eq:root-sweep-zonotope}.  On
a zonotope face exposed by $a$, precisely the segments with $z\cdot a=0$
remain free; their span has dimension $r_h(a)$.  The usual
face--normal-cone correspondence gives the asserted description and dimension
of the normal cone.  For $j\leq h$, the padding map
\[
 E_{j,k}\longrightarrow E_{h,k},\qquad
 u\longmapsto u+(h-j)e_1,
\]
adds the same constant $(h-j)a_1$ to every sum and recovers the degree-$j$
table.  Finally, two degree-$h$ monomials have the same image under
$\varphi_a$ exactly when their exponent vectors yield the same $h$-sum,
which proves \eqref{eq:root-sweep-Hilbert-label}.
\end{proof}

The abstract sweep--face correspondence is due to Padrol--Philippe.  In this
specific type-$A$ configuration, Vietri studied strict fixed-degree monomial
weight orders and their lower-degree restrictions \cite{Vietri2002}, while
Bogart--Cu\'ellar described the corresponding strict $B_h$ comparison
arrangement in gap coordinates and counted the resulting rulers by
inside-out Ehrhart theory \cite{BogartCuellar2025}.  Moreover, after comparing
the root gauge with the Euclidean norm, the asymptotic orders of the
unlabelled cell and face counts follow from
B\'ar\'any--Bureaux--Lund, Theorem~5.6
\cite{BaranyBureauxLund2018}.  None of those unlabelled or strict-region facts
is claimed as new here.  The arithmetic refinement below instead retains, on
the same face and normal cone, ties, equality rank and its birth filtration,
the Hilbert/block label, the fixed-face frequency law, and the filtered
integral normal.

\section{Active-relation compression and the sharp core}
\label{sec:compression-master}

At the equality level, finite torsion-free fragments admit classical Freiman
models in $\mathbb Z$
\cite[Part~II, Lemma~2.3.4]{GeroldingerRuzsa2009}.  The qualitative existence
of integer models for finite real addition tables is also classical; see
\cite{NathansonMSTD2018,OBryant2025}.  The use of rational cones, extreme
rays, and Cramer's rule---including the generic factorial loss in the
determinant bound---has precedent in the order-preserving Freiman models of
Amirkhanyan, Bush, and Croot \cite{AmirkhanyanBushCroot2018}.  Here we obtain
a whole-set comparison model that treats equality faces of every rank,
together with a rank- and degree-sensitive interval whose universal
$h$-exponent is optimal for fixed $k$.  The sharp constant in the
maximal-rank core comes from the totally unimodular type-$A$
elementary-transfer configuration.
Nathanson's published universal interval has order $O_k(h^{k-1})$
\cite{NathansonCompression}.  Our theorem determines the optimal
$h$-exponent for fixed $k$ while preserving the complete comparison pattern,
including equality faces.

Put
\[
 \mathcal D_{h,k}=\left\{z\in\mathbb Z^k\setminus\{0\}:
       \sum_i z_i=0,\quad \rho(z):=\sum_i z_i^+\leq h\right\}.
\]
This is precisely the set of nonzero differences, after cancellation and
padding, of exponent vectors of degree at most $h$.

\begin{lemma}[Transfer minors]\label{lem:transfer-minors-master}
Let $1\leq s\leq k$, and let
$z^{(1)},\ldots,z^{(s)}\in\mathbb Z^k$ have coordinate sum zero.  Delete
the same $k-s$ coordinates from every row.  If $Q$ is the resulting
$s\times s$ matrix, then
\begin{equation}\label{eq:transfer-minors-master}
 |\det Q|\leq\prod_{i=1}^s\rho(z^{(i)}).
\end{equation}
In particular, if every $z^{(i)}\in\mathcal D_{h,k}$, then
$|\det Q|\leq h^s$.
\end{lemma}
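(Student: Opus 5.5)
The plan is to mirror the proof of Lemma~\ref{lem:product}: decompose each row into elementary transfers and expand the determinant multilinearly. First, each integer zero-sum vector $z$ with $\rho(z)=m$ can be written as a sum of exactly $m$ transfers $e_i-e_j$. To see this, match the positive mass $z^+$ against the negative mass $z^-$ unit by unit; greedy matching works because $\sum_i z_i^+=\sum_i z_i^-=m$. So we write
\[
 z^{(i)}=\sum_{\nu=1}^{\rho(z^{(i)})}\bigl(e_{a_{i\nu}}-e_{b_{i\nu}}\bigr),\qquad a_{i\nu}\neq b_{i\nu}.
\]
Deleting the same $k-s$ coordinates is a linear projection $\pi$, so row $i$ of $Q$ equals $\sum_\nu\pi(e_{a_{i\nu}}-e_{b_{i\nu}})$.

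Next, expand $\det Q$ multilinearly in its rows. This gives a sum of at most $\prod_i\rho(z^{(i)})$ determinants. In each of them, row $i$ is the projection of a single transfer $e_a-e_b$, so it has the form $0$, $\pm e_c$, or $e_c-e_{c'}$ in $\mathbb Z^s$. The key claim is that every such determinant lies in $\{0,\pm1\}$. This holds because each row has at most one $+1$ and at most one $-1$. The transposed matrix is therefore an incidence-type matrix with at most one $+1$ and one $-1$ per column, which is a standard totally unimodular matrix (a network matrix with possibly truncated columns).

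If one prefers a self-contained argument, I would use induction on $s$. If some row is zero, the determinant is $0$. If some row has exactly one nonzero entry, expand along that row and apply induction to the minor; that minor again has rows of the same form, since deleting a column preserves the property of having at most one $+1$ and one $-1$. If every row has both a $+1$ and a $-1$, then every row sums to zero, so $Q\mathbf 1=0$ and $\det Q=0$.

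The triangle inequality then gives $|\det Q|\le\prod_i\rho(z^{(i)})$. For the final assertion, membership in $\mathcal D_{h,k}$ gives $\rho\le h$, so the bound becomes $h^s$. The case $s=k$, where no coordinate is deleted, is covered as well: in that case every row sums to zero, and $\det Q=0$ directly.

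The only real point is the unimodularity step. The subtlety is that the projection may kill one or both endpoints of a transfer, and the induction handles these cases uniformly. Everything else is bookkeeping.
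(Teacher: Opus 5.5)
Your proposal is correct and follows the paper's proof: decompose each row into $\rho(z^{(i)})$ elementary transfers, expand $\det Q$ multilinearly, and use that every square matrix whose rows are projected transfers ($0$, $\pm e_c$, or $e_c-e_{c'}$) has determinant in $\{0,\pm1\}$. The only difference is in how that last fact is justified. You expand along a row with a single nonzero entry, or note $Q\mathbf 1=0$ when every row contains both signs, whereas the paper phrases it as a forest rooted at the deleted vertices with leaf elimination; the two arguments are equivalent.
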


\begin{proof}
A zero-sum integral vector $z$ is a sum of $\rho(z)$ elementary transfers
$e_p-e_q$.  After the fixed coordinates are deleted, a transfer is one of
\[
 0,\qquad \pm e_i,\qquad e_i-e_j.
\]
These are rows of a reduced directed incidence matrix.  Hence every square
matrix formed from them has determinant $0$ or $\pm1$: a nonzero determinant
corresponds to a forest rooted at the deleted vertices, and leaf elimination
gives absolute determinant one.  Expanding the determinant multilinearly in
transfer decompositions of the rows gives at most
$\prod_i\rho(z^{(i)})$ nonzero terms, each of absolute value one.
\end{proof}

\begin{theorem}[Optimal-order complete models and the sharp active core]
\label{thm:compression-master}
Let $h\geq1$, $k\geq2$, and let
$A=(a_0<a_1<\cdots<a_{k-1})$ be an ordered real $k$-set.  If
$r=r_h(A)$, define its successive active-relation degrees by
\[
 p_i(A)=\min\{p\in\mathbb Z_{\geq1}:\dim L_p(A)\geq i\},
 \qquad 1\leq i\leq r.
\]
Put $d_A=k-1-r$.  Since $r\leq k-2$, one has $d_A\geq1$.
Then there is an integer set
\[
 \begin{aligned}
 A^*&=(0=a_0^*<a_1^*<\cdots<a_{k-1}^*),\\
 a_{k-1}^*
 &\leq d_Ah^{d_A-1}\prod_{i=1}^{r}p_i(A)
 \leq(k-1-r)h^{k-2},
 \end{aligned}
\]
where the empty product is one,
such that, simultaneously for every $1\leq j\leq h$ and every
$u,v\in\mathcal U_{j,k}$,
\begin{equation}\label{eq:all-order-comparison}
 \operatorname{sgn}\bigl((u-v)\cdot A^*\bigr)
 =\operatorname{sgn}\bigl((u-v)\cdot A\bigr).
\end{equation}
Thus the ordered addition tables are identical in every degree $1\leq j\leq h$;
in particular, $|jA^*|=|jA|$ for $1\leq j\leq h$.

There is a complementary intrinsic statement.  Suppose $k\geq3$ and
\[
 B=(0=b_0<b_1<\cdots<b_{k-1}=d),
 \qquad \gcd(b_1,\ldots,b_{k-1})=1.
\]
For $1\leq i\leq k-2$, define the successive relation degrees
\[
 p_i(B)=\min\{p\in\mathbb Z_{\geq1}:\dim L_p(B)\geq i\}.
\]
Then
\begin{equation}\label{eq:successive-product-master}
 \ndiam(B)=d\leq\prod_{i=1}^{k-2}p_i(B).
\end{equation}
Consequently, for $h\geq2$,
\begin{equation}\label{eq:sharp-maximal-rank-core}
 \max_{\substack{B\subset\mathbb Z,\ |B|=k\\r_h(B)=k-2}}
       \ndiam(B)=h^{k-2}.
\end{equation}
\end{theorem}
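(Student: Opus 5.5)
The plan is to build $A^*$ inside the normal cone of the face $F_h(A)$ from Proposition~\ref{prop:root-sweep-dictionary}. Translate so $a_0=0$ and work in gap coordinates $g_i=a_i-a_{i-1}>0$, $1\leq i\leq k-1$. The cone of weights $g\in\mathbb R^{k-1}$ inducing the same signed observation is then cut out by two families of conditions. The first is the equalities $z\cdot a=0$ for $z\in L_h(A)$. The second is the strict inequalities $\operatorname{sgn}(z\cdot a)$ fixed for $z\in\mathcal D_{h,k}$, which include the positivity $g_i>0$ coming from the transfers $e_i-e_{i-1}$.

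The equalities cut out a linear subspace $W$ of dimension $d_A=k-1-r$. Inside $W$, the relatively open cone $\Gamma$ of weights inducing the table is a nonempty open polyhedral cone, and its closure is pointed because $g_i\geq0$. So $\overline\Gamma$ is generated by extreme rays $w^{(1)},\dots,w^{(m)}$. Each extreme ray is the solution line of $k-2$ independent linear equations of the form $z\cdot g=0$, with $z\in\mathcal D_{h,k}$ (written in gap coordinates, i.e.\ as partial-sum vectors).

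Next I choose these defining equations carefully: $r$ of them are a basis of $L_h(A)$ consisting of vectors $z^{(i)}$ with $\rho(z^{(i)})=p_i(A)$, which is possible by the definition of the successive birth degrees. The remaining $d_A-1$ are tight facet constraints of degree $\leq h$. By Cramer's rule, the extreme ray has an integral generator whose coordinates are $(k-2)$-minors of the constraint matrix. Before taking minors I change back from gap to point coordinates, so the minors become those of zero-sum vectors with one column deleted. Then Lemma~\ref{lem:transfer-minors-master} bounds each minor by $h^{d_A-1}\prod_i p_i(A)$. The total $a^*_{k-1}=\sum_i g_i^*$ is itself a single minor (a cofactor expansion against the all-ones functional), so each primitive ray vector $w^{(j)}$ has $\max$ coordinate $\leq h^{d_A-1}\prod_i p_i$.

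To land in the open cone, I take a sum of at most $d_A$ extreme rays whose relative interior meets $\Gamma$. By Carathéodory in the $d_A$-dimensional cone, some point of $\Gamma$ lies in the relative interior of a simplicial subcone spanned by $\leq d_A$ rays. In fact I will need that subcone to be full-dimensional in $W$, and then the sum of its generators lies in the interior, hence in $\Gamma$, because $\Gamma$ is the relative interior of a face of the arrangement. This gives $a^*_{k-1}\leq d_Ah^{d_A-1}\prod p_i$. Since $p_i\leq h$, the second bound $(k-1-r)h^{k-2}$ follows. Preservation of all degrees $j\leq h$ is the padding remark in Proposition~\ref{prop:root-sweep-dictionary}. \textbf{The main obstacle} is this step: ensuring that the chosen simplicial subcone meets the open cell and that the summed vector is not merely in its closure. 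I would first try a triangulation of $\overline\Gamma$ by rays, together with the fact that the open cell is a single arrangement chamber within $W$.

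For the intrinsic statement, choose independent relations $z^{(1)},\dots,z^{(k-2)}$ of degrees $p_i(B)$, and delete coordinates $0$ and one other. As in the proof of Lemma~\ref{lem:product}, the generalized cross product of the reduced $(k-1)$-vectors is an integer multiple $m(b_1,\dots,b_{k-1})$ with $m\neq0$, using primitivity. Its last coordinate gives $d\leq|m|d=|\det Q|\leq\prod p_i(B)$ by Lemma~\ref{lem:transfer-minors-master}. When $r_h(B)=k-2$, every $p_i\leq h$, so $\ndiam\leq h^{k-2}$. For sharpness, take the iterated suspension tower $\{0,1,h,h^2,\dots,h^{k-2}\}$ built from $\Sigma_{h,1}$ steps: it has rank $k-2$ at order $h$ (Corollary~\ref{cor:iterated-suspension}) and diameter $h^{k-2}$ (Corollary~\ref{cor:suspension-critical-core}), and $h\geq2$ is needed for the suspensions to be valid.
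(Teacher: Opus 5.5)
Your proposal is essentially the paper's proof: the sign cone in the slice $x_0=0$, filtration-adapted equality normals plus $d_A-1$ tight normals, cofactor minors bounded by Lemma~\ref{lem:transfer-minors-master}, a sum of $d_A$ extreme rays, the integral exterior-product bound $d\leq\prod_i p_i(B)$, and $P_{h,k}=\{0,1,h,\dots,h^{k-2}\}$ for sharpness (your suspension-tower check of its rank and diameter is a valid substitute for the paper's direct verification of the relations $he_i-e_{i+1}-(h-1)e_0$). The step you flag as the main obstacle needs neither Carath\'eodory nor a triangulation: \emph{any} $d_A$ linearly independent extreme rays of $\overline\Gamma$ work, and they exist because the pointed cone is generated by its extreme rays, which must span $W$. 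Indeed, every non-active signed functional is $\geq0$ on each ray, so vanishing on their sum would force it to vanish on all of $W\ni a$, contradicting its strict sign at $a$ --- this is exactly the paper's argument.
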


\begin{proof}
We first prove the intrinsic product law.  Choose independent homogeneous
relations $z^{(1)},\ldots,z^{(k-2)}$ adapted to the filtration by relation
degree, so that $\rho(z^{(i)})\leq p_i(B)$.  Project away coordinate zero.
The projected vectors remain independent and lie in the orthogonal complement
of the primitive vector $b=(b_1,\ldots,b_{k-1})$.  Their integral exterior
product therefore has Hodge dual $mb$ for some nonzero integer $m$.  The
coordinate complementary to $b_{k-1}=d$ is a minor obtained by deleting
coordinates $0$ and $k-1$.  Lemma~\ref{lem:transfer-minors-master} gives
\[
 d\leq |m|d
 \leq\prod_{i=1}^{k-2}\rho(z^{(i)})
 \leq\prod_{i=1}^{k-2}p_i(B),
\]
 which proves \eqref{eq:successive-product-master}.  Given an arbitrary
 integer set, translate it and divide all its elements by the gcd of its
 differences before applying this argument; neither its active relations nor
 its normalized diameter changes.  If $r_h(B)=k-2$, all the successive
 degrees are at most $h$, so $d\leq h^{k-2}$.

For equality in \eqref{eq:sharp-maximal-rank-core}, take
\[
 P_{h,k}=\{0,1,h,h^2,\ldots,h^{k-2}\}.
\]
With coordinates indexed from $0$ to $k-1$, the $k-2$ vectors
\[
 h e_i-e_{i+1}-(h-1)e_0,
 \qquad 1\leq i\leq k-2,
\]
are independent homogeneous relations of degree $h$.  Hence
$r_h(P_{h,k})=k-2$---the full homogeneous relation space has dimension
$k-2$---and $\ndiam(P_{h,k})=h^{k-2}$.

We turn to bounded models.  Translate $A$ so that $a_0=0$ and regard
$a=(0,a_1,\ldots,a_{k-1})$ as a point of
$E=\{x\in\mathbb R^k:x_0=0\}$.  Its sign cone is
\[
 C(a)=\left\{x\in E:
 \operatorname{sgn}(z\cdot x)=\operatorname{sgn}(z\cdot a)
 \text{ for every }z\in\mathcal D_{h,k}\right\},
\]
where a zero sign denotes an equality and a nonzero sign a strict signed
inequality.  Let $K(a)=\overline{C(a)}$, obtained by making the strict
inequalities weak.  This really is the closure: if $x$ satisfies the weak
system, then $x+\varepsilon a\in C(a)$ for every $\varepsilon>0$.

The cone $K(a)$ is rational and pointed.  Indeed, $e_j-e_i$ belongs to
$\mathcal D_{h,k}$ for $i<j$, after padding, so its ordered closure satisfies
\[
 0=x_0\leq x_1\leq\cdots\leq x_{k-1};
\]
if both $x$ and $-x$ lie in $K(a)$, then $x=0$.  Its equality normals are
exactly the active relations of $A$.  Put
\[
 L=E\cap\bigcap_{\substack{z\in\mathcal D_{h,k}\\z\cdot a=0}}\ker z.
\]
Then $K(a)\subseteq L$.  Every remaining defining functional is strict at
$a$, and there are only finitely many of them, so $a$ is a relative-interior
point of $K(a)$ in $L$.  Hence $\operatorname{span}K(a)=L$, and
\begin{equation}\label{eq:sign-cone-dimension}
 d_A:=\dim\operatorname{span}K(a)=k-1-r_h(A).
\end{equation}

Choose active equality normals $z^{(1)},\ldots,z^{(r)}$ adapted to their
degree filtration, so that $\rho(z^{(i)})\leq p_i(A)$.  Consider a primitive
integral generator $v$ of an extreme ray of $K(a)$.  In
$E\cong\mathbb R^{k-1}$, the equality normals together with $d_A-1$ tight
inequality normals from $\mathcal D_{h,k}$ contain $k-2$ independent rows:
an extreme ray has relative codimension $d_A-1$ in $K(a)$.
The cofactor vector of these rows is an integral ray generator.  Each
coordinate is a minor obtained by deleting coordinate zero and one further
coordinate, so Lemma~\ref{lem:transfer-minors-master} bounds it by
\[
 H_A:=h^{d_A-1}\prod_{i=1}^r p_i(A).
\]
Dividing by the gcd to obtain the primitive generator only decreases the
coordinates, and the ordered closure gives
\begin{equation}\label{eq:bounded-rays-master}
 0=v_0\leq v_1\leq\cdots\leq v_{k-1}\leq H_A.
\end{equation}
For $k=2$, the same statement uses the empty determinant, equal to one.

Choose $d_A$ linearly independent extreme rays with primitive generators
$v^{(1)},\ldots,v^{(d_A)}$ and put
$a^*=\sum_{i=1}^{d_A}v^{(i)}$.  This sum lies in the relative interior of
$K(a)$.  Indeed, if a signed defining functional vanished on the sum, its
nonnegative value would vanish on every chosen ray; because those rays span
$\operatorname{span}K(a)$, the functional would vanish there, contrary to
its strict value at $a$.  Thus $a^*\in C(a)$, and
\eqref{eq:bounded-rays-master} gives
\[
 0=a_0^*<a_1^*<\cdots<a_{k-1}^*
 \leq d_AH_A
 \leq(k-1-r_h(A))h^{k-2}.
\]

If $u,v\in\mathcal U_{j,k}$ with $j\leq h$, then either $u=v$, or after
cancelling their common part, $u-v\in\mathcal D_{h,k}$.  Membership of $a$
and $a^*$ in the same sign cone proves \eqref{eq:all-order-comparison}.
\end{proof}

For a maximal-rank weak type
$\mathsf T\in\mathfrak T^{\max}_{h,k}$, define its primitive realization
diameter by
\[
 \mu_h(\mathsf T)=
 \min\{\ndiam(B):B\subset\mathbb Z,\ |B|=k,\ B\text{ primitive and realizes }
 \mathsf T\}.
\]

\begin{corollary}[Sharp type-level compression]
\label{cor:sharp-type-compression}
For every $h\geq2$ and $k\geq3$,
\begin{equation}\label{eq:sharp-type-compression}
 \max_{\mathsf T\in\mathfrak T^{\max}_{h,k}}\mu_h(\mathsf T)=h^{k-2}.
\end{equation}
Thus the maximal-rank boundary is sharp not only for individual sets: one
weak addition-table type itself forces the critical normalized diameter.
\end{corollary}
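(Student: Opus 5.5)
Two inequalities are needed.

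The upper bound $\mu_h(\mathsf T)\leq h^{k-2}$ for every $\mathsf T\in\mathfrak T^{\max}_{h,k}$ comes directly from Theorem~\ref{thm:compression-master}. Take any primitive integer realization $B$ of $\mathsf T$. The active rank $r_h(B)$ is determined by the weak type: $L_h(B)$ is spanned by the differences $u-v$ with $u\sim_B v$ in $\mathcal U_{h,k}$, and these differences, after cancellation and padding, exhaust the active relations. So $r_h(B)=k-2$, and \eqref{eq:sharp-maximal-rank-core} gives $\ndiam(B)\leq h^{k-2}$. The minimum over realizations is therefore at most $h^{k-2}$.

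For the lower bound we exhibit one type whose every primitive realization has normalized diameter exactly $h^{k-2}$. The natural candidate is the type $\mathsf T_0$ of $P_{h,k}=\{0,1,h,\ldots,h^{k-2}\}$, which has maximal rank by the proof of Theorem~\ref{thm:compression-master}. Let $B=(0=b_0<\cdots<b_{k-1})$ be a primitive realization of $\mathsf T_0$; we may translate so that $b_0=0$. The weak type records exactly which degree-$h$ exponent vectors collide. Hence every relation vector $z\in\mathcal D_{h,k}$ that is active for $P_{h,k}$ is also active for $B$, and vice versa.

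The relations $he_i-e_{i+1}-(h-1)e_0$ for $1\leq i\leq k-2$ give $b_{i+1}=hb_i$, so $B=b_1\cdot\{0,1,h,\ldots,h^{k-2}\}$. Primitivity forces $b_1=1$, so $\ndiam(B)=h^{k-2}$. One needs $h\geq2$ so that the points are distinct and the relations are genuine. No ordering subtleties arise: the weak type already fixes the indexing of $B$ in increasing order, since realizations are strictly increasing tuples.

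The only potential obstacle is the bookkeeping that identifies a relation of degree at most $h$ with a collision in $\mathcal U_{h,k}$. This is exactly the padding argument of Proposition~\ref{prop:root-sweep-dictionary}: the relation $z$ is witnessed by the pair $z^++w$, $z^-+w$. Combining the two inequalities proves \eqref{eq:sharp-type-compression}.
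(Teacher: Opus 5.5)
Your proposal is correct and follows the paper's proof: you use the same witness type of $P_{h,k}=\{0,1,h,\ldots,h^{k-2}\}$, whose degree-$h$ equalities $hb_i=b_{i+1}+(h-1)b_0$ force $b_{i+1}=hb_i$ and then $b_1=1$ by primitivity. The only cosmetic difference is the upper bound: you take it from the intrinsic product law \eqref{eq:sharp-maximal-rank-core}, which shows that every primitive realization of a maximal-rank type has normalized diameter at most $h^{k-2}$, whereas the paper cites the bounded complete-model part of Theorem~\ref{thm:compression-master} followed by primitive normalization; both routes are immediate.
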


\begin{proof}
The complete-model part of Theorem~\ref{thm:compression-master}, followed by
primitive normalization, gives $\mu_h(\mathsf T)\leq h^{k-2}$ for every
maximal-rank type.  For sharpness, let $\mathsf T_{h,k}$ be the weak
degree-$h$ type of
\[
 P_{h,k}=\{0,1,h,h^2,\ldots,h^{k-2}\}.
\]
Every realization $B=(b_0<\cdots<b_{k-1})$ of this type preserves the
equalities
\[
 h b_i=b_{i+1}+(h-1)b_0,
 \qquad 1\leq i\leq k-2.
\]
After translation and primitive normalization, $b_0=0$ and $b_1=1$;
inductively $b_i=h^{i-1}$.  Hence every primitive realization of
$\mathsf T_{h,k}$ has normalized diameter $h^{k-2}$, proving
\eqref{eq:sharp-type-compression}.
\end{proof}

For an additive subsemigroup $X\subseteq\mathbb R$, write
\[
 \Rcal_X(h,k)=\{|hA|:A\subset X,\ |A|=k\}.
\]
Following Nathanson, let $N(h,k)$ be the least $N$ such that every member of
$\Rcal(h,k)$ has a representative in $[0,N-1]\cap\mathbb Z$.

\begin{corollary}[Optimal-order finite modeling]
\label{cor:optimal-finite-modeling}
For all $h\geq1$ and $k\geq2$,
\begin{equation}\label{eq:real-integer-spectrum}
 \Rcal_{\mathbb R}(h,k)=\Rcal_{\mathbb Q}(h,k)=\Rcal(h,k),
\end{equation}
and
\begin{equation}\label{eq:compression-master-bounds}
 1+\left\lceil\frac{M_{h,k}-1}{h}\right\rceil
 \leq N(h,k)
 \leq 1+\max\left\{(k-2)h^{k-2},
                    \sum_{j=0}^{k-2}h^j\right\}.
\end{equation}
Consequently, for every fixed $k\geq2$,
\begin{equation}\label{eq:optimal-compression-order}
 N(h,k)=\Theta_k(h^{k-2}).
\end{equation}
More finely, if
\[
 \nu_{h,k}(n)=\min\{\ndiam(A):A\subset\mathbb Z,\ |A|=k,\ |hA|=n\},
\]
then
\begin{equation}\label{eq:valuewise-rank-compression}
 \nu_{h,k}(n)
 \leq\bigl(k-1-r_{h,k}(n)\bigr)h^{k-2}
 \qquad(n\in\Rcal(h,k)).
\end{equation}
\end{corollary}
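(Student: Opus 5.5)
We prove the three assertions in order: the spectrum equalities \eqref{eq:real-integer-spectrum}, the two-sided bound \eqref{eq:compression-master-bounds}, and the valuewise estimate \eqref{eq:valuewise-rank-compression}. Everything rests on the complete-model part of Theorem~\ref{thm:compression-master}.

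For \eqref{eq:real-integer-spectrum}, the inclusions $\Rcal(h,k)\subseteq\Rcal_{\mathbb Q}(h,k)\subseteq\Rcal_{\mathbb R}(h,k)$ are trivial. Conversely, given a real $k$-set $A$, Theorem~\ref{thm:compression-master} supplies an integer set $A^*$ with the same ordered degree-$h$ table. In particular the equality pattern is the same, so $|hA^*|=|hA|$. The same integer models drive valuewise compression. Fix $n\in\Rcal(h,k)$ and choose $A$ realizing $n$ with $r_h(A)=r_{h,k}(n)$. The model $A^*$ realizes $n$ and has $\max A^*\leq(k-1-r_{h,k}(n))h^{k-2}$. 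Since $\ndiam(A^*)\leq\max A^*-\min A^*=\max A^*$, this gives \eqref{eq:valuewise-rank-compression}.

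For the upper bound on $N(h,k)$, we split on the minimum rank. When $r_{h,k}(n)\geq1$, the model above lies in $[0,(k-2)h^{k-2}]$. The only remaining label is the rank-zero value, which is necessarily $M_{h,k}$, the unique label with $\Delta_h=0$ by \eqref{eq:defect-rank-nullity}. For it we would rather use an explicit $B_h$-set than the $(k-1)h^{k-2}$ model. The base-$(h+1)$ set $\{0,1,h+1,\ldots,(h+1)^{k-2}\}$ is a $B_h$-set, because $h$-fold sums are base-$(h+1)$ digit expansions with digit sum at most $h$. However, its diameter is $(h+1)^{k-2}$, which exceeds $\sum_{j=0}^{k-2}h^j$. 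I would therefore look for a sharper $B_h$ construction of diameter $\sum_{j<k-1}h^j$, for instance $\{0\}\cup\{\sum_{j<i}h^j:1\leq i\leq k-1\}$, and verify the $B_h$ property by a carry argument. This verification is the expected obstacle. If it fails, we fall back on the cruder bound $(h+1)^{k-2}$, which still yields $\Theta_k(h^{k-2})$.

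For the lower bound, take a $B_h$-set $A\subset[0,N-1]$ with $|hA|=M_{h,k}$. Then $hA\subseteq[h\min A,h\max A]$, an interval containing at most $h(N-1)+1$ integers. Hence $M_{h,k}\leq h(N-1)+1$, that is, $N\geq1+\lceil(M_{h,k}-1)/h\rceil$. Since $M_{h,k}\asymp_k h^{k-1}$, the lower bound is $\gg_k h^{k-2}$, and combining the two bounds gives \eqref{eq:optimal-compression-order}.
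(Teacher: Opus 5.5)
Your arguments for \eqref{eq:real-integer-spectrum}, for \eqref{eq:valuewise-rank-compression}, for the nonmaximal labels, and for the lower bound in \eqref{eq:compression-master-bounds} are the paper's own arguments.

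\textbf{The gap.} The gap is the maximal label $M_{h,k}$. The explicit upper bound in \eqref{eq:compression-master-bounds} needs a $B_h$-set of height at most $\sum_{j=0}^{k-2}h^j$, and you leave the $B_h$ property of your candidate unverified. Your fallback $\{0,1,h+1,\ldots,(h+1)^{k-2}\}$ recovers only \eqref{eq:optimal-compression-order}, not the displayed inequality. At $(h,k)=(2,4)$ it has height $9$, whereas \eqref{eq:compression-master-bounds} allows only $\max\{8,7\}=8$. At $h=2$ it fails for every $k\geq4$, and for each fixed $h$ it fails once $k$ is large, since $(1+1/h)^{k-2}$ outgrows $k-2$. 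Applying Theorem~\ref{thm:compression-master} to a rank-zero set gives only height $(k-1)h^{k-2}$, which is also too large. As written, the upper bound is therefore not proved.

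\textbf{The fix.} The missing step is short, and it is a dominance argument, not a carry argument. Your candidate is exactly the paper's set: $a_0=0$, $a_1=1$, $a_{i+1}=ha_i+1$, so $a_i=\sum_{j<i}h^j$. Take distinct $u,v\in\mathcal U_{h,k}$ and put $z=u-v$. Let $i\geq1$ be the largest index with $z_i\neq0$, and assume $z_i>0$. Every negative entry of $z$ then sits at an index below $i$, and these entries have total mass $\rho(z)\leq h$. Hence, for $i\geq2$,
\[
 z\cdot a\;\geq\; a_i-h\,a_{i-1}\;=\;1\;>\;0 .
\]
For $i=1$ one has $z\cdot a=z_1\neq0$ because $a_0=0$. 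So distinct compositions have distinct sums, $|hA|=M_{h,k}$, and the height is $\sum_{j=0}^{k-2}h^j$. This completes the upper bound.

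\textbf{A minor point.} At rank zero, \eqref{eq:defect-rank-nullity} alone gives only $\Delta_h=\operatorname{cnul}_h$, so it does not by itself show $\Delta_h=0$. The direct reason is that rank zero means no two distinct elements of $\mathcal U_{h,k}$ have the same sum.
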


\begin{proof}
Theorem~\ref{thm:compression-master} gives an integer model for every real
ordered type and therefore proves \eqref{eq:real-integer-spectrum} and
\eqref{eq:valuewise-rank-compression}.  Every nonmaximal cardinality has
minimum active rank at least one, so it has a model of height at most
$(k-2)h^{k-2}$.  The maximum $M_{h,k}$ is realized by the superincreasing set
defined by
\[
 a_0=0,\qquad a_1=1,\qquad a_{i+1}=ha_i+1,
 \quad 1\leq i\leq k-2.
\]
Its height is $a_{k-1}=\sum_{j=0}^{k-2}h^j$.  It is a $B_h$-set: at the
largest differing index $i$, the total opposing contribution from all lower
coordinates is at most $ha_{i-1}<a_i$.  This proves the
upper bound in \eqref{eq:compression-master-bounds}.  For the lower bound, if
a set in
$[0,N-1]$ realizes that cardinality, its sums lie among the $h(N-1)+1$
integers in $[0,h(N-1)]$.  Hence $M_{h,k}\leq h(N-1)+1$.  Since
$M_{h,k}/h\asymp_k h^{k-2}$, \eqref{eq:optimal-compression-order} follows.
\end{proof}

\begin{remark}[Rank, shape, frequency, and compression]
\label{rem:rank-shape-compression}
For each complete signed/ordered face type of active rank $r$---equivalently,
for each sign-cone component of the realization locus of a weak type---the
translation-normalized sign cone has dimension $k-1-r$, and its projectivized
shape cell has dimension $k-2-r$.  The realization locus of a weak type of
rank $r$ is the finite union of its ordered sign-cone components; all these
components have the displayed dimensions, and the weak type's frequency
quasipolynomial has degree $k-r$.  Theorem~
\ref{thm:compression-master} shows that every such component contains an
integral point of diameter at most
\[
 (k-1-r)h^{k-2}.
\]
The filtered estimate sharpens this by replacing the factor $h^r$ with the
product of the successive active-relation degrees.  Thus the same rational
arrangement simultaneously determines compression, shape, frequency, and
relation degree.
\end{remark}

\subsection{Ordered-group shadows and optimal monomial weights}
\label{subsec:finite-shadows}

The complete-model theorem is not confined to subsets of the real line.
Every finite fragment of an ordered abelian group can first be scalarized,
with all requested equalities and strict inequalities intact.

For an element $g$ of a linearly ordered abelian group $\Gamma$, define
$\operatorname{sgn}_{\Gamma}(g)$ to be $-1$, $0$, or $1$ according as
$g<0$, $g=0$, or $g>0$.

\begin{lemma}[Finite scalarization]
\label{lem:finite-scalarization}
Let $\Gamma$ be a linearly ordered abelian group, let $H\leq\Gamma$ be a
finitely generated subgroup, and let $F\subset H$ be finite.  There is a
homomorphism $\chi:H\to\mathbb Z$ such that
\[
 \operatorname{sgn}\chi(g)=\operatorname{sgn}_{\Gamma}(g)
 \qquad(g\in F).
\]
\end{lemma}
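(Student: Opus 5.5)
The plan is to reduce to a finitely generated torsion-free group, where Hahn-type structure gives a lexicographic embedding into $\mathbb R^n$. From there we pass to a real functional whose signs match on $F$, and finally to an integer-valued homomorphism by a rational perturbation argument. This mirrors the sign-cone argument already used in Theorem~\ref{thm:compression-master}.

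\textbf{Step 1: lexicographic coordinates.} A linearly ordered abelian group is torsion-free, so $H\cong\mathbb Z^n$ for some $n$. The order restricted to $H$ is a linear group order on $\mathbb Z^n$. By the classical structure theorem for orders on finitely generated free abelian groups (Robbiano's classification, or Hahn's embedding applied to $H$), this order extends to $\mathbb Q^n\supseteq H$. There it is given by an $m\times n$ real matrix $W$ of rank $n$: $g>0$ if and only if $Wg$ is lexicographically positive. I would cite this, or prove it by induction on the rank of the convex subgroups. Each convex subgroup quotient is archimedean, hence embeds in $\mathbb R$ by H\"older's theorem, and this supplies the rows of $W$.

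\textbf{Step 2: one real functional.} For $\lambda>0$, put $\psi_\lambda=\sum_{i=1}^m\lambda^{m-i}w_i$, where the $w_i$ are the rows of $W$. Take a nonzero $g\in F$ with $Wg$ having first nonzero entry in row $i$. Then $\psi_\lambda(g)=\lambda^{m-i}\bigl((Wg)_i+O(\lambda^{-1})\bigr)$. Since $F$ is finite, one sufficiently large $\lambda$ gives $\operatorname{sgn}\psi_\lambda(g)=\operatorname{sgn}_\Gamma(g)$ for every nonzero $g\in F$. Moreover, if $g=0$ in $\Gamma$, then $g=0$ in $H$, because $H$ is a subgroup, so $\psi_\lambda(g)=0$ automatically. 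Thus $\psi=\psi_\lambda\in\operatorname{Hom}(\mathbb Z^n,\mathbb R)=\mathbb R^n$ satisfies the required sign conditions.

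\textbf{Step 3: rationalize.} The set of $\psi\in\mathbb R^n$ with $\operatorname{sgn}\psi(g)=\operatorname{sgn}_\Gamma(g)$ for all $g\in F$ is defined by finitely many strict rational linear inequalities $\psi(g)>0$ or $\psi(g)<0$. The equality constraints are vacuous, since only $g=0$ is constrained. This set is therefore an open set, nonempty by Step 2, so it contains a rational point $\psi'$. Clearing denominators by a positive integer $N$ preserves all signs. Then $\chi=N\psi'$ is a homomorphism $H\to\mathbb Z$ with the required property.

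The main obstacle is Step 1, justifying the lexicographic real representation of an arbitrary group order on $\mathbb Z^n$. I would first try the direct induction. Let $C$ be the largest proper convex subgroup of $H$; it exists since $H$ has finitely many convex subgroups. Then $H/C$ is archimedean ordered, hence embeds order-preservingly in $\mathbb R$ via a homomorphism $\phi_1$. Apply induction to $C$, and extend its functional to $H$ via $H\otimes\mathbb Q$. The extension needs care to keep the rows defined on all of $H$, but any linear extension works because comparisons are decided first by $\phi_1$. The remainder is routine.
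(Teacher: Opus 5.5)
Your proposal is correct, but it takes a different route from the paper.

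\textbf{The paper's argument.} It never describes the order on $H$. It fixes $H\cong\mathbb Z^m$ and lets $V$ be the positive elements of $F\cup(-F)$. It then shows the strict system $\lambda\cdot v>0$ $(v\in V)$ is feasible. Otherwise Gordan's alternative, made rational and then integral by clearing denominators, gives nonnegative integers $y_v$, not all zero, with $\sum_v y_v v=0$. That is a nonempty sum of positive elements equal to zero, which is impossible in an ordered group. A rational solution is then scaled to an integral one, exactly as in your Step~3.

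\textbf{Comparison.} The paper uses only one finite fact about positivity (closed under addition, never zero) together with linear-programming duality. Your route instead imports H\"older's theorem and the convex-subgroup structure. With these you obtain a lexicographic real-matrix representation of the entire order on $H$, and then extract one real functional by the $\lambda$-weighting.

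\textbf{Your Step~1.} It proves more than is needed, namely a Robbiano-type normal form for every group order on $\mathbb Z^n$. The induction you sketch is sound, provided you add the one-line check behind your finiteness claim:
\begin{itemize}
\item convex subgroups are isolated ($kg\in C\Rightarrow g\in C$), hence pure;
\item so ranks strictly increase along the chain, which makes the chain finite;
\item $H/C$ then has no proper nontrivial convex subgroup, so it is archimedean;
\item any extension of the rows from $C$ to $H$ works, because $\phi_1(g)=0$ forces $g\in C$.
\end{itemize}

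\textbf{A small correction.} The matrix your induction produces has one row per step of the convex-subgroup chain, so it need not have rank $n$. For example, a single irrational functional on $\mathbb Z^2$ already defines an order with one row. What you actually use, and what the induction gives, is only that $g>0$ if and only if $Wg$ is lexicographically positive.

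\textbf{Trade-off.} The paper's duality argument is shorter and self-contained. Yours yields extra structural information about the order, at the cost of more classical machinery.
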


\begin{proof}
The group $H$ is torsion-free, so fix coordinates $H\cong\mathbb Z^m$.
Let $V\subset\mathbb Z^m$ represent the positive members of
$F\cup(-F)$.  The strict rational system
\[
 \lambda\cdot v>0\qquad(v\in V)
\]
is feasible.  Otherwise Gordan's alternative, followed by rationality and
clearing denominators, would give a nonempty sum of positive elements of
$H$ equal to zero.  Multiplying a rational solution by a common denominator
produces the required integral functional.  Zeros map to zero, and the
negative signs follow by applying positivity to $-g$.
\end{proof}

For a linearly ordered abelian group $\Gamma$, integers $k\geq1$ and
$p\geq0$, and an ordered tuple
$A=(a_0<\cdots<a_{k-1})\subset\Gamma$, define
\[
 L_p^\Gamma(A)=\operatorname{span}_{\mathbb R}
 \left\{z\in\mathbb Z^k:
   \sum_i z_i=0,\ \rho(z)\leq p,\
   \sum_i z_i a_i=0\text{ in }\Gamma\right\}.
\]
For an integer $h\geq1$, put $r_h^\Gamma(A)=\dim L_h^\Gamma(A)$ and
\[
 p_i^\Gamma(A)=
 \min\{p\in\mathbb Z_{\geq1}:\dim L_p^\Gamma(A)\geq i\},
 \qquad 1\leq i\leq r_h^\Gamma(A).
\]

\begin{theorem}[Filtered ordered shadows]
\label{thm:ordered-group-shadow}
Let $\Gamma$ be any linearly ordered abelian group, let
$A=(a_0<\cdots<a_{k-1})\subset\Gamma$, where $k\geq2$, and let
$h\in\mathbb Z_{\geq1}$.  Write
$r=r_h^\Gamma(A)$ and $d=k-1-r$.  There are integers
\[
 0=b_0<b_1<\cdots<b_{k-1}
\]
such that, for every $1\leq j\leq h$ and $u,v\in\mathcal U_{j,k}$,
\begin{equation}\label{eq:ordered-group-shadow-signs}
 \operatorname{sgn}_{\Gamma}\!\left(\sum_i(u_i-v_i)a_i\right)
 =\operatorname{sgn}\!\left(\sum_i(u_i-v_i)b_i\right),
\end{equation}
and
\begin{equation}\label{eq:ordered-group-shadow-height}
 b_{k-1}
 \leq d h^{d-1}\prod_{i=1}^{r}p_i^\Gamma(A)
 \leq(k-1-r)h^{k-2},
\end{equation}
with the empty product understood as one.  For fixed $k$, the universal
$h$-exponent $k-2$ is optimal, already for subsets of $\mathbb Z$.
\end{theorem}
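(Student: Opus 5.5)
The plan is to reduce to Theorem~\ref{thm:compression-master} through Lemma~\ref{lem:finite-scalarization}. Let $H\leq\Gamma$ be the subgroup generated by $a_0,\ldots,a_{k-1}$; it is finitely generated. Let $F\subset H$ be the finite set of all elements $\sum_i z_ia_i$ with $z\in\mathcal D_{h,k}$, together with the differences $a_j-a_i$. Every comparison of $j$-fold sums with $j\leq h$ reduces, after cancelling the common part, to the sign of such an element. Lemma~\ref{lem:finite-scalarization} then gives a homomorphism $\chi:H\to\mathbb Z$ that preserves the sign of every element of $F$. Put $a'_i=\chi(a_i)$. Since the differences $a_j-a_i$ for $i<j$ are positive, $a'_0<\cdots<a'_{k-1}$ is an ordered integer (hence real) $k$-set. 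For every $z\in\mathcal D_{h,k}$ one has $\operatorname{sgn}(z\cdot a')=\operatorname{sgn}_\Gamma(\sum_iz_ia_i)$.

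Next I would check that the filtration is transported exactly. For each $p\leq h$, the integral generators of $L_p(a')$ are those $z\in\mathcal D_{p,k}\subseteq\mathcal D_{h,k}$ with $z\cdot a'=0$. Sign preservation on $F$ makes these exactly the $z$ with $\sum_iz_ia_i=0$ in $\Gamma$. Hence $L_p(a')=L^\Gamma_p(A)$ for all $p\leq h$, so $r_h(a')=r$ and $p_i(a')=p_i^\Gamma(A)$ for $1\leq i\leq r$; all of these birth degrees are at most $h$, so only degrees $p\leq h$ matter. Applying Theorem~\ref{thm:compression-master} to $a'$ produces $0=b_0<\cdots<b_{k-1}$ with height at most $dh^{d-1}\prod_ip_i(a')\leq(k-1-r)h^{k-2}$. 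It also gives the same signs as $a'$ on all $(u-v)$ with $u,v\in\mathcal U_{j,k}$, $j\leq h$. Composing the two sign equalities yields \eqref{eq:ordered-group-shadow-signs}, and \eqref{eq:ordered-group-shadow-height} follows.

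For optimality of the exponent, I would invoke Corollary~\ref{cor:optimal-finite-modeling} with $\Gamma=\mathbb Z$. Take a $B_h$-set, which has rank $0$. Any integer shadow $b$ preserves all equalities and strict comparisons, so $|hb|=M_{h,k}$ and $hb\subseteq[0,hb_{k-1}]$. This forces $b_{k-1}\geq(M_{h,k}-1)/h\gg_kh^{k-2}$.

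The only delicate point is the choice of $F$. It must be finite, which holds since $\mathcal D_{h,k}$ is finite, and it must contain every element whose sign is needed, including the ordering differences. Ensuring this is what makes the equality sets, and hence the whole birth filtration, transfer exactly rather than only up to containment.
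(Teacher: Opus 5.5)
Your argument is correct, and its existence part is the paper's proof. You scalarize the finite set of relation values via Lemma~\ref{lem:finite-scalarization}, check that the integer tuple has the same signs, active rank and birth degrees through $h$, and apply Theorem~\ref{thm:compression-master}. Adding the differences $a_j-a_i$ to $F$ is harmless but redundant, since $e_j-e_i\in\mathcal D_{h,k}$ already.

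You differ from the paper only in the optimality clause.

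\emph{The paper's route.} It takes $P_{h,k}=\{0,1,h,\ldots,h^{k-2}\}$ and uses the forced equalities $hb_i=b_{i+1}+(h-1)b_0$. These pin every normalized shadow to $b_{k-1}\geq h^{k-2}$. Since $P_{h,k}$ has rank $r=k-2$, the upper bound $(k-1-r)h^{k-2}$ also equals $h^{k-2}$, so this shows the height bound is attained exactly, constant included, at maximal rank.

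\emph{Your route.} Your pigeonhole argument with a $B_h$-set uses only the preserved cardinality $|hb|=M_{h,k}$ together with $b_0=0$. It is therefore a label-level obstruction, the same one that gives the lower bound for $N(h,k)$ in Corollary~\ref{cor:optimal-finite-modeling}. It shows that height of order $h^{k-2}$ is forced even at rank zero. The cost is the constant: you get $(M_{h,k}-1)/h\sim h^{k-2}/(k-1)!$ rather than exactly $h^{k-2}$.

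Either argument suffices for the stated claim that the exponent $k-2$ is optimal.
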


\begin{proof}
After translating, take $a_0=0$.  Let $H$ be the subgroup generated by the
tuple, and apply Lemma~\ref{lem:finite-scalarization} to
\[
 F_h=\left\{\sum_i z_i a_i:
 z\in\mathbb Z^k,\ \sum_i z_i=0,\ \rho(z)\leq h\right\}.
\]
The integer tuple $B=(\chi(a_0),\ldots,\chi(a_{k-1}))$ has exactly the same
signs, active rank, and successive active-relation degrees through $h$.
Theorem~\ref{thm:compression-master} applied to $B$ proves
\eqref{eq:ordered-group-shadow-signs} and
\eqref{eq:ordered-group-shadow-height}.

For optimality, when $h\geq2$ take
$P_{h,k}=\{0,1,h,h^2,\ldots,h^{k-2}\}$, and write its ordered elements as
$a_0<\cdots<a_{k-1}$.  Its complete table contains
\[
 h a_i=a_{i+1}+(h-1)a_0,
 \qquad1\leq i\leq k-2.
\]
Consequently every normalized integer shadow
$B=(b_0,\ldots,b_{k-1})$ of that table satisfies
\[
 h b_i=b_{i+1}+(h-1)b_0,
 \qquad1\leq i\leq k-2.
\]
Since $b_0=0$, it follows that $b_{i+1}=hb_i$; as $b_1\geq1$, one obtains
$b_{k-1}\geq h^{k-2}$.
\end{proof}

\begin{corollary}[Exact finite-profile universality]
\label{cor:finite-profile-transfer}
Fix $H\geq1$ and $k\geq2$, and let $G$ be a nontrivial torsion-free abelian
group.  The following four collections of truncated profiles are identical:
\[
 \bigl\{(|jA|)_{1\leq j\leq H}:A\subset G,\ |A|=k\bigr\},
 \quad
 \bigl\{(|jB|)_{1\leq j\leq H}:B\subset\mathbb Z,\ |B|=k\bigr\},
\]
\[
 \bigl\{(|P^{\times j}|)_{1\leq j\leq H}:
       P\subset\mathbb Z_{>0},\ |P|=k\bigr\},
 \quad
 \bigl\{(H_{C_B}(j))_{1\leq j\leq H}:
       B\subset\mathbb Z,\ |B|=k\bigr\},
\]
where $P^{\times j}$ is the $j$-fold product set and $C_B$ is the projective
monomial curve attached to $B$ over an arbitrary field.  More strongly,
equip the subgroup generated by an additive realization with a
translation-invariant linear order.  Its complete equality-and-order table
through degree $H$ then has an integer exponent representative.  The
resulting sumset, product-set, and monomial-curve exponent models have
identical ordered tables, active ranks, and relation-birth degrees through
$H$.

Every such table has a primitive integer representative
$B=\{0=b_0<\cdots<b_{k-1}\}$ with
\begin{equation}\label{eq:finite-profile-universal-height}
 b_{k-1}\leq dH^{d-1}\prod_{i=1}^r p_i
 \leq(k-1-r)H^{k-2},
 \qquad d=k-1-r,
\end{equation}
where $r$ and $p_i$ are its active rank and birth degrees through $H$.
The universal exponent $k-2$ for preserving the complete truncated table is
best possible.
\end{corollary}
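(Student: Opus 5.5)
The proof reduces all four collections to integer tables via Theorem~\ref{thm:ordered-group-shadow}. Fix $H$, $k$, and $G$ nontrivial torsion-free. First, the collections for $G$ and for $\mathbb Z$ coincide. Since $G$ is nontrivial and torsion-free, it contains a copy of $\mathbb Z$, so every integer profile occurs in $G$. Conversely, given $A\subset G$ with $|A|=k$, the subgroup $\langle A\rangle$ is finitely generated and torsion-free, hence free abelian. A lexicographic order on a basis makes it a linearly ordered abelian group $\Gamma$ with a translation-invariant order. Theorem~\ref{thm:ordered-group-shadow}, applied with $h=H$, gives integers $0=b_0<\cdots<b_{k-1}$ whose sign pattern agrees with that of $A$ on every pair $u,v\in\mathcal U_{j,k}$ for all $j\leq H$. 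Since equal signs give equal equality relations, $|jA|=|jB|$ for every $j\leq H$. The same theorem gives the height bound \eqref{eq:finite-profile-universal-height}, with $r=r_H^\Gamma(A)$ and $p_i=p_i^\Gamma(A)$, and shows that the active rank and the birth degrees through $H$ are preserved.

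Next, the product-set and monomial-curve collections. For the product sets, take $P\subset\mathbb Z_{>0}$ with $|P|=k$. Its elements generate a finitely generated subgroup of the multiplicative group $\mathbb Q_{>0}$. That group is torsion-free, and as an abstract group it is free abelian on the primes. The $j$-fold product set $P^{\times j}$ is exactly the $j$-fold sumset of $P$ in this group, so $P$ falls under the torsion-free case above. Conversely, given an integer set $B$ with $\min B\geq0$, put $P=\{2^{b}:b\in B\}$. This realizes the same profile, because $b\mapsto 2^b$ is an injective homomorphism of $(\mathbb Z_{\geq0},+)$ into $(\mathbb Z_{>0},\times)$. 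Translating $B$ first keeps its exponents nonnegative, and translation does not change any profile. For the monomial curves, \eqref{eq:eliasbridge}, or equivalently \eqref{eq:root-sweep-Hilbert-label}, gives $H_{C_B}(j)=|jB|$ for every $j$ and over every field. So that collection is literally the integer one.

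For the stronger table-level claim, an additive realization comes with an ordered subgroup $\Gamma$ and the integer shadow $B$ produced above. The product-set model is then $\{2^{b}:b\in B\}$, and the monomial-curve model is $C_B$. All three models share the same exponent vector $B$, so their ordered tables through $H$ agree with one another by construction. By the identity of signs, they also agree with the original table, and with it the active ranks and the birth degrees through $H$. Normalizing by the gcd of the differences makes the representative primitive without changing any relation or sign, and it only lowers $b_{k-1}$.

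Finally, optimality of the exponent $k-2$ follows from the family $P_{h,k}=\{0,1,H,\dots,H^{k-2}\}$, taken with $h=H\geq2$. As shown in Corollary~\ref{cor:sharp-type-compression}, every normalized integer realization of its table has $b_{k-1}\geq H^{k-2}$. The step needing the most care is the order-theoretic one: checking that preserving signs in $\Gamma$ also preserves the real spans $L_p^\Gamma$ and hence the birth degrees. This holds because $\chi$ is injective on the finite set $F_H$, so the integral relation vectors of relation degree at most $H$ are exactly the same before and after scalarization.
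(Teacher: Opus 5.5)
Your proposal is correct and follows essentially the paper's own proof: order the finitely generated subgroup and apply Theorem~\ref{thm:ordered-group-shadow}, embed back via $n\mapsto ng$ and $b\mapsto 2^b$, invoke Elias's identity, normalize by the gcd, and take optimality from $P_{h,k}$. Treating product sets as sumsets in the torsion-free group $(\mathbb Q_{>0},\times)$ is only a repackaging of the paper's passage to the real tuple $(\log p)_{p\in P}$. One wording fix: sign preservation does not make $\chi$ injective on $F_H$. It preserves signs there, in particular zeros, and that is exactly what makes the integral relation vectors of degree at most $H$, and hence the ranks and birth degrees, coincide.
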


\begin{proof}
The finitely generated subgroup of $G$ containing a given tuple is
torsion-free, hence isomorphic to $\mathbb Z^m$; equip it with a
translation-invariant lexicographic order and apply
Theorem~\ref{thm:ordered-group-shadow}.  This gives the integer model,
the filtered height bound, and preservation of ranks and births.  Dividing
the integer model by the gcd of its nonzero entries makes it primitive without
changing any comparison, equality, rank, or birth degree.  Conversely,
if $0\ne g\in G$, torsion-freeness makes $n\mapsto ng$ injective, so it
preserves every finite integer equality table and all sumset cardinalities.
Elias's identity \eqref{eq:eliasbridge} gives
$H_{C_B}(j)=|jB|$ for every $j$.

For the multiplicative model take $P=\{2^{b_0},\ldots,2^{b_{k-1}}\}$;
the exponential law identifies equality of products with equality of exponent
sums, and strict monotonicity preserves their order.  In the other direction,
apply the ordered
shadow theorem to the real tuple $(\log p)_{p\in P}$, since comparison and
equality of products are comparison and equality of the corresponding log
sums.  Optimality is the last assertion of
Theorem~\ref{thm:ordered-group-shadow}.
\end{proof}

The same theorem gives an optimal-order coefficient bound for finite
monomial-order data.  Fix an arbitrary field $K$.  Let
$W_k^{\mathrm{hom}}(h)$ be the least $W$ such
that every monomial order $\prec$ on $K[x_1,\ldots,x_k]$ admits
$w\in\mathbb Z_{\geq0}^k$ with $\min_iw_i=0$, $\max_iw_i\leq W$, and
\[
 x^\alpha\prec x^\beta
 \quad\Longleftrightarrow\quad
 w\cdot\alpha<w\cdot\beta
 \qquad(|\alpha|=|\beta|\leq h).
\]

\begin{theorem}[Optimal-order homogeneous monomial weights]
\label{thm:homogeneous-monomial-weight-complexity}
For all $k\geq2$ and $h\geq1$,
\begin{equation}\label{eq:homogeneous-weight-two-sided}
 1+h+\cdots+h^{k-2}
 \leq W_k^{\mathrm{hom}}(h)
 \leq(k-1)h^{k-2}.
\end{equation}
Thus $W_k^{\mathrm{hom}}(h)=\Theta_k(h^{k-2})$ for fixed $k$.
\end{theorem}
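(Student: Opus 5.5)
The upper bound will follow from Theorem~\ref{thm:ordered-group-shadow} (or Theorem~\ref{thm:compression-master}) applied to the ordered tuple $(x_1,\ldots,x_k)$ of an exponent-weight realization. Given a monomial order $\prec$, the finitely many comparisons $x^\alpha\prec x^\beta$ with $|\alpha|=|\beta|\leq h$ are determined by the signs of $(\alpha-\beta)\cdot a$ in the ordered group $\mathbb Z^k$. More precisely, $\prec$ is a total order on $\mathbb Z_{\geq0}^k$ compatible with addition, so it extends uniquely to a translation-invariant linear order on the group $\mathbb Z^k$. Let $\Gamma=\mathbb Z^k$ with this order and $a_i=e_i$. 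After relabelling the variables so that $e_{\sigma(1)}<\cdots<e_{\sigma(k)}$, which are distinct since they are incomparable only if equal, the tuple satisfies the hypotheses of Theorem~\ref{thm:ordered-group-shadow}. The elements $e_i$ are $\mathbb Z$-independent, so $r_h^\Gamma=0$ and $d=k-1$. That theorem then yields integers $0=b_0<\cdots<b_{k-1}\leq(k-1)h^{k-2}$ with the same signs on all $(u-v)\cdot(\,\cdot\,)$, $u,v\in\mathcal U_{j,k}$, $j\leq h$. Setting $w_{\sigma(i)}=b_{i-1}$ gives $\min w=0$, $\max w\leq(k-1)h^{k-2}$, and $x^\alpha\prec x^\beta\iff w\cdot\alpha<w\cdot\beta$ for $|\alpha|=|\beta|\leq h$. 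Every such pair lies in some $\mathcal U_{j,k}$, so the bound $W_k^{\mathrm{hom}}(h)\leq(k-1)h^{k-2}$ follows.

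For the lower bound I would use a lexicographic order, namely lex with $x_k\succ x_{k-1}\succ\cdots\succ x_1$. Let $w$ represent it through degree $h$. First, $x_1\prec x_2$ forces $w_1<w_2$, and inductively $w_1<\cdots<w_k$, so $w_1=0$ and $w_2\geq1$. Next, for $2\leq i\leq k-1$, compare $x_{i+1}x_1^{h-1}$ with $x_i^h$. Both have degree $h$, and lex gives $x_i^h\prec x_{i+1}x_1^{h-1}$. Hence $hw_i<w_{i+1}+(h-1)w_1=w_{i+1}$, so $w_{i+1}\geq hw_i+1$. Iterating from $w_2\geq1$ gives $w_k\geq1+h+\cdots+h^{k-2}$, which is the lower bound. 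This is the same superincreasing recursion that appears in Corollary~\ref{cor:optimal-finite-modeling}. The $\Theta_k$ statement is then immediate.

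The main point needing care is the first step: justifying that a monomial order induces a translation-invariant linear order on $\mathbb Z^k$, and that Theorem~\ref{thm:ordered-group-shadow} applies in the right direction. Compatibility with multiplication on the monoid gives a well-defined order on differences, since $\alpha-\beta>0$ iff $x^\beta\prec x^\alpha$, and this is independent of the representatives because of cancellativity and totality. Once that is checked, the rest is routine. For $k=2$, both bounds read $1\leq W\leq1$, which is consistent.
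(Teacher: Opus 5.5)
Your proposal is correct and follows the paper's proof: the upper bound applies Theorem~\ref{thm:ordered-group-shadow} to the standard basis vectors of $\mathbb Z^k$, ordered by the translation-invariant group order that extends $\prec$, where active rank zero gives the height $(k-1)h^{k-2}$. The lower bound uses a lexicographic order and the degree-$h$ comparisons $x_{i+1}x_1^{h-1}\succ x_i^h$ to force the superincreasing recursion; this is the paper's argument with the variables relabelled in reverse order.
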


\begin{proof}
A monomial order extends by cancellation to a translation-invariant group
order on $\mathbb Z^k$.  Apply Theorem~\ref{thm:ordered-group-shadow} to
the standard basis vectors, sorted by their induced variable order.  They
have active rank zero, giving the upper bound after the weights are permuted
back.

For the lower bound take lexicographic order
$x_1\succ\cdots\succ x_k$.  A representing vector satisfies
$w_1>\cdots>w_k=0$, and
\[
 x_i x_k^{h-1}\succ x_{i+1}^h
 \qquad(1\leq i\leq k-1).
\]
Hence $w_i>hw_{i+1}$; integrality gives recursively
$w_i\geq1+h+\cdots+h^{k-1-i}$ and proves
\eqref{eq:homogeneous-weight-two-sided}.
\end{proof}

There is also an equality-sensitive version for monomial weights.  Let
$k,h\in\mathbb Z_{\geq1}$, let $\Gamma$ be a linearly ordered abelian group,
and let $\gamma_1,\ldots,\gamma_k\in\Gamma$.  Give
$x^\alpha$ weight $\sum_i\alpha_i\gamma_i$.  There is a
nonnegative integer vector $w$ preserving the entire homogeneous weight
preorder through degree $h$, including ties.  If
$\delta_0<\cdots<\delta_{\ell-1}$ are the distinct $\gamma_i$, put
\[
 r=r_h^\Gamma(\delta_0,\ldots,\delta_{\ell-1}),\qquad
 p_i=p_i^\Gamma(\delta_0,\ldots,\delta_{\ell-1})
 \quad(1\leq i\leq r).
\]
Then for $\ell=1$ one may take
$w=0$, while for $\ell\geq2$, when necessarily $0\leq r\leq\ell-2$, one may take
\begin{equation}\label{eq:monomial-gamma-shadow}
 \max_iw_i\leq
 (\ell-1-r)h^{\ell-2-r}\prod_{i=1}^r p_i.
\end{equation}
We use the maximal-weight initial-form convention.  Thus, for a homogeneous
polynomial $f=\sum_\alpha c_\alpha x^\alpha$, put
\[
 \begin{aligned}
 \operatorname{in}_\gamma(f)
 &=\sum_{\sum_i\alpha_i\gamma_i\,=\,
          \max_{\beta:c_\beta\ne0}\sum_i\beta_i\gamma_i}
          c_\alpha x^\alpha,\\
 \operatorname{in}_w(f)
 &=\sum_{w\cdot\alpha\,=\,\max_{\beta:c_\beta\ne0}w\cdot\beta}
          c_\alpha x^\alpha.
 \end{aligned}
\]
For a homogeneous ideal $I$, let
$\operatorname{in}_\gamma(I)_j$ and $\operatorname{in}_w(I)_j$ denote the
degree-$j$ spans of these initial forms for $f\in I_j$.  The preserved weight
preorder gives
\[
 \operatorname{in}_\gamma(I)_j=\operatorname{in}_w(I)_j
 \qquad(0\leq j\leq h).
\]
This assertion concerns the displayed monomial weight data; it is not a
global rank reduction for arbitrary valuations.

Indeed, apply Theorem~\ref{thm:ordered-group-shadow} to the ordered tuple of
distinct values and let $b_j$ be its integer shadow.  Set $w_i=b_j$ whenever
$\gamma_i=\delta_j$.  Aggregating the exponent counts within each equality
class converts every homogeneous comparison through degree $h$ into the
corresponding comparison on the $\delta$-tuple.  Hence the full preorder,
including all ties, is preserved, and so are the selected initial-form pieces.

Qualitative finite weight realization is classical: Robbiano gives the
matrix description of term orders \cite{Robbiano1985}, Vietri studies total
weight orders in fixed degree \cite{Vietri2002}, and Bayer--Mumford use
bounded-degree positional weights \cite{BayerMumford1993}.  Kaveh--Manon
prove qualitative scalarization of rational weighting matrices for a fixed
initial ideal \cite[Appendix Lemma~8.9 and Proposition~8.10]{KavehManon2019},
and order-preserving Freiman models were developed by
Amirkhanyan--Bush--Croot \cite{AmirkhanyanBushCroot2018}.  To our knowledge,
the contribution
of Theorems~\ref{thm:ordered-group-shadow} and
\ref{thm:homogeneous-monomial-weight-complexity} is the uniform optimal
$h$-exponent for the entire homogeneous truncated preorder, together with
the active-rank and relation-birth refinement in
\eqref{eq:ordered-group-shadow-height}.

\section{Active-relation rank: diversity, frequency, and shape}
\label{sec:rank-duality}

\subsection{Exact frequencies on rational flats}

The compression theorem has a complementary enumerative form: active rank
controls both how many collision patterns exist and how frequently each one
is realized.  We retain the notation $\mathcal U_{h,k}$, $\rho$, $L_h(a)$,
and $r_h(a)$ from Section~\ref{sec:root-sweep-dictionary}, and put
$M_{h,k}=|\mathcal U_{h,k}|=\binom{h+k-1}{k-1}$.  The rational-arrangement
counting used below is standard inside-out Ehrhart theory
\cite{BeckZaslavsky2006,BeckBogartPham2012,BogartCuellar2025}; the point is
to retain every collision stratum, group the strata by the exact value of
$|hA|$, and read off their degrees.  Since every vector in $L_h(a)$ is
orthogonal to both ${\bf 1}=(1,\ldots,1)$ and $a$, and these two vectors
are independent, one always has
\begin{equation}\label{eq:rank-frequency-range}
 0\leq r_h(a)\leq k-2.
\end{equation}

For $n\in\mathcal R(h,k)$, define the minimum active rank
\begin{equation}\label{eq:minimum-active-rank}
 r_{h,k}(n)=\min\left\{r_h(a):a\in\mathbb Z^k,
           a_1<\cdots<a_k,
          \left|\{u\cdot a:u\in\mathcal U_{h,k}\}\right|=n\right\}.
\end{equation}
The same minimum is obtained if integer tuples are replaced by real tuples.  This
also follows from O'Bryant's type-realization theorem
\cite[Theorem~7]{OBryant2025}; the proof below gives the rational-stratum
realization needed for the frequency count.

\begin{theorem}[Complete rank--frequency stratification]
\label{thm:rank-frequency}
Fix $h\geq2$ and $k\geq3$.  For every $n\in\mathcal R(h,k)$, the function
\[
 F^{(k)}_{h,n}(q)=
 \#\left\{A\in\binom{[q]}k:|hA|=n\right\}
\]
is a quasipolynomial in $q-1$ (and hence in $q$).  Its degree is exactly
\[
 \deg F^{(k)}_{h,n}=k-r_{h,k}(n).
\]
In particular, there is an explicitly computable positive rational number
$c_{h,k,n}$ such that
\begin{equation}\label{eq:rank-frequency-asymptotic}
 F^{(k)}_{h,n}(q)
 =c_{h,k,n}q^{k-r_{h,k}(n)}
  +O_{h,k}\!\left(q^{k-r_{h,k}(n)-1}\right).
\end{equation}
The possible frequency exponents therefore belong to
$\{2,3,\ldots,k\}$.
\end{theorem}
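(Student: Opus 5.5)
The plan is to stratify the configuration space $\binom{[q]}k$ by complete ordered addition tables (sign cones), count lattice points in each cone by Ehrhart theory, and then aggregate over the finitely many tables carrying the label $n$.

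\textbf{Reduction to integer points of rational cones.} Translate each $A\in\binom{[q]}k$ to $a=(0,a_1,\ldots,a_{k-1})$ with $a_{k-1}\leq q-1$. The translate determines $A$ up to $q-a_{k-1}$ shifts, so
\[
 F^{(k)}_{h,n}(q)=\sum_{a}(q-a_{k-1}),
\]
where $a$ runs over integer points of $E=\{x_0=0\}$ with $0<a_1<\cdots<a_{k-1}\leq q-1$ and $|hA|=n$. By Proposition~\ref{prop:root-sweep-dictionary}, $|hA|$ depends only on the complete ordered table, that is, on the sign cone $C(a)$ from the proof of Theorem~\ref{thm:compression-master}. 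There are finitely many such cones, since they are the faces of the finite arrangement $\{z^\perp:z\in\mathcal D_{h,k}\}$ in $E$, and they partition the open chamber $0<x_1<\cdots<x_{k-1}$. Hence
\[
 F^{(k)}_{h,n}(q)=\sum_{\sigma}\ \sum_{a\in\sigma\cap\mathbb Z^k,\ a_{k-1}\leq q-1}(q-a_{k-1}),
\]
with $\sigma$ ranging over the finitely many relatively open rational cones whose table has $n$ blocks.

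\textbf{Ehrhart count for a single cone.} Fix one such $\sigma$, of dimension $d_\sigma=k-1-r_\sigma$ by \eqref{eq:sign-cone-dimension}.
\begin{itemize}
\item Slicing at $x_{k-1}\leq m$ gives a relatively open rational polytope $\sigma\cap\{x_{k-1}\leq 1\}$, dilated by $m$. Boundedness follows from pointedness together with $x_{k-1}>0$ on $\sigma\setminus\{0\}$.
\item Its lattice point count $\#\{a\in\sigma\cap\mathbb Z^k:a_{k-1}\leq m\}$ is a quasipolynomial in $m$ of degree $d_\sigma$, with positive leading coefficient equal to the relative volume of the slice. This uses Ehrhart--Macdonald reciprocity for open rational polytopes, in the inside-out form of Beck--Zaslavsky.
\item The weighted sum $\sum(q-a_{k-1})$ is the summation of the level counts $\#\{a_{k-1}=j\}$ against $(q-j)$. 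Equivalently, it is the lattice-point count of a rational cone of one dimension higher, obtained by adjoining the translation coordinate $t$ with $a_{k-1}<t\leq q$ (or $t\leq q-1$ after the shift).
\item This yields a quasipolynomial in $q-1$ of degree $d_\sigma+1=k-r_\sigma$, again with positive rational leading coefficient.
\end{itemize}

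\textbf{Aggregation and exact degree.} Summing over $\sigma$, the total is a quasipolynomial whose degree is the maximum of $k-r_\sigma$. Leading coefficients are positive and cannot cancel, so only the top-degree cones contribute, and their sum is positive. The minimal $r_\sigma$ among tables with label $n$ over integer tuples equals $r_{h,k}(n)$. Real-tuple tables give nothing new here: Theorem~\ref{thm:compression-master} puts an integer point in every cone, which also establishes the remark that real and integer minima agree.

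It follows that $\deg F^{(k)}_{h,n}=k-r_{h,k}(n)$, and \eqref{eq:rank-frequency-asymptotic} holds with $c_{h,k,n}$ a finite sum of relative volumes. These volumes are computable from the explicit arrangement. Since $0\leq r\leq k-2$ by \eqref{eq:rank-frequency-range}, the exponents lie in $\{2,\ldots,k\}$.

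\textbf{Main obstacle.} I expect the main obstacle to be making the quasipolynomial structure precise for relatively open cones of lower dimension. Such a cone lies in a rational subspace whose lattice need not be generated by the chosen coordinates, so the relative volume must be normalized to the lattice $L\cap\mathbb Z^k$. One must also check that the extra weight $(q-a_{k-1})$ preserves quasipolynomiality with degree exactly one higher and a positive leading term.

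My first approach would be to treat $\{(a,t):a\in\sigma,\ a_{k-1}<t\leq q\}$ directly as the $q$-dilate of a half-open rational polytope in $L\oplus\mathbb R$. I would then apply Ehrhart's theorem for rational polytopes to its closure and open faces by inclusion–exclusion. Positivity of the leading coefficient comes from the full relative dimension $d_\sigma+1$ of this polytope.
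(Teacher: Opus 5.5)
Your proposal is correct and is essentially the paper's proof. Both arguments partition the open ordered chamber by the finite rational arrangement of hyperplanes $z^\perp$ with $\rho(z)\leq h$, count each relatively open piece by rational Ehrhart theory in its own lattice (degree equal to its dimension $k-r$, leading coefficient a positive relative volume), and sum over the pieces carrying label $n$, where positive leading coefficients cannot cancel. The differences are only in bookkeeping: the paper counts directly in $\mathbb R^k$ over the flat strata $X^\circ$, whose flats contain ${\bf1}$ so that translations are built in, whereas you use the finer sign cones in the translation-normalized space and restore translations through the extra coordinate $t$, which under the unimodular change $(a,t)\mapsto a+(t-a_{k-1}){\bf1}$ gives the same lattice-point count.
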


Here ``quasipolynomial'' means that there is an integer $P=P(h,k)\geq1$
such that, on each residue class modulo $P$, the function agrees with a
polynomial for every positive $q$.  The period can be taken to divide a
common denominator of the vertices of the finitely many rational polytopes
in the proof.  Thus no unspecified asymptotic threshold is needed, although
only the leading term will be used below.

\begin{proof}
Let $\mathcal Z_{h,k}$ be the finite set, modulo sign, of primitive nonzero
vectors $z\in\mathbb Z^k$ satisfying $\sum_i z_i=0$ and $\rho(z)\leq h$.
Consider the central rational arrangement
\[
 \mathcal A_{h,k}=\{H_z:z\in\mathcal Z_{h,k}\},
 \qquad H_z=\{x\in\mathbb R^k:z\cdot x=0\},
\]
inside the open ordered chamber
\[
 \mathcal O=\{x\in\mathbb R^k:x_1<\cdots<x_k\}.
\]
For an intersection flat $X$ of this arrangement, set
\[
 X^\circ=(X\cap\mathcal O)\setminus
 \bigcup_{\substack{H\in\mathcal A_{h,k}\\X\not\subseteq H}}(X\cap H).
\]
The nonempty sets $X^\circ$ are disjoint and partition $\mathcal O$.
If $x\in X^\circ$, then the active normals at $x$ are exactly the
arrangement normals whose hyperplanes contain $X$.  Consequently
\begin{equation}\label{eq:flat-rank}
 r_h(x)=\operatorname{codim}_{\mathbb R^k}X.
\end{equation}

We next check that the cardinality of the $h$-fold sumset is constant on
each $X^\circ$.  For $u,v\in\mathcal U_{h,k}$,
\[
 u\cdot x=v\cdot x\quad\Longleftrightarrow\quad (u-v)\cdot x=0.
\]
If $u\ne v$, divide $u-v$ by the gcd of its coordinates.  The resulting
primitive vector $z$ satisfies
$\rho(z)\leq\rho(u-v)\leq h$, so $H_z$ belongs to
$\mathcal A_{h,k}$.  Whether $x$ lies on $H_z$ is constant on $X^\circ$.
Thus the complete equality pattern on $\mathcal U_{h,k}$, and hence
\[
 N(X):=\left|\{u\cdot x:u\in\mathcal U_{h,k}\}\right|,
\]
is independent of the choice of $x\in X^\circ$.

All flats are rational.  Choose any nonempty connected component of
$X^\circ$, equivalently a chamber obtained by fixing the signs of the
remaining rational forms.  Inside the rational vector space $X$, that
component is cut out by finitely many strict rational linear inequalities
and therefore contains a rational point.  Multiplying by a common denominator
and then translating by a multiple of ${\bf1}$ gives a strictly increasing
integer point with the same active hyperplanes.  (Every arrangement normal
has coordinate sum zero, so translation changes none of its equations.)
It follows both that every nonempty stratum has integer realizations and
that the minimum in \eqref{eq:minimum-active-rank} is unchanged over the
reals.

It remains to count the integer points in a stratum.  Translate $[q]$ to
$\{0,1,\ldots,Q\}$, where $Q=q-1$; again this leaves every relation
equation invariant.  Let
\[
 \Lambda_X=X\cap\mathbb Z^k,
 \qquad
 P_X=X\cap\{0\leq x_1\leq\cdots\leq x_k\leq1\}.
\]
If $X^\circ$ is nonempty, choose $x\in X^\circ$.  Every arrangement normal
has coordinate sum zero, so ${\bf1}\in X$; hence a positive affine image
$\lambda x+t{\bf1}$ also lies in $X$.  Choosing $\lambda>0$ small and then
$t$ puts this image strictly inside
$0<x_1<\cdots<x_k<1$.  The strict inequalities persist on a relative
neighborhood in $X$, so $P_X$ has nonempty relative interior.  Thus $P_X$
is a rational polytope of full dimension $\dim X$ in $X$, with positive
relative volume.  Ehrhart's
theorem, applied in the lattice $\Lambda_X$, gives
\begin{equation}\label{eq:flat-ehrhart}
 |QP_X\cap\Lambda_X|
 =\operatorname{vol}_{\Lambda_X}(P_X)Q^{\dim X}
   +O_X(Q^{\dim X-1}),
\end{equation}
and the left side is a quasipolynomial in $Q$.  The relative normalized
volume in \eqref{eq:flat-ehrhart} is positive and rational.

To pass from $P_X$ to $X^\circ$, exclude the coordinate-equality faces and
the finitely many sections $X\cap H$ with $X\not\subseteq H$.  Finite
inclusion--exclusion expresses the resulting count as an integer linear
combination of Ehrhart quasipolynomials of rational polytopes; compare
\cite[Theorems~4.1--4.2 and Corollary~4.3]{BeckZaslavsky2006}.  Every
excluded section has
dimension at most $\dim X-1$.  Hence, if
\[
 E_X(q)=\#\{a\in X^\circ\cap\mathbb Z^k:
                    1\leq a_1<\cdots<a_k\leq q\},
\]
then $E_X(q)$ is a quasipolynomial in $Q=q-1$ and
\begin{equation}\label{eq:stratum-ehrhart}
 E_X(q)=\operatorname{vol}_{\Lambda_X}(P_X)q^{\dim X}
          +O_X(q^{\dim X-1}).
\end{equation}

Finally, the strata are disjoint, so exactly
\begin{equation}\label{eq:frequency-flat-sum}
 F^{(k)}_{h,n}(q)=
 \sum_{\substack{X:\ X^\circ\ne\varnothing\\N(X)=n}}E_X(q).
\end{equation}
The sum is finite.  By \eqref{eq:flat-rank}, its largest-dimensional terms
are precisely the flats of codimension $r_{h,k}(n)$.  For fixed $h,k$, the
arrangement has only finitely many flats, so their leading coefficients and
Ehrhart error constants are uniformly bounded in terms of $h,k$; after the
largest-dimensional leading terms are collected, all lower-dimensional
strata and all Ehrhart errors contribute
$O_{h,k}(q^{k-r_{h,k}(n)-1})$.  The positive leading coefficients cannot
cancel.  Therefore
\eqref{eq:frequency-flat-sum} has degree $k-r_{h,k}(n)$ and leading
coefficient
\begin{equation}\label{eq:rank-frequency-constant}
 c_{h,k,n}=
 \sum_{\substack{X:\ X^\circ\ne\varnothing, N(X)=n\\
          \operatorname{codim}X=r_{h,k}(n)}}
       \operatorname{vol}_{\Lambda_X}(P_X)>0.
\end{equation}
This coefficient is rational and can be computed effectively from the finite
arrangement.
The range of exponents follows from \eqref{eq:rank-frequency-range}.
\end{proof}

\begin{corollary}[Frequency of every weak addition-table type]
\label{cor:addition-type-frequency}
Fix $h\geq2$ and $k\geq3$.  For a strictly increasing
$a=(a_1,\ldots,a_k)$, let $\mathsf T_h(a)$ be the equivalence relation on
$\mathcal U_{h,k}$ defined by
\[
 u\sim_{\mathsf T_h(a)}v
 \quad\Longleftrightarrow\quad
 u\cdot a=v\cdot a.
\]
Equivalently, after quotienting the forced permutation symmetries, this is
O'Bryant's weak type of the $h$-fold addition table
\cite[Section~3]{OBryant2025}.
Let $\mathfrak T_{h,k}$ be the finite set of types realized by strictly
increasing integer $k$-tuples.  For $\mathsf T\in\mathfrak T_{h,k}$, put
\[
 r(\mathsf T)=\dim_{\mathbb R}
 \operatorname{span}_{\mathbb R}
 \{u-v:u\sim_{\mathsf T}v\}
\]
and
\[
 F^{(k)}_{h,\mathsf T}(q)=
 \#\left\{A\in\binom{[q]}k:\mathsf T_h(A)=\mathsf T\right\}.
\]
Then $F^{(k)}_{h,\mathsf T}(q)$ is a quasipolynomial in $q-1$, valid for
every positive integer $q$, of degree exactly $k-r(\mathsf T)$.  In
particular, there is an effectively computable
$c_{h,k,\mathsf T}\in\mathbb Q_{>0}$ such that
\begin{equation}\label{eq:addition-type-frequency}
 F^{(k)}_{h,\mathsf T}(q)
 =c_{h,k,\mathsf T}q^{k-r(\mathsf T)}
  +O_{h,k,\mathsf T}\!\left(q^{k-r(\mathsf T)-1}\right).
\end{equation}
Moreover,
\begin{equation}\label{eq:type-sum-frequency}
 F^{(k)}_{h,n}(q)=
 \sum_{\substack{\mathsf T\in\mathfrak T_{h,k}\\
 |\mathcal U_{h,k}/\mathsf T|=n}}
 F^{(k)}_{h,\mathsf T}(q).
\end{equation}
Hence summing over weak types with $n$ equivalence classes recovers
Theorem~\ref{thm:rank-frequency}.
\end{corollary}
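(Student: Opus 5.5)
We will reuse, almost verbatim, the rational-arrangement decomposition from the proof of Theorem~\ref{thm:rank-frequency}. There the open ordered chamber $\mathcal O$ is partitioned into the strata $X^\circ$ indexed by intersection flats $X$ of $\mathcal A_{h,k}$. The first observation is that the weak type $\mathsf T_h(x)$ is constant on each $X^\circ$. Indeed, for $u\ne v$ in $\mathcal U_{h,k}$, the relation $u\sim v$ holds exactly when $x$ lies on $H_z$, where $z$ is the primitive reduction of $u-v$. Whether $x\in H_z$ depends only on whether $X\subseteq H_z$. Thus each nonempty stratum carries a single type $\mathsf T(X)$.

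Conversely, the type determines the flat. The relation directions $u-v$ with $u\sim_{\mathsf T}v$ are exactly the arrangement normals vanishing on $X$, up to scaling and padding. Hence the flat is
\[
 X=\{u-v:u\sim_{\mathsf T}v\}^\perp,
\]
and distinct strata carry distinct types. This gives a bijection $\mathsf T\leftrightarrow X$ between realized types and nonempty strata. By \eqref{eq:flat-rank} it also gives
\[
 r(\mathsf T)=\dim\operatorname{span}\{u-v\}=\operatorname{codim}X.
\]
We will verify this span identity as in the proof of Proposition~\ref{prop:collision-exact-sequence}: every active primitive normal with $\rho(z)\leq h$ is realized as $(z^++w)-(z^-+w)$ for a suitable padding $w$. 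Integer realizability of each realized type is already part of Theorem~\ref{thm:rank-frequency}, so $\mathfrak T_{h,k}$ is exactly the set of types of nonempty strata.

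With the bijection in hand, the frequency is a single stratum count:
\[
 F^{(k)}_{h,\mathsf T}(q)=E_X(q).
\]
Here translating $[q]$ to $\{0,\dots,Q\}$ preserves the type, because every normal has coordinate sum zero. Equation \eqref{eq:stratum-ehrhart} then gives a quasipolynomial in $Q=q-1$ of degree $\dim X=k-r(\mathsf T)$, valid for all positive $q$. Its leading coefficient is $\operatorname{vol}_{\Lambda_X}(P_X)$. This coefficient is rational, positive because $P_X$ has nonempty relative interior (shown in that proof), and effectively computable. This proves \eqref{eq:addition-type-frequency}.

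Finally, $|hA|$ is the number of classes of $\mathsf T_h(A)$, so
\[
 \Bigl\{A:|hA|=n\Bigr\}=\bigsqcup_{|\mathcal U_{h,k}/\mathsf T|=n}\Bigl\{A:\mathsf T_h(A)=\mathsf T\Bigr\}.
\]
This disjoint decomposition is \eqref{eq:type-sum-frequency}, and it coincides with \eqref{eq:frequency-flat-sum}. The remark identifying $\mathsf T$ with O'Bryant's weak type, after quotienting the forced symmetries, is definitional and will be cited only.

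The one delicate point, and the main obstacle, is the injectivity step: distinct strata must give distinct types, so that $F_{h,\mathsf T}$ is a single $E_X$ rather than a sum over several flats. If the span argument needs care, the fallback is to show that $\mathsf T$ determines the set of active hyperplanes. A hyperplane $H_z$ is active exactly when its padded pair $(z^++w,\,z^-+w)$ is equivalent under $\mathsf T$, which uses $\rho(z)\leq h$. That set of active hyperplanes determines $X$ and hence the stratum.
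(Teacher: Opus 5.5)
Your proposal is correct and is essentially the paper's argument. The paper sets $X_{\mathsf T}=(\operatorname{span}\{u-v:u\sim_{\mathsf T}v\})^\perp$ and uses the padding $z=(z^++w)-(z^-+w)$ to show that an arrangement hyperplane contains $X_{\mathsf T}$ exactly when its equality holds in $\mathsf T$. This is your ``fallback,'' and from it the paper identifies $F^{(k)}_{h,\mathsf T}=E_{X_{\mathsf T}}$ with the stratum count from the proof of Theorem~\ref{thm:rank-frequency}; you start from the strata and prove the map $X\mapsto\mathsf T(X)$ is injective, which is the same bijection read in the other direction.
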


\begin{proof}
For $\mathsf T\in\mathfrak T_{h,k}$, let
\[
 X_{\mathsf T}=
 \{x\in\mathbb R^k:(u-v)\cdot x=0
       \text{ whenever }u\sim_{\mathsf T}v\}.
\]
By construction,
\[
 X_{\mathsf T}=
 \left(\operatorname{span}_{\mathbb R}
 \{u-v:u\sim_{\mathsf T}v\}\right)^\perp.
\]
Thus an arrangement normal $z$ vanishes identically on $X_{\mathsf T}$ if
and only if it lies in the displayed span.  If $a$ realizes $\mathsf T$ and
$z$ lies in that span, then $z\cdot a=0$.  Since $\rho(z)\leq h$, choose
$w\in\mathbb Z_{\geq0}^k$ with $\sum_iw_i=h-\rho(z)$; then
\[
 z=(z^++w)-(z^-+w),
\]
and the two exponent vectors on the right are equivalent in $\mathsf T$.
The converse is immediate from the definition of $X_{\mathsf T}$.  Hence an
arrangement hyperplane contains $X_{\mathsf T}$ exactly when its equality
holds in $\mathsf T$.  Consequently the points having type exactly
$\mathsf T$ form the relatively open arrangement stratum
$X_{\mathsf T}^{\circ}$ from the proof of
Theorem~\ref{thm:rank-frequency}; this stratum is the finite union of its
sign-cone components inside the flat $X_{\mathsf T}$.  Moreover,
\[
 \operatorname{codim}X_{\mathsf T}=r(\mathsf T).
\]
Indeed, the orthogonal-complement identity gives the displayed codimension,
and the same padding argument identifies its defining span with $L_h(a)$.
Thus $r(\mathsf T_h(a))=r_h(a)$.

It follows that $F^{(k)}_{h,\mathsf T}(q)=E_{X_{\mathsf T}}(q)$ in the
notation of that proof.  The inclusion--exclusion count discussed above is
an exact quasipolynomial in $q-1$; by \eqref{eq:stratum-ehrhart}, it has degree
$\dim X_{\mathsf T}=k-r(\mathsf T)$ and positive rational leading coefficient
\[
 c_{h,k,\mathsf T}
 =\operatorname{vol}_{X_{\mathsf T}\cap\mathbb Z^k}
 \left(X_{\mathsf T}\cap
 \{0\leq x_1\leq\cdots\leq x_k\leq1\}\right).
\]
This proves \eqref{eq:addition-type-frequency}.  Finally,
$|hA|=|\mathcal U_{h,k}/\mathsf T_h(A)|$, so the disjoint sum
\eqref{eq:type-sum-frequency} is immediate.  Its largest degree is obtained
from the types of minimum rank, and their positive leading coefficients do
not cancel, giving precisely Theorem~\ref{thm:rank-frequency}.
\end{proof}

\subsection{Rank diversity and arrangement geometry}
\label{subsec:rank-diversity}

The preceding result fixes $h$ and lets the ambient interval grow.  We now
fix $k$ and let the table order $h$ grow.  The same active-relation rank
controls both the diversity of weak types as a function of $h$ and the
frequency of each fixed type as a function of $q$.

\begin{lemma}[A primitive-lattice shell count]
\label{lem:primitive-lattice-shell}
Fix integers $D\geq3$ and $1\leq s<D$.  A sublattice
$\Lambda\subset\mathbb Z^D$ is called primitive when
$\Lambda=(\operatorname{span}_{\mathbb R}\Lambda)\cap\mathbb Z^D$, and we
write
\[
 \Lambda^\perp=
 (\operatorname{span}_{\mathbb R}\Lambda)^\perp\cap\mathbb Z^D.
\]
An orientation of $\operatorname{span}_{\mathbb R}\Lambda$, together with
the standard orientation of $\mathbb R^D$, induces the orientation on
$\Lambda^\perp$ used below.
Write
\[
 \mathcal X_j=\operatorname{SO}_j(\mathbb R)\backslash
 \operatorname{SL}_j(\mathbb R)/\operatorname{SL}_j(\mathbb Z)
\]
for Horesh--Karasik's shape space of oriented rank-$j$ lattices, equipped
with the invariant quotient Radon measure induced from Haar measure, with any
fixed normalization.  Write $\operatorname{Gr}^+(s,D)$ for the oriented
Grassmannian of $s$-planes in $\mathbb R^D$, equipped with its invariant
measure.  Let $\Phi\subset\operatorname{Gr}^+(s,D)$ and
$E\times F\subset\mathcal X_s\times\mathcal X_{D-s}$ be positive-measure
boundary-controllable sets (BCSs) in the sense of
\cite[Definition~3.1]{HoreshKarasik2023}, with $E$ and $F$ relatively
compact.  Relatively compact domains with piecewise $C^1$ boundary suffice.
Let $N(T)$ be the number of oriented primitive rank-$s$ lattices
$\Lambda\subset\mathbb Z^D$ such that
\[
 \begin{gathered}
  \dfrac{T}{2}<\operatorname{covol}(\Lambda)\leq T,\qquad
  \operatorname{span}_{\mathbb R}\Lambda\in\Phi,\\
  \operatorname{shape}(\Lambda)\in E,\qquad
  \operatorname{shape}(\Lambda^\perp)\in F.
 \end{gathered}
\]
Then, as $T\to\infty$,
\begin{equation}\label{eq:primitive-lattice-shell}
 N(T)=\Theta_{D,s,\Phi,E,F}(T^D).
\end{equation}
\end{lemma}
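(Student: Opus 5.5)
The count will follow from the joint equidistribution theorem of Horesh--Karasik for primitive lattices: directions, shapes, and orthogonal-complement shapes become jointly equidistributed as the covolume grows.
Concretely, I will use their main counting theorem (the version refining Schmidt's asymptotic) in the form
\[
 \#\{\Lambda:\operatorname{covol}(\Lambda)\leq T,\ \operatorname{span}_{\mathbb R}\Lambda\in\Phi,\ \operatorname{shape}(\Lambda)\in E,\ \operatorname{shape}(\Lambda^\perp)\in F\}
 \sim c_{D,s}\,\mu(\Phi)\,\mu_s(E)\,\mu_{D-s}(F)\,T^{D},
\]
valid for boundary-controllable sets of positive measure. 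Here $c_{D,s}>0$ is Schmidt's constant, built from zeta values and volumes of $\operatorname{SL}_j(\mathbb R)/\operatorname{SL}_j(\mathbb Z)$.
For $s=1$ or $D-s=1$ the shape space $\mathcal X_1$ is a point. The hypothesis $D\geq3$ ensures that at least one factor is genuinely a lattice shape space. The rank-one case still reduces to the classical count of primitive vectors in a cone, so that case needs no new argument.

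The first step is to check the hypotheses. Relative compactness of $E$ and $F$ together with piecewise $C^1$ boundaries gives BCS sets of positive, finite measure; this is the remark already built into the statement. Orientation conventions only change the count by a factor $2$, because each unoriented primitive lattice carries two orientations and the induced orientation on $\Lambda^\perp$ is determined by them.

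The second step passes from the cumulative count to the dyadic shell. Writing $\mathcal N(T)$ for the cumulative count, we have $N(T)=\mathcal N(T)-\mathcal N(T/2)$. The asymptotic gives
\[
 N(T)\sim(1-2^{-D})\,c\,\mu(\Phi)\mu_s(E)\mu_{D-s}(F)\,T^D.
\]
The prefactor here is a positive constant, which proves \eqref{eq:primitive-lattice-shell}. It in fact gives an asymptotic, which is stronger than $\Theta$.

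The main obstacle is matching normalizations and hypotheses exactly with the cited theorem. Horesh--Karasik state their theorem in terms of the oriented Grassmannian and a product of shape spaces, with Haar-induced measures normalized in their own way. Any fixed renormalization only rescales $c$, so the main risk is a mismatch in the exact form of the hypothesis on $\Phi\times E\times F$ (BCS product versus BCS joint set). I would address this by noting that a product of BCS sets is BCS in the product space, which follows from their Definition 3.1 since boundaries of products lie in unions of products with boundaries.

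If only a $\limsup$/$\liminf$ form of the theorem were available, the two-sided $\Theta$ still follows. For the upper bound, enlarge $E\times F$ to a compact set and apply Schmidt's total count. For the lower bound, take an inner BCS subset of positive measure.
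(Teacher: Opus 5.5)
Your proposal is correct and follows the paper's proof. Both apply Horesh--Karasik's Theorem~1.1 to the cumulative count with direction domain $\Phi$ and joint shape domain $E\times F$, subtract the asymptotic at $T/2$ to obtain the positive factor $1-2^{-D}$, and note that orientation changes counts by at most a factor of two.

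One side remark is slightly off but harmless: for $s=1$ the condition $\operatorname{shape}(\Lambda^\perp)\in F$ still constrains a genuine rank-$(D-1)$ shape, so that case is not a plain count of primitive vectors in a cone; since you invoke Horesh--Karasik uniformly for all $s$, nothing depends on it.
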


\begin{proof}
In the notation of Horesh--Karasik, take $n=D$ and $d=s$, with direction
domain $\Phi$ and joint shape domain $E\times F$.  The controlled-boundary
hypotheses of \cite[Theorem~1.1]{HoreshKarasik2023} are satisfied; positive
measure makes the leading constant positive, and relative compactness of $E$
and $F$ selects the theorem's bounded case.  That theorem says that the
corresponding count of oriented lattices with covolume at most $T$ equals a
positive constant times $T^D$, with a power-saving error, jointly in the
direction and the two shapes.  Subtracting the formula at $T/2$
from the formula at $T$ multiplies the positive main term by $1-2^{-D}$
and leaves an $o(T^D)$ error.  Forgetting orientation has fibres of
cardinality at most two, so the same order also holds for the underlying
primitive lattices and rational spans used below.  This proves
\eqref{eq:primitive-lattice-shell}.
\end{proof}

For $0\leq r\leq k-2$, define
\[
 \tau_{k,r}(h)
 =\#\{\mathsf T\in\mathfrak T_{h,k}:r(\mathsf T)=r\}.
\]
Also put
\[
 G^{(k)}_{h,r}(q)
 =\#\left\{A\in\binom{[q]}k:r(\mathsf T_h(A))=r\right\}.
\]
For normalized gap directions, write
\[
 \Delta_{k-1}^{\circ}
 =\left\{x\in\mathbb R_{>0}^{k-1}:\sum_i x_i=1\right\}.
\]

\begin{theorem}[Rank duality for type diversity and frequency]
\label{thm:rank-diversity-frequency}
Fix $k\geq3$. For every $0\leq r\leq k-2$,
\begin{equation}\label{eq:rank-diversity}
 \tau_{k,r}(h)
 =\Theta_{k,r}\!\left(h^{(k-1)r}\right)
 \qquad(h\to\infty).
\end{equation}
For $r=0$ one has, more precisely, $\tau_{k,0}(h)=1$ for every $h$.
More locally, let $U$ be any nonempty relatively compact open subset of
$\Delta_{k-1}^{\circ}$. There is a constant $c_{k,U}>0$ such that, for all
sufficiently large $h$, at least
\[
 c_{k,U} h^{(k-1)(k-2)}
\]
maximal-rank types have a representative
$A=\{0=a_1<a_2<\cdots<a_k\}$ for which
\[
 \frac{(a_2-a_1,\ldots,a_k-a_{k-1})}{a_k-a_1}\in U,
 \qquad a_k\leq h^{k-2}.
\]

For each fixed $h$ and each $\mathsf T\in\mathfrak T_{h,k}$ of rank $r$,
the exact frequency law is
\begin{equation}\label{eq:master-type-frequency}
 F^{(k)}_{h,\mathsf T}(q)
 =c_{h,k,\mathsf T}q^{k-r}
  +O_{h,k,\mathsf T}(q^{k-r-1}),
 \qquad c_{h,k,\mathsf T}\in\mathbb Q_{>0},
\end{equation}
and this function is a quasipolynomial in $q-1$ for every positive integer
$q$. Consequently, whenever $\tau_{k,r}(h)>0$,
$G^{(k)}_{h,r}(q)$ is an exact quasipolynomial in $q-1$ of degree $k-r$,
with
\begin{equation}\label{eq:rank-aggregate-frequency}
 G^{(k)}_{h,r}(q)
 =C_{h,k,r}q^{k-r}+O_{h,k,r}(q^{k-r-1}),\qquad
 C_{h,k,r}=\sum_{\substack{\mathsf T\in\mathfrak T_{h,k}\\
                            r(\mathsf T)=r}}
                    c_{h,k,\mathsf T}\in\mathbb Q_{>0}.
\end{equation}
\end{theorem}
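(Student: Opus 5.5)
The frequency half of the statement is essentially already proved, so I would dispose of it first.  Equation~\eqref{eq:master-type-frequency} and its quasipolynomiality are exactly Corollary~\ref{cor:addition-type-frequency}.  For $G^{(k)}_{h,r}$, I would sum $F^{(k)}_{h,\mathsf T}$ over the finitely many types of rank $r$.  The types partition the configurations, so this sum is exact, and each summand is a quasipolynomial of degree exactly $k-r$ with positive leading coefficient.  Hence the leading coefficients cannot cancel, which gives \eqref{eq:rank-aggregate-frequency}.  The identity $\tau_{k,0}(h)=1$ is immediate: rank zero means no degree-$h$ equalities, so the weak type is the discrete relation.

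\emph{Diversity bounds.}  For the upper bound in \eqref{eq:rank-diversity}, I would note that a type of rank $r$ is determined by its flat $X_{\mathsf T}$, via the stratum description in the proof of Corollary~\ref{cor:addition-type-frequency}.  This flat is the orthogonal complement of a span of $r$ independent vectors from $\mathcal D_{h,k}$.  Each such vector has $\rho\leq h$, so it ranges over $O_k(h^{k-1})$ choices in the rank-$(k-1)$ lattice $\mathsf A_{k-1}$.  Choosing $r$ of them gives $O(h^{(k-1)r})$ possible spans, and hence at most that many types.  For the lower bound I would inject in the reverse direction.  Distinct rank-$r$ rational subspaces $W\subset{\bf1}^\perp$ that are spanned by elements of $\mathcal D_{h,k}$, and whose orthogonal flat meets the ordered chamber in the correct stratum, give distinct types.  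So it suffices to produce $\gg h^{(k-1)r}$ such subspaces $W$, each containing no further active relation of degree $\leq h$ beyond those in $W$.

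\emph{Counting the subspaces.}  Here I would use Lemma~\ref{lem:primitive-lattice-shell} in the lattice $\mathsf A_{k-1}\cong\mathbb Z^{k-1}$, with $D=k-1$ and $s=r$, for $1\leq r\leq k-2$.  I would count primitive rank-$r$ sublattices $\Lambda$ with covolume in $(T/2,T]$ and $T\asymp h^r$.  I would impose a bounded shape condition, and a direction domain $\Phi$ chosen so that $\Lambda^\perp$ contains a point of the ordered chamber with normalized gaps in $U$.  The count gives $\asymp T^{k-1}=h^{(k-1)r}$ lattices.  Bounded shape together with covolume $\asymp h^r$ makes the successive minima of $\Lambda$ all $\asymp h$.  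After scaling $T=\epsilon h^r$, the reduced basis therefore lies in $\mathcal D_{h,k}$, so $\Lambda$ is generated by active relations, and $\dim L_h\geq r$ on its flat.  Distinct $\Lambda$ give distinct spans and thus distinct flats.

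\emph{Main obstacle: exactness of the rank.}  The hardest step, in my view, is to ensure that a generic point of $\Lambda^\perp\cap\mathcal O$ has active rank exactly $r$.  That is, I must rule out that the degree-$\leq h$ relations vanishing on this flat span a strictly larger primitive lattice $\Lambda'\supsetneq\Lambda_{\mathbb R}\cap\mathsf A$.  For $r<k-2$, a generic point of the flat avoids every hyperplane not containing it, so the only danger is that the saturated span is larger.  This is handled by primitivity of $\Lambda$ together with the fact that every $\rho\leq h$ vector in the span already lies in $\Lambda$.  For the localized maximal-rank claim ($r=k-2$), the flat is one-dimensional and spanned by the primitive normal $a\propto\Lambda^\perp$.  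I would then argue via the cross-product identity in Theorem~\ref{thm:compression-master}: the coordinates of $a$ are $(k-2)$-minors bounded by $\prod\rho\leq h^{k-2}$, which gives $a_k\leq h^{k-2}$.  The direction domain $\Phi$ is chosen as the preimage of $U$ under the map to normalized gap vectors, which is open and has positive measure, and $E,F$ are small compact shape balls.  The constant $c_{k,U}$ then comes from the positive Horesh--Karasik main term.
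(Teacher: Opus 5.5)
Your outline follows the paper's proof in all essentials. The span $W$ of the degree-$\le h$ active set determines the type, which gives the upper bound $|S_h|^r$. The lower bound is a Horesh--Karasik shell count at $T=\eta h^r$ with bounded shapes, so that $\Lambda$ contains $r$ independent vectors of degree $<h$. Exactness of the rank comes from choosing a rational point of $W^\perp$ in the open positive cone, off the finitely many sections $c^\perp$ with $c\in S_h\setminus W$.

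At such a point the active set is exactly $S_h\cap W$. That, and not primitivity, is what rules out a larger active span; primitivity is used only to make distinct $\Lambda$ give distinct $W$. For the localized bound $a_k\le h^{k-2}$ you invoke the product law $\ndiam(B)\le\prod_i p_i(B)$ of Theorem~\ref{thm:compression-master}. The paper instead uses $\|g_W\|_2=\operatorname{covol}(\Lambda)\le T$ and $\|g_W\|_1\le\sqrt d\,\eta h^{d-1}\le h^{d-1}$. Your route is valid and avoids tuning $\eta$ for this purpose.

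The genuine gap is the case $k=3$. Lemma~\ref{lem:primitive-lattice-shell} is stated only for $D\ge3$. With $D=k-1=2$, your argument therefore covers neither $\tau_{3,1}(h)\gg h^2$ nor the localized maximal-rank clause for $k=3$, where maximal rank is $r=1$. You need a separate elementary count, as in the paper:
\begin{itemize}
\item For coprime $p,q\ge1$ with $p+q\le h$, the relation $c=(p,-q)$, of degree $p+q$, annihilates the gap vector $(q,p)$.
\item Distinct pairs give distinct relation lines, hence $\sum_{2\le m\le h}\varphi(m)=\Theta(h^2)$ rank-one types.
\item Restricting to $h/2<p+q\le h$ with $(q,p)/(p+q)$ in a fixed interval $U_0\Subset U$ gives $\Theta_U(h^2)$ localized types with $a_3=p+q\le h$.
\end{itemize}

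There are also two smaller omissions. Corollary~\ref{cor:addition-type-frequency} is stated only for $h\ge2$, so the case $h=1$ must be noted separately: there is a single rank-zero type, with frequency $\binom qk$. Also, $\tau_{k,0}(h)\ge1$ requires exhibiting a $B_h$-set, such as a rapidly increasing positional set. Observing that a rank-zero type is the discrete relation only gives $\tau_{k,0}(h)\le1$.
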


\begin{proof}
Put $d=k-1$. Translate a strictly increasing $k$-tuple so that its first
entry is zero, and write its positive gap vector as
\[
 g=(g_1,\ldots,g_d),\qquad g_i=a_{i+1}-a_i>0.
\]
If $z=(z_1,\ldots,z_{d+1})$ satisfies $\sum_jz_j=0$, set
\[
 c_i=\sum_{j=i+1}^{d+1}z_j\qquad(1\leq i\leq d).
\]
Then $z\cdot a=c\cdot g$, and the inverse transformation is
\begin{equation}\label{eq:gap-tail-map}
 z=(-c_1,c_1-c_2,\ldots,c_{d-1}-c_d,c_d).
\end{equation}
Thus this is a linear isomorphism from the homogeneous relation space to
$\mathbb R^d$. In these coordinates the relation degree is the norm
\begin{equation}\label{eq:gap-active-norm}
 \delta(c)
 =\rho(z)
 =\frac12\left(
 |c_1|+\sum_{i=1}^{d-1}|c_i-c_{i+1}|+|c_d|\right).
\end{equation}
Indeed, $\sum_jz_j=0$ gives
$\rho(z)=\sum_jz_j^+=\frac12\sum_j|z_j|$. Moreover,
\begin{equation}\label{eq:gap-norm-comparison-master}
 \|c\|_\infty\leq\delta(c)
 \leq\|c\|_1\leq\sqrt d\,\|c\|_2.
\end{equation}
For the first inequality, observe that
$c_i=-\sum_{j\leq i}z_j$ and that every partial sum of a zero-sum vector
has absolute value at most $\sum_jz_j^+$; the remaining inequalities
follow from \eqref{eq:gap-active-norm}.

Write
\[
 S_h=\{c\in\mathbb Z^d:\delta(c)\leq h\},\qquad
 S_h(g)=\{c\in S_h:c\cdot g=0\}.
\]
A homogeneous $z$ is the difference of two members of
$\mathcal U_{h,k}$ if and only if $\rho(z)\leq h$: necessity is immediate,
and for sufficiency one pads $z^+$ and $z^-$ by the same nonnegative vector
of total mass $h-\rho(z)$. It follows that $S_h(g)$ determines the weak
type completely, and that the rank of the type is
$\dim_{\mathbb R}\operatorname{span}_{\mathbb R}S_h(g)$.

We first prove the upper bound in \eqref{eq:rank-diversity}.
By \eqref{eq:gap-norm-comparison-master}, $|S_h|=O_d(h^d)$. If a realized
type has rank $r$, put
\[
 W=\operatorname{span}_{\mathbb R}S_h(g).
\]
Since $W\subseteq g^\perp$, one has the exact identity
\[
 S_h(g)=S_h\cap W.
\]
Thus $W$ determines the type, and $W$ is spanned by $r$ members of $S_h$.
Consequently
\[
 \tau_{k,r}(h)\leq |S_h|^r=O_{k,r}(h^{dr}).
\]

For the lower bound, first suppose that $d\geq3$ and $1\leq r<d$.
If $r<d-1$, consider the rational $r$-plane
\[
 W_0=\operatorname{span}_{\mathbb R}
       (e_1-e_d,\ldots,e_r-e_d).
\]
Choose an orientation of $W_0$.  Its orthogonal complement contains the
all-ones vector. Orthogonally
projecting that vector to $W^\perp$ shows that, throughout a sufficiently
small neighborhood of $W_0$ in the oriented Grassmannian, the space $W^\perp$
contains a strictly positive vector. Choose a positive-measure
boundary-controllable domain $\Phi$ with closure inside this neighborhood.

If $r=d-1$, let $U\Subset\Delta_{d}^{\circ}$ be the open set in the
localized assertion. Choose a nonempty open ball
$U_0\Subset U$ with smooth boundary, and take
\[
 \Phi=\{x^\perp:x\in U_0\}\subset\operatorname{Gr}^+(d-1,d),
\]
where $x^\perp$ has the orientation determined by the ambient orientation
and the positive normal $x$.
The map $x\mapsto x^\perp$ is a smooth coordinate chart on positive
directions, so $\Phi$ has positive measure and controlled boundary.
Every $W\in\Phi$ has a unique normalized positive normal lying in $U_0$.

Choose relatively compact positive-measure shape domains $E\subset\mathcal
X_r$ and $E^\perp\subset\mathcal X_{d-r}$ so that
$E\times E^\perp$ is boundary-controllable (small domains with piecewise
$C^1$ boundary suffice). Compactness of $E$ gives a constant
$B$ such that every rank-$r$ lattice $\Lambda$ of shape in $E$ has
$r$ independent vectors of Euclidean length at most
\[
 B\,\operatorname{covol}(\Lambda)^{1/r}.
\]
This is the usual successive-minima bound on a compact part of shape
space. Fix $\eta>0$ so small that
\[
 \sqrt d\,B\eta^{1/r}<1,\qquad \sqrt d\,\eta\leq1,
\]
and apply Lemma~\ref{lem:primitive-lattice-shell} with
$T=\eta h^r$.  After forgetting orientation, there remain
$\Theta_{d,r,\Phi,E,E^\perp}(h^{dr})$ primitive lattices $\Lambda$, since the
fibres have cardinality at most two.  Their real spans $W$ lie in the
projection of $\Phi$.  Each such $\Lambda$ contains $r$ independent vectors
$c_1,\ldots,c_r$ satisfying
\[
 \delta(c_i)\leq\sqrt d\,\|c_i\|_2
 \leq\sqrt d\,B T^{1/r}<h.
\]

It remains to ensure that $W$, rather than a larger space, is the exact
active span. The relatively open cone
\[
 C_W=W^\perp\cap\mathbb R_{>0}^d
\]
is nonempty. For each $c\in S_h\setminus W$, the section
$C_W\cap c^\perp$ is a proper hyperplane section of $C_W$; otherwise
$W^\perp\subseteq c^\perp$ and hence $c\in W$. Since $S_h$ is finite,
these sections do not cover $C_W$. The plane $W$ is rational, so rational
points are dense in $W^\perp$. We may therefore choose
\[
 g\in C_W\cap\mathbb Q^d
\]
outside all the forbidden sections and then clear denominators. For this
positive integral gap vector,
\[
 S_h(g)=S_h\cap W,
\]
and the vectors $c_1,\ldots,c_r$ show that this set spans $W$. Distinct
primitive lattices have distinct rational spans, while equal weak types
have equal active sets and hence equal spans.  We have thus obtained
$\Omega_{d,r}(h^{dr})$ distinct rank-$r$ types when $r<d-1$.  When
$r=d-1$, the same argument gives $\Omega_{d,r,U}(h^{dr})$ types for the
prescribed set $U$; choosing once and for all a fixed nonempty
$U\Subset\Delta_d^\circ$ also proves the global lower bound in this case.

When $r=d-1$, the same construction gives the asserted localization.
Indeed, $W^\perp\cap\mathbb Z^d$ is generated by a primitive vector
$g_W$, whose sign may be chosen so that $g_W>0$. By the definition of
$\Phi$,
\[
 \frac{g_W}{\|g_W\|_1}\in U_0\subset U.
\]
The standard
codimension-one determinant identity gives
\[
 \|g_W\|_2
 =\operatorname{covol}(g_W^\perp\cap\mathbb Z^d)
 =\operatorname{covol}(\Lambda)\leq T.
\]
Indeed, the Hodge dual of the wedge of a lattice basis is an integral
vector on the line $\mathbb Rg_W$, and primitivity of $\Lambda$ makes its
coefficient $\pm1$.
Here $g_W^\perp=W$, so every active relation lies in $W$.  The previously
constructed vectors $c_1,\ldots,c_{d-1}$ are active and span $W$; hence the
active span is exactly $W$, although further dependent active relations may
occur inside it.  The
associated cumulative-sum set
\[
 A_{g_W}=\{0,g_1,g_1+g_2,\ldots,g_1+\cdots+g_d\}
\]
satisfies
\[
 \max A_{g_W}=\|g_W\|_1
 \leq\sqrt d\,T
 =\sqrt d\,\eta h^{d-1}\leq h^{d-1}=h^{k-2}.
\]
Thus this same localized representative already lies in the exact core; no
change of its normalized gap direction is needed.  The same exact bound
holds in the $d=2$ case below.

The only case not covered by the lattice-counting lemma is $d=2$, where
necessarily $r=1$. For every coprime pair $p,q>0$ with $p+q\leq h$, take
\[
 c=(p,-q),\qquad g=(q,p).
\]
Then $c\cdot g=0$ and $\delta(c)=p+q\leq h$. Distinct primitive pairs give
distinct relation lines, and
\[
 \#\{(p,q)\in\mathbb Z_{>0}^2:(p,q)=1,\ p+q\leq h\}
 =\sum_{m=2}^{h}\varphi(m)
 =\sum_{m\leq h}\varphi(m)-1
 =\Theta(h^2),
\]
where $\varphi$ is Euler's totient function.
This proves the lower bound for $k=3$.
For the localized assertion, choose an open interval
$U_0\Subset U\subset\Delta_2^\circ$ and retain the pairs for which
\[
 \frac{(q,p)}{p+q}\in U_0,\qquad \frac h2<p+q\leq h.
\]
This is a fixed positive-area sector dilated by $h$. M\"obius inversion
counts $\Theta_U(h^2)$ primitive lattice points in it. The corresponding
cumulative-sum set has normalized gap vector in $U$ and maximum
$p+q\leq h$.

Finally, rank zero is the single type in which all distinct members of
$\mathcal U_{h,k}$ give distinct sums. It is realized, for example, by a
sufficiently rapidly increasing positional-base set; equivalently, choose
a positive integral gap vector avoiding the finitely many nonzero
hyperplanes from $S_h$. Hence $\tau_{k,0}(h)=1$, completing
\eqref{eq:rank-diversity}.

For $h=1$, the unique type has rank zero and frequency $\binom qk$, so the
assertion is immediate.  For $h\geq2$, the individual frequency assertion
\eqref{eq:master-type-frequency} is
Corollary~\ref{cor:addition-type-frequency}. Summing those exact
quasipolynomials over the finite, nonempty set of rank-$r$ types gives
\eqref{eq:rank-aggregate-frequency}. All leading coefficients are
positive, so neither the degree nor the leading term can cancel.
\end{proof}

\begin{remark}\label{rem:rank-duality-scope}
Theorem~\ref{thm:rank-diversity-frequency} concerns two separate asymptotic
regimes.  Formula \eqref{eq:rank-diversity} is a fixed-$k$ statement as
$h\to\infty$, whereas \eqref{eq:master-type-frequency} and
\eqref{eq:rank-aggregate-frequency} fix $h$ and let $q\to\infty$. No
uniform joint $(h,q)$ asymptotic is asserted.
\end{remark}

\begin{corollary}[Sharp enumeration of weak types]
\label{cor:weak-type-diversity}
For every fixed $k\geq3$, as $h\to\infty$,
\begin{equation}\label{eq:weak-type-total-growth}
 |\mathfrak T_{h,k}|
 =\Theta_k\!\left(h^{(k-1)(k-2)}\right).
\end{equation}
The maximal-rank types alone have this order.  More precisely, for every
nonempty relatively compact open subset
$U\Subset\Delta_{k-1}^{\circ}$, they already have this order among sets
whose normalized gaps lie in $U$ and whose largest element is at most
$h^{k-2}$.  In particular,
\[
 \tau_{4,0}(h)=1,\quad
 \tau_{4,1}(h)=\Theta(h^3),\quad
 \tau_{4,2}(h)=\Theta(h^6).
\]
Thus \eqref{eq:weak-type-total-growth} gives the sharp polynomial growth
order asked for by O'Bryant \cite[Question~6]{OBryant2025}.
\end{corollary}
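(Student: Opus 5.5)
The plan is to read this off from Theorem~\ref{thm:rank-diversity-frequency}. Since the weak types are partitioned by rank,
\[
 |\mathfrak T_{h,k}|=\sum_{r=0}^{k-2}\tau_{k,r}(h),
\]
and the sum has only $k-1$ terms, each of order $\Theta_{k,r}(h^{(k-1)r})$. The exponent $(k-1)r$ is strictly increasing in $r$, so the term $r=k-2$ dominates. The lower-rank terms contribute $O_k(h^{(k-1)(k-3)})=o(h^{(k-1)(k-2)})$. This gives \eqref{eq:weak-type-total-growth}. The upper bound can also be seen directly: the proof of the rank-diversity theorem shows that a type is determined by $r$ spanning members of $S_h$, which gives at most $|S_h|^r$ types per rank. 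The lower bound is just $\tau_{k,k-2}(h)\gg h^{(k-1)(k-2)}$, which is the assertion that maximal-rank types alone have this order.

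For the localized refinement I would invoke the localized clause of Theorem~\ref{thm:rank-diversity-frequency} with the given $U\Subset\Delta_{k-1}^\circ$. It supplies at least $c_{k,U}h^{(k-1)(k-2)}$ maximal-rank types, each with a representative whose normalized gap vector lies in $U$ and whose maximum is at most $h^{k-2}$. Distinct lattices give distinct spans $W$, and the span determines the type, so these types are distinct. The matching upper bound is inherited from the global count. One point needs care here: at $k=3$ the localized statement comes from the separate $d=2$ sector count in that proof rather than from Lemma~\ref{lem:primitive-lattice-shell}. That sector count gives $\Theta_U(h^2)=\Theta(h^{(k-1)(k-2)})$, so it fits the same pattern.

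The $k=4$ specializations come from substituting $d=3$: $\tau_{4,0}(h)=1$ by the exact rank-zero clause, $\tau_{4,1}(h)=\Theta(h^3)$, and $\tau_{4,2}(h)=\Theta(h^6)$. The final sentence about O'Bryant's Question~6 is interpretive: it asks for the growth order of the number of weak types, and \eqref{eq:weak-type-total-growth} supplies it up to constants.

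I expect the only real obstacle to be bookkeeping rather than anything substantive. One must confirm that $\mathfrak T_{h,k}$, defined via strictly increasing integer tuples, coincides with the union over $r$ of the sets counted by $\tau_{k,r}$, which holds by definition together with \eqref{eq:rank-frequency-range}. The constants must also be uniform in $h$ for each fixed $r$, which the $\Theta_{k,r}$ statements guarantee.
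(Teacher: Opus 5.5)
Your proposal is correct and is essentially the paper's proof: sum \eqref{eq:rank-diversity} over $0\leq r\leq k-2$, note that the exponent $(k-1)r$ is uniquely maximized at $r=k-2$, and take the localized lower bound from the localized maximal-rank clause of Theorem~\ref{thm:rank-diversity-frequency}, with the global upper bound giving the matching order. One small slip: to conclude that distinct spans give distinct types you need that the type determines its active span (equal weak types have equal active sets), not that the span determines the type; this step is redundant anyway, since the cited clause already counts distinct types.
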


\begin{proof}
Sum \eqref{eq:rank-diversity} over $0\leq r\leq k-2$. The largest
exponent occurs uniquely at $r=k-2$.  The localized lower bound is the
localized maximal-rank clause of
Theorem~\ref{thm:rank-diversity-frequency}, while the global upper bound in
\eqref{eq:rank-diversity} gives the matching order.
\end{proof}

There are two natural refinements of a weak type. An \emph{ordered type}
records the total preorder of all $h$-sums, including ties. A
\emph{generic type} records their strict total order for a $B_h$-set; this
is the combinatorial type used by Bogart and Cu\'ellar
\cite[Definition~2.4 and Theorem~2.5]{BogartCuellar2025}.
After norm comparison, the asymptotic order of the unlabelled
arrangement-cell count follows from B\'ar\'any--Bureaux--Lund
\cite[Theorem~5.6]{BaranyBureauxLund2018}.  The following proof identifies
the additive types explicitly and connects them to the filtered compression
window and the rank-refined weak count.

\begin{corollary}[Ordered and generic types]
\label{cor:ordered-generic-type-diversity}
For each fixed $k\geq3$, as $h\to\infty$, both the number of realized ordered
$h$-addition-table types and the number of realized generic $B_h$-types
are
\begin{equation}\label{eq:ordered-generic-growth}
 \Theta_k\!\left(h^{(k-1)(k-2)}\right).
\end{equation}
\end{corollary}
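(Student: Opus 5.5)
The plan is to sandwich both counts between the maximal-rank weak-type count and the number of cells of a hyperplane arrangement. For the lower bound, note that an ordered type determines the weak type, since the ties are exactly the weak equalities. Hence the number of ordered types is at least $|\mathfrak T_{h,k}|$, and Corollary~\ref{cor:weak-type-diversity} makes this $\gg_k h^{(k-1)(k-2)}$.

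For generic $B_h$-types the lower bound takes more work. Each maximal-rank weak type $\mathsf T$ is realized on a relatively open stratum $X_{\mathsf T}^\circ$ of codimension $k-2$ in the translation-normalized picture; after removing translation and dilation, this is a zero-dimensional shape locus. I will attach to each such stratum a generic type realized arbitrarily close to it. Take a point $a$ of the stratum and perturb it to a nearby $a'$ off all arrangement hyperplanes. Every strict comparison of $a$ persists in $a'$, and the ties of $a$ are broken in some way. The resulting generic type $\sigma$ then determines $\mathsf T$, through the following map. The closure of the chamber of $\sigma$ is a polyhedral cone in the gap coordinates. Its faces are the sign cones, one of which is the face exposed by $a$.

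The worry is that one generic chamber could have several maximal-rank strata in its boundary. This is where the counting must be done carefully. A chamber of a central arrangement in $\mathbb R^{k-1}/\mathbb R\mathbf 1$ of dimension $k-2$ (after quotienting by translation) has rays as its lowest-dimensional faces. The maximal-rank strata are exactly these rays. So the number of chambers is bounded below by the number of rays divided by the maximal number of rays per chamber, and the latter is not uniformly bounded. I will therefore avoid the ray-per-chamber issue altogether and use B\'ar\'any--Bureaux--Lund, Theorem~5.6, as the paper indicates. After comparing the root gauge $\rho$ with the Euclidean norm through \eqref{eq:gap-norm-comparison-master}, the arrangement $\{c^\perp: c\in S_h\}$ is sandwiched between the Euclidean-ball arrangements of radii $h/\sqrt d$ and $h$. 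Their theorem counts the chambers of such arrangements as $\Theta_d(h^{d(d-1)})=\Theta_k(h^{(k-1)(k-2)})$, with $d=k-1$. Every chamber meeting the positive orthant cone is realized by a $B_h$-set. Since the arrangement is invariant under coordinate permutations and the restriction to the positive cone captures a fixed positive proportion, or, more directly, since their cell count localizes to open cones, this gives the lower bound for generic types.

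For the upper bound, ordered types correspond to faces of the arrangement in the positive cone by Proposition~\ref{prop:root-sweep-dictionary}, and the total face count of an arrangement is within a factor depending only on $d$ of its chamber count, again $O_k(h^{(k-1)(k-2)})$. An elementary route is also available: each face is determined by its span together with a sign vector, and one can bound chambers by $\sum_{i\le d-1}\binom{|S_h|}{i}=O(h^{d(d-1)})$ via the standard Zaslavsky bound on regions of an arrangement in rank $d-1$ effective dimension. Generic types are a subset of the ordered types, so the same upper bound holds for them. The main obstacle is the generic-type lower bound. The cleanest fix is the localized version of Barany--Bureaux--Lund to the positive cone; failing that, I would compare against the ordered-type count by showing that each chamber has $O_k(1)$ faces on average.
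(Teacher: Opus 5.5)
Your proposal is correct in outline, and your lower bound for ordered types (ordered types map onto the realized weak types, then Corollary~\ref{cor:weak-type-diversity}) is the paper's argument. The other two bounds take different routes.

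\textbf{Upper bound.} The paper uses Theorem~\ref{thm:compression-master}: every ordered type has a representative with $0=a_0<\cdots<a_{k-1}\le(k-1)h^{k-2}$. Hence there are at most $\binom{(k-1)h^{k-2}}{k-1}=O_k(h^{(k-1)(k-2)})$ ordered types, and the generic types are a subset of these. Your face-count route also works, but your claim that the total face count is within a factor $C_d$ of the chamber count is unproved, and you do not need it. Count faces flat by flat instead. There are at most $\binom{m}{d-j}$ flats of dimension $j$, and each restricted central arrangement has $O_d(m^{j-1})$ chambers. With $m=|S_h|=O(h^d)$ this gives $O_d(m^{d-1})=O_k(h^{(k-1)(k-2)})$ faces directly.

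\textbf{Generic lower bound.} The paper does not use B\'ar\'any--Bureaux--Lund. In the normalized open simplex of dimension $k-2$, the $\Theta_k(h^{(k-1)(k-2)})$ maximal-rank weak types of Theorem~\ref{thm:rank-diversity-frequency} are distinct interior vertices. A deletion--restriction induction then shows that any finite arrangement in an open convex set has at least as many chambers as vertices. This inequality removes your worry about unboundedly many rays per chamber, because no per-chamber bound is needed. Your route gives an unlabelled cell count, but it costs an external theorem, a norm comparison and a localization step. The paper's route is self-contained and ties the generic count to the rank-refined weak count.

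\textbf{Localization needs repair.} If you keep the B\'ar\'any--Bureaux--Lund route, the symmetry argument as you state it fails. In gap coordinates $S_h$ is not invariant under permuting coordinates: for instance $\delta(1,-1,1)=3$ but $\delta(-1,1,1)=2$. Moreover, coordinate permutations fix the positive orthant, so they cannot give a fixed proportion anyway. Either of the following fixes works.
\begin{itemize}
\item Use the symmetric group acting on the original coordinates $a$. There $\rho$ is permutation invariant, and the braid hyperplanes $x_i=x_j$ belong to the arrangement since $\rho(e_i-e_j)=1$. So exactly a $1/k!$ fraction of all chambers lies in the ordered cone, which is the positive gap orthant.
\item Apply signed permutations to the Euclidean sub-arrangement of primitive $c$ with $\lVert c\rVert_2\le h/\sqrt d$. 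This sub-arrangement lies inside $S_h$ and contains the coordinate hyperplanes, so a $2^{-d}$ fraction of its chambers lies in the positive orthant, and the $S_h$ arrangement refines them.
\end{itemize}
Do not rely on the cell count \emph{localizing to open cones} without a precise citation.

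A minor point: the outer comparison radius is $\sqrt d\,h$, not $h$, since $\lVert c\rVert_2\le\sqrt d\,\lVert c\rVert_\infty\le\sqrt d\,\delta(c)$. This does not affect the argument.
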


\begin{proof}
Theorem~\ref{thm:compression-master} realizes every ordered type by a set
\[
 0=a_0<a_1<\cdots<a_{k-1}\leq(k-1)h^{k-2}.
\]
There are only
$\binom{(k-1)h^{k-2}}{k-1}=O_k(h^{(k-1)(k-2)})$ such sets.  This proves
the upper bound for ordered types, and hence also for generic types.  The
ordered lower bound follows by forgetting the order: every realized weak
type has at least one realized ordered refinement, so
Corollary~\ref{cor:weak-type-diversity} applies.

In normalized gap coordinates the parameter space is the open simplex
\[
 P^\circ=\{g\in\mathbb R_{>0}^{d}:\textstyle\sum_i g_i=1\},
 \qquad d=k-1,
\]
of dimension $p=d-1=k-2$.  With $\delta$ denoting the gap-coordinate
relation degree from \eqref{eq:gap-active-norm}, put
\[
 S_h=\{c\in\mathbb Z^d:\delta(c)\leq h\}.
\]
The active hyperplanes are the sections $c^\perp\cap P^\circ$ with
$c\in S_h\setminus\{0\}$.

Generic types are the chambers. Every maximal-rank weak type is an
interior vertex of the normalized arrangement: its active normals have
rank $p$, and its positive realizing ray meets $P^\circ$ in one point.
Distinct such weak types give distinct vertices.

We use the following elementary deletion--restriction inequality. For any
finite real hyperplane arrangement $\mathcal A$ in an open convex set $P$,
let $R(\mathcal A,P)$ be its number of chambers and
$V(\mathcal A,P)$ its number of vertices in $P$. We use
$R=V=0$ for an empty $P$, and in dimension zero both numbers are one.
Then
\begin{equation}\label{eq:chambers-dominate-vertices}
 R(\mathcal A,P)\geq V(\mathcal A,P).
\end{equation}
Indeed, induct simultaneously on the ambient dimension and on
$|\mathcal A|$. Delete a hyperplane $H$ and restrict the remaining
arrangement to $P\cap H$.  Write
\[
 \mathcal A^H=\{H\cap H':H'\in\mathcal A\setminus\{H\}\},
\]
with empty intersections and repetitions removed.  The chamber recurrence is
\[
 R(\mathcal A,P)
 =R(\mathcal A\setminus\{H\},P)
  +R(\mathcal A^H,P\cap H).
\]
Every vertex newly present after $H$ is inserted is a vertex of the
restricted arrangement, although a vertex of the restriction may already
have been a vertex before insertion. Hence
\[
 V(\mathcal A,P)
 \leq V(\mathcal A\setminus\{H\},P)
   +V(\mathcal A^H,P\cap H).
\]
The induction hypotheses give \eqref{eq:chambers-dominate-vertices};
the stated conventions handle empty restrictions and dimension zero.
This is the deletion--restriction form of the usual arrangement region formula
\cite{BeckZaslavsky2006}.

Theorem~\ref{thm:rank-diversity-frequency} supplies
$\Theta_k(h^{d(d-1)})$ interior vertices, so
\eqref{eq:chambers-dominate-vertices} gives at least that many chambers.
Finally, each chamber is an open rational polyhedron and contains a
rational point; clearing denominators realizes its strict ordering by an
integer set. This proves the generic lower bound.
\end{proof}

\begin{corollary}[Optimal-order window saturation]
\label{cor:critical-window-saturation}
Fix $k\geq3$.  For each $h\geq2$, put
\[
 H_{h,k}=(k-1)h^{k-2},\qquad
 \mathcal M_{h,k}
 =\{A\subseteq\{0,\ldots,H_{h,k}\}:0\in A,\ |A|=k\}.
\]
The maps from $\mathcal M_{h,k}$ onto the realized weak types and onto the
realized ordered $h$-addition-table types are surjective.  Moreover, as
$h\to\infty$,
\[
 |\mathcal M_{h,k}|=\binom{H_{h,k}}{k-1}
 =\Theta_k\!\left(h^{(k-1)(k-2)}\right),
\]
and both target sets have the same order of magnitude.  Consequently, the
average fibre of either type map has order $\Theta_k(1)$, and choosing one
representative from each fibre gives a positive-proportion subfamily on which
the map is injective.  For either type map, such a full-order subfamily may
be chosen entirely among representatives of maximal-rank weak types, and
hence inside the exact core $[0,h^{k-2}]$.
\end{corollary}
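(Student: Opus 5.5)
Every assertion in Corollary~\ref{cor:critical-window-saturation} follows by combining Theorem~\ref{thm:compression-master} with the counting statements of Corollaries~\ref{cor:weak-type-diversity} and~\ref{cor:ordered-generic-type-diversity}.

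\emph{Surjectivity.} Let $\mathsf T$ be a realized weak type, or a realized ordered type, and let $A$ be a strictly increasing integer tuple realizing it. Theorem~\ref{thm:compression-master} gives an integer set $A^*$ with $0=a_0^*<\cdots<a_{k-1}^*\le (k-1-r)h^{k-2}\le H_{h,k}$. This set has the same sign pattern $\operatorname{sgn}((u-v)\cdot A^*)$ for all $u,v\in\mathcal U_{h,k}$. In particular it has the same weak type and the same complete ordered type. Since $A^*\in\mathcal M_{h,k}$, both maps are surjective. These maps are defined on $\mathcal M_{h,k}$ because every $k$-subset has a weak type and an ordered type, and all of these types are realized.

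\emph{Sizes and fibres.} The count $|\mathcal M_{h,k}|=\binom{H_{h,k}}{k-1}$ holds because $0$ is fixed and the remaining $k-1$ elements are chosen from $\{1,\dots,H_{h,k}\}$. For fixed $k$ this equals $\Theta_k(H_{h,k}^{k-1})=\Theta_k(h^{(k-1)(k-2)})$. By Corollary~\ref{cor:weak-type-diversity} the weak types number $\Theta_k(h^{(k-1)(k-2)})$, and by Corollary~\ref{cor:ordered-generic-type-diversity} so do the ordered types. For a surjection, the average fibre size is $|\mathcal M_{h,k}|$ divided by the size of the target, which here is $\Theta_k(1)$. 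A section, obtained by choosing one preimage of each type, is an injective subfamily whose cardinality is the size of the target. That is a positive proportion $\gg_k 1$ of $|\mathcal M_{h,k}|$.

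\emph{Maximal-rank refinement.} By Corollary~\ref{cor:weak-type-diversity}, the maximal-rank weak types alone number $\Theta_k(h^{(k-1)(k-2)})$. For each such type $\mathsf T$, Corollary~\ref{cor:sharp-type-compression} (equivalently \eqref{eq:sharp-maximal-rank-core} after primitive normalization) gives a representative with $0=a_0<\cdots<a_{k-1}\le h^{k-2}$. Choosing one such representative for each maximal-rank weak type gives an injective family for the weak-type map of full order, contained in $[0,h^{k-2}]\subseteq[0,H_{h,k}]$.

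For the ordered-type map, refine each maximal-rank weak type to one ordered type realized by the chosen representative. Distinct weak types have distinct ordered refinements, so the representatives are pairwise distinct and the ordered-type map is injective on them, again with full order.

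The only point needing care is that the localized clause in Theorem~\ref{thm:rank-diversity-frequency} already places these representatives in $[0,h^{k-2}]$. The normalization step using primitive division preserves the type, so this causes no difficulty. No step here is a real obstacle; the substance lies in the earlier counting theorems.
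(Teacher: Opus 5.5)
Your proposal is correct and follows essentially the same route as the paper: surjectivity from Theorem~\ref{thm:compression-master}, the two target counts from Corollaries~\ref{cor:weak-type-diversity} and~\ref{cor:ordered-generic-type-diversity}, a section of each surjection as the injective subfamily, and the maximal-rank refinement from the full-order count of maximal-rank weak types combined with the exact core bound $h^{k-2}$. The only difference is bookkeeping: where the paper cites Theorems~\ref{thm:rank-diversity-frequency} and~\ref{thm:compression-master} directly, you pass through their corollaries (Corollaries~\ref{cor:weak-type-diversity} and~\ref{cor:sharp-type-compression}).
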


\begin{proof}
Surjectivity follows from Theorem~\ref{thm:compression-master}.  The domain
count is immediate, and the two target counts are
Corollaries~\ref{cor:weak-type-diversity} and
\ref{cor:ordered-generic-type-diversity}.  Choosing one representative of
each type proves the first injective formulation.  Maximal-rank weak types
alone have the full order by Theorem~\ref{thm:rank-diversity-frequency};
choosing one ordered refinement of each and applying the exact core in
Theorem~\ref{thm:compression-master} proves the strengthened final sentence.
\end{proof}

\begin{corollary}[Many types at one cardinality]
\label{cor:many-types-one-cardinality}
For every fixed $k\geq3$ and all sufficiently large $h$, some
$n\in\Rcal(h,k)$ is realized by at least
\[
 c_k h^{(k-1)(k-3)}
\]
pairwise distinct maximal-active-rank weak types.  In particular, for
$k=5$ one cardinality is realized by $\gg h^8$ such types.
\end{corollary}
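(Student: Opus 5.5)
This follows by pigeonhole from the maximal-rank type count. By Corollary~\ref{cor:weak-type-diversity}, or more precisely the maximal-rank clause of Theorem~\ref{thm:rank-diversity-frequency}, there are at least $c'_k h^{(k-1)(k-2)}$ distinct realized weak types of active rank $k-2$ for all large $h$. Each weak type $\mathsf T$ carries the cardinality label $|\mathcal U_{h,k}/\mathsf T|\in\Rcal(h,k)$. By the ambient interval \eqref{eq:ambient}, this label lies in a set of at most $\binom{h+k-1}{k-1}\leq C_k h^{k-1}$ integers. We could sharpen the target using \eqref{eq:defect-rank-nullity}, which shows that maximal-rank labels are at most $M_{h,k}-(k-2)$, but this is not needed.

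Pigeonholing the maximal-rank types over the possible labels gives some $n$ whose fibre contains at least
\[
 \frac{c'_kh^{(k-1)(k-2)}}{C_kh^{k-1}}=c_kh^{(k-1)(k-3)}
\]
distinct maximal-rank weak types. Every such type is realized by a strictly increasing integer tuple, since $\mathfrak T_{h,k}$ consists of realized types. Hence $n\in\Rcal(h,k)$, and the types in its fibre are pairwise distinct by construction.

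For $k=5$ the exponent is $(k-1)(k-3)=8$, which gives the stated $\gg h^8$. At $k=3$ the statement is trivial, since $h^0=1$. At $k=4$ it gives $\gg h^3$ types sharing one label, consistent with $\tau_{4,2}(h)=\Theta(h^6)$ and the $\Theta(h^3)$ labels.

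All the difficulty sits in the lower bound $\tau_{k,k-2}(h)\gg h^{(k-1)(k-2)}$, which is already proved via the Horesh--Karasik shell count. The only point needing care is that the label is a function of the weak type, which holds by definition, so no further obstacle arises. If we wanted the label $n$ inside the macroscopic window, we could restrict to the localized family of types with $a_k\leq h^{k-2}$ and normalized gaps in $U$. The pigeonhole would be unchanged, since the labels still lie in an interval of size $O_k(h^{k-1})$.
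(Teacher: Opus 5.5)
Your proof is correct and is exactly the paper's argument: pigeonhole the $\Theta_k(h^{(k-1)(k-2)})$ maximal-rank weak types of Theorem~\ref{thm:rank-diversity-frequency} into the $O_k(h^{k-1})$ possible labels $n\leq M_{h,k}$. One caution about your optional closing aside: restricting to gap directions in $U$ and to $a_k\leq h^{k-2}$ does not by itself place $n$ in a macroscopic window. For example, the arithmetic progression $\{0,1,\ldots,k-1\}$ has maximal rank for $h\geq2$, its normalized gaps may lie in $U$, and its label is $hk-h+1$; the statement does not need such a localization anyway.
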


\begin{proof}
By Theorem~\ref{thm:rank-diversity-frequency}, there are
\[
 \Theta_k\!\left(h^{(k-1)(k-2)}\right)
\]
maximal-rank types.  There are, however, at most
$M_{h,k}=O_k(h^{k-1})$ possible cardinalities.  Pigeonholing proves the
claim.  This does not assert that the chosen cardinality has minimum active
rank $k-2$.
\end{proof}

\begin{corollary}[Bogart--Cu\'ellar's zero evaluation]
\label{cor:bogart-cuellar-type-growth}
Let $b_{m,h}(t)$ be Bogart and Cu\'ellar's quasipolynomial counting
$B_h$-rulers of length $t$ with $m+1$ markings. For every fixed $m\geq1$,
\begin{equation}\label{eq:bogart-cuellar-type-growth}
 (-1)^{m-1}b_{m,h}(0)
 =\Theta_m\!\left(h^{m(m-1)}\right)
 \qquad(h\to\infty).
\end{equation}
\end{corollary}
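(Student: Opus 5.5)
We plan to derive the statement from Corollary~\ref{cor:ordered-generic-type-diversity} by means of the Beck--Zaslavsky reciprocity for inside-out polytopes, which is the mechanism underlying Bogart and Cu\'ellar's quasipolynomial. Put $k=m+1$, so that $(k-1)(k-2)=m(m-1)$. The case $m=1$ is trivial: a two-mark ruler is always a $B_h$-ruler, so $b_{1,h}(t)=1$ for $t\geq1$, and the constant quasipolynomial gives $b_{1,h}(0)=1=h^0$. From now on we assume $m\geq2$, that is, $k\geq3$.

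First we recall the setup of \cite{BogartCuellar2025}. A ruler of length $t$ with $m+1$ marks corresponds to a positive gap vector $g\in\mathbb Z_{>0}^m$ with $\sum_ig_i=t$. Such a ruler is a $B_h$-ruler exactly when $c\cdot g\neq0$ for every nonzero $c\in S_h$, where $S_h$ consists of the gap-coordinate relation vectors of Theorem~\ref{thm:rank-diversity-frequency}; this uses the padding identity $E_{h,k}-E_{h,k}=h\mathsf R_{k-1}\cap\mathsf A_{k-1}$. Hence $b_{m,h}(t)$ counts the lattice points of $t\overline{P}$ lying in the open simplex and off the hyperplanes $c^\perp$, where $\overline P$ is the closed normalized gap simplex of dimension $m-1$. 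In Beck--Zaslavsky language, this is the open inside-out Ehrhart quasipolynomial of $(\overline P,\mathcal A_h)$, with $\mathcal A_h=\{c^\perp:c\in S_h\setminus\{0\}\}$.

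Next we apply the reciprocity theorem for inside-out polytopes \cite[Theorems~4.1--4.2 and Corollary~4.3]{BeckZaslavsky2006}. The closed inside-out quasipolynomial, which counts lattice points with multiplicity equal to the number of closed regions containing them, takes the value at $0$ equal to the number of regions of $\mathcal A_h$ inside $P^\circ$. Reciprocity then gives
\[
 (-1)^{m-1}b_{m,h}(0)=R(\mathcal A_h,P^\circ),
\]
the number of open chambers of the arrangement in the open simplex. This is also the content of \cite[Theorem~2.5]{BogartCuellar2025}, where the chambers are identified with the generic types of \cite[Definition~2.4]{BogartCuellar2025}. The one point needing care is the exact normalization convention of Bogart--Cu\'ellar. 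A reflection symmetry $g\mapsto$ reversed $g$, or any other bounded identification, changes the count by at most a factor of two. It therefore cannot affect a $\Theta$ statement.

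Finally, the chambers are exactly the generic $B_h$-types. By the proof of Corollary~\ref{cor:ordered-generic-type-diversity}, their number is $\Theta_k(h^{(k-1)(k-2)})=\Theta_m(h^{m(m-1)})$. The lower bound there comes from $R\geq V$ together with the $\Theta(h^{(k-1)(k-2)})$ maximal-rank vertices; the upper bound comes from the compression window of Theorem~\ref{thm:compression-master}. This proves the corollary. The main obstacle is only the bookkeeping in the reciprocity step: matching the sign $(-1)^{m-1}$ to the dimension $m-1$ of the gap simplex, and confirming that Bogart--Cu\'ellar's quasipolynomial is the open (interior) count rather than the closed one.
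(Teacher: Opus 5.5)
Your proposal is correct and follows the paper's proof: identify $(-1)^{m-1}b_{m,h}(0)$ with the number of generic $B_h$-types, i.e.\ chambers of the gap arrangement in the open simplex, then apply Corollary~\ref{cor:ordered-generic-type-diversity} with $k=m+1$, treating $m=1$ separately. The only difference is that you re-derive that identification from Beck--Zaslavsky reciprocity, whereas the paper cites it directly as \cite[Theorem~2.5]{BogartCuellar2025}; that theorem is exactly this inside-out reciprocity statement, so your hedge about normalization conventions is unnecessary.
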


\begin{proof}
By \cite[Theorem~2.5]{BogartCuellar2025}, the left side of
\eqref{eq:bogart-cuellar-type-growth} is exactly the number of
combinatorial types of $B_h$-rulers with $m+1$ markings. These are the
generic types in Corollary~\ref{cor:ordered-generic-type-diversity} with
$k=m+1$, giving exponent
$(k-1)(k-2)=m(m-1)$ for $m\geq2$. For $m=1$ there is exactly one type,
which gives the same formula.
\end{proof}

The same conclusions hold multiplicatively. For a strictly increasing
set $P=\{p_1,\ldots,p_k\}\subset\mathbb Z_{>0}$, define its weak
$h$-multiplication type on $\mathcal U_{h,k}$ by
\[
 u\sim_P^\times v
 \quad\Longleftrightarrow\quad
 \prod_{i=1}^k p_i^{u_i}=\prod_{i=1}^k p_i^{v_i},
\]
and define its rank as
$\dim_{\mathbb R}\operatorname{span}\{u-v:u\sim_P^\times v\}$.
Its ordered $h$-multiplication type is the total preorder on
$\mathcal U_{h,k}$ defined by
\[
 u\preccurlyeq_P^\times v
 \quad\Longleftrightarrow\quad
 \prod_{i=1}^k p_i^{u_i}\leq\prod_{i=1}^k p_i^{v_i}.
\]
We call this type \emph{generic} when it is tie-free, equivalently when
$u\sim_P^\times v$ implies $u=v$.

\begin{corollary}[Multiplication-table types]
\label{cor:multiplicative-type-diversity}
For fixed $k\geq3$ and $0\leq r\leq k-2$, as $h\to\infty$, the number of weak
$h$-multiplication-table types of rank $r$ is
\[
 \Theta_{k,r}\!\left(h^{(k-1)r}\right).
\]
The total number of weak multiplication types, the total number of ordered
multiplication types, and the total number of generic multiplication types
are all
\begin{equation}\label{eq:multiplicative-type-growth}
 \Theta_k\!\left(h^{(k-1)(k-2)}\right).
\end{equation}
\end{corollary}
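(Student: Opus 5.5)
The plan is to reduce every multiplicative statement to the additive results already proved for real ordered tuples, via the logarithm. For $P=\{p_1<\cdots<p_k\}\subset\mathbb Z_{>0}$, put $a=(\log p_1,\ldots,\log p_k)$. This is a strictly increasing real $k$-tuple, and for $u,v\in\mathcal U_{h,k}$,
\[
 \prod_i p_i^{u_i}\lesseqqgtr\prod_i p_i^{v_i}
 \iff u\cdot a\lesseqqgtr v\cdot a.
\]
Hence the weak, ordered and generic $h$-multiplication types of $P$ coincide, as relations on $\mathcal U_{h,k}$, with the weak, ordered and generic additive types of the real tuple $a$. The rank definitions agree as well. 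Conversely, an integer tuple $B=(b_1<\cdots<b_k)$ with $b_1\geq0$ gives $P=\{2^{b_i}\}$, whose multiplication types equal the additive types of $B$; this is the exponential map used in Corollary~\ref{cor:finite-profile-transfer}. The strategy is therefore to prove that the set of realized multiplicative types of each kind equals the set of the corresponding realized additive types of strictly increasing integer tuples.

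For the two inclusions, I would argue as follows. The inclusion from additive types into multiplicative types comes from the map $B\mapsto 2^B$, after translating $B$ so that $b_1\geq0$; translation does not change any additive type. For the reverse inclusion, start from a multiplicative type, which is the additive type of the real tuple $a=\log P$. Theorem~\ref{thm:compression-master} supplies an integer tuple $A^*$ with exactly the same signs $\operatorname{sgn}((u-v)\cdot A)$ for all $u,v\in\mathcal U_{h,k}$. So $A^*$ has the same ordered type as $a$, and therefore the same weak type, the same rank, and the same genericity. The two families of types thus coincide, rank by rank.

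The counts then transfer directly. The weak-type count at rank $r$ is Theorem~\ref{thm:rank-diversity-frequency}, namely \eqref{eq:rank-diversity}. The total weak count is Corollary~\ref{cor:weak-type-diversity}. The ordered and generic totals are Corollary~\ref{cor:ordered-generic-type-diversity}. Together these give \eqref{eq:multiplicative-type-growth}.

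The only point that needs care is the definition of the rank of a multiplicative type. The additive rank is $\dim L_h$, the span of all integer relations of relation degree at most $h$. Here the rank is defined as $\dim\operatorname{span}\{u-v:u\sim v\}$. The padding argument in the proof of Corollary~\ref{cor:addition-type-frequency} shows that these two spans agree: every relation $z$ with $\rho(z)\leq h$ is a difference of two equivalent elements of $\mathcal U_{h,k}$. A second point is that the additive results are stated for types realized by integer tuples, while $\log P$ is real. This is exactly what the integer model from Theorem~\ref{thm:compression-master} handles, since it removes any need for real-versus-integer realizability beyond what is already proved. I expect no genuine obstacle beyond these identifications.
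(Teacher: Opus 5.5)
Your proposal is correct and follows essentially the paper's route. The paper identifies the realized weak multiplicative types with the additive ones (citing O'Bryant's transfer theorem) and the ordered and generic ones via the $\log P$ / $2^{B}$ correspondence of Corollary~\ref{cor:finite-profile-transfer}, which rests on the same compression-theorem integer model you invoke; it then reads off the counts from Theorem~\ref{thm:rank-diversity-frequency} and Corollary~\ref{cor:ordered-generic-type-diversity}. Your rank concern is simpler than you treat it: both ranks are $\dim\operatorname{span}\{u-v:u\sim v\}$ of the same equivalence relation, so the padding argument is not even needed.
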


\begin{proof}
O'Bryant's fixed-degree weak-type transfer identifies the realized weak type
sets \cite[Theorem~9]{OBryant2025}.  The stronger ordered statement is the
additive--multiplicative part of Corollary~\ref{cor:finite-profile-transfer}:
the complete total preorders agree, with their equality relations, ranks, and
tie-free subfamilies.  Theorem~\ref{thm:rank-diversity-frequency} and
Corollary~\ref{cor:ordered-generic-type-diversity} therefore give all the
asserted counts.
\end{proof}

For four generators, the maximal-rank count also gives an additive derivation
of the growth orders for vertices and regions in the classical
two-dimensional Farey-line diagram, with the vertex lower bound localized in
the open triangle.

Here a \emph{Farey line of order $(N,N)$} means a line whose primitive
integer equation is
\[
 ux+vy+w=0,
 \qquad |u|,|v|\leq N,
\]
and whose intersection with the unit square is a nondegenerate segment, in
the sense of \cite[Definition~1]{KhoshnoudiradFareyLines2015}.

\begin{corollary}[Farey vertices and regions]
\label{cor:farey-sixth-order}
For each integer $N\geq1$, let $\mathrm{FV}(N)$ be the number of distinct
intersections of Farey lines of order $(N,N)$, and let $\mathrm{FR}(N)$ be
the number of connected regions into which those lines cut the open unit
square.  Then, as $N\to\infty$,
\begin{equation}\label{eq:farey-sixth-order}
 \mathrm{FV}(N)=\Theta(N^6),\qquad
 \mathrm{FR}(N)=\Theta(N^6).
\end{equation}
The lower bound for $\mathrm{FV}(N)$ already comes from intersections in
the open triangle $0<x<y<1$.
\end{corollary}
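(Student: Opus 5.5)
The plan is to translate the Farey-line picture into the $k=4$ case of the gap-coordinate arrangement from the proof of Theorem~\ref{thm:rank-diversity-frequency}, and then read off both bounds from the type counts there: vertices come from maximal-rank weak types, and regions are controlled by the chamber--vertex inequality \eqref{eq:chambers-dominate-vertices}.

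\emph{The dictionary.} For $k=4$ the normalized shape space is two-dimensional. I would use the exponent-shape coordinates $A=\{0,x,y,1\}$ with $0<x<y<1$, i.e.\ the open triangle. A homogeneous relation $z=(z_0,z_1,z_2,z_3)$ has equation $z_1x+z_2y+z_3=0$. This is a line $ux+vy+w=0$ with $(u,v,w)=(z_1,z_2,z_3)$, and $z_0=-(u+v+w)$. The point to check is that the relation degree $\rho(z)$ is comparable to $\max(|u|,|v|)$ on lines that actually cross the unit square. If the line meets the square nondegenerately, then $|w|\leq|u|+|v|$, so $\rho(z)\leq\|z\|_1/2\leq 2(|u|+|v|)\leq 4N$. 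Conversely, \eqref{eq:weighteddegree} gives $|u|,|v|\leq\|z\|_1\leq 2\rho(z)$. Hence every active line of degree $\leq h$ is a Farey line of order $2h$, and every Farey line of order $N$ that meets the triangle is an active line of degree $\leq 4N$.

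\emph{Vertex lower bound.} Take $h=\lfloor N/2\rfloor$. Corollary~\ref{cor:weak-type-diversity} gives $\tau_{4,2}(h)=\Theta(h^6)$ maximal-rank weak types. As observed in the proof of Corollary~\ref{cor:ordered-generic-type-diversity}, each such type is a distinct interior vertex of the normalized arrangement. That vertex is the unique point of the open triangle where two independent active lines meet. By the dictionary these are two Farey lines of order $N$, so the point is a Farey vertex inside $0<x<y<1$. Distinct types give distinct points, so $\mathrm{FV}(N)\gg N^6$.

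\emph{Upper bounds and regions.} There are $O(N^3)$ Farey lines, since $w$ is bounded by $2N$. Each vertex is the intersection of two of them, so $\mathrm{FV}(N)=O(N^6)$. For regions, the standard planar count gives
\[
\mathrm{FR}\leq 1+\#\text{lines}+\sum_{P}(m_P-1)\leq 1+O(N^3)+\#\{\text{pairs of lines}\}=O(N^6),
\]
where the sum is over interior vertices $P$ and $m_P$ is the number of lines through $P$. For the lower bound, apply \eqref{eq:chambers-dominate-vertices} to the Farey arrangement restricted to the open square; its vertex count is $\gg N^6$ by the previous step, so $\mathrm{FR}(N)\gg N^6$.

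\emph{Main obstacle.} I expect the only delicate step to be the norm comparison together with the definitional boundary cases. One must check that a Farey line meeting the square only at a corner or along an edge is excluded by the nondegeneracy convention of \cite{KhoshnoudiradFareyLines2015}. One must also check that vertices on the square's boundary do not affect the counts. The lower bound sidesteps both issues because it only uses vertices inside the open triangle; the upper bounds are crude pair counts and are insensitive to them.
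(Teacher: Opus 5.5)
Your proposal is correct and follows essentially the paper's own argument. It uses the same $(0,x,y,1)$ dictionary between homogeneous relations and Farey lines, the $\Theta(h^6)$ rank-two types of Theorem~\ref{thm:rank-diversity-frequency} as distinct Farey vertices in the open triangle, the line-pair bound for $\mathrm{FV}$, and a planar chamber count for $\mathrm{FR}$.

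The differences are cosmetic. The paper reads off $|u|,|v|\leq\rho(z)$ directly from \eqref{eq:weighteddegree}, so it can take $h=N$ instead of $h=\lfloor N/2\rfloor$. It also cites Khoshnoudirad's asymptotic $L(N)\sim 2N^3/\zeta(3)$ for the line count, whereas your trivial bound via $|w|\leq|u|+|v|\leq 2N$ already suffices and keeps the upper bound self-contained.
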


\begin{proof}
Normalize a four-set to $(0,x,y,1)$ with $0<x<y<1$. A homogeneous
relation gives a line
\[
 ux+vy+w=0
\]
with relation vector $(-u-v-w,u,v,w)$ and exact degree
\begin{equation}\label{eq:farey-relation-degree}
 \Delta(u,v,w)
 =\frac12\bigl(
 |u+v+w|+|u|+|v|+|w|\bigr).
\end{equation}
If $\Delta(u,v,w)\leq N$, then $|u|,|v|\leq N$: each coordinate of a
zero-sum vector has absolute value at most the total positive mass
$\Delta$. A line crossing the open triangle crosses the unit square in a
nondegenerate segment, so its primitive equation is a Farey line of order
$(N,N)$ in the sense of
\cite[Definition~1]{KhoshnoudiradFareyLines2015}.
Conversely, suppose that a Farey line of order $(N,N)$ crosses the open
triangle. Its values at the three vertices are
\[
 w,\qquad v+w,\qquad u+v+w,
\]
and zero lies strictly between their minimum and maximum. The total
variation along this two-step path is therefore at least the distance from
the first value to zero plus the distance from zero to the last value:
\[
 |w|+|u+v+w|\leq |v|+|u|.
\]
Using the primitive equation defining the Farey line,
\[
 \Delta(u,v,w)\leq |u|+|v|\leq2N.
\]
Thus, inside the triangle, the active arrangement of degree $N$ is
contained in the Farey arrangement of order $(N,N)$, which in turn is
contained in the active arrangement of degree $2N$.

For $k=4$, the rank-two types in
Theorem~\ref{thm:rank-diversity-frequency} are precisely distinct
intersections of two independent active lines in the open triangle.
Consequently
\[
 \mathrm{FV}(N)\gg N^6.
\]
Khoshnoudirad proved that the number $L(N)$ of Farey lines of order
$(N,N)$ satisfies
\[
 L(N)\sim\frac{2N^3}{\zeta(3)}
 \qquad\text{\cite[Theorem~3]{KhoshnoudiradFareyLines2015}}.
\]
The line-pair bound therefore gives
$\mathrm{FV}(N)\leq\binom{L(N)}2=O(N^6)$.

For the region count, insert the line segments crossing the square one at
a time.  Let $L_{\mathrm{int}}(N)\leq L(N)$ be the number of lines meeting
the open square.  If $m_p$ lines pass through an interior vertex $p$, the
resulting planar-arrangement formula is
\[
 \mathrm{FR}(N)
 =1+L_{\mathrm{int}}(N)+
   \sum_{p\in(0,1)^2}(m_p-1).
\]
Hence every distinct interior vertex contributes at least
one region, while the line-pair bound gives
$\mathrm{FR}(N)=O(L(N)^2)$.  The $\Theta(N^6)$ interior vertices already
constructed in $0<x<y<1$ give the matching lower bound.
\end{proof}

\begin{remark}\label{rem:farey-erratum}
The order-six conclusion does not use the order-five logarithmic estimate
stated in a 2015 follow-up on Farey vertices.  The erratum explains that
the argument treated only the positive-minimum case, omitting the zero case
\cite{KhoshnoudiradFareyVertices2015,KhoshnoudiradErratum2016}.  The proof
above instead combines the sharp line count with the independent
rank-diversity lower bound.  The same unlabelled exponent also follows
abstractly from the primitive-arrangement cell theorem of
B\'ar\'any--Bureaux--Lund \cite[Theorem~5.6]{BaranyBureauxLund2018}; the
contribution here is the explicit comparison in Farey coordinates between
the two arrangements, together with localization inside the open triangle.
\end{remark}

The arrangement has a second geometric interpretation after quotienting
the two affine symmetries of an exponent set: its cells are Hilbert-constant
and refine the degree-$h$ Hilbert-value loci of projective monomial curves.
On each normalized cell, active rank is its codimension in exponent-shape
space.

\subsection{Hilbert strata and the universal principal skeleton}

\begin{corollary}[Hilbert-value codimension and realization rarity]
\label{cor:hilbert-strata}
Fix $h\geq1$, $k\geq2$, and a field $K$.  Recall the open normalized
exponent-shape simplex
\[
 \mathcal S_k=
 \{x\in\mathbb R^k:0=x_1<x_2<\cdots<x_k=1\}
\]
and put
\[
 N_h(x)=\left|\{u\cdot x:u\in\mathcal U_{h,k}\}\right|.
\]
For every $n\in\mathcal R(h,k)$, the locus
\[
 \mathcal H_{h,k}(n)=\{x\in\mathcal S_k:N_h(x)=n\}
\]
is a finite disjoint union of relatively open rational polytopes, and
\begin{equation}\label{eq:hilbert-stratum-dimension}
 \max\{\dim P:P\text{ is a cell of }\mathcal H_{h,k}(n)\}
 =k-2-r_{h,k}(n).
\end{equation}
For rational $x\in\mathcal S_k$, choose $D\geq1$ with
$Dx\in\mathbb Z^k$ and let $C_x\subset\mathbb P^{k-1}$ be the projective
monomial curve defined by the kernel of
\[
 K[X_1,\ldots,X_k]\longrightarrow K[s,t],
 \qquad X_i\longmapsto s^{D(1-x_i)}t^{Dx_i}.
\]
The kernel is independent of the choice of $D$, and
\[
 H_{C_x}(h)=N_h(x).
\]
Consequently, $r_{h,k}(n)$ is exactly the codimension in exponent-shape
space of the largest stratum with degree-$h$ Hilbert value $n$.
If $h\geq2$, $k\geq3$, and $A_q$ is uniform in $\binom{[q]}k$, then
\begin{equation}\label{eq:rank-codimension-rarity}
 \mathbb P(|hA_q|=n)
 =k!c_{h,k,n}q^{-r_{h,k}(n)}
  +O_{h,k}(q^{-r_{h,k}(n)-1}),
\end{equation}
and hence the same integer has the four equivalent descriptions
\begin{equation}\label{eq:four-way-rank-identity}
 \boxed{
 r_{h,k}(n)
 =k-\deg F^{(k)}_{h,n}
 =\operatorname{codim}_{\mathcal S_k}\mathcal H_{h,k}(n)
 =-\lim_{q\to\infty}
   \frac{\log\mathbb P(|hA_q|=n)}{\log q}.}
\end{equation}
Here the codimension of a finite polyhedral union means the codimension of
its largest-dimensional cell.
\end{corollary}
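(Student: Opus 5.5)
The plan is to transport the ordered-chamber arrangement from the proof of Theorem~\ref{thm:rank-frequency} to the shape simplex, and then read off each of the four quantities from one dictionary.

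\emph{Step 1: reduction to cones.} Every arrangement normal $z\in\mathcal Z_{h,k}$ satisfies $\sum_i z_i=0$. Hence $N_h$ is invariant under $x\mapsto\lambda x+t\mathbf 1$ with $\lambda>0$. The map $x\mapsto(x-x_1\mathbf 1)/(x_k-x_1)$ sends $\mathcal O$ onto $\mathcal S_k$. Its fibres are the two-dimensional sets $\{\lambda y+t\mathbf 1\}$, and these are contained in every flat $X$, because $\mathbf 1\in X$ and flats are linear.

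Each stratum $X^\circ$ (more precisely, each of its sign-cone components) is a relatively open rational polyhedral cone of dimension $\dim X=k-r$, by \eqref{eq:flat-rank}. Its intersection with the affine slice $\mathcal S_k$ is therefore a relatively open rational polytope of dimension $k-2-r$. It is nonempty exactly when the cone is, and it is bounded because $\mathcal S_k$ is bounded.

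Since $N_h$ is constant on strata, $\mathcal H_{h,k}(n)$ is the disjoint union of the slices of the strata with $N(X)=n$. The maximal cell dimension is $k-2$ minus the least codimension of such a stratum, which is $r_{h,k}(n)$ by definition, using the real/integer equivalence established in that proof. This gives \eqref{eq:hilbert-stratum-dimension}.

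\emph{Step 2: the Hilbert identification.} Put $a=Dx\in\mathbb Z^k$. Then $X_i\mapsto s^{D-a_i}t^{a_i}$ is exactly the map $\varphi_A$ of \eqref{eq:eliasbridge} for $A=\{a_i\}$. Equivalently, it is the map of Proposition~\ref{prop:root-sweep-dictionary} after dehomogenising, so $H_{C_x}(h)=|hA|=N_h(Dx)=N_h(x)$.

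For independence of $D$: a binomial $X^u-X^v$ lies in the kernel iff $|u|=|v|$ and $(u-v)\cdot Dx=0$, which is equivalent to $(u-v)\cdot x=0$. Since the toric kernel is spanned by such binomials, the kernel does not depend on $D$. The statement that $r_{h,k}(n)$ is the codimension of the largest stratum then follows from Step 1.

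\emph{Step 3: rarity and the four-way identity.} Theorem~\ref{thm:rank-frequency} gives $F^{(k)}_{h,n}(q)=c_{h,k,n}q^{k-r}+O(q^{k-r-1})$ with $r=r_{h,k}(n)$. Also $\binom{q}{k}=q^k/k!+O(q^{k-1})$. Dividing yields \eqref{eq:rank-codimension-rarity}, the error being $O(q^{-r-1})$.

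Because $c_{h,k,n}>0$, taking $\log$ and dividing by $\log q$ gives the limit $-r$. The degree equality is Theorem~\ref{thm:rank-frequency} itself, and the codimension equality is Step 1, so \eqref{eq:four-way-rank-identity} follows.

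\emph{Main obstacle.} The delicate point is Step 1's claim that each cell is a \emph{polytope}, i.e.\ that slicing the cone by $x_1=0,\ x_k=1$ loses exactly two dimensions and meets every nonempty component. Both facts follow because the rescaling orbit of any point of $\mathcal O$ lies in the same component and meets $\mathcal S_k$ in exactly one point.

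I would also check the degenerate case $k=2$ separately. There $\mathcal S_2$ is a single point, there is one stratum with $r=0$, and the count gives dimension $0$. The rarity statements are only claimed for $k\ge3$, so no further checking is needed.
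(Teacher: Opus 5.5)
Your proposal is correct and follows essentially the same route as the paper: you slice the arrangement strata from the proof of Theorem~\ref{thm:rank-frequency} by $\mathcal S_k$, losing exactly the two dimensions of the translation--dilation orbits. You then identify $H_{C_x}(h)$ with $N_h(x)$ through the equal-image criterion for monomials, which also gives independence of $D$, and divide the frequency asymptotic by $\binom qk$. The one thing to add is the case $h=1$, which the paper treats separately alongside $k=2$, because Theorem~\ref{thm:rank-frequency} is stated only for $h\geq2$, $k\geq3$: there $\mathcal R(1,k)=\{k\}$, every point has active rank zero, and $\mathcal H_{1,k}(k)=\mathcal S_k$. Your cone-slicing argument covers this case verbatim, but you should say so explicitly.
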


\begin{proof}
If $h=1$, then $\mathcal R(1,k)=\{k\}$, every point has active rank zero,
and $\mathcal H_{1,k}(k)=\mathcal S_k$ has dimension $k-2$.  If $k=2$ and
$h\geq2$, then $\mathcal S_2=\{(0,1)\}$,
$\mathcal R(h,2)=\{h+1\}$, and again the active rank is zero.  Thus the
assertion holds in both edge regimes.  We may henceforth assume
$h\geq2$ and $k\geq3$.

Intersect the arrangement strata $X^\circ$ from the proof of
Theorem~\ref{thm:rank-frequency} with $\mathcal S_k$ and decompose along the
remaining rational hyperplanes.  This gives a finite disjoint union of
relatively open rational polytopes on each of which $N_h(x)$ is constant.
An active-rank-$r$ stratum has dimension $k-r$ before normalization.
Translation by ${\bf1}$ and positive dilation preserve every active relation,
and the normalization
\[
 x\longmapsto\frac{x-x_1{\bf1}}{x_k-x_1}
\]
has two-dimensional affine-equivalence fibres.  Its intersection with
$\mathcal S_k$ therefore has dimension $k-r-2$.  Taking the largest
dimension over strata with value $n$ proves
\eqref{eq:hilbert-stratum-dimension}.

For rational $x$, the degree-$h$ monomial $X^u$ maps to
\[
 s^{hD-D(u\cdot x)}t^{D(u\cdot x)}.
\]
Two such monomials have the same image exactly when their dot products with
$x$ agree.  Hence the degree-$h$ Hilbert function is $N_h(x)$.  The same
criterion shows that replacing $D$ by another common denominator does not
change the kernel.  Finally, every integral exponent set becomes a rational
point of $\mathcal S_k$ after translation and positive dilation, and every
rational point clears to an integral exponent set.
For $h\geq2$ and $k\geq3$, divide
\eqref{eq:rank-frequency-asymptotic} by
$\binom qk=q^k/k!+O_k(q^{k-1})$ to obtain
\eqref{eq:rank-codimension-rarity}.  Theorem~\ref{thm:rank-frequency} and
\eqref{eq:hilbert-stratum-dimension} then give
\eqref{eq:four-way-rank-identity}.
\end{proof}

\begin{corollary}[Universal principal-rank skeleton]
\label{cor:universal-principal-skeleton}
Fix $h\geq2$ and $k\geq3$.  Then
\begin{equation}\label{eq:universal-principal-skeleton}
 \Rcal^{[0]}(h,k)=\{M_{h,k}\},
 \qquad
 \Rcal^{[1]}(h,k)
 =\{M_{h,k}-M_{h-r,k}:2\leq r\leq h\}.
\end{equation}
In particular, exactly $h$ attainable cardinalities have minimum active
rank at most one.  They are exactly the values whose realization-frequency
quasipolynomial has degree greater than $k-2$: the maximum has degree $k$,
the other $h-1$ values have degree $k-1$, and every remaining attainable
cardinality has degree at most $k-2$.
\end{corollary}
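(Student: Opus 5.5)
The plan is to combine the rank--nullity identity \eqref{eq:defect-rank-nullity} with the rank-one Hilbert formula and the frequency theorem.

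\textbf{Rank zero.} If $r_h(A)=0$, there is no active relation of degree at most $h$. Then $\mathcal C_h(A)$ is trivial, since every collision of degree-$h$ monomials cancels to an active relation. Hence $|hA|=M_{h,k}$. Conversely, $M_{h,k}$ is attained by a $B_h$-set, for instance the superincreasing set in Corollary~\ref{cor:optimal-finite-modeling}, and it has rank zero. Every other value has $\Delta_h(A)>0$, so $\mathcal C_h(A)\neq0$; by the exact sequence \eqref{eq:collision-exact-sequence}, together with the observation that a nonzero fibre difference $e_u-e_v$ has image $u-v\ne0$, this forces $r_h(A)\geq1$. So $\Rcal^{[0]}(h,k)=\{M_{h,k}\}$.

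\textbf{Rank one.} I would generalize Proposition~\ref{prop:rankone} from four to $k$ coordinates. The argument uses only that the active span is a line with primitive generator $z$. Every degree-$h$ fibre binomial, after removing its monomial gcd, is a monomial multiple of $(x^{z^+})^{n}-(x^{z^-})^{n}$, which is divisible by $f_z$. Hence $(I_A)_h=(f_z)_h$ and $|hA|=M_{h,k}-M_{h-r,k}$ with $r=\rho(z)\in[2,h]$. We have $r\geq2$ because a relation of degree one would identify two points.

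For realizability of every $2\leq r\leq h$, I would use $A=\{0,1,r\}$ and append $k-3$ further points that are free at order $h$. For example, iterate cyclic suspensions with degrees $\ell_i>h$; by Theorem~\ref{thm:cyclic-suspension}(iv) these add no active relation and keep the profile formula. Alternatively, choose rapidly increasing extra points so that the active span stays the single line. Then $r_h=1$ with primitive direction of degree $r$, so the label is $M_{h,k}-M_{h-r,k}$.

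\textbf{Minimum rank of these labels.} These $h-1$ values are distinct, since $M_{h-r,k}$ is strictly monotone in $r$, and none equals $M_{h,k}$. Each is realized at rank one and not at rank zero, so its minimum rank is exactly one. Conversely, any label of minimum rank one is realized by a rank-one set and so lies in the displayed list. This gives \eqref{eq:universal-principal-skeleton} and the count of exactly $h$ values.

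\textbf{Frequency degrees.} The degrees follow immediately from Theorem~\ref{thm:rank-frequency}: $\deg F^{(k)}_{h,n}=k-r_{h,k}(n)$, so degree $k$ occurs iff $r=0$, degree $k-1$ iff $r=1$, and otherwise the degree is at most $k-2$.

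\textbf{Main obstacle.} The delicate step is making sure the rank-one construction really has active span exactly one-dimensional with primitive degree $r$, so that no extra active relation appears among the appended points. Free suspensions handle this cleanly, because the filtration \eqref{eq:suspension-filter} with $q=h<\ell$ adds nothing.
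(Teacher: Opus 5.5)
Your proof is correct and follows the paper's argument. Rank zero forces the value $M_{h,k}$, the principal-binomial argument gives $M_{h,k}-M_{h-r,k}$ with $2\leq r\leq h$ on the rank-one locus, strict monotonicity of the $M_{j,k}$ gives distinctness, and Theorem~\ref{thm:rank-frequency} gives the degrees. The one difference is how each rank-one label is realized: the paper takes a generic point on the hyperplane with normal $(r-1,-r,1,0,\ldots,0)$ and invokes the rational-stratum realization from the proof of Theorem~\ref{thm:rank-frequency}, whereas you build an explicit witness, $\{0,1,r\}$ followed by $k-3$ suspensions of degree $\ell_i>h$ with $c=1$, certified directly by \eqref{eq:suspension-filter}; this is equally valid and is the same construction the paper uses in proving Theorem~\ref{thm:rank-defect-frontier}.
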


\begin{proof}
Rank zero means that no two members of $\mathcal U_{h,k}$ have the same
image, and hence gives the unique value $M_{h,k}$.  If the active relation
space of a fixed $k$-set $A$ has rank one, let $z$ be its primitive active
direction and put $r=\rho(z)$.  Then $r\leq h$; moreover, $r=1$ would force
$z=\pm(e_i-e_j)$ and hence two elements of $A$ to coincide.  Thus
$2\leq r\leq h$.  Over any field $K$,
put
$S=K[x_1,\ldots,x_k]$ and
\[
 f_z=x^{z^+}-x^{z^-}.
\]
Every difference of two degree-$h$ exponent vectors in the same fibre is an
integral multiple of the primitive vector $z$.  After cancelling the common
monomial, the corresponding binomial is divisible by $f_z$, since
$X^m-Y^m$ is divisible by $X-Y$.  Conversely, every degree-$h$ multiple of
$f_z$ is a toric relation.  Hence the degree-$h$ toric-ideal piece is
$f_zS_{h-r}$, of dimension $M_{h-r,k}$, and therefore
\[
 |hA|=M_{h,k}-M_{h-r,k}.
\]
Conversely, for each $2\leq r\leq h$, the hyperplane with primitive normal
\[
 (r-1,-r,1,0,\ldots,0)
\]
meets the ordered chamber, and a generic point of this section lies on no
other active hyperplane.  The rational-stratum realization in the proof of
Theorem~\ref{thm:rank-frequency} therefore supplies an integer realization
of active rank one.  The displayed values are distinct because
$M_{0,k},M_{1,k},\ldots,M_{h-2,k}$ are strictly increasing.  This proves
\eqref{eq:universal-principal-skeleton}; the frequency assertions follow
from Theorem~\ref{thm:rank-frequency}.
\end{proof}

For $k=3$, the bound $r_h(A)\leq1$ for every three-element set $A$ shows that
Corollary~\ref{cor:universal-principal-skeleton} exhausts the entire
spectrum.  For $k=4$, the maximum active rank is two, so
Theorem~\ref{thm:rank-frequency} gives exactly the
$q^4,q^3,q^2$ trichotomy.  Fix
\[
 n\in\Rcal(h,4)\setminus
 \bigl(\{M_h\}\cup\{M_h-M_{h-r}:2\leq r\leq h\}\bigr).
\]
By Corollary~\ref{cor:universal-principal-skeleton}, every realization of
$n$ has active rank two.  Let $\mathcal S_{h,n}$ be the finite set of all
primitive normalized shapes $B=\{0<b<c<d\}$ satisfying
$\gcd(b,c,d)=1$ and $|hB|=n$; reflected shapes $d-B$ are retained as
separate elements when distinct.  This set is finite by
Lemma~\ref{lem:product}.  For a fixed shape $B$, every realization in
$[q]$ is uniquely $t+gB$ with $g\in\mathbb Z_{\geq1}$, and hence its
number of copies is
\[
 \sum_{1\leq g\leq\lfloor(q-1)/d\rfloor}(q-gd)
 =\frac{q^2}{2d}+O_d(q).
\]
Thus each corresponding affine-shape plane, spanned by ${\bf1}$ and $B$,
has leading coefficient $1/(2d)$, and
$c_{h,4,n}=\lambda_{h,n}$ is given by
\[
 \lambda_{h,n}=\frac12\sum_{B\in\mathcal S_{h,n}}\frac1{\max B}.
\]

\part{Spectrum--geometry synthesis, rigidity, and consequences}
\label{part:rigidity}

Parts~\ref{part:spectrum} and~\ref{part:structure} provide complementary
information about the same family.  This part combines those descriptions;
the subsequent rigidity, transfer, and distribution results refine the
synthesis or transfer it to other settings.

\newpage
\section{Spectrum--geometry synthesis and the rank--defect frontier}
\label{sec:collision-synthesis}

\subsection{The sharp rank--defect frontier}

\begin{theorem}[Sharp rank--defect frontier]
\label{thm:rank-defect-frontier}
For $h\geq2$, $k\geq3$, and $0\leq r\leq k-2$,
\begin{equation}\label{eq:rank-defect-frontier}
 \min_{\substack{A\subset\mathbb Z,\ |A|=k\\r_h(A)=r}}
 \left(M_{h,k}-|hA|\right)=r.
\end{equation}
Equivalently,
\[
 \max_{\substack{A\subset\mathbb Z,\ |A|=k\\r_h(A)=r}}
 |hA|=M_{h,k}-r.
\]
Equality holds precisely when $\operatorname{cnul}_h(A)=0$.
\end{theorem}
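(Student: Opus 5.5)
The lower bound and the equality clause come directly from the collision exact sequence; the real content is an explicit extremal construction for each rank. By Proposition~\ref{prop:collision-exact-sequence}, every $k$-set $A$ satisfies $M_{h,k}-|hA|=\Delta_h(A)=r_h(A)+\operatorname{cnul}_h(A)$. Since $\operatorname{cnul}_h(A)\geq0$, every $A$ with $r_h(A)=r$ has defect at least $r$, and the defect equals $r$ precisely when $\operatorname{cnul}_h(A)=0$. This gives the inequality ``$\geq$'' in \eqref{eq:rank-defect-frontier} and the final sentence of the theorem. It then remains, for each $0\leq r\leq k-2$, to exhibit one $k$-set of active rank exactly $r$ whose degree-$h$ collision space $\mathcal C_h(A)$ has dimension exactly $r$.

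For the construction I would truncate the extremal set $P_{h,k}$ from Theorem~\ref{thm:compression-master} and then append very large elements. Put $m=r+2$, and take
\[
 a_0=0,\qquad a_1=1,\qquad a_{i+1}=h\,a_i\quad(1\leq i\leq r),
\]
so that the first $m$ points are $\{0,1,h,\ldots,h^r\}$. For the remaining $k-m$ points, choose recursively $b_{j+1}>h\cdot b_j$, starting from $b_1>h\cdot h^r$. In the case $r=0$ this set is exactly the superincreasing $B_h$-set used in Corollary~\ref{cor:optimal-finite-modeling}, so the case $r=0$ is already covered.

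The key step, and the only one needing care, is to enumerate all degree-$h$ collisions, that is, all pairs $u\neq v$ in $\mathcal U_{h,k}$ with $u\cdot A=v\cdot A$.

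\textbf{Large coordinates.} Compare coordinates from the top down, exactly as in the $B_h$ argument of Corollary~\ref{cor:optimal-finite-modeling}. At the largest index where $u$ and $v$ differ, if that index is one of the appended $b_j$, then the total contribution of all lower coordinates is at most $h$ times the previous element, which is smaller than $b_j$. So no collision can differ at an appended coordinate, and $u,v$ agree on all large coordinates. After cancelling that common part, we have two vectors supported on the first $m$ points with equal total mass $h'\leq h$.

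\textbf{Small coordinates.} Now read the sum $u_1+u_2h+\cdots+u_{r+1}h^r$ as a base-$h$ expansion; the coordinate $u_0$ is free but multiplies $a_0=0$. Two such vectors with equal sum and equal mass give identical base-$h$ digits unless some digit is at least $h$. Since the total mass is at most $h$, a digit equal to $h$ forces $h'=h$ and $u=h e_i$ for some $1\leq i\leq r$. The only partner of $h e_i$ is then $v=e_{i+1}+(h-1)e_0$.

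\textbf{Conclusion of the enumeration.} Hence $\mathcal C_h(A)$ has exactly $r$ nontrivial fibres, each of size two. These fibres contribute the $r$ vectors $h e_i-e_{i+1}-(h-1)e_0$, which are independent. Therefore $\dim\mathcal C_h(A)=r$ and $r_h(A)=r$, so $\operatorname{cnul}_h(A)=0$ and $|hA|=M_{h,k}-r$. This construction needs $h\geq2$, so that the points $\{0,1,h,\ldots,h^r\}$ are distinct.

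I expect the main obstacle to be this collision enumeration, in particular making the carrying argument airtight at the boundary where one digit equals exactly $h$. It is also where the exclusion of padded relations is decisive: each relation has degree exactly $h$, so it admits no padding that would create additional collision pairs.
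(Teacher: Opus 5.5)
Your proof is correct. Its first half is exactly the paper's: the inequality and the equality clause both come from $\Delta_h(A)=r_h(A)+\operatorname{cnul}_h(A)$ with $\operatorname{cnul}_h(A)\geq0$. The extremal witness, however, is built and verified differently.

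\textbf{The paper's witness.} The paper starts from $\{0,1\}$ and applies $r$ cyclic suspensions of degree $h$ with $c=1$; these produce precisely your block $\{0,1,h,\ldots,h^r\}$. It then applies $k-r-2$ suspensions of degrees $\ell_i>h$, each of which dilates the current set by $\ell_i$ and inserts $1$ at the bottom. It reads off $|hA|=M_{h,k}-r$ as the $z^h$-coefficient of $(1-z^h)^r\prod_i(1-z^{\ell_i})/(1-z)^k$. It gets $r_h(A)=r$ from the relation-birth filtration \eqref{eq:suspension-filter}.

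\textbf{Your witness.} You instead append a superincreasing tail on top and verify everything by hand. The top-down comparison rules out collisions that differ on the tail, and the base-$h$ digit argument classifies what remains.

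\textbf{Comparison.} Your route is self-contained and elementary, needing only the collision exact sequence. It also gives more than the count: the full fibre structure, namely exactly $r$ nontrivial fibres, each of size two. So $\operatorname{cnul}_h(A)=0$ is seen directly rather than inferred from a Hilbert coefficient. The paper's route costs nothing once the suspension calculus is in place. Its witnesses are also complete intersections by construction, being iterated regular extensions, and this is used immediately afterwards in Corollary~\ref{cor:largest-nonprincipal-value}. Your witness with an arbitrary superincreasing tail does not come with that structure for free.

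\textbf{A small imprecision.} A digit equal to $h$ forces $u=he_i$ with $1\leq i\leq r+1$, not $1\leq i\leq r$. The case $i=r+1$ simply has no partner. A mass-$h$ vector with all digits below $h$ has sum at most $(h-1)(1+h+\cdots+h^r)<h^{r+1}$. Any other vector of the form $he_j$ has sum $h^j\neq h^{r+1}$.
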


\begin{proof}
The lower bound is \eqref{eq:defect-rank-nullity}.  For equality, begin with
$A^{(0)}=\{0,1\}$.  Apply $r$ suspensions of degree $h$, always
using $c=1$.  Then apply $k-r-2$ further suspensions of arbitrary degrees
$\ell_i>h$, again using $c=1$.  Every intermediate set contains $1$, so all
these suspensions are valid, primitive, and add a point distinct from the
scaled old set.  The iterated $K$-polynomial is
\[
 (1-z^h)^r\prod_{i=1}^{k-r-2}(1-z^{\ell_i}).
\]
Since $h\geq2$ and every $\ell_i>h$, its degree-$h$ Hilbert coefficient is
\[
 |hA|=[z^h]\frac{(1-z^h)^r
        \prod_i(1-z^{\ell_i})}{(1-z)^k}
      =M_{h,k}-r.
\]
The relation-filtration formula gives $r_h(A)=r$: precisely the $r$
degree-$h$ suspension relations are visible, while all later relations are
born above degree $h$.  Thus $\Delta_h(A)=r$, proving sharpness.
\end{proof}

\begin{remark}\label{rem:rank-defect-label-scope}
Theorem~\ref{thm:rank-defect-frontier} constructs a rank-$r$ witness for the
label $M_{h,k}-r$.  It does not assert that this label has minimum active rank
$r$ among all of its representations.
\end{remark}

\begin{corollary}[Largest nonprincipal value]
\label{cor:largest-nonprincipal-value}
For every $h\geq2$ and $k\geq4$,
\[
 r_{h,k}(M_{h,k}-2)=2.
\]
Thus $M_{h,k}-2$ is the largest attainable cardinality outside the universal
principal skeleton.  It has a primitive complete-intersection witness, its
realization quasipolynomial has degree $k-2$, and its largest normalized
Hilbert-value stratum has dimension $k-4$.
\end{corollary}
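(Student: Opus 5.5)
The plan is to squeeze the minimum active rank of $n=M_{h,k}-2$ between two bounds. First show that $n$ lies outside the principal skeleton, so its minimum rank is at least two. Then use the rank--defect frontier witness to get at most two. The remaining assertions will follow from results already proved.

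\emph{Step 1: $n\in\Rcal(h,k)$ and $r_{h,k}(n)\le2$.} Theorem~\ref{thm:rank-defect-frontier} with $r=2$ (allowed since $k\ge4$ gives $2\le k-2$) builds a witness $A$ with $r_h(A)=2$ and $|hA|=M_{h,k}-2$. It starts from $\{0,1\}$, applies two suspensions of degree $h$, and then $k-4$ suspensions of degrees $\ell_i>h$, all with $c=1$. This already gives the upper bound on minimum rank. Moreover, $\{0,1\}$ is a complete intersection (its toric ideal is zero), and each suspension adjoins a regular homogeneous equation by Theorem~\ref{thm:cyclic-suspension}(ii). As in Remark~\ref{rem:suspension-homological-transition} and Corollary~\ref{cor:complete-intersection-spectrum-tower}, the witness is therefore a primitive complete intersection.

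\emph{Step 2: $r_{h,k}(n)\ge2$.} By Corollary~\ref{cor:universal-principal-skeleton}, minimum rank at most one forces $n=M_{h,k}$ or $n=M_{h,k}-M_{h-r,k}$ for some $2\le r\le h$. Clearly $n\neq M_{h,k}$. The values $M_{h-r,k}$ are exactly the numbers $\binom{j+k-1}{k-1}$ with $0\le j\le h-2$, so I must show that $2$ is not of this form. At $j=0$ the value is $1$. For $j\ge1$ the value is at least $\binom{k}{k-1}=k\ge4$. Hence $2$ never occurs, $n$ is not principal, and $r_{h,k}(n)=2$.

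\emph{Step 3: the remaining claims.} Every attainable value above $n$ is either $M_{h,k}$ or $M_{h,k}-1$. Theorem~\ref{thm:rank-defect-frontier} with $r=1$ shows $M_{h,k}-1$ is attainable. By the identity $\Delta_h=r_h+\operatorname{cnul}_h$ from Proposition~\ref{prop:collision-exact-sequence}, a realization of $M_{h,k}-1$ has rank at most one. So both larger values are principal, and $M_{h,k}-2$ is the largest nonprincipal value. Equivalently, this is the case $r=1$ of $M_{h-r,k}=1$.

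The realization quasipolynomial has degree $k-r_{h,k}(n)=k-2$ by Theorem~\ref{thm:rank-frequency}. The largest normalized stratum has dimension $k-2-2=k-4$ by \eqref{eq:hilbert-stratum-dimension}.

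The only real obstacle is the arithmetic exclusion in Step~2, and it relies on $k\ge4$. For $k=3$ the exclusion fails, since $\binom{1+2}{2}=3$ could not equal $2$ anyway but the rank-two layer does not exist there. In Step~1 one should also check that the degree-$h$ suspensions are valid for $h\ge2$; $\gcd(1,h)=1$ ensures this.
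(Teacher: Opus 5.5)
Your proposal is correct and follows the paper's proof essentially step for step. The argument is the same: the rank-two frontier witness obtained from $\{0,1\}$ by regular cyclic extensions; exclusion of deficit two from the principal skeleton via $M_{0,k}=1<2<k\le M_{j,k}$ for $j\ge1$; recognition of $M_{h,k}-1$ as the $j=0$ principal value; and the degree and dimension read off from Theorem~\ref{thm:rank-frequency} and Corollary~\ref{cor:hilbert-strata}.

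Two cosmetic slips. In $M_{h,k}-M_{h-r,k}$ the value $M_{h,k}-1$ corresponds to $r=h$, not $r=1$. Also, the exclusion of $2$ already holds for $k\ge3$; the hypothesis $k\ge4$ is needed only so that active rank two can occur at all.
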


\begin{proof}
Theorem~\ref{thm:rank-defect-frontier} with $r=2$ supplies a rank-two witness,
obtained by iterated regular cyclic extensions and hence a complete
intersection.  Corollary~\ref{cor:universal-principal-skeleton} shows that a
minimum-rank-zero or rank-one deficit is either zero or $M_{j,k}$ for some
$0\leq j\leq h-2$.  Since $M_{0,k}=1$, $M_{1,k}=k>2$, and $M_{j,k}$ is
increasing in $j$, deficit two is not principal.  Moreover, $M_{h,k}-1$ is
the principal value corresponding to $j=0$; since cardinalities are integral,
the attained value $M_{h,k}-2$ is the largest value outside the principal
skeleton.  The assertions on frequency and Hilbert strata follow from
Theorem~\ref{thm:rank-frequency} and Corollary~\ref{cor:hilbert-strata}.
\end{proof}

\subsection{Proof of the synthesis theorem}

\begin{proof}[Proof of Theorem~\ref{thm:maximal-rank-saturation}]
For part~\textup{(i)}, the endpoint construction begins with a four-point
Hilbert--Burch chart of active rank two and applies $k-4$ prime suspensions
of degrees below $h$.  Equation~\eqref{eq:suspension-rank} gives final rank
$2+(k-4)=k-2$.  When $k=4$, the birth-degree and diameter assertions are
verified in the proof of Theorem~\ref{thm:endpoint-spectrum}; when $k\geq5$,
they follow from \eqref{eq:endpoint-critical-birth-scales}.  Thus in every
case
\[
 p_i(A)\asymp_k h,
 \qquad
 \ndiam(A)\asymp_k\prod_i p_i(A)\asymp_k h^{k-2};
\]
Corollary~\ref{cor:suspension-critical-core} shows that these suspensions
preserve the ratio
\[
 \frac{\ndiam(A)}{\prod_i p_i(A)}.
\]
Equation~
\eqref{eq:suspension-toric} and
Corollary~\ref{cor:endpoint-acm-type-two} give the homological assertions.
Let
$\mathcal L^{\mathrm{end}}_{h,k}$ be the resulting set of distinct labels.
Theorem~\ref{thm:endpoint-spectrum} supplies its number and localization;
choose $\alpha_k,\beta_k$ there to be the constants furnished by this same
endpoint family.  This proves \eqref{eq:maximal-rank-label-saturation}.

For part~\textup{(ii)}, retain the explicit endpoint witnesses chosen in
part~\textup{(i)}.  For some $C_k>0$,
\[
 |\mathcal L^{\mathrm{end}}_{h,k}|\geq
 h^{k-1}\exp\!\left(-C_k\frac{\log h}{\log\log h}\right).
\]
Corollary~\ref{cor:universal-principal-skeleton} says that exactly $h$ labels
in the whole spectrum have minimum active rank at most one.  Hence at most
$h$ elements of $\mathcal L^{\mathrm{end}}_{h,k}$ have minimum rank at most
one.  The displayed lower bound divided by $h$ is
$h^{k-2-C_k/\log\log h}\to\infty$.  Delete those labels and partition the
remainder among the $k-3$ ranks $2,\ldots,k-2$.  One fibre has the lower
bound \eqref{eq:dominant-rank-layer-count}, with fixed factors absorbed in
the $O_k$-term; call its rank $r^\ast_{h,k}$ and its label set
$\mathcal E_{h,k}$.  Deleting labels does not alter their chosen endpoint
witnesses.

Theorem~\ref{thm:rank-frequency} gives every summand of
$F^{(k)}_{h,\mathcal E}$ degree $k-r^\ast_{h,k}$ with positive rational
leading coefficient.  Their finite sum has the same degree and no
leading-term cancellation.  Division by $\binom qk$ proves
\eqref{eq:dominant-rank-layer-rarity}; Corollary~\ref{cor:hilbert-strata}
gives the shape dimension.  The lower bound and the ambient upper bound
$|\Rcal(h,k)|=O_k(h^{k-1})$ give
\eqref{eq:dominant-intrinsic-rank-exponent}.  If $k=4$, rank two is the only
remaining rank and
$|\Rcal^{[2]}(h,4)|=|\Rcal(h,4)|-h=\Theta(h^3)$.
\end{proof}

\begin{remark}[Witnesses, types, and labels]
\label{rem:maximal-rank-saturation-scope}
Theorem~\ref{thm:maximal-rank-saturation} says that every label in the family
$\mathcal L^{\mathrm{end}}_{h,k}$ of
\eqref{eq:maximal-rank-label-saturation} is realized by a maximal-rank type.
It does not assert that every such label has minimum active rank $k-2$.
Part~\textup{(ii)} instead selects one intrinsic minimum-rank layer carrying
the full exponent, but that layer may depend on $h$.  Consequently:
\begin{itemize}
\item the selected rank is not asserted to stabilize or to equal $k-2$;
\item the probability law in part~\textup{(ii)} fixes $h$ before
$q\to\infty$ and asserts no joint $(h,q)$ limit;
\item part~\textup{(ii)} asserts polynomial-exponent saturation only; it makes
no positive-density claim for $k\geq5$.
\end{itemize}
\end{remark}

We next determine the sharp large-diameter boundary and the optimal
four-point $B_h$ rulers.

\section{Four-point diameter rigidity and optimal rulers}
\label{sec:four-point-rigidity}

\begin{theorem}[Large-diameter tail]\label{thm:tail}
For every $h\geq2$,
\[
 \{|hA|:A\subset\Z,\ |A|=4,\ \ndiam A>h^2\}
 =\{M_h\}\cup\{M_h-M_{h-r}:2\leq r\leq h\}.
\]
Every listed value occurs at arbitrarily large normalized diameter.
\end{theorem}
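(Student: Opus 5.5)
The statement has two inclusions: every large-diameter label is principal, and every principal label occurs at arbitrarily large normalized diameter.

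\emph{Large diameter forces rank at most one.} Normalize $A$ to a primitive set $\{0,a,b,d\}$ with $d=\ndiam A>h^2$. Lemma~\ref{lem:product} shows that two independent $h$-active relations would give $d\leq\rho(z)\rho(w)\leq h^2$. Hence the $h$-active relations span a space of rank at most one.

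\emph{Inclusion ``$\subseteq$''.} Apply Proposition~\ref{prop:rankone}. If the rank is zero, it gives $|hA|=M_h$. If the rank is one with primitive active direction of degree $r$, it gives $2\leq r\leq h$ and $|hA|=M_h-M_{h-r}$. These are exactly the listed values.

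\emph{Inclusion ``$\supseteq$'': explicit large witnesses.} For the maximum $M_h$, take superincreasing sets $\{0,1,h+1,N\}$ with $N>h(h+1)$ arbitrarily large. These are $B_h$-sets by the argument in the proof of Corollary~\ref{cor:optimal-finite-modeling}: at the largest differing coordinate, the lower coordinates contribute at most $h$ times the previous element. Any $N$ beyond that threshold works.

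For $M_h-M_{h-r}$ with $2\leq r\leq h$, use cyclic suspension. Start from $\{0,1,r\}=\Sigma_{r,1}\{0,1\}$, which is primitive and has one relation born at degree $r$. Then suspend once more with degree $\ell>h$ and $c=1$, giving $C=\ell\cdot\{0,1,r\}\cup\{1\}$. Three facts about $C$ follow from earlier results.
\begin{itemize}
\item By Theorem~\ref{thm:cyclic-suspension}, $C$ is primitive and $\ndiam(C)=\ell r\geq 2\ell$, which is unbounded in $\ell$.
\item By Corollary~\ref{cor:iterated-suspension}, $\mathcal K_C=(1-z^r)(1-z^\ell)$.
\item Since $\ell>h$, the degree-$h$ coefficient gives $|hC|=M_h-M_{h-r}$.
\end{itemize}
For the first item, note that $r\geq2$ and $\ell>h$ give $\ell r>h^2$ once $\ell>h^2/2$. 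So taking $\ell\to\infty$ realizes the value at arbitrarily large normalized diameter, which also covers the final sentence of the statement. Nothing here is a serious obstacle.

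The only point to check is that the normalized diameter is the invariant quantity being compared with $h^2$. It is: Lemma~\ref{lem:product} is applied to the primitive translate, and the suspension diameter formula \eqref{eq:suspension-diameter} is already stated for $\ndiam$.
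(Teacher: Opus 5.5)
Your proof is correct. The inclusion ``$\subseteq$'' is the paper's argument: pass to the primitive normalization, use Lemma~\ref{lem:product} to force active rank at most one, and read off the label from Proposition~\ref{prop:rankone}.

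The two proofs differ only in the witnesses.

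\emph{The paper's witnesses.} The paper takes the explicit sets $\{0,1,N,N^2\}$ with $N>h+1$, and $\{0,1,N,N+r-1\}$ with $N>h^2$. For the second it computes the active relations by hand: every relation of degree at most $h$ is a multiple of $(r-1,-(r-1),-1,1)$, so Proposition~\ref{prop:rankone} applies. The same computation, with $N=(r-1)h+1$, is reused later for the small witnesses $C_{h,r}$ in the quadratic core.

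\emph{Your witnesses.} You build $\{0,1,\ell,\ell r\}$ by two cyclic suspensions of $\{0,1\}$ and read the label directly off the $K$-polynomial $(1-z^r)(1-z^\ell)$, so no relation lattice needs to be identified. This is the suspension mechanism the paper uses for the three-point skeleton and the rank--defect frontier. It also shows that your witnesses are complete intersections with known Hilbert function in every degree. The cost is reliance on the suspension profile formula, though the witness can also be checked in one line: $|z_1|\leq h<\ell$ forces $z_1=0$, leaving the multiples of $(r-1,0,-r,1)$, of degree $r$.

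Your superincreasing $B_h$ witness $\{0,1,h+1,N\}$ with $N>h(h+1)$ is an inessential variant of the paper's $\{0,1,N,N^2\}$.
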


\begin{proof}
For an arbitrary four-set $A$, pass to
\[
 B=\frac{A-\min A}{g(A)}.
\]
This is an integral primitive set with $\max B=\ndiam(A)$.  Since every
relation vector has coordinate sum zero, $z\cdot B=(z\cdot A)/g(A)$;
hence the relation lattice, relation degrees, and active rank are unchanged.
The affine map also bijects $hA$ with $hB$.  Lemma~\ref{lem:product} applied
to $B$ therefore shows that at normalized diameter greater than $h^2$ the
active rank is at most one.  Proposition~\ref{prop:rankone} gives the upper
inclusion.

For the maximum, take
\[
 A_N=\{0,1,N,N^2\},\qquad N>h+1.
\]
An active relation in the last three coordinates satisfies
\[
 \alpha+N\beta+N^2\gamma=0,
 \qquad |\alpha|,|\beta|,|\gamma|\leq h.
\]
Since $N^2>h(N+1)$, one has $\gamma=0$, and then $N>h$ forces
$\beta=\alpha=0$.  Hence $A_N$ is a $B_h$-set and $|hA_N|=M_h$.

For $2\leq r\leq h$, take
\[
 A_{N,r}=\{0,1,N,N+r-1\},\qquad N>h^2.
\]
An active relation satisfies
\begin{equation}\label{eq:largewitness}
 N(\beta+\gamma)+\alpha+(r-1)\gamma=0.
\end{equation}
The second summand in \eqref{eq:largewitness} has absolute value at most
$h+(r-1)h\leq h^2<N$.  Hence $\beta+\gamma=0$ and
$\alpha+(r-1)\gamma=0$.  Every active relation is a multiple of
\[
 (r-1,-(r-1),-1,1),
\]
whose degree is $r$.  Proposition~\ref{prop:rankone} gives
$|hA_{N,r}|=M_h-M_{h-r}$.  Letting $N$ grow proves the result.
\end{proof}

\subsection{The rank-zero endpoint: optimal four-mark rulers}

We first determine the least normalized diameter of a four-element
$B_h$-set.  Put
\[
 \eta_0(h)=\min\{\ndiam(A):A\subset\Z,\ |A|=4,\ |hA|=M_h\}.
\]
In the interval-size convention of \cite{OBryantThick2024}, let
$R_h^{-1}(4)$ denote the least size of a positive-label interval supporting
a four-mark $B_h$ ruler.  Then $\eta_0(h)+1=R_h^{-1}(4)$.  Lam and Duan
studied this fixed-order optimization problem \cite{LamDuan1989}.  In the
present normalization, the values $\eta_0(3)=11$ and $\eta_0(4)=15$ are
recorded in later tabulations
\cite{OBryantThick2024,MartosDelgadoTrujillo2021}.  The equivalent
positive-label formula was
recorded as a conjecture in Jens Vo{\ss}'s 2013 OEIS entry A227589;
a conjectured equivalent recurrence was contributed by Colin Barker, and Martin Fuller
later supplied the construction below and reported exhaustive optimality
through $h=53$ \cite{OEISA227589}.  We prove the conjectured formula
uniformly in $h$.

\begin{lemma}[Four-point relation-ball bound]
\label{lem:fourpoint-relation-ball}
Let
\[
 A=\{0,a,b,D\},\qquad 0<a<b<D,\qquad \gcd(a,b,D)=1,
\]
and suppose that every nonzero homogeneous relation of $A$ has relation
degree at least $H$, where $H\in\mathbb R$ and $H\geq2$.  Then
\[
 D\geq \binom{H+1}{2}.
\]
If equality holds, then, for the gaps
\[
 g_1=a,\qquad g_2=b-a,\qquad g_3=D-b,
\]
one has
\begin{equation}\label{eq:fourpointcriticalgaps}
 g_1+g_3=H,\qquad
 \gcd(g_1,g_3)=1,\qquad
 g_2=\frac{H(H-1)}2.
\end{equation}
\end{lemma}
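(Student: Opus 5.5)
The plan is to work in the gap coordinates of the proof of Theorem~\ref{thm:rank-diversity-frequency}. A homogeneous relation $z$ of $A$ corresponds to an integer vector $c=(c_1,c_2,c_3)$ with $c\cdot g=0$, where $g=(g_1,g_2,g_3)$. Its degree is
\[
 \delta(c)=\tfrac12\bigl(|c_1|+|c_1-c_2|+|c_2-c_3|+|c_3|\bigr),
\]
and the hypothesis says that $\delta(c)\geq H$ for every nonzero $c\in g^\perp\cap\mathbb Z^3$. The lower bound $D=g_1+g_2+g_3\geq\binom{H+1}{2}$ will come from exhibiting two explicit families of low-degree candidate relations and playing them off against each other.

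\emph{Step 1: the outer gaps.} The vector $c=(g_3,0,-g_1)/\gcd(g_1,g_3)$ is a relation, since it encodes $g_3a=g_1(D-b)$ up to the gcd. Its degree is $(g_1+g_3)/\gcd(g_1,g_3)$, so the hypothesis gives
\[
 m:=\frac{g_1+g_3}{e}\geq H,\qquad e=\gcd(g_1,g_3).
\]

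\emph{Step 2: the middle gap.} The vectors $c=(-x,1,y)$ are relations whenever $g_2=xg_1-yg_3$ (with suitable sign conventions, and also with $c_2=\pm1$ and the roles of $x,y$ reversed). For $x\geq0$ and $y\geq1$ their degree is $x+y$. Write $g_1=e\gamma_1$ and $g_3=e\gamma_3$, so that $\gamma_1+\gamma_3=m$ and $\gcd(\gamma_1,\gamma_3)=1$. Then as $(x,y)$ ranges over the pairs with $x+y=j$, the quantity $xg_1-yg_3$ runs through an arithmetic progression of step $g_1+g_3$ whose residue modulo $g_1+g_3$ is $jg_1$. These residues $jg_1\pmod{em}$ are distinct for $1\leq j\leq m$. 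Two subcases follow.
\begin{itemize}
\item If $e\nmid g_2$, then $\gcd(a,b,D)=\gcd(g_1,g_2,g_3)=1$ forces the genuinely two-dimensional relations (those with $c_2\ne0$) to have $|c_2|\geq e$ in a suitable sense. I expect this case to be disposable by a cruder bound that is strictly larger than $\binom{H+1}{2}$.
\item If $e=1$, the pigeonhole argument runs as follows. Every residue class modulo $g_1+g_3$ is hit by some $j\leq m$. If $g_2<\binom{H}{2}$, one locates $j\leq H-1$ and a pair with $x+y=j$ for which $g_2=xg_1-yg_3$ holds exactly, not merely modulo $g_1+g_3$. This is done by comparing $g_2$ with the span $j(g_1+g_3)$ of the progression. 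The resulting relation has degree $<H$, a contradiction.
\end{itemize}
Combined with Step 1, this gives $D\geq H+\binom H2=\binom{H+1}2$.

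\emph{Step 3: equality.} Tracing equality through Steps 1 and 2 forces $g_1+g_3=H$, $\gcd(g_1,g_3)=1$ and $g_2=H(H-1)/2$. These are exactly the conditions \eqref{eq:fourpointcriticalgaps}.

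I expect the main obstacle to be the exact bookkeeping in Step 2. There one must show that a middle gap smaller than $\binom H2$ lands in the \emph{range}, and not just the residue class, of a progression $\{xg_1-yg_3: x+y=j\}$ with $j<H$. This needs a careful accounting of how far each progression extends; the triangular number arises from summing the lengths $1+2+\cdots+(H-1)$. The case $e>1$ must be folded in without losing the sharp constant.

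If the combinatorial covering proves awkward, my fallback is geometric. The relations form a rank-two lattice in $g^\perp$ of covolume $\lVert g\rVert_2$. One would apply Minkowski's second theorem to the hexagonal $\delta$-unit ball in that plane, compare areas to obtain $D\gtrsim H^2/2$, and then sharpen the constant by the explicit relations of Step 1.
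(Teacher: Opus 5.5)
\textbf{There is a genuine gap in Step~2.} Step~2 is meant to deliver $g_2\geq\binom{H}{2}$, that is, that $g_2<\binom{H}{2}$ with $e=1$ produces a relation of degree $<H$. This inequality is false, so the additive splitting $D=(g_1+g_3)+g_2\geq H+\binom{H}{2}$ cannot be carried out.

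Take $A=\{0,1,3,7\}$. Its ten $2$-fold sums are distinct, so every nonzero homogeneous relation has degree at least $H=3$. Its gaps are $(1,2,4)$, so $e=\gcd(1,4)=1$, yet $g_2=2<\binom{3}{2}$. Concretely, $2=x-4y$ has no solution with $x\geq0$, $y\geq1$, $x+y\leq2$. The lemma survives here, $D=7\geq6$, only because $s=g_1+g_3=5$ exceeds $H$. A larger outer sum permits a smaller middle gap, so any correct proof must trade one against the other.

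Your residue-covering argument never sees this trade-off. It also ignores the relations $(-x,1,y)$ with $y\leq0$, which have degree $x+|y|+1$, and those with $|c_2|\geq2$. The case $e>1$ is left as an expectation. Step~3 inherits all of these problems, since it traces equality through two separate bounds. A minor point: $H$ is real, so you should first pass to $\lceil H\rceil$ and note that equality forces $H\in\mathbb Z$.

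\textbf{Your fallback is the right direction, but it is incomplete as stated.} A crude area comparison (area $>2$) gives only $D>H^2/2$. The missing linear term $H/2$ cannot be recovered by citing Step~1 afterwards; it has to enter through the area itself. The paper does this inside Minkowski's first theorem (the second is not needed).

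Project the relation lattice to its middle two coordinates. This gives $\Lambda=\{(x,y)\in\mathbb Z^2:ax+by\equiv0\pmod D\}$, of index $D$. The relation degree becomes a norm whose unit ball is an octagon, not a hexagon, since four linear forms are involved. Its exact area is
\[
 2+\frac{g_1g_2+g_1g_3+g_2g_3}{(g_1+g_2)(g_2+g_3)}
 =2+\frac{s}{D}+\frac{g_1g_2g_3}{D(g_1+g_2)(g_2+g_3)}.
\]
Minkowski gives $H^2\operatorname{area}\leq4D$, and your Step~1 gives $s\geq H$. Together these yield $4D^2-2H^2D-H^3>0$, which forces $D\geq\binom{H+1}{2}$.

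For the equality case, suppose $D=\binom{H+1}{2}$. Then $s\geq H+1$ would make $H^2\operatorname{area}>4D$, so $s=H$. The outer-gap relation then forces $\gcd(g_1,g_3)=1$, and $g_2=D-H=\binom{H}{2}$. Thus the excess of the area over $2$ is exactly where your Step~1 relation enters, and it enters multiplicatively rather than additively.
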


\begin{proof}
Relation degrees are integers.  Put $H_0=\lceil H\rceil$.  The hypothesis
remains true with $H_0$ in place of $H$.  Once the assertion is proved for
integral thresholds, it gives
\[
 D\geq\binom{H_0+1}{2}\geq\binom{H+1}{2}.
\]
Equality in the original bound forces $H_0=H$, because
$x\mapsto\binom{x+1}{2}$ is strictly increasing on $[2,\infty)$; the
integral equality classification then applies.  It therefore suffices below
to assume $H\in\mathbb Z_{\geq2}$.

Project the homogeneous relation lattice $L_A$ to its middle two
coordinates.  Its image is
\[
 \Lambda=\{(x,y)\in\Z^2:ax+by\equiv0\pmod D\}.
\]
The map $(x,y)\mapsto ax+by\pmod D$ is onto, so
\begin{equation}\label{eq:fourpointdeterminant}
 \det\Lambda=[\Z^2:\Lambda]=D.
\end{equation}
If $q=(ax+by)/D$, the corresponding full relation is
\[
 (q-x-y,x,y,-q).
\]
Putting $\alpha=a/D$ and $\beta=b/D$, its degree is
\[
 F_{\alpha,\beta}(x,y)
 =\frac12\bigl(
 |x|+|y|+|\alpha x+\beta y|
 +|(1-\alpha)x+(1-\beta)y|\bigr).
\]
For every $t>0$, put
\[
 \Omega_t=\{(x,y)\in\mathbb R^2:F_{\alpha,\beta}(x,y)<t\}.
\]
Consequently $\Omega_H$ contains no nonzero point of $\Lambda$.
The functional $F_{\alpha,\beta}$ is a norm: it is a sum of absolute values
of linear forms and satisfies
\[
 F_{\alpha,\beta}(x,y)\geq\frac12(|x|+|y|).
\]
For $0<\varepsilon<H$, the closed centrally symmetric convex body
\[
 \overline\Omega_{H-\varepsilon}
 =\{(x,y)\in\mathbb R^2:
       F_{\alpha,\beta}(x,y)\leq H-\varepsilon\}
\]
is contained in $\Omega_H$ and hence contains no nonzero point of
$\Lambda$.  Minkowski's theorem therefore gives
\[
 (H-\varepsilon)^2\operatorname{area}(\Omega_1)
 \leq4\det\Lambda=4D.
\]
Letting $\varepsilon\downarrow0$ yields
\begin{equation}\label{eq:fourpointMinkowski}
 H^2\operatorname{area}(\Omega_1)\leq4D.
\end{equation}

Write
\[
 L_1=\alpha x+\beta y,
 \qquad
 L_2=(1-\alpha)x+(1-\beta)y.
\]
Since $0<\alpha<\beta<1$, the four consecutive sign chambers in the
closed lower half-plane, read clockwise from the positive to the
negative $x$-axis, give the following linear pieces:
\[
\begin{array}{c|c}
 (x,y,L_1,L_2) & F_{\alpha,\beta}(x,y)\\ \hline
 (+,-,+,+) & x\\
 (+,-,-,+) & (1-\alpha)x-\beta y\\
 (+,-,-,-) & -y\\
 (-,-,-,-) & -x-y.
\end{array}
\]
Thus the boundary of the closed ball
$\overline\Omega_1=\{F_{\alpha,\beta}\leq1\}$ in the lower half-plane has
the five consecutive vertices
\[
 (1,0),\quad
 \left(1,-\frac{\alpha}{\beta}\right),\quad
 \left(\frac{1-\beta}{1-\alpha},-1\right),\quad
 (0,-1),\quad
 (-1,0).
\]
The table also shows that there are no further boundary changes there;
central symmetry supplies the other three vertices.  Applying the shoelace
formula to this octagon gives
\[
 \operatorname{area}(\Omega_1)
 =\operatorname{area}(\overline\Omega_1)
 =2+\frac{\alpha}{\beta}
   +\frac{1-\beta}{1-\alpha}
   -\frac{\alpha(1-\beta)}{\beta(1-\alpha)}.
\]
Substituting $\alpha=g_1/D$ and $\beta=(g_1+g_2)/D$ yields
\begin{equation}\label{eq:fourpointrelationballarea}
 \operatorname{area}(\Omega_1)
 =2+\frac{g_1g_2+g_1g_3+g_2g_3}
          {(g_1+g_2)(g_2+g_3)}.
\end{equation}
Let $s=g_1+g_3$.  The primitive outer-gap relation
\[
 \frac1{\gcd(g_1,g_3)}(-g_3,g_3,g_1,-g_1)
\]
has degree $s/\gcd(g_1,g_3)$, and hence $s\geq H$.  Moreover,
\[
 \frac{g_1g_2+g_1g_3+g_2g_3}
      {(g_1+g_2)(g_2+g_3)}
 -\frac{s}{D}
 =
 \frac{g_1g_2g_3}
 {D(g_1+g_2)(g_2+g_3)}>0.
\]
Combining this with \eqref{eq:fourpointMinkowski} gives
\[
 4D^2-2H^2D-H^3>0.
\]
For $C_H=\binom{H+1}{2}$, the left side equals $H^2$ at $D=C_H$
and $-H^2-4H+4$ at $D=C_H-1$.  Its positive root lies strictly
between these two integers, proving $D\geq C_H$.

Suppose now that $D=C_H$.  If $s\geq H+1$, then
\[
 H^2\operatorname{area}(\Omega_1)
 >
 H^2\left(2+\frac{H+1}{C_H}\right)
 =4C_H,
\]
contrary to \eqref{eq:fourpointMinkowski}.  Thus $s=H$.  The
outer-gap relation then forces $\gcd(g_1,g_3)=1$, and
$g_2=D-s=H(H-1)/2$, proving \eqref{eq:fourpointcriticalgaps}.
\end{proof}

\begin{theorem}[Optimal four-mark $B_h$ ruler]
\label{thm:optimal-four-mark}
For every $h\geq2$, put
\[
 D_h^{\max}=\binom{h+2}{2}+\mathbf 1_{\{2\nmid h\}}.
\]
Then
\begin{equation}\label{eq:optimal-four-mark}
 \boxed{\qquad \eta_0(h)=D_h^{\max}.\qquad}
\end{equation}
Writing
\[
 Q_h=1+\binom{h+1}{2},
\]
the minimum is attained by
\begin{equation}\label{eq:optimal-four-mark-witness}
 A_h^\star=\{0,1,Q_h,D_h^{\max}\}.
\end{equation}
Equivalently,
\[
 R_h^{-1}(4)=\binom{h+2}{2}+1+\mathbf 1_{\{2\nmid h\}}
\]
in the positive-label interval convention.
\end{theorem}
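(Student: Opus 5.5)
The plan is a lower bound from Lemma~\ref{lem:fourpoint-relation-ball} with threshold $H=h+1$, a parity argument that excludes the equality case when $h$ is odd, and a direct verification that $A_h^\star$ is a $B_h$-set.

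\emph{Lower bound.} Let $A$ be a four-element $B_h$-set, normalized to $\{0,a,b,D\}$ with $\gcd(a,b,D)=1$ and $D=\ndiam(A)$. An active relation of degree $\le h$ would produce a collision in $hA$ by padding, as in Section~\ref{sec:relations}. So every nonzero homogeneous relation has degree at least $h+1$. Lemma~\ref{lem:fourpoint-relation-ball} with $H=h+1\geq3$ then gives
\[
 D\geq\binom{h+2}{2}.
\]
If equality holds, the lemma forces the gap structure
\[
 g_1+g_3=h+1,\qquad \gcd(g_1,g_3)=1,\qquad g_2=\tfrac{h(h+1)}2 .
\]
For even $h$ there is nothing more to do, since $D_h^{\max}=\binom{h+2}2$.

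\emph{Excluding equality for odd $h$ (the main obstacle).} Let $h$ be odd. Then $h+1$ is even, so $g_1$ and $g_3$ are both odd. Also $g_2=h\cdot\frac{h+1}2$ is a multiple of $m=(h+1)/2$.

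I would work in the gap coordinates $c$ of \eqref{eq:gap-active-norm}. The goal is an explicit integer $c\ne0$ with $c\cdot g=0$ and
\[
 \delta(c)=\tfrac12\bigl(|c_1|+|c_1-c_2|+|c_2-c_3|+|c_3|\bigr)\le h,
\]
which contradicts the $B_h$ hypothesis.

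The natural candidate is $c_2=\pm1$. Then one needs $c_1g_1+c_3g_3=\mp h m$ with $|c_1|,|c_3|$ of size about $m$. Since $g_1+g_3=2m$, the choice $c_1=c_3=-h/2$ would give the middle-gap identity exactly, but $h/2$ is not an integer. So write $c_1=-(h\pm1)/2+\epsilon_1$ and $c_3=-(h\mp1)/2+\epsilon_3$. The correction $g_1-g_3$ is even, and this lets the parity defect be absorbed via the coprime pair $(g_1,g_3)$ with a bounded change of degree.

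I expect this bookkeeping to be the hardest step. The first thing I would try is the vector
\[
 c=\bigl(-\tfrac{h+1}2+\tfrac{g_3-g_1}{2}\cdot t,\ 1,\ \dots\bigr)
\]
and then check the degree by splitting into cases on the sign pattern of the four terms in $\delta(c)$. If no single family works, I would fall back on a lattice argument in the index-$D$ lattice $\Lambda$ from the proof of Lemma~\ref{lem:fourpoint-relation-ball}. In the critical case that lemma's Minkowski inequality is exactly tight at $D=\binom{h+2}2$, so $\Omega_{h+1}$ is extremal. I would then show that, when $h$ is odd, $\Lambda$ must contain a nonzero point in $\Omega_{h+1}$, for instance by locating the points of $\Lambda$ near the octagon vertices $\pm(1,-\alpha/\beta)(h+1)$ and using that their coordinates would have to be half-integers. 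Either route gives $D\ge\binom{h+2}2+1$ for odd $h$.

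\emph{Witness.} For $A_h^\star=\{0,1,Q_h,D_h^{\max}\}$, relations correspond to $(x,y)\in\Lambda=\{x+Q_hy\equiv0\pmod{D}\}$ with degree $F_{\alpha,\beta}(x,y)$, where $\alpha=1/D$ and $\beta=Q_h/D$. First, $|y|\le h$ is forced, since the degree is at least $\frac12(|x|+|y|)$. I would then run through $y\in[-h,h]$: for each $y$, the residue condition pins $x\equiv -Q_hy\pmod D$, and I would check that the minimal $|x|$ makes the degree exceed $h$. Here I would use $Q_h=1+\binom{h+1}2$ and $D-Q_h=\binom{h+1}{2}+h+1+\mathbf 1_{\{2\nmid h\}}$ explicitly. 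The extra $+1$ for odd $h$ is what kills the would-be parity relation found in the previous step. Equivalently, one can compute $hA_h^\star$ directly as $h$-sums $i+jQ_h+kD$ and check injectivity by reducing modulo $Q_h$ and comparing sizes. Finally, the $R_h^{-1}(4)$ reformulation is immediate from $\eta_0(h)+1=R_h^{-1}(4)$.
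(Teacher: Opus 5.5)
Your structure matches the paper's: Lemma~\ref{lem:fourpoint-relation-ball} with $H=h+1$, then exclusion of the equality case for odd $h$, then verification of $A_h^\star$. The even-$h$ lower bound is fine. The genuine gap is the step you flag yourself. You never produce the short relation in the odd critical case, and both routes you suggest point the wrong way.

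Put $\ell=(h+1)/2$. After reflecting, take $g_1=t\le\ell$, $g_3=2\ell-t$, $g_2=\ell(2\ell-1)$; you never use this normalization, but it is essential. For integral $c$ with $c_2=1$, the equation $c\cdot g=0$ reads $2\ell c_3+t(c_1-c_3)=-\ell(2\ell-1)$. Since $\gcd(t,\ell)=1$, this forces $c_1-c_3$ to be an odd multiple of $\ell$. So no integral solution lies within bounded distance of the balanced point $c_1=c_3=-h/2$, which in any case has degree exactly $h+1$. No ``bounded correction'' can work.

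The relation that does work is the unbalanced one. Writing $t=2j+1$,
\[
 c=(-j,\,1,\,-\ell-j),\qquad c\cdot g=\ell(t-1-2j)=0,\qquad \delta(c)=\ell+t .
\]
This degree is at most $2\ell-1=h$ unless $t=\ell$, and $t=\ell$ contradicts $\gcd(t,2\ell)=1$ because $\ell\geq2$.

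Your fallback rests on a false premise: Minkowski is not tight in the critical case. With $D=\binom{H+1}{2}$ and $g_1+g_3=H$, formula \eqref{eq:fourpointrelationballarea} gives $4D-\operatorname{area}(\Omega_H)>\frac{H(H+2)}{(H+1)^2}>0$. So Minkowski's theorem cannot force a nonzero point of $\Lambda$ into $\Omega_H$, and there is no extremal structure to exploit.

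The verification of $A_h^\star$ is also only a plan, and its one explicit input is miscomputed. The quantity $\binom{h+1}{2}+h+1+\mathbf 1_{\{2\nmid h\}}$ is $D_h^{\max}$ itself, whereas $m:=D_h^{\max}-Q_h=h+\mathbf 1_{\{2\nmid h\}}=2\lceil h/2\rceil$. Also, $|y|\le h$ follows because every coordinate of a zero-sum vector is bounded by its degree; the bound $F\geq\frac12(|x|+|y|)$ only gives $|y|\le 2h$. The smallness of $m$ is what makes a uniform argument possible. Checking $y\in[-h,h]$ one value at a time is not a proof valid for all $h$.

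The paper instead writes every relation as $z=(s-u,-s,u+q,-q)$ with $u=z_2+z_3$, $q=-z_3$, $s=Q_hu-mq$.
\begin{itemize}
\item The case $u=0$ gives $\rho(z)=(m+1)|q|\geq h+1$.
\item The case $u>0>q$ gives $\rho(z)\geq s\geq Q_h>h$.
\item For $u>0$, $q\geq0$, one has $\rho(z)=u+q+\operatorname{dist}(Q_hu-mq,[0,u])$. So $\rho(z)\leq h$ forces $(Q_h+m-1)u\leq hm<2(Q_h+m-1)$, and only $u=1$ can survive. Splitting at $q=\lfloor h/2\rfloor$ then gives $\rho(z)\geq h+1$ in both ranges.
\end{itemize}
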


\begin{proof}
Normalize a four-element $B_h$-set as
\[
 A=\{0,a,b,D\},\qquad 0<a<b<D,\qquad \gcd(a,b,D)=1.
\]
The $B_h$ property says precisely that every nonzero homogeneous relation
has degree at least $H=h+1$.  Lemma~
\ref{lem:fourpoint-relation-ball} gives
\[
 D\geq C_H:=\binom{H+1}{2}=\binom{h+2}{2}.
\]
This is the required lower bound when $h$ is even.

Suppose that $h$ is odd and equality $D=C_H$ holds.  Write
$H=2\ell$, where $\ell\geq2$.  By
\eqref{eq:fourpointcriticalgaps}, after reflecting $A$ if necessary,
\[
 g_1=t\leq\ell,\qquad
 g_2=\ell(2\ell-1),\qquad
 g_3=2\ell-t,\qquad
 \gcd(t,2\ell)=1.
\]
Thus $t=2j+1$ is odd.  If
$c=(c_1,c_2,c_3)$ satisfies
$c_1g_1+c_2g_2+c_3g_3=0$, then
\[
 z(c)=(-c_1,c_1-c_2,c_2-c_3,c_3)\in L_A
\]
and
\[
 \rho(z(c))
 =\frac{|c_1|+|c_1-c_2|+|c_2-c_3|+|c_3|}{2}.
\]
For
\[
 c=(-j,1,-\ell-j)
\]
one checks that $c\cdot(g_1,g_2,g_3)=0$ and
$\rho(z(c))=\ell+t$.  If $t<\ell$, this is below $H$, a
contradiction.  If $t=\ell$, then
$\gcd(t,2\ell)=\ell$, forcing $\ell=1$, again a contradiction.
Hence $D\geq C_H+1$ when $h$ is odd.

It remains to verify \eqref{eq:optimal-four-mark-witness}.  Put
\[
 Q=Q_h,\qquad m=D_h^{\max}-Q=2\left\lceil\frac h2\right\rceil,
 \qquad A_h^\star=\{0,1,Q,Q+m\}.
\]
Every relation $z=(z_0,z_1,z_2,z_3)\in L_{A_h^\star}$ can be
parametrized by
\[
 u=z_2+z_3,\qquad q=-z_3,\qquad s=Qu-mq
\]
as
\begin{equation}\label{eq:optimal-ruler-parametrization}
 z=(s-u,-s,u+q,-q).
\end{equation}
If $u=0$, every nonzero such relation has
\[
 \rho(z)=(m+1)|q|\geq h+1.
\]
After replacing $z$ by $-z$, assume $u>0$.  If $q<0$, then
$s=Qu+m|q|$ and
\[
 \rho(z)\geq s\geq Q>h.
\]
It remains to consider $q\geq0$.  From
\eqref{eq:optimal-ruler-parametrization},
\begin{equation}\label{eq:optimal-ruler-degree}
 \rho(z)=u+q+\operatorname{dist}(Qu-mq,[0,u]).
\end{equation}
If $\rho(z)\leq h$, then $u+q\leq h$ and
$Qu-mq\leq h-q$.  Hence
\[
 (Q+m-1)u\leq hm.
\]
But
\[
 \frac{hm}{Q+m-1}
 =
 \begin{cases}
  \dfrac{2h}{h+3},&h\ \textup{even},\\[4pt]
  \dfrac{2h}{h+2},&h\ \textup{odd},
 \end{cases}
 <2.
\]
For $h=2$ this already contradicts $u\geq1$; otherwise it forces
$u=1$.

Let $r=\lfloor h/2\rfloor$.  If $0\leq q\leq r$, then
$Q-mq>1$, and \eqref{eq:optimal-ruler-degree} gives
\[
 \rho(z)=Q-(m-1)q
 \geq Q-(m-1)r=h+1.
\]
If $q\geq r+1$, then $Q-mq\leq0$, and
\[
 \rho(z)=(m+1)q+1-Q
 \geq(m+1)(r+1)+1-Q=h+1.
\]
Thus no nonzero relation has degree at most $h$, so $A_h^\star$
is a $B_h$-set.  Its diameter is $Q+m=D_h^{\max}$, completing the proof.
\end{proof}

\subsection{Compression into the quadratic core}

The nonmaximal tail values all have small witnesses.  For $2\leq r\leq h$
put
\begin{equation}\label{eq:smallwitness}
 C_{h,r}=\{0,1,(r-1)h+1,(r-1)h+r\}.
\end{equation}
Its diameter is at most $h^2$.  Write $m=(r-1)h+1$.  An active relation
satisfies
\[
 m(\beta+\gamma)+\alpha+(r-1)\gamma=0.
\]
By \eqref{eq:weighteddegree}, the second summand has absolute value at most
$(r-1)h=m-1$.  Since the first summand is divisible by $m$, both summands
must vanish.  Hence $\beta+\gamma=0$ and
$\alpha+(r-1)\gamma=0$; homogeneity then shows that every relation of degree
at most $h$ is an integral multiple of the primitive vector
\[
 v_r=(r-1,-(r-1),-1,1),
\]
whose relation degree is $r\leq h$.  Thus the active relation span has rank
one.  Therefore
\begin{equation}\label{eq:smallwitnessvalue}
 |hC_{h,r}|=M_h-M_{h-r}.
\end{equation}

\begin{corollary}[Quadratic core]\label{cor:core}
For every $h\geq2$,
\[
 \Rcal(h,4)=
 \{|hA|:A\subset\Z,\ |A|=4,\ \ndiam A\leq h^2\}\cup\{M_h\}.
\]
In particular, every nonmaximal value has a representative of normalized
diameter at most $h^2$.
\end{corollary}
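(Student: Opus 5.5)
The plan is to prove the two inclusions separately. The inclusion ``$\supseteq$'' is immediate, since the right-hand side consists of attained cardinalities of four-sets. For ``$\subseteq$'', fix $n\in\Rcal(h,4)$ and a four-set $A$ with $|hA|=n$. If $\ndiam A\leq h^2$, then $n$ already lies in the first set on the right, and there is nothing to do. Otherwise $\ndiam A>h^2$, and Theorem~\ref{thm:tail} places $n$ in $\{M_h\}\cup\{M_h-M_{h-r}:2\leq r\leq h\}$. The value $M_h$ is listed explicitly, so it remains to treat $n=M_h-M_{h-r}$ for some $2\leq r\leq h$.

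For those tail values I will use the small witnesses $C_{h,r}$ from \eqref{eq:smallwitness}. The verification after \eqref{eq:smallwitness} gives $|hC_{h,r}|=M_h-M_{h-r}$ by \eqref{eq:smallwitnessvalue}. It remains to bound the normalized diameter. The set $C_{h,r}$ contains $0$ and $1$, so $g(C_{h,r})=1$ and $\ndiam C_{h,r}=\max C_{h,r}=(r-1)h+r$. We also need this set to have four distinct elements, which holds since $1<(r-1)h+1$ for $r\geq2$. Finally, $(r-1)h+r\leq(h-1)h+h=h^2$ because $r\leq h$. So each tail value $M_h-M_{h-r}$ is attained at normalized diameter at most $h^2$.

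For the second sentence, take any nonmaximal value $n\neq M_h$. By the displayed identity it must lie in the first set, since $M_h$ is the only element that can come from the extra singleton. So $n$ has a representative of normalized diameter at most $h^2$.

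No step is a real obstacle. The only points needing care are the boundary case $r=h$, where the diameter is exactly $h^2$, so the inequality must be non-strict, and the check that $C_{h,r}$ is primitive, so that its normalized diameter is its actual maximum.
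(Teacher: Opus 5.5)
Your proposal is correct and follows the paper's proof exactly: you normalize a representative, invoke Theorem~\ref{thm:tail} when the normalized diameter exceeds $h^2$, and replace each nonmaximal tail value by the witness $C_{h,r}$ of \eqref{eq:smallwitness}. Your checks of primitivity, distinctness of the four points, and the boundary case $r=h$ (where the diameter is exactly $h^2$) fill in the details the paper leaves implicit.
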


\begin{proof}
Normalize a representative.  If its diameter is at most $h^2$, there is
nothing to prove.  Otherwise Theorem~\ref{thm:tail} applies, and each
nonmaximal tail value has the witness \eqref{eq:smallwitness}.
\end{proof}

For completeness, define the diameter-compression parameter
\[
 \nu(h,4)=\min\{D\in\mathbb Z_{\geq0}:\Rcal(h,4)=
 \{|hA|:A\subset[0,D]\cap\Z,\ |A|=4\}\}.
\]
Thus $\nu(h,4)=N(h,4)-1$ in Nathanson's interval-size convention
\cite{NathansonCompression}.

\begin{corollary}[Endpoint-sharp quadratic compression]
\label{cor:compression}
For every $h\geq2$,
\begin{equation}\label{eq:compressionbounds}
 D_h^{\max}\leq\nu(h,4)\leq\max\{h^2,D_h^{\max}\}.
\end{equation}
Consequently
\[
 \nu(2,4)=6,\qquad \nu(3,4)=11,
\]
while for every $h\geq4$,
\[
 \binom{h+2}{2}+\mathbf 1_{\{2\nmid h\}}
 \leq\nu(h,4)\leq h^2.
\]
In particular, $\nu(h,4)=\Theta(h^2)$.
\end{corollary}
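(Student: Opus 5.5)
The lower bound $D_h^{\max}\leq\nu(h,4)$ will come from Theorem~\ref{thm:optimal-four-mark}. The value $M_h$ lies in $\Rcal(h,4)$, so any $D$ with $\Rcal(h,4)=\{|hA|:A\subset[0,D]\cap\mathbb Z,\ |A|=4\}$ must admit some $A\subset[0,D]$ with $|hA|=M_h$. Such an $A$ has $\ndiam(A)\leq\max A-\min A\leq D$, because $g(A)\geq1$. Hence $D\geq\eta_0(h)=D_h^{\max}$.

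For the upper bound I will take $D=\max\{h^2,D_h^{\max}\}$ and show that every $n\in\Rcal(h,4)$ has a representative inside $[0,D]$. If $n=M_h$, the witness $A_h^\star$ of \eqref{eq:optimal-four-mark-witness} lies in $[0,D_h^{\max}]$. If $n\neq M_h$, Corollary~\ref{cor:core} gives a representative $A$ with $\ndiam(A)\leq h^2$. The normalization $B=(A-\min A)/g(A)$ satisfies $|hB|=|hA|$, since the affine map bijects $hA$ with $hB$, and $B\subset[0,\ndiam A]\subset[0,h^2]$. This establishes \eqref{eq:compressionbounds}.

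The consequences then follow by arithmetic.
\begin{itemize}
\item \emph{$h=2$.} Here $D_2^{\max}=\binom42=6>4=h^2$, so both sides of \eqref{eq:compressionbounds} equal $6$.
\item \emph{$h=3$.} Here $D_3^{\max}=\binom52+1=11>9$, so $\nu(3,4)=11$.
\item \emph{$h\geq4$.} The upper bound reduces to $h^2$ as soon as $D_h^{\max}\leq h^2$. This amounts to $(h+2)(h+1)/2+1\leq h^2$, which holds for $h\geq4$: at $h=4$ it reads $15\leq16$, and beyond that the difference grows.
\end{itemize}
Finally, $\nu(h,4)=\Theta(h^2)$ because both bounds are quadratic.

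The only step needing any care is the normalization, which brings a small-$\ndiam$ representative into the window $[0,D]$ while preserving $|hA|$. Everything else is a direct invocation of the earlier results and a few numerical checks, so I do not expect a real obstacle.
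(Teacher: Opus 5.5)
Your proof is correct and follows the paper's argument essentially verbatim. It gets the lower bound from Theorem~\ref{thm:optimal-four-mark}, since any witness for $M_h$ is a $B_h$-set, and the upper bound from the witness $A_h^\star$ together with Corollary~\ref{cor:core}. The small cases come from comparing $D_h^{\max}$ with $h^2$.

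One trivial slip: at $h=4$ your sufficient inequality $(h+2)(h+1)/2+1\leq h^2$ reads $16\leq16$, not $15\leq16$. It still holds, and $D_4^{\max}=15$ in any case.
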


Corollary~\ref{cor:optimal-finite-modeling} already determines the
quadratic order of $N(h,4)$ as part of the all-$k$ theorem.  The present
four-point results identify the exact minimum height of the maximal label,
improve the leading constant in the compression lower bound from $1/6$ to
$1/2$, and retain the $h^2$ universal height for every nonmaximal label.
The greedy endpoint witness used previously has diameter $h^2+h+1$
\cite{NathansonGreedyThird}; Theorem~\ref{thm:optimal-four-mark} replaces it
by the exact value $D_h^{\max}$.

\begin{proof}
Every witness for $M_h$ is a four-element $B_h$-set, so
Theorem~\ref{thm:optimal-four-mark} gives
$\nu(h,4)\geq D_h^{\max}$.  The same theorem realizes $M_h$ inside
$[0,D_h^{\max}]$, while Corollary~\ref{cor:core} realizes every nonmaximal
value inside $[0,h^2]$.  This proves the upper bound.  Finally,
$D_h^{\max}>h^2$ for $h=2,3$, whereas $D_h^{\max}\leq h^2$ for
$h\geq4$.
\end{proof}

Under Nathanson's convention $N(h,4)=\nu(h,4)+1$, both sides of
\eqref{eq:compressionbounds} increase by one.

Whether $\nu(h,4)=D_h^{\max}$ for every $h\geq2$---equivalently, whether
every nonmaximal label can always be modeled at the rank-zero optimal
height---is the remaining endpoint-sharp compression problem.

\begin{proposition}[Sharpness of the cutoff]\label{prop:boundary}
For every $h\geq2$, the set $A_h^\partial=\{0,1,h,h^2\}$ satisfies
\[
 |hA_h^\partial|=M_h-2.
\]
This value does not occur in the tail list in Theorem~\ref{thm:tail}.
\end{proposition}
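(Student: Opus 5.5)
We will compute the active relations of $A_h^\partial=\{0,1,h,h^2\}$ directly, show that the degree-$h$ toric piece has dimension exactly two, and then check that $2$ is not a tail deficit. Index the coordinates as $(x_0,x_1,x_2,x_3)\leftrightarrow(0,1,h,h^2)$.

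\textbf{Step 1: identify the two independent degree-$h$ relations.} The relations are
\[
 z^{(1)}=(h-1,-h,1,0),\qquad z^{(2)}=(0,0,h,-1)+(1-h)e_0\cdot 0\ \text{adjusted},
\]
namely $h\cdot1=h+(h-1)\cdot0$ and $h\cdot h=h^2+(h-1)\cdot 0$. These give the vectors $(h-1,-h,1,0)$ and $(h-1,0,-h,1)$, each of degree $h$, as in the construction of $P_{h,k}$ in Theorem~\ref{thm:compression-master}. Hence $r_h(A_h^\partial)=2$.

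\textbf{Step 2: show these are the only active relations up to sign.} By the defect identity \eqref{eq:defect-rank-nullity}, it suffices to show that every collision of degree-$h$ monomials is one of these two relations, padded trivially. Concretely, let $z=(z_0,z_1,z_2,z_3)$ be a homogeneous relation with $\rho(z)\le h$. It satisfies
\[
 z_1+hz_2+h^2z_3=0,\qquad |z_i|\le h.
\]
Reducing modulo $h$ gives $h\mid z_1$, so $z_1\in\{0,\pm h\}$.
\begin{itemize}
 \item If $z_1=0$, then $z_2=-hz_3$. This forces either $z=0$ or $z_3=\pm1$, $z_2=\mp h$. The degree constraint then pins $z=\pm(h-1,0,-h,1)$.
 \item If $z_1=\pm h$, then $z_2+hz_3=\mp1$. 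The combination $z_3=0$, $z_2=\mp1$ gives $\pm(h-1,-h,1,0)$ and is fine. Any other option has $|z_2|\ge h-1$, and together with $|z_1|=h$ this makes $\rho>h$, apart from boundary cases that need a careful check.
\end{itemize}
So the active relations are exactly $\pm z^{(1)},\pm z^{(2)}$. Each is a \emph{pure} difference $z^+-z^-$ of total mass $h$, so it admits no padding and yields exactly one colliding pair in $\mathcal U_{h,4}$. The two pairs involve disjoint monomials, $\{x_0^{h-1}x_2,x_1^h\}$ and $\{x_0^{h-1}x_3,x_2^h\}$, so $\dim\mathcal C_h=2$. Therefore $|hA|=M_h-2$, with $\operatorname{cnul}_h=0$.

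\textbf{Step 3: show $M_h-2$ is not a tail value.} The tail deficits are $0$ and $M_{h-r}$ for $2\le r\le h$, that is, $0$ together with the increasing sequence $M_0=1, M_1=4,\dots$. Since $2$ is neither $0$ nor any $M_j$, the value $M_h-2$ is absent from the list in Theorem~\ref{thm:tail}. This matches the argument of Corollary~\ref{cor:largest-nonprincipal-value}.

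\textbf{Main obstacle.} The main obstacle is the exhaustive case check in Step 2 at the degree boundary. I would handle it by writing $\rho=\sum z_i^-$ and bounding the negative mass explicitly in each sign case. An alternative worth trying is to argue directly on monomial fibres, i.e. that no third monomial shares a sum with these two pairs, by comparing base-$h$ expansions.
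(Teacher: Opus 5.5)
Your argument is correct and follows a different route from the paper in Step~2. The paper first shows that $v=(h-1,-h,1,0)$ and $w=(h-1,0,-h,1)$ form a $\mathbb Z$-basis of the relation lattice: their projections have cross product $(1,h,h^2)$, so the index is one. It then writes an arbitrary relation as $sv+tw$ and excludes everything except $\pm v,\pm w$ by reading off coordinates. If $|s|\ge2$, the second coordinate has absolute value at least $2h$. If $|s|\le1$ and $|t|\ge2$, the third has absolute value at least $2h-1$. The four diagonal choices have degree $h+1$ or $2h-1$.

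You instead work directly with the equation $z_1+hz_2+h^2z_3=0$, the coordinate bound $|z_i|\le\rho(z)\le h$, and the divisibility $h\mid z_1$. This avoids the lattice-index verification and is slightly shorter. The paper's route records the full lattice structure, in the same basis-plus-coefficient-bound style it uses again for the defect-two witnesses. Your Steps~1 and~3, and the final count (no room for padding, four distinct monomials, so two fibres of size two), coincide with the paper.

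You leave one case unfinished. For $z_1=\pm h$ you defer ``boundary cases that need a careful check,'' and the reason you sketch is not valid as stated. The bounds $|z_1|=h$ and $|z_2|\ge h-1$ do not by themselves force $\rho(z)>h$, because entries of opposite sign are compatible with $\rho=h$.

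The check takes one line. For $z_1=\pm h$ you have $z_2=\mp1-hz_3$, and $|z_2|\le h$ leaves only two options:
\begin{itemize}
\item $z_3=0$, which is your good case;
\item $z_3=\mp1$, which gives $z_2=\pm(h-1)$, of the same sign as $z_1$, so the positive (or negative) mass is at least $2h-1>h$.
\end{itemize}
With that line added, the proof is complete.

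Two minor points. In the $z_1=0$ case, it is the zero-sum condition, not the degree, that determines $z_0$. The garbled first display for $z^{(2)}$ in Step~1 should be replaced by the vector you actually use, $(h-1,0,-h,1)$.
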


\begin{proof}
Deleting coordinate zero identifies the homogeneous relation lattice
$L_{A_h^\partial}$ with
\[
 \ker_{\mathbb Z}(1,h,h^2)
 =\{(z_1,z_2,z_3)\in\mathbb Z^3:
       z_1+hz_2+h^2z_3=0\}.
\]
The lattice contains the independent degree-$h$ relations
\[
 v=(h-1,-h,1,0),\qquad w=(h-1,0,-h,1).
\]
Their projections are $v'=(-h,1,0)$ and $w'=(0,-h,1)$, and
\[
 v'\times w'=(1,h,h^2).
\]
This is exactly the primitive normal to the displayed kernel, so
$\mathbb Zv'+\mathbb Zw'$ has lattice index one.  Hence $v,w$ form a
$\mathbb Z$-basis of $L_{A_h^\partial}$.  An arbitrary relation is
$sv+tw$.  If
$|s|\geq2$, then its second coordinate has absolute value greater than
$h$.  If $|s|\leq1$ and $|t|\geq2$, then the third coordinate has absolute
value at least $2h-1$.  Among $(s,t)\in\{-1,0,1\}^2$, the combinations
with both entries nonzero have degree $h+1$ or $2h-1$.  Hence the only
nonzero active relations are $\pm v,\pm w$.

There is no room for monomial padding, and the two degree-$h$ binomials
\[
 x_0^{h-1}x_2-x_1^h,
 \qquad
 x_0^{h-1}x_3-x_2^h
\]
have four distinct monomials.  Thus $\dim_K(I_{A_h^\partial})_h=2$ and
$|hA_h^\partial|=M_h-2$.

The nonzero deficits in the tail list are
$M_{h-r}\in\{1,4,10,\ldots\}$, so the deficit $2$ is absent.
\end{proof}

\subsection{Exact defect-two diameter and a lattice obstruction}
\label{subsec:defect-two-diameter}

Proposition~\ref{prop:boundary} shows that defect two already occurs at the
quadratic cutoff.  The least diameter at which it occurs is substantially
smaller, and can be determined exactly.  Senger proved that the values
$M_h-2$ and $M_h-3$ are rare in the long-interval frequency problem
\cite[Theorem~2]{Senger2025}; the result below gives a complementary exact
extremal statement for the first of them.  For $h\geq2$, define
\[
 \eta_2(h)=\min\{\ndiam(A):A\subset\Z,\ |A|=4,
                    \ |hA|=M_h-2\}.
\]

\begin{theorem}[Exact diameter at defect two]
\label{thm:defecttwodiameter}
For every $h\geq2$,
\begin{equation}\label{eq:defecttwodiameter}
 \boxed{\qquad \eta_2(h)=1+\binom{h+1}{2}.\qquad}
\end{equation}
More explicitly, if
\[
 D_h=1+\binom{h+1}{2},
\]
then equality is attained by
\[
 \{0,1,D_h-h,D_h\}\qquad(h\ \textup{even})
\]
and by
\[
 \left\{0,\frac{h-1}{2},D_h-\frac{h+1}{2},D_h\right\}
 \qquad(h\ \textup{odd}).
\]
\end{theorem}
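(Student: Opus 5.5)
The plan is to treat the lower bound through the collision exact sequence and the relation-ball lemma, and the upper bound by verifying the two witnesses directly, as in the proof of Theorem~\ref{thm:optimal-four-mark}.

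\textbf{Structure of defect two.} Let $A$ be a primitive normalized four-set $\{0,a,b,D\}$ with $|hA|=M_h-2$. By Corollary~\ref{cor:universal-principal-skeleton}, the value $M_h-2$ is not principal, so $r_h(A)=2$. Then \eqref{eq:defect-rank-nullity} forces $\operatorname{cnul}_h(A)=0$, so $\partial_h$ maps the collision space $\mathcal C_h(A)$, of dimension $2$, isomorphically onto $L_h(A)$.

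I will next show that every nonzero active relation has degree exactly $h$. Suppose some active $z$ had $\rho(z)<h$. Padding $z$ by every $w\in\mathcal U_{h-\rho(z),4}$ gives at least $4$ collisions $e_{z^++w}-e_{z^-+w}$. These are linearly independent, since triangularity in the padding variable $w$ separates them. This already exceeds $\dim\mathcal C_h=2$, a contradiction. Consequently, in the notation of Lemma~\ref{lem:fourpoint-relation-ball}, every nonzero homogeneous relation has degree at least $H=h$. That lemma gives $D\geq\binom{h+1}{2}$, and it remains to exclude equality.

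\textbf{Excluding the equality case.} This is the main obstacle. If $D=\binom{h+1}{2}$, then \eqref{eq:fourpointcriticalgaps} gives $g_1+g_3=h$, $\gcd(g_1,g_3)=1$ and $g_2=h(h-1)/2$. The outer-gap relation $(-g_3,g_3,g_1,-g_1)$ then has degree exactly $h$.

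I would work in the gap coordinates $c=(c_1,c_2,c_3)$ used in the proof of Theorem~\ref{thm:optimal-four-mark}, where $\rho(z(c))=\tfrac12(|c_1|+|c_1-c_2|+|c_2-c_3|+|c_3|)$. There I would list the degree-$\le h$ solutions of $c_1g_1+c_2g_2+c_3g_3=0$. Writing $g_1=t$ and $g_3=h-t$, the candidates are of the form $c_2=\pm1$ with $c_1,c_3$ of size about $h/2$, modeled on the vector $c=(-j,1,-\ell-j)$ from the odd-$h$ argument.

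I expect one of three outcomes.
\begin{itemize}
\item The only active directions are multiples of the outer relation, so the rank is $1$; this contradicts the rank-two requirement.
\item Some second active relation has degree $<h$; this was excluded in the previous step.
\item A third active relation pair $\pm(v+w)$ or $\pm(v-w)$ also has degree $\le h$. Then $\mathcal C_h$ contains three independent collision vectors, which contradicts $\dim\mathcal C_h=2$.
\end{itemize}
The case split will be on the parity of $h$ and on whether $t$ is small. If the full enumeration proves messy, I would fall back on rerunning the Minkowski inequality \eqref{eq:fourpointMinkowski}, now counting boundary lattice points: the closed ball $\overline\Omega_h$ contains exactly the four lattice points $\pm v,\pm w$ and $0$. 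Combined with the exact octagon area \eqref{eq:fourpointrelationballarea}, this should force strict inequality.

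\textbf{Upper bound.} For each displayed witness I would parametrize the relation lattice as in \eqref{eq:optimal-ruler-parametrization}. I would then check three things.
\begin{itemize}
\item There are exactly two independent primitive relations of degree $h$. For $h$ even, one of them is the relation coming from $D_h-(D_h-h)=h$ together with the small gap $1$.
\item Every other nonzero relation, including sums and differences of the two, has degree $>h$. This uses the same distance-to-interval computation as \eqref{eq:optimal-ruler-degree}, with the bounds $u\le 1$ and the choice of $q$.
\item The four monomials in the two degree-$h$ binomials are distinct, so there is no padding and $\dim(I_A)_h=2$, exactly as in Proposition~\ref{prop:boundary}.
\end{itemize}
This gives $|hA|=M_h-2$ at diameter $D_h=1+\binom{h+1}{2}$.
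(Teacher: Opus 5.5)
\textbf{Verdict: the proof has a genuine gap at the one step that matters.} Your first half agrees with the paper. Padding an active relation of degree $<h$ gives at least $M_1=4$ independent collisions, which contradicts $\dim\mathcal C_h(A)=M_h-|hA|=2$. Hence every relation has degree at least $h$, and Lemma~\ref{lem:fourpoint-relation-ball} with $H=h$ gives $D\geq\binom{h+1}{2}$. The gap is the exclusion of $D=\binom{h+1}{2}$. That is the entire content of the theorem, and there you only list outcomes you expect, without producing relations that realize any of them.

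\textbf{What is missing: an arithmetic, parity-dependent argument.} In the critical case, reflect so that $g_1=r\leq\lfloor h/2\rfloor$.
\begin{itemize}
\item For $h=2m$: $\gcd(r,2m)=1$ forces $r=2j+1$. Then $c=(-j,1,-m-j)$ is a relation of degree $m+r<h$ unless $r=m$. That case forces $h=2$ and the set $\{0,1,2,3\}$, which has defect three.
\item For $h=2m+1$: there need be no relation of degree below $h$. Instead, $(h-r,0,-r)$, $(-m,1,-m)$ and their sum all have degree exactly $h$. One must then check that the three binomials involve six pairwise distinct monomials.
\end{itemize}
Your third bullet skips exactly this check. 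Since $\operatorname{cnul}_h(A)=0$ makes $\partial_h$ injective, any collisions mapping to $v$, $w$, $v+w$ are dependent unless one verifies directly from their monomials that they are not. That verification is the contradiction; it is not a consequence of having a third relation of degree $h$.

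\textbf{Why the Minkowski fallback cannot supply this step.} At $D=\binom{h+1}{2}$ with $g_1+g_3=h$, formula \eqref{eq:fourpointrelationballarea} gives
$h^2\operatorname{area}(\Omega_1)\leq 2h^2+\frac{2h^2}{h+1}+\frac{h^2}{(h+1)^2}<2h^2+2h=4D$.
So \eqref{eq:fourpointMinkowski} holds with room to spare, and no boundary-point refinement of Minkowski's theorem yields a contradiction. The critical configuration is in fact realizable. For $h=3$, the set $\{0,1,4,6\}$ has diameter $6=\binom42$ and is a $B_2$-set, so it has no relation of degree below $3$. It is excluded only because it carries three degree-$3$ binomials with six distinct monomials; its defect is three, not two.

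\textbf{A smaller error in the upper bound.} Your sketch misidentifies the active relations of the even-$h$ witness. With gaps $(1,D_h-h-1,h)$, the relation $(-h,h,1,-1)$ has degree $h+1$, so it is not active. For $h=2m$ the active pair is $(m,-m-1,m,1-m)$ and $(m,1-m,-m-1,m)$. The paper shows this pair is a lattice basis, since its determinant is $D_h$. It then bounds $\rho(uz+vw)$ using the coordinate subsums $|hu-v|\leq h$ and $|u+hv|\leq h$.
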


\begin{proof}
Normalize a putative witness as
\[
 A=\{0,a,b,D\},\qquad 0<a<b<D,\qquad \gcd(a,b,D)=1.
\]
By \eqref{eq:Hilbert},
\[
 \dim_K(I_A)_h=M_h-|hA|=2.
\]
If $I_A$ contained a nonzero binomial of degree $r<h$, multiplication by
that binomial would inject $S_{h-r}$ into $(I_A)_h$.  This would give
\[
 \dim_K(I_A)_h\geq \dim_K S_{h-r}=M_{h-r}\geq M_1=4,
\]
a contradiction.  Consequently every nonzero homogeneous relation of $A$
has degree at least $h$.

Write the three gaps as
\[
 g_1=a,\qquad g_2=b-a,\qquad g_3=D-b,
\]
and put $C=\binom{h+1}{2}$.  The initial-degree conclusion and
Lemma~\ref{lem:fourpoint-relation-ball}, applied with $H=h$, give
\[
 D\geq C.
\]
If equality held, the gaps would satisfy
\eqref{eq:fourpointcriticalgaps}.  We now exclude that equality.
For $c=(c_1,c_2,c_3)\in\Z^3$ satisfying
$c_1g_1+c_2g_2+c_3g_3=0$, the associated homogeneous relation is
\[
 z(c)=(-c_1,c_1-c_2,c_2-c_3,c_3),
\]
of degree
\begin{equation}\label{eq:defecttwotaildegree}
 \delta(c)=\rho(z(c))=
 \frac{|c_1|+|c_1-c_2|+|c_2-c_3|+|c_3|}{2}.
\end{equation}

Suppose first that $h=2m$.  Reflecting $A$ if necessary, write
$g_1=r\leq m$.  By \eqref{eq:fourpointcriticalgaps},
$\gcd(r,2m)=1$, so $r=2j+1$ is odd.  The vector
\[
 c=(-j,1,-m-j)
\]
is orthogonal to
\[
 (g_1,g_2,g_3)=(r,m(2m-1),2m-r)
\]
and satisfies $\delta(c)=m+r$.  If $r<m$, this is below $h$, a
contradiction.  Hence $r=m$; but then $\gcd(r,h)=m$, so $m=1$.
For $h=2$, the only critical set is $\{0,1,2,3\}$, whose ten degree-two
monomials have only seven distinct images.  Its defect is three, not two.

Suppose now that $h=2m+1$.  After reflection take $g_1=r\leq m$.
The three tail vectors
\[
 \begin{aligned}
 c^{(0)}&=(h-r,0,-r),\\
 c^{(1)}&=(-m,1,-m),\\
 c^{(2)}&=(m+1-r,1,-m-r)=c^{(0)}+c^{(1)}
 \end{aligned}
\]
are orthogonal to $(r,mh,h-r)$, and direct substitution into
\eqref{eq:defecttwotaildegree} gives
\[
 \delta(c^{(0)})=\delta(c^{(1)})=\delta(c^{(2)})=h.
\]
Their degree-$h$ binomials are
\[
 \begin{aligned}
 B_0&=x_1^{h-r}x_2^r-x_0^{h-r}x_3^r,\\
 B_1&=x_0^m x_2^{m+1}-x_1^{m+1}x_3^m,\\
 B_2&=x_1^{m-r}x_2^{m+r+1}
      -x_0^{m+1-r}x_3^{m+r}.
 \end{aligned}
\]
For $1\leq r\leq m$, the six displayed monomials are pairwise distinct.
Thus $B_0,B_1,B_2$ are linearly independent in $(I_A)_h$, contradicting
$\dim_K(I_A)_h=2$.  We have proved the strict lower bound
\begin{equation}\label{eq:defecttwolower}
 D\geq C+1=D_h.
\end{equation}

It remains to verify the asserted witnesses.  If $h=2m$, consider
\[
 A_h=\{0,1,D_h-h,D_h\}
\]
and the two relations
\[
 z=(m,-m-1,m,1-m),\qquad
 w=(m,1-m,-m-1,m).
\]
Both have degree $h$, and the determinant of their middle-coordinate
projections is
\[
 \left|\det
 \begin{pmatrix}
  -m-1&m\\
  1-m&-m-1
 \end{pmatrix}\right|=2m^2+m+1=D_h.
\]
The projected relation lattice has determinant $D_h$ by the same index
calculation as in \eqref{eq:fourpointdeterminant}; hence they form a basis of
the full relation lattice.  For $R=uz+vw$,
\[
 R_0+R_2=hu-v,\qquad R_0+R_3=u+hv.
\]
For any zero-sum vector, the absolute value of every coordinate subsum is
at most its degree.  Hence $\rho(R)\leq h$ implies
\[
 |hu-v|\leq h,\qquad |u+hv|\leq h.
\]
Squaring and adding gives
\[
 (h^2+1)(u^2+v^2)\leq2h^2<2(h^2+1).
\]
Thus $(u,v)$ is zero or a signed coordinate unit vector.

If $h=2m+1$, take
\[
 A_h=\{0,m,D_h-(m+1),D_h\}
\]
and the two relations
\[
 z=(m-1,-m,m+2,-m-1),\qquad
 w=(m+1,-m-1,-m,m).
\]
Since $D_h-(m+1)\equiv1\pmod m$, the set $A_h$ is primitive.  Consequently
the middle-coordinate projection of its full relation lattice has determinant
$D_h$, by the index calculation in \eqref{eq:fourpointdeterminant}.
Again both have degree $h$, while
\[
 \left|\det
 \begin{pmatrix}
  -m&m+2\\
  -m-1&-m
 \end{pmatrix}\right|=2m^2+3m+2=D_h.
\]
Their determinant therefore equals the determinant of the projected lattice,
so they form a basis of the full relation lattice.  For $R=uz+vw$,
\[
 R_0+R_2=hu+v,\qquad R_0+R_3=-2u+hv.
\]
If $\rho(R)\leq h$, put $U=hu+v$ and $V=-2u+hv$.
Then $|U|,|V|\leq h$ and
\[
 u=\frac{hU-V}{h^2+2},\qquad
 v=\frac{2U+hV}{h^2+2}.
\]
Since $h\geq3$, these inequalities give $|u|,|v|<2$.  Each of the four
diagonal choices $(u,v)\in\{-1,1\}^2$ violates either $|U|\leq h$ or
$|V|\leq h$.  Thus once more only zero and the signed coordinate unit
vectors remain.

In either parity, the only nonzero relations of degree at most $h$ are
$\pm z$ and $\pm w$, both of degree exactly $h$.  A primitive relation of
degree $h$ has a unique degree-$h$ binomial, with no room for monomial
padding, and the two binomials here have four distinct monomials.  Therefore
$\dim_K(I_{A_h})_h=2$, so $|hA_h|=M_h-2$.  Together with
\eqref{eq:defecttwolower}, this proves the theorem.
\end{proof}

We now pass from one-dimensional normalized diameter to Nathanson's lattice
compression problem.  For a nonempty finite $A\subset\mathbb R^n$, write
\[
 \operatorname{diam}_\infty(A)
 =\max_{x,y\in A}\lVert x-y\rVert_\infty.
\]
To keep the two interval conventions separate, for integers $h,n\geq1$ put
\[
 \nu_n(h,4)=\min\left\{D\in\Z_{\geq0}:
 \substack{\text{for every }t\in\Rcal(h,4)\text{ there exists}\\
 A\subset[0,D]^n\cap\Z^n\text{ with }|A|=4\text{ and }|hA|=t}
 \right\}.
\]
Thus $\nu_1(h,4)=\nu(h,4)$.  Problem~3 of
\cite{NathansonLattice2026} denotes these parameters by $N_n(h,4)$ and
$N(h,4)=N_1(h,4)$, respectively.  This $[0,D]$ convention differs from the
interval-size convention in \cite{NathansonCompression}, for which the
parameter is $\nu(h,4)+1$.

\begin{corollary}[Exact lattice obstruction and failure of root compression]
\label{cor:defecttwolattice}
For every $h\geq2$ and $n\geq1$, the least $D$ for which some
$A\subset[0,D]^n\cap\Z^n$ with $|A|=4$ satisfies $|hA|=M_h-2$ is
\begin{equation}\label{eq:defecttwolatticediameter}
 D_h=1+\binom{h+1}{2}.
\end{equation}
Consequently
\begin{equation}\label{eq:latticecompressionbounds}
 1+\binom{h+1}{2}
 \leq\nu_n(h,4)\leq\nu_1(h,4)
 \leq\max\{h^2,D_h^{\max}\}.
\end{equation}
In particular, $\nu_n(h,4)=\Theta(h^2)$ with constants uniform in $n$.
Moreover, for every $n\geq2$,
\begin{equation}\label{eq:rootcompressionfails}
 \nu_n(h,4)>\nu_1(h,4)^{1/n}.
\end{equation}
Thus the inequality proposed in \cite[Problem~3]{NathansonLattice2026}
fails already for $k=4$, for every $h\geq2$ and every $n\geq2$.
\end{corollary}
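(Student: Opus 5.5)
The plan has three parts: reduce the lattice problem to the one-dimensional Theorem~\ref{thm:defecttwodiameter}, then read off the compression bounds, then derive the root-compression failure.

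\textbf{Lower bound via a generic projection.} Let $A\subset[0,D]^n\cap\Z^n$ with $|A|=4$ and $|hA|=M_h-2$. Choose an integer vector $\lambda\in\Z^n$ that is generic enough that the linear map $x\mapsto\lambda\cdot x$ is injective on the finite set $hA-hA$ (and on $A$). Then $\lambda\cdot A$ is a four-element subset of $\Z$ with the same weak $h$-addition table, so $|h(\lambda\cdot A)|=M_h-2$.

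The real work is controlling $\ndiam$ of this image: a generic projection has large diameter, so the image cannot be compared to $D$ directly. Instead, work intrinsically. The lattice $L$ of homogeneous relations of $A$ (integer $z$ with $\sum z_i=0$ and $\sum z_ia_i=0$ in $\Z^n$) equals the relation lattice of the projection, by injectivity. The defect-two argument in the proof of Theorem~\ref{thm:defecttwodiameter} depends only on relation data:
\begin{itemize}
\item no relations of degree below $h$;
\item Minkowski applied to the projected relation lattice;
\item the parity exclusions.
\end{itemize}

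There are two cases. If the affine span of $A$ has dimension at least $2$, the relation lattice has rank at most $1$. Its degree-$h$ piece is then principal, and the defect is $0$ or $M_{h-r}\ne2$, contradicting $|hA|=M_h-2$. So $A$ is collinear, and this is the key structural claim. Write $A=x_0+\{t_0,t_1,t_2,t_3\}v$ with $v\in\Z^n$ primitive and $t_i\in\Z$. Then $\ndiam$ of the one-dimensional set $\{t_i\}$ is at most $(\max t_i-\min t_i)$, and
\[
(\max t_i-\min t_i)\,\lVert v\rVert_\infty\le D.
\]
Since $\lVert v\rVert_\infty\ge1$, Theorem~\ref{thm:defecttwodiameter} gives $D\ge\ndiam\ge D_h$.

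\textbf{Upper bound and the compression bounds.} For the upper bound, embed the one-dimensional witness of Theorem~\ref{thm:defecttwodiameter} along the first coordinate axis; its coordinates lie in $[0,D_h]$. This proves \eqref{eq:defecttwolatticediameter}.

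For \eqref{eq:latticecompressionbounds}:
\begin{itemize}
\item The first inequality holds because the label $M_h-2$ must be realized, and it lies in $\Rcal(h,4)$ by Proposition~\ref{prop:boundary}.
\item The middle inequality holds by embedding any one-dimensional realization into the first coordinate.
\item The last inequality is Corollary~\ref{cor:compression}.
\end{itemize}
Both outer bounds are $\asymp h^2$ with constants independent of $n$.

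\textbf{Root-compression failure.} For \eqref{eq:rootcompressionfails}, combine the bounds: for $n\ge2$,
\[
\nu_1(h,4)^{1/n}\le\max\{h^2,D_h^{\max}\}^{1/2}.
\]
Check this is less than $1+\binom{h+1}{2}$ for all $h\ge2$. The largest case is $h=2,3$, where $D_h^{\max}=7,11$ and $\sqrt{11}<7$; for $h\ge4$, $h<1+h(h+1)/2$ holds.

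The main obstacle is the collinearity step. It needs two things to be checked carefully:
\begin{itemize}
\item rank of the relation lattice at most one when the affine dimension is at least two, which is clear since the relation space has dimension $3-\dim\operatorname{aff}A$;
\item that the defect-two argument transfers through the primitive direction $v$, which requires $\ndiam$ to be computed on the parameters $t_i$ rather than on coordinates.
\end{itemize}
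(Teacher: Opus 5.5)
Your proof is correct and follows essentially the paper's proof. Affine dimension at least two leaves a relation lattice of rank at most one, so the defect is $0$ or $M_{h-r}\neq2$; this forces collinearity. The primitive-direction parametrization then gives $\operatorname{diam}_\infty(A)\geq D_0\geq D_h$, and the remaining bounds come from axis embedding and Corollary~\ref{cor:compression}. The opening generic-projection paragraph is unnecessary and can be dropped. Also correct the small slip $D_2^{\max}=\binom{4}{2}=6$ (not $7$); it is harmless, since $\sqrt6<4=D_2$.
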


\begin{proof}
Let $A=\{a_0,a_1,a_2,a_3\}\subset\Z^n$ satisfy $|hA|=M_h-2$.
We first show that $A$ is collinear.  If its affine dimension is three,
its homogeneous relation lattice is zero, so all $M_h$ degree-$h$
compositions have distinct images.  If its affine dimension is two, its
homogeneous relation lattice is a saturated rank-one lattice.  Let $z$ be
its primitive generator and put $r=\rho(z)$.  The four points are distinct,
so $r\geq2$.  Exactly as in the proof of
Proposition~\ref{prop:rankone}, the homogeneous toric ideal is generated by
\[
 x^{z^+}-x^{z^-}.
\]
Its degree-$h$ defect is $M_{h-r}$ if $r\leq h$, and zero if $r>h$.
The possible defects are therefore $0,1,4,10,\ldots$, never two.  Affine
dimensions three and two are impossible, while four distinct points cannot
have affine dimension zero.  Hence $A$ has affine dimension one.

Write
\[
 A=a_0+\{t_0,t_1,t_2,t_3\}v,
\]
where $v\in\Z^n$ is primitive and the $t_i$ are distinct integers.  Put
\[
 q=\gcd\{t_i-t_j:0\leq i,j\leq3\},\qquad
 D_0=\frac{\max_i t_i-\min_i t_i}{q}.
\]
Then $D_0$ is the normalized diameter of the corresponding
one-dimensional four-set, whereas
\begin{equation}\label{eq:collinearnormalization}
 \operatorname{diam}_\infty(A)
 =qD_0\lVert v\rVert_\infty\geq D_0.
\end{equation}
Theorem~\ref{thm:defecttwodiameter} gives $D_0\geq D_h$.
Conversely, its one-dimensional witnesses embed along a coordinate axis.
This proves \eqref{eq:defecttwolatticediameter} and the first inequality in
\eqref{eq:latticecompressionbounds}.  The remaining inequalities follow
from axis-embedding all one-dimensional witnesses and
Corollary~\ref{cor:compression}.

Finally, if $h\geq4$, then
\[
 \nu_1(h,4)^{1/n}\leq\sqrt{\nu_1(h,4)}
 \leq h<D_h\leq\nu_n(h,4).
\]
For $h=2,3$, Corollary~\ref{cor:compression} gives
$\nu_1(2,4)=6$ and $\nu_1(3,4)=11$, while $D_2=4$ and $D_3=7$;
the same strict inequality follows directly.
This proves \eqref{eq:rootcompressionfails}.
\end{proof}

Thus equality of the integer and lattice spectra does not entail a
root-scale reduction in the diameter needed to realize them.

\section{Popularity, collision multiplicities, and conditional laws}
\label{sec:popularity}

We now turn to the probabilistic counterpart of the same rank geometry,
beginning with four-point realization frequencies.  The labels in the
large-diameter classification are exactly the asymptotic frequency spikes.
This gives an exact form of the triangular-gap phenomenon observed by
Nathanson and quantified by Senger
\cite{NathansonTriangular,Senger2025}.
Enumeration of $B_h$-sets of prescribed cardinality has a parallel
probabilistic literature; see, for example, Dellamonica, Kohayakawa, Lee,
R\"odl, and Samotij \cite{DellamonicaEtAl2018}.  Here the statistic is finer:
we condition on the exact label $|hA|$ and identify its frequency exponent.

For $n\in\Rcal(h,4)$ and $q\geq1$, put
\[
 F_{h,n}(q)=\#\left\{A\in\binom{[q]}4:|hA|=n\right\},
 \qquad [q]=\{1,\ldots,q\}.
\]
For fixed $h$, we call $n$ \emph{popular} when
$F_{h,n}(q)/q^2\to\infty$ as $q\to\infty$.

\begin{theorem}[Four-point active-rank profile]
\label{thm:spectral-rank}
For every $h\geq2$,
\begin{align}
 \Rcal^{[0]}(h,4)&=\{M_h\},\label{eq:rankzero-spectrum}\\
 \Rcal^{[1]}(h,4)&=
 \{M_h-M_{h-j}:2\leq j\leq h\},\label{eq:rankone-spectrum}\\
 \Rcal^{[2]}(h,4)&=
 \Rcal(h,4)\setminus
 \bigl(\Rcal^{[0]}(h,4)\cup\Rcal^{[1]}(h,4)\bigr).
 \label{eq:ranktwo-spectrum}
\end{align}
The first two layers have cardinalities $1$ and $h-1$ exactly.  As
$h\to\infty$, the third has cardinality
$|\Rcal(h,4)|-h=\Theta(h^3)$; in particular, all but a
$\Theta(h^{-2})$ proportion of the distinct values have minimum rank two.
Moreover, as $h\to\infty$,
\[
 \left|\Rcal^{[2]}(h,4)\cap
 \left[M_h-\frac{h^3}{250},M_h-\frac{h^3}{400}\right]\right|
 \gg h^3.
\]
For every rank-two cardinality, the realization count is an exact
quasipolynomial in $q-1$ of degree two and the normalized shape locus is
zero-dimensional.  Such a cardinality has no realization of normalized
diameter exceeding $h^2$; conversely, the rank-zero and rank-one labels are
exactly those admitting arbitrarily large diameter.  The largest rank-two
label is $M_h-2$, whose least possible normalized diameter is
$1+\binom{h+1}{2}$.
\end{theorem}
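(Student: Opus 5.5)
This theorem is mostly a matter of collecting the $k=4$ cases of earlier results, and I would assemble it in that order. First, \eqref{eq:rankzero-spectrum} and \eqref{eq:rankone-spectrum} are Corollary~\ref{cor:universal-principal-skeleton} with $k=4$ and $M_{h,4}=M_h$. The rank-one values $M_h-M_{h-j}$ for $2\leq j\leq h$ are pairwise distinct and differ from $M_h$, since $M_{h-j}\geq1$ and the $M_i$ increase strictly. Hence the two layers have exactly $1$ and $h-1$ elements. By \eqref{eq:rank-frequency-range}, the minimum active rank of a four-point label is at most $k-2=2$, which gives \eqref{eq:ranktwo-spectrum}. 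It follows that $|\Rcal^{[2]}(h,4)|=|\Rcal(h,4)|-h$, and this is $\Theta(h^3)$ by Theorem~\ref{thm:main}. The exceptional proportion is then $h/\Theta(h^3)=\Theta(h^{-2})$.

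For the localized count, I would take the $\gg h^3$ labels from \eqref{eq:localmain}. At most $h$ of them have minimum rank at most one, so removing those still leaves $\gg h^3$ labels of minimum rank two. One could alternatively note that every principal deficit $M_{h-j}$ lies outside the window for large $h$, except for $j$ near $h-(h/\sqrt[3]{\cdot})$. The subtraction argument is cleaner and avoids that check.

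The frequency and shape claims come from general results. For a rank-two label, Theorem~\ref{thm:rank-frequency} gives an exact quasipolynomial in $q-1$ of degree $4-2=2$. Corollary~\ref{cor:hilbert-strata} gives shape-locus dimension $k-2-r=0$. I would also check that every cell has dimension zero, not just the largest one. This holds because every realization of such a label has active rank $2$: a realization of rank at most one would have a principal value by Proposition~\ref{prop:rankone}, and that contradicts minimum rank two.

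The same observation handles the diameter statements. A realization with $\ndiam>h^2$ has active rank at most one by Lemma~\ref{lem:product}, so its label is principal. Therefore rank-two labels have no realization of diameter exceeding $h^2$. Conversely, Theorem~\ref{thm:tail} realizes every principal label at arbitrarily large diameter.

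For the final clause, Corollary~\ref{cor:largest-nonprincipal-value} with $k=4$ shows that $M_h-2$ has minimum rank two and is the largest nonprincipal label. Its least normalized diameter is given by Theorem~\ref{thm:defecttwodiameter}. I expect the only delicate points to be the all-realizations-are-rank-two argument and the bookkeeping in the localized count; everything else is a direct citation.
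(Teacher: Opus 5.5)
Your proposal is correct and takes essentially the paper's route. The paper also reads off the two principal layers (citing Proposition~\ref{prop:rankone} with the witnesses \eqref{eq:smallwitness}, where you cite Corollary~\ref{cor:universal-principal-skeleton}), subtracts the $h$ principal labels from the window count of \eqref{eq:localmain}, invokes Theorem~\ref{thm:rank-frequency} and Corollary~\ref{cor:hilbert-strata}, and uses Theorems~\ref{thm:tail} and~\ref{thm:defecttwodiameter} for the diameter clauses. Your direct appeal to Lemma~\ref{lem:product} is exactly the content of the proof of Theorem~\ref{thm:tail}, and your check that every realization of a minimum-rank-two label has rank two (so every shape cell is a point, not only the largest) is a harmless sharpening consistent with the discussion after Corollary~\ref{cor:universal-principal-skeleton}.
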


\begin{theorem}[Exact popularity stratification]\label{thm:popularity}
Fix $h\geq2$.  There are explicit positive rational constants
$\kappa_2,\ldots,\kappa_h$ such that, as $q\to\infty$,
\begin{equation}\label{eq:tailfrequency}
 F_{h,M_h-M_{h-r}}(q)=\kappa_rq^3+O_h(q^2)
 \qquad(2\leq r\leq h),
\end{equation}
and
\begin{equation}\label{eq:maxfrequency}
 F_{h,M_h}(q)=\binom q4-
 \left(\sum_{r=2}^h\kappa_r\right)q^3+O_h(q^2).
\end{equation}
For every
\[
 n\in\Rcal(h,4)\setminus
 \left(\{M_h\}\cup\{M_h-M_{h-r}:2\leq r\leq h\}\right)
\]
there is an explicit positive rational constant $\lambda_{h,n}$ such that
\begin{equation}\label{eq:corefrequency}
 F_{h,n}(q)=\lambda_{h,n}q^2+O_h(q).
\end{equation}
Thus exactly $h$ cardinalities are popular: the maximum has frequency of
order $q^4$, the other $h-1$ displayed values have frequency of order
$q^3$, and every remaining attainable value has frequency of order $q^2$.
\end{theorem}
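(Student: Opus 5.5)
The plan is to read Theorem~\ref{thm:popularity} off the general rank--frequency stratification, Theorem~\ref{thm:rank-frequency}, specialized to $k=4$, together with the four-point skeleton of Theorem~\ref{thm:spectral-rank} (equivalently Corollary~\ref{cor:universal-principal-skeleton}). For $k=4$ the minimum active rank of a label $n$ is $0$, $1$ or $2$. Rank $0$ occurs exactly for $n=M_h$, and rank $1$ occurs exactly for $n=M_h-M_{h-r}$ with $2\leq r\leq h$. Every other attainable $n$ has minimum rank $2$. Theorem~\ref{thm:rank-frequency} then says that $F_{h,n}(q)$ is an exact quasipolynomial in $q-1$ of degree $4-r_{h,4}(n)$, with positive rational leading coefficient given by \eqref{eq:rank-frequency-constant}. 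Applied to $n=M_h-M_{h-r}$, this gives \eqref{eq:tailfrequency}: we define $\kappa_r=c_{h,4,M_h-M_{h-r}}$, and the $O_h(q^2)$ error holds because the quasipolynomial has degree three. Applied to a rank-two label, it gives \eqref{eq:corefrequency} with $\lambda_{h,n}=c_{h,4,n}$. The explicit form $\lambda_{h,n}=\tfrac12\sum_{B\in\mathcal S_{h,n}}1/\max B$ can be quoted from the computation after Corollary~\ref{cor:universal-principal-skeleton}.

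For \eqref{eq:maxfrequency} I will use complementary counting rather than computing the volume of the rank-zero stratum. The labels partition $\binom{[q]}4$, so
\[
 F_{h,M_h}(q)=\binom q4-\sum_{r=2}^hF_{h,M_h-M_{h-r}}(q)-\sum_{n\in\Rcal^{[2]}(h,4)}F_{h,n}(q).
\]
The last sum runs over finitely many $n$ for fixed $h$, since $\Rcal(h,4)$ is finite, and each term is $O_h(q^2)$. This proves \eqref{eq:maxfrequency}, and the leading coefficient $1/24$ shows that $F_{h,M_h}$ has order $q^4$.

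The remaining points are bookkeeping. First, the constants must be positive and explicit. Positivity comes from \eqref{eq:rank-frequency-constant}, because each rank-$r$ label has at least one nonempty stratum of codimension exactly $r$. Explicitness holds because these are relative normalized volumes of finitely many rational polytopes $P_X$. Second, the popularity count follows directly: $F/q^2\to\infty$ exactly for the degree-$3$ and degree-$4$ labels, and these are $1+(h-1)=h$ values. The $h-1$ values $M_h-M_{h-r}$ are distinct because $M_{h-r}$ is strictly increasing in the index.

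The only step that needs care is uniformity of the error terms. Each $O_h$ is an error for one fixed label. When these errors are summed in the complement, the implied constant could grow with the number of labels, and that number is $O(h^3)$. Since $h$ is fixed, this only affects the constant in $O_h(q^2)$, so the argument stays in the fixed-$h$ regime of Remark~\ref{rem:rank-duality-scope}. I expect no genuine obstacle here beyond checking that the rank-one classification (Corollary~\ref{cor:universal-principal-skeleton}) is an exact identification of minimum rank and not just an upper bound. Without that, a label of the form $M_h-M_{h-r}$ could also have a rank-zero realization and so a higher degree. That cannot happen, because a rank-zero realization forces the label to equal $M_h$.
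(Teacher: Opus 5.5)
Your argument is correct, but it takes a different route from the paper.

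\textbf{Your route.} You obtain the theorem as the $k=4$ case of the general Ehrhart stratification, Theorem~\ref{thm:rank-frequency}, combined with the exact minimum-rank classification of Corollary~\ref{cor:universal-principal-skeleton}. You then get \eqref{eq:maxfrequency} by complementing over labels.

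\textbf{The paper's route.} The paper argues directly.
\begin{itemize}
\item Lemma~\ref{lem:onehyperplane} counts the sets lying on one primitive relation hyperplane. It works in gap coordinates, on an index-$D$ lattice with a degree-one translation weight, and this produces the closed weight $w(c)=D/(6(D+A)(D+B))$.
\item Inclusion--exclusion over the finitely many primitive hyperplanes of degree at most $h$ follows, with pairwise intersections costing $O_h(q^2)$. Together with Proposition~\ref{prop:rankone}, this gives \eqref{eq:tailfrequency}.
\item Subtracting the whole union of active hyperplanes from $\binom q4$ gives \eqref{eq:maxfrequency}. This is the same complement idea as yours, phrased on configurations rather than labels.
\item For the remaining labels, Lemma~\ref{lem:product} confines every representative to normalized diameter at most $h^2$, so only finitely many primitive shapes occur. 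Counting translates and dilates of each shape then gives $q^2/(2\max B)+O(q)$ without any Ehrhart theory.
\end{itemize}

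\textbf{What each buys.} Your route is shorter once the arrangement machinery is available, and it yields exact quasipolynomiality for every $q$. The paper's route yields the closed formula \eqref{eq:kappaformula}, in which $\kappa_r$ is visibly independent of $h$. That explicit, $h$-independent form is what Proposition~\ref{prop:kappasize}, \eqref{eq:firstcollisionfrequency}, and Corollary~\ref{cor:conditional-limit} later rely on.

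Your constant $c_{h,4,M_h-M_{h-r}}$ is in fact the same number. Its contributing flats are the hyperplanes $H_z$ with $z$ primitive, $\rho(z)=r$, meeting the open chamber, and neither these flats nor their polytopes $P_X$ depend on $h$. To connect your version with those later applications, you would need to record this and identify each relative volume with $w(c)$.
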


\begin{proof}[Proof of Theorem~\ref{thm:spectral-rank}]
For a four-element set, the full homogeneous relation lattice has rank two,
so the active rank is one of $0,1,2$.  Proposition~\ref{prop:rankone} shows
that an active-rank-zero realizer has cardinality $M_h$, and a $B_h$ witness
gives the converse.  The same proposition shows that every active-rank-one
realizer has cardinality $M_h-M_{h-j}$ for some $2\leq j\leq h$.
Conversely, the witnesses in \eqref{eq:smallwitness} have active rank one and
realize all these values.  They are distinct because
$M_0<M_1<\cdots<M_{h-2}$.  This proves
\eqref{eq:rankzero-spectrum} and \eqref{eq:rankone-spectrum}; since no other
active rank is possible, \eqref{eq:ranktwo-spectrum} follows.
The first two layers have exactly $h$ values in total, so
\[
 |\Rcal^{[2]}(h,4)|=|\Rcal(h,4)|-h.
\]
Together with Theorem~\ref{thm:main}, this also gives the asserted
$1-\Theta(h^{-2})$ proportion.

The local window in \eqref{eq:localmain} contains $\gg h^3$ spectrum values.
Deleting the $h$ rank-zero and rank-one values removes only $O(h)$ of them,
so $\gg h^3$ values in that same window have minimum active rank two.  The
proof of Theorem~\ref{thm:main} supplies primitive maximal-rank witnesses for
this window; their ACM property follows from Theorem~\ref{thm:family}.  The
ambient bound gives the
matching $O(h^3)$ estimate.  Finally,
Theorem~\ref{thm:rank-frequency} says that each corresponding realization
count is an exact quasipolynomial in $q-1$ of degree $4-2=2$, with positive
leading coefficient.  Corollary~\ref{cor:hilbert-strata} gives normalized
shape dimension $4-2-2=0$.  Equivalently, its leading asymptotic has the form
\eqref{eq:corefrequency}.

Theorem~\ref{thm:tail} supplies arbitrarily large normalized-diameter
realizations for precisely the rank-zero and rank-one list, and says that no
other value can occur beyond diameter $h^2$.  Hence every realization of a
minimum-rank-two value lies in the quadratic core.  Finally, the deficit-one
value $M_h-1$ is the last member of the rank-one list, so $M_h-2$ is the
largest rank-two value; Theorem~\ref{thm:defecttwodiameter} gives its exact
least diameter $1+\binom{h+1}{2}$.
\end{proof}

We first give finite formulas for the constants.  For
$x=(x_1,x_2,x_3)\in\mathbb R^3$, define
\[
 z(x)=(-x_1,x_1-x_2,x_2-x_3,x_3)
\]
and
\begin{equation}\label{eq:gapdegree}
 \delta(x)=\rho(z(x))=
 \frac{|x_1|+|x_1-x_2|+|x_2-x_3|+|x_3|}{2}.
\end{equation}
Let $\mathfrak R_r$ be the set, modulo $c\sim-c$, of primitive integer
triples satisfying
\[
 \delta(c)=r,\qquad \min_i c_i<0<\max_i c_i.
\]
For every nonzero mixed-sign $x\in\mathbb R^3$, choose the sign and coordinate
order so that $x=(A,B,-D)$, where $A,B\geq0$ and $D>0$, and put
\begin{equation}\label{eq:relationweight}
 w(x)=\frac{D}{6(D+A)(D+B)}.
\end{equation}
This value is independent of the choices: the expression is symmetric in
$A,B$, and when one coordinate is zero either sign gives
$1/(6(A+D))$.  Permuting the three gap coordinates is a lattice-preserving
symmetry of the gap simplex (and leaves the translation weight unchanged),
so the coordinate reordering preserves the hyperplane-section coefficient.
Thus $w$ is homogeneous of degree $-1$ and continuous on each of the finitely
many sign/permutation chambers; their walls have measure zero.

For $r\geq2$, define the constants in Theorem~\ref{thm:popularity} by
\begin{equation}\label{eq:kappaformula}
 \boxed{\ \kappa_r:=\sum_{c\in\mathfrak R_r}w(c).\ }
\end{equation}
Lemma~\ref{lem:onehyperplane} and the proof of
Theorem~\ref{thm:popularity} below identify these constants with the
coefficients in \eqref{eq:tailfrequency}.
The sum is finite because the total variation of
$0,c_1,c_2,c_3,0$ is $2r$, so $|c_i|\leq r$.  It is positive because the
class of $c=(-(r-1),0,1)$ belongs to $\mathfrak R_r$ and contributes
$1/(6r)$.  Direct enumeration in $\mathfrak R_r$ gives, for example,
\[
 \kappa_2=\frac13,\qquad
 \kappa_3=\frac{43}{72},\qquad
 \kappa_4=\frac{73}{108}.
\]

\begin{proposition}[Size of the popularity constants]\label{prop:kappasize}
There are absolute constants $c_0,C_0>0$ such that
\[
 c_0r\leq\kappa_r\leq C_0r\qquad(r\geq2).
\]
Consequently
\[
 K_h:=\sum_{r=2}^h\kappa_r\asymp h^2.
\]
More precisely, using the real-variable functions $\delta$ and $w$ just
defined, put
\[
 \Omega=\{x\in\mathbb R^3:\delta(x)\leq1,
 \ \min_i x_i<0<\max_i x_i\},
 \qquad I=\int_\Omega w(x)\,dx.
\]
Then $0<I<\infty$ and
\begin{equation}\label{eq:Khasymptotic}
 \boxed{\ K_h\sim\frac{I}{2\zeta(3)}h^2.\ }
\end{equation}
Writing $\mathfrak K:=I/(2\zeta(3))>0$, one has
$K_h\sim\mathfrak K h^2$.
\end{proposition}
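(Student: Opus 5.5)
The plan is to read $K_h=\sum_{r\le h}\kappa_r$ as a weighted count of primitive lattice points in the dilated region $h\Omega$, and compare it with an integral. By definition, $K_h=\sum_{c}w(c)$, where $c$ runs over primitive integer triples modulo $\pm$ with mixed signs and $2\le\delta(c)\le h$. The $\delta(c)=1$ triples are the finitely many classes for which no $c$ of degree one is mixed-sign primitive with $\delta\ge2$. The only degree-one triples are $\pm(1,0,0)$-type transfers, and these are not mixed sign. So up to $O(1)$ we are counting all primitive mixed-sign $c\in h\Omega$ modulo sign.

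\textbf{Step 1: the constant $I$.} We check that $0<I<\infty$. Positivity holds because $w>0$ on the open set $\Omega$, which has positive volume. For finiteness, note that $w$ is homogeneous of degree $-1$, continuous off measure-zero walls, and bounded by $D/(6D^2)=1/(6D)\le 1/(6\|x\|_\infty)$ up to a constant, since $D$ is comparable to the largest coordinate modulo the ordering choice. An integrable singularity of order $\|x\|^{-1}$ at the origin in $\mathbb R^3$ gives $I<\infty$.

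\textbf{Step 2: the asymptotic.} Write $S(h)=\sum_{c\in\mathbb Z^3\cap h\Omega,\,c\neq0}w(c)$ over all nonzero mixed-sign lattice points. Every such $c$ is $m c'$ with $c'$ primitive, and $w(mc')=w(c')/m$. Hence $S(h)=\sum_{m\ge1}m^{-1}P(h/m)$, where $P(y)=\sum_{c'\text{ primitive},\,\delta(c')\le y}w(c')=2K_{\lfloor y\rfloor}+O(1)$. Möbius inversion then gives $P(h)=\sum_m \mu(m)m^{-1}S(h/m)$.

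So it suffices to prove $S(y)=Iy^2+O(y^{2-\eta})$ for some $\eta>0$, or even just $S(y)\sim Iy^2$ together with a crude bound $S(y)\ll y^2$. Then $P(h)\sim Ih^2\sum_m\mu(m)m^{-3}=Ih^2/\zeta(3)$, which gives $K_h\sim Ih^2/(2\zeta(3))$. The tail of the Möbius sum is controlled by the crude bound $S(y)\ll y^2$, using dominated convergence over $m$.

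For $S(y)\sim Iy^2$, I would use the Riemann-sum argument. The function $w$ is homogeneous of degree $-1$, so $S(y)=y^{-1}\sum_{c}w(c/y)$ summed over $c/y\in\Omega\cap y^{-1}\mathbb Z^3$, and this equals $y^{2}\cdot y^{-3}\sum w(c/y)\to y^2\int_\Omega w$. The singularity at $0$ needs separate handling: cut out a ball of radius $\epsilon$. Outside it, $w$ is bounded and Riemann integrable (piecewise continuous with measure-zero discontinuities, and $\partial\Omega$ is polyhedral). Inside it, the lattice contribution is $\sum_{0<\|c\|\le\epsilon y}\|c\|^{-1}\ll(\epsilon y)^2$, and the integral contribution is $\ll\epsilon^2$ after scaling. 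Letting $\epsilon\to0$ finishes this step.

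\textbf{Step 3: $\kappa_r\asymp r$.} Summing over the shell gives $\kappa_r=\tfrac12\bigl(P(r)-P(r-1)\bigr)$. The upper bound comes from counting the $O(r^2)$ primitive triples with $\delta=r$ (all $|c_i|\le r$ on a two-dimensional polyhedral shell), each with $w(c)\ll 1/r$ because $D\ge$ const$\cdot r$. The main obstacle is that $D$ need not be large for every shell point. Among triples with $\delta(c)=r$, the positive part in gap coordinates may be small, but the definition puts $D$ as the lone negative, or in the two-negative case the sign flip makes it the lone positive. The total variation equal to $2r$ then forces $\max(A,B,D)\gg r$. When $D$ itself is small, $w\le D/(6AB)$ is still $\ll 1/r$ provided $A,B$ are comparable to $r$; otherwise a refined case split is needed.

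For the lower bound, restrict to a fixed subcone where $A,B,D\asymp r$ and $w\asymp1/r$. By the Möbius count, it contains $\gg r^2$ primitive shell points: primitive lattice points on the dilated polygon $\{\delta=r\}$ within a fixed open cone number $\asymp r^2/\zeta(3)$ up to error. This gives $\kappa_r\gg r$, and $K_h\asymp h^2$ follows directly from the asymptotic.
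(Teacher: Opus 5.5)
Your overall route is the paper's. It uses a weighted Riemann sum for $S(X)$ with the region near the origin removed, the decomposition $c=mc'$ with $w(mc')=w(c')/m$, and M\"obius inversion $P(h)=\sum_m\mu(m)m^{-1}S(h/m)$ with dominated convergence to produce $1/\zeta(3)$. It then divides by two for $c\sim-c$. Your lower bound is also the paper's idea in looser form. The paper makes it explicit with the family $c=(-a,b,s)$, $a+s=r$, $r/3\le a\le r/2$, $s/2\le b\le s$. On this family $\delta(c)=r$ and $w(c)\ge1/(18r)$ hold automatically, and primitivity $\gcd(a,b,r)=1$ is counted by M\"obius over the divisors of $r$.

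The gap is the pointwise bound $w(x)\ll1/\|x\|$. Four of your steps need it: $I<\infty$, the near-origin estimate in the Riemann sum, the crude bound $S(y)\ll y^2$ that licenses dominated convergence over $m$, and $\kappa_r\le C_0r$.

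In Step~1 you justify it by saying $D$ is comparable to the largest coordinate. That is false: $x=(1,N,-1)$ admits only the normalization $A=1$, $B=N$, $D=1$. In Step~3 you then notice that $D$ can be small and defer to an unspecified ``refined case split''. So as written the upper bound $\kappa_r=O(r)$ is not proved.

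No case split is needed:
\[
 w(x)=\frac16\cdot\frac{D}{D+B}\cdot\frac1{D+A}\le\frac1{6(D+A)},
\]
and symmetrically $w(x)\le1/(6(D+B))$. Hence $w(x)\le1/(6\max(A,B,D))=1/(6\|x\|_\infty)\le1/(2\delta(x))$, using $\delta(x)\le\|x\|_1\le3\|x\|_\infty$. With this inequality Steps~1--3 go through, once you also justify that the shell $\{\delta=r\}$ has $O(r^2)$ lattice points. Your bound $|c_i|\le r$ alone gives only $O(r^3)$. The paper notes that the positive entries and the absolute values of the negative entries of $z(c)$ form two compositions of $r$ with at most four parts in total, which gives $O(r^2)$ per sign pattern. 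Alternatively, difference two Ehrhart counts, since $\delta$ is integer-valued on $\mathbb Z^3$.

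Smaller slips:
\begin{itemize}
\item On a single shell the primitive proportion is $\prod_{p\mid r}(1-p^{-2})\in[6/\pi^2,1]$, not $1/\zeta(3)$. This is harmless for $\gg r^2$.
\item The lower bound for small $r$ needs positivity of every $\kappa_r$; for instance, the class of $(-(r-1),0,1)$ contributes $1/(6r)$.
\item No mixed-sign triple has $\delta=1$, so $P(h)=2K_h$ exactly rather than up to $O(1)$.
\end{itemize}
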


\begin{proof}
For each of the finitely many support patterns of $z(c)$, its positive
entries form a composition of $r$ and the absolute values of its negative
entries form another composition of $r$.  Since the total support has at
most four coordinates, this gives $O(r^2)$ primitive classes.  Moreover,
$\max(A,B,D)\geq r/4$, because the four-step path
$0,c_1,c_2,c_3,0$ has total variation $2r$.  Formula
\eqref{eq:relationweight} then gives $w(c)=O(1/r)$, and hence
$\kappa_r=O(r)$.

For the reverse bound, consider
\[
 c=(-a,b,s),\qquad a+s=r,\qquad
 \frac r3\leq a\leq\frac r2,\qquad
 \frac s2\leq b\leq s.
\]
Then
\[
 2\delta(c)=a+(a+b)+(s-b)+s=2r
\]
and
\[
 w(c)=\frac{a}{6(a+b)(a+s)}\geq\frac1{18r}.
\]
Primitivity is equivalent to
$\gcd(a,b,s)=\gcd(a,b,r)=1$.  The planar region of admissible pairs
$(a,b)$ has area
\[
 \int_{r/3}^{r/2}\frac{r-a}{2}\,da=\frac7{144}r^2.
\]
M\"obius inversion over the divisors of $r$ therefore gives
\[
 \#\{(a,b):\gcd(a,b,r)=1\}
 =\frac7{144}r^2\prod_{p\mid r}(1-p^{-2})
 +O(r\log\log r+d(r))\gg r^2,
\]
where $d(r)$ denotes the number of positive divisors of $r$.
 The resulting classes are distinct modulo sign.  Summing their weights
 proves $\kappa_r\gg r$ for all sufficiently large $r$.  After decreasing
 $c_0$ if necessary, the finitely many remaining values are covered by the
 positivity of $\kappa_r$ established above.

For the summatory assertion, homogeneity gives
$w(tx)=t^{-1}w(x)$, while the preceding estimate gives
$w(x)\ll1/\delta(x)$.  Thus $w$ is locally integrable in three
dimensions.  Let
\[
 S(X)=\sum_{\substack{c\in\mathbb Z^3\setminus\{0\}\\
                       c\ {\rm mixed},\ \delta(c)\leq X}}w(c).
\]
Weighted Riemann sums give
\begin{equation}\label{eq:allrelationRiemann}
 S(X)=IX^2+o(X^2).
\end{equation}
Indeed, first restrict to $\varepsilon\leq\delta(x)\leq1$, where the
integrand is bounded and Riemann integrable.  The continuous and discrete
contributions from $\delta\leq\varepsilon$ are $O(\varepsilon^2)$ after
division by $X^2$: by \eqref{eq:gapdegree}, the $\delta$-unit ball is a
fixed rational three-dimensional polytope, so subtracting consecutive
rational Ehrhart counts shows that a degree shell contains $O(m^2)$ lattice
points; moreover, $w=O(1/m)$ on that shell.
For fixed $\varepsilon$, let $X\to\infty$ in the ordinary Riemann sum on
the annulus; the two uniform $O(\varepsilon^2)$ bounds then permit
$\varepsilon\downarrow0$ and prove \eqref{eq:allrelationRiemann}.

M\"obius inversion, together with
$\delta(dc)=d\delta(c)$ and $w(dc)=d^{-1}w(c)$, now yields
\[
 P(h):=\sum_{\substack{c\ {\rm primitive, mixed}\\\delta(c)\leq h}}w(c)
 =\sum_{d\leq h}\frac{\mu(d)}d S(h/d).
\]
The same shell estimate used above gives $S(X)\ll X^2$ for $X\geq1$.
Consequently
\[
 \frac{P(h)}{h^2}
 =\sum_{d\leq h}\frac{\mu(d)}{d^3}
   \frac{S(h/d)}{(h/d)^2}.
\]
For each fixed $d$, the final factor tends to $I$ by
\eqref{eq:allrelationRiemann}, and it is uniformly bounded by the shell
estimate.  The summands are therefore dominated in absolute value by
$C/d^3$.  Dominated convergence and
$\sum_{d\geq1}\mu(d)d^{-3}=1/\zeta(3)$ give
\[
 P(h)=\frac{I}{\zeta(3)}h^2+o(h^2).
\]
The classes in \eqref{eq:kappaformula} identify $c$ with $-c$, so division
by two proves \eqref{eq:Khasymptotic}.
\end{proof}

For an integer $\ell\geq1$, if $\mathcal B^*_{\ell,4}(q)$ denotes the
four-subsets of $[q]$ that are
$B_\ell$-sets but not $B_{\ell+1}$-sets, the same constants give
\begin{equation}\label{eq:firstcollisionfrequency}
 |\mathcal B^*_{\ell,4}(q)|
 =\kappa_{\ell+1}q^3+O_\ell(q^2),
 \qquad \kappa_{\ell+1}\asymp\ell.
\end{equation}
Here and below in this paragraph, the asymptotic is as $q\to\infty$ with
$\ell$ fixed.
Indeed, away from pairwise intersections of relation hyperplanes,
membership in $\mathcal B^*_{\ell,4}(q)$ is exactly the assertion that the
primitive relation degree is $\ell+1$.

\begin{lemma}[One relation hyperplane]\label{lem:onehyperplane}
Fix an integer $r\geq2$ and let $c\in\mathfrak R_r$.  The number of sets
$A=\{a_0<a_1<a_2<a_3\}\subset[q]$ satisfying
$z(c)\cdot(a_0,a_1,a_2,a_3)=0$ is
\[
 w(c)q^3+O_c(q^2).
\]
The asymptotic is as $q\to\infty$.
\end{lemma}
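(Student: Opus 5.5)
The plan is to pass to gap coordinates, reduce the count to a weighted lattice-point count in a dilated planar triangle, and read off the leading coefficient as an area integral.

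\textbf{Gap coordinates.} Write $g_1=a_1-a_0$, $g_2=a_2-a_1$, $g_3=a_3-a_2$. A direct expansion gives
\[
 z(c)\cdot(a_0,a_1,a_2,a_3)=c_1g_1+c_2g_2+c_3g_3=c\cdot g,
\]
and this is the same tail-sum identity as in the proof of Theorem~\ref{thm:rank-diversity-frequency}. A set $A\subset[q]$ is determined by its positive gap vector $g\in\mathbb Z_{>0}^3$ together with $a_0$, and for fixed $g$ there are exactly $\max(0,\,q-(g_1+g_2+g_3))$ choices of $a_0$. Hence the count equals
\[
 \sum_{\substack{g\in\Lambda_c\cap\mathbb Z_{>0}^3\\ g_1+g_2+g_3<q}}\bigl(q-g_1-g_2-g_3\bigr),
 \qquad \Lambda_c=c^\perp\cap\mathbb Z^3 .
\]
Since the positive orthant and the sum $g_1+g_2+g_3$ are invariant under permuting coordinates and under $c\mapsto -c$, I would normalize $c=(A,B,-D)$ with $A,B\geq0$ and $D>0$, exactly as in \eqref{eq:relationweight}. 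This is possible because $c$ has mixed signs. The constraint then reads $g_3=(Ag_1+Bg_2)/D$.

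\textbf{Projection and leading term.} Project $\Lambda_c$ to the $(g_1,g_2)$-plane. The projection is injective, and its image is
\[
 \{(x,y)\in\mathbb Z^2:\ Ax+By\equiv0\pmod D\}.
\]
Because $c$ is primitive, $\gcd(A,B,D)=1$, so $(x,y)\mapsto Ax+By \bmod D$ is onto $\mathbb Z/D$. The image therefore has index $D$ in $\mathbb Z^2$, which is the same index computation as in \eqref{eq:fourpointdeterminant}. In these coordinates
\[
 s:=g_1+g_2+g_3=\frac{(D+A)x+(D+B)y}{D},
\]
and positivity of $g_3$ is automatic when $A+B>0$; it is automatic in all cases because $c$ is mixed. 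After scaling by $q$, the sum becomes $q^3$ times a Riemann sum for $\frac1D\int_T(1-s)\,dx\,dy$. Here $T$ is the triangle $x,y>0$, $s<1$, with intercepts $D/(D+A)$ and $D/(D+B)$. The integral of an affine function vanishing on the hypotenuse and equal to $1$ at the origin is one third of the area, so
\[
 \frac1D\int_T(1-s)\,dx\,dy=\frac1D\cdot\frac13\cdot\frac{D^2}{2(D+A)(D+B)}=\frac{D}{6(D+A)(D+B)}=w(c).
\]
When $A=0$ or $B=0$ the same computation applies without change.

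\textbf{Error term.} The weighted count of lattice points of a fixed planar lattice of index $D$ in $qT$, with the affine weight $q(1-s/q)$, has main term $q^3\,w(c)$. The error is $O_c(q^2)$. This follows because boundary lattice points of $qT$ number $O_c(q)$ and each carries weight $O(q)$, while the interior discrepancy between the weighted sum and the integral is $O_c(q^2)$ by a standard Ehrhart or Euler--Maclaurin estimate for rational polygons. Equivalently, one can sum the Ehrhart counts $|tT\cap\Lambda|$ over $t\leq q$. The strict inequalities $x,y>0$ and $s<q$ only remove boundary points and so stay within the same error.

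\textbf{Expected obstacle.} The main point requiring care is the normalization step. I must confirm that the index of the projected lattice is exactly $D$; this is where primitivity of $c$ enters. I must also confirm that the chosen coordinate permutation and sign do not change the count. The lattice-point error estimate itself is routine.
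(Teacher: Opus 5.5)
Your proposal is correct and takes essentially the same route as the paper. Both pass to gap coordinates, so the relation becomes $c\cdot g=0$, normalize to $c=(A,B,-D)$, use the index-$D$ lattice $\{Au+Bv\equiv0\pmod D\}$ with the weight $q-s$ counting translations, and finish with a weighted Ehrhart count over the triangle, whose integral is $w(c)$. Your explicit evaluation of that integral as one third of the triangle's area, and your alternative of summing unweighted counts over $t\le q$, just make the paper's terse final step more concrete.
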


\begin{proof}
Put
\[
 g_1=a_1-a_0,\qquad g_2=a_2-a_1,\qquad g_3=a_3-a_2.
\]
For every homogeneous vector $z$, one has
\[
 z\cdot(a_0,a_1,a_2,a_3)
 =-z_0g_1-(z_0+z_1)g_2+z_3g_3.
\]
The correspondence
\[
 c=(-z_0,-z_0-z_1,z_3),\qquad
 z=(-c_1,c_1-c_2,c_2-c_3,c_3)
\]
is unimodular.  Thus the relation becomes
\begin{equation}\label{eq:gaprelation}
 c_1g_1+c_2g_2+c_3g_3=0.
\end{equation}

Choose signs and gap coordinates so that $c=(A,B,-D)$.
Equation~\eqref{eq:gaprelation} is
\[
 Au+Bv=Ds.
\]
Since $c$ is primitive, the lattice
\[
 \Lambda=\{(u,v)\in\mathbb Z^2:Au+Bv\equiv0\pmod D\}
\]
has index $D$.  Once $(u,v)$ is chosen, the third gap is
$s=(Au+Bv)/D$, and there are $q-u-v-s$ possible translations into
$[q]$.  The gaps $u,v,s$ are strictly positive.  Passing to the closed
nonnegative section in the integral below changes only boundary terms, whose
degree-one weighted lattice count is $O_c(q^2)$.  Weighted Ehrhart counting
with this degree-one weight on the fixed index-$D$ lattice gives the $q^3$
main term below, with $O_c(q^2)$ error.  Thus
\begin{align*}
 &\frac{q^3}{D}
 \int_{\substack{x,y\geq0\\
 ((D+A)/D)x+((D+B)/D)y\leq1}}
 \left(1-\frac{D+A}{D}x-\frac{D+B}{D}y\right)dx\,dy
 +O_c(q^2)\\
 &\hspace{35mm}=
 \frac{D}{6(D+A)(D+B)}q^3+O_c(q^2).
\end{align*}
\end{proof}

\begin{proof}[Proof of Theorem~\ref{thm:popularity}]
For fixed $h$, only finitely many primitive relation directions have degree
at most $h$.  Two distinct directions determine two independent rational
hyperplanes in $\mathbb R^4$; their intersection contains $O_h(q^2)$
integer points of $[q]^4$.  Lemma~\ref{lem:onehyperplane} and
inclusion--exclusion therefore show that the union of the primitive
degree-$r$ relation hyperplanes has cardinality
\begin{equation}\label{eq:hyperplaneunion}
 \kappa_rq^3+O_h(q^2).
\end{equation}

A four-set is a $B_h$-set exactly when it lies on none of the active
relation hyperplanes.  Outside their pairwise intersections, a non-$B_h$
set has active rank one.  If its primitive direction has degree $r$,
Proposition~\ref{prop:rankone} gives
\[
 |hA|=M_h-M_{h-r}.
\]
These values are distinct as $r$ varies.  Equation~
\eqref{eq:hyperplaneunion}, with all intersections absorbed in the error,
gives \eqref{eq:tailfrequency}; subtracting the union of all active
hyperplanes from $\binom q4$ gives \eqref{eq:maxfrequency}.

Now let $n$ be an attainable cardinality outside the displayed list.  Every
representative has at least two independent active relations, so
Lemma~\ref{lem:product} implies that its normalized diameter is at most
$h^2$.  Let $\mathcal S_{h,n}$ be the resulting finite set of primitive
normalized shapes
\[
 B=\{0<b<c<d\},\qquad \gcd(b,c,d)=1,\qquad |hB|=n.
\]
Every representative in $[q]$ occurs uniquely as $t+gB$, and the number of
copies of one shape is
\[
 \sum_{g=1}^{\lfloor(q-1)/d\rfloor}(q-gd)
 =\frac{q^2}{2d}+O_d(q).
\]
Consequently \eqref{eq:corefrequency} holds with
\begin{equation}\label{eq:lambdaformula}
 \boxed{\ \lambda_{h,n}=\frac12
 \sum_{B\in\mathcal S_{h,n}}\frac1{\max B}>0.\ }
\end{equation}
\end{proof}

\begin{corollary}[First-order law for a random four-set]
\label{cor:random-four-set-first-order}
Fix $h\geq2$.  As $q\to\infty$, a uniformly random
$A\in\binom{[q]}4$ satisfies
\[
 \mathbb P(|hA|=M_h-M_{h-r})
 =\frac{24\kappa_r}{q}+O_h(q^{-2})
 \qquad(2\leq r\leq h),
\]
and
\[
 \mathbb P(|hA|=M_h)
 =1-\frac{24K_h}{q}+O_h(q^{-2}).
\]
Every other attainable cardinality has probability
$\Theta_{h,n}(q^{-2})$.  In particular,
\[
 \mathbb P(A\text{ is not }B_h)
 =\frac{24K_h}{q}+O_h(q^{-2}),
 \qquad K_h\asymp h^2.
\]
\end{corollary}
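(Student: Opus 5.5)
The plan is to divide the counts of Theorem~\ref{thm:popularity} by $\binom q4$.

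Since $\binom q4=q^4/24+O(q^3)$, we have
\[
 1/\binom q4=24q^{-4}\bigl(1+O(q^{-1})\bigr).
\]
For the tail labels, \eqref{eq:tailfrequency} gives $F_{h,M_h-M_{h-r}}(q)=\kappa_rq^3+O_h(q^2)$. Dividing yields
\[
 \mathbb P\bigl(|hA|=M_h-M_{h-r}\bigr)=24\kappa_rq^{-1}+O_h(q^{-2}),
\]
where the product of the main term with the $O(q^{-1})$ correction is absorbed into $O_h(q^{-2})$, uniformly over the finitely many $r\le h$.

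For the maximum, \eqref{eq:maxfrequency} gives $F_{h,M_h}(q)=\binom q4-K_hq^3+O_h(q^2)$. Dividing, the first term contributes exactly $1$. The second contributes $K_hq^3/\binom q4=24K_hq^{-1}+O_h(q^{-2})$, by the same expansion of $1/\binom q4$.

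The non-$B_h$ probability is then the complement of the maximum label, since a four-set is $B_h$ exactly when $|hA|=M_h$. This gives
\[
 \mathbb P(A\text{ is not }B_h)=24K_hq^{-1}+O_h(q^{-2}).
\]
The bound $K_h\asymp h^2$ is Proposition~\ref{prop:kappasize}.

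For every remaining attainable $n$, \eqref{eq:corefrequency} gives $F_{h,n}(q)=\lambda_{h,n}q^2+O_h(q)$ with $\lambda_{h,n}>0$ by \eqref{eq:lambdaformula}. Dividing by $\binom q4$ gives probability $24\lambda_{h,n}q^{-2}(1+o(1))$, which is $\Theta_{h,n}(q^{-2})$ for large $q$. For small $q$, where the count may vanish, the statement should be read as an asymptotic as $q\to\infty$. Equivalently, this is \eqref{eq:rank-codimension-rarity} with $r_{h,4}(n)=2$, which holds by Theorem~\ref{thm:spectral-rank}.

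The only point needing care is the bookkeeping of error terms. The expansion of $1/\binom q4$ carries a relative $O(q^{-1})$ correction. This is harmless at order $q^{-2}$ for the $q^3$ counts, and it gives exactly the stated error for the maximum, because $\binom q4/\binom q4=1$ exactly.
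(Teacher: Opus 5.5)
Your proposal is correct and matches the paper's proof: divide \eqref{eq:tailfrequency}, \eqref{eq:maxfrequency}, and \eqref{eq:corefrequency} by $\binom q4=q^4/24+O(q^3)$, and use that $A$ is a $B_h$-set exactly when $|hA|=M_h$. Your explicit error bookkeeping and your citation of Proposition~\ref{prop:kappasize} for $K_h\asymp h^2$ just spell out details that the paper's one-line argument leaves implicit.
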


\begin{proof}
Divide \eqref{eq:tailfrequency}, \eqref{eq:maxfrequency}, and
\eqref{eq:corefrequency} by
$\binom q4=q^4/24+O(q^3)$.  The final assertion follows because $A$ is a
$B_h$-set exactly when $|hA|=M_h$.
\end{proof}

The preceding law resolves \cite[Problem~9]{NathansonTriangular}
asymptotically.  Indeed, let
\[
 \mathcal N_h(q)=\left\{A\in\binom{[q]}4:A\text{ is not a }B_h\text{-set}\right\}
\]
and let $\mathcal P_h(q)$ consist of those $A\in\mathcal N_h(q)$ for which
\[
 |hA|\in\{M_h-M_{h-r}:2\leq r\leq h\}.
\]
Then Theorem~\ref{thm:popularity} gives
\[
 |\mathcal N_h(q)|=K_hq^3+O_h(q^2),\qquad
 |\mathcal P_h(q)|=K_hq^3+O_h(q^2),
\]
and consequently
\begin{equation}\label{eq:problem9}
 \boxed{\ \frac{|\mathcal P_h(q)|}{|\mathcal N_h(q)|}
       =1+O_h(q^{-1})\longrightarrow1.\ }
\end{equation}

Senger proves that the tetrahedral-difference values occur
$\Omega(h^{-5}q^3)$ times and that the intervening sizes occur
$O(h^{13}q^2)$ times, when $q$ is sufficiently large relative to $h$
\cite[Theorem~2]{Senger2025}.  Theorem~\ref{thm:popularity} is a sharp
asymptotic refinement of that result: it gives explicit leading constants,
the full $q^3$ correction to the maximum, and a positive quadratic
asymptotic for every remaining attainable cardinality, including those
below the last popular spike.

\subsection{The higher-cardinality popularity law}\label{subsec:popularity-higher-k}

The hyperplane argument is not special to four elements.  O'Bryant proposed a
closely related family of higher-cardinality candidate values
\cite{OBryant2025}, and Senger conditionally extrapolated its gap pattern
\cite[Section~4]{Senger2025}.  The theorem below identifies the feasible
popular list exactly, including the maximum and excluding the impossible
degree-one endpoint.  It also determines the frequency scales and
coefficients, giving a precise asymptotic answer to Nathanson's request to
discover the pattern for larger set sizes
\cite[Problem~10]{NathansonTriangular}.

For $k\geq3$ and $j\geq0$, put
\[
 M_{j,k}=\binom{j+k-1}{k-1},
 \qquad
 F^{(k)}_{h,n}(q)=
 \#\left\{A\in\binom{[q]}k:|hA|=n\right\}.
\]
For fixed $h,k$, we use the corresponding convention that $n$ is popular
when $F^{(k)}_{h,n}(q)/q^{k-2}\to\infty$.

\begin{theorem}[Higher-cardinality popularity law]
\label{thm:popularity-higher-k}
Fix $k\geq3$ and $h\geq2$.  There are explicit positive rational
constants $\kappa_{k,r}$, $2\leq r\leq h$, such that
as $q\to\infty$,
\begin{equation}\label{eq:pop-hk-tail}
 F^{(k)}_{h,M_{h,k}-M_{h-r,k}}(q)
 =\kappa_{k,r}q^{k-1}+O_{h,k}(q^{k-2}),
\end{equation}
and
\begin{equation}\label{eq:pop-hk-max}
 F^{(k)}_{h,M_{h,k}}(q)
 =\binom qk-
 \left(\sum_{r=2}^h\kappa_{k,r}\right)q^{k-1}
 +O_{h,k}(q^{k-2}).
\end{equation}
Every other attainable cardinality has frequency $O_{h,k}(q^{k-2})$.
Thus the $h$ values
\begin{equation}\label{eq:pop-hk-values}
 \{M_{h,k}\}\cup
 \{M_{h,k}-M_{h-r,k}:2\leq r\leq h\}
\end{equation}
are exactly the popular values.  In decreasing order, their consecutive
gaps are
\[
 \binom{j+k-2}{k-2}\qquad(0\leq j\leq h-2).
\]
\end{theorem}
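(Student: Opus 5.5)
The plan mirrors the four-point proof of Theorem~\ref{thm:popularity}, using the general rank machinery already proved. First, by Corollary~\ref{cor:universal-principal-skeleton} and Theorem~\ref{thm:rank-frequency}, the labels of minimum active rank at least two have frequency quasipolynomials of degree at most $k-2$. So every attainable value outside \eqref{eq:pop-hk-values} has frequency $O_{h,k}(q^{k-2})$ and is not popular. The maximum has minimum rank zero and frequency degree $k$. Each $M_{h,k}-M_{h-r,k}$, $2\le r\le h$, has minimum rank one and frequency degree $k-1$ with positive leading coefficient. Hence exactly these $h$ values are popular. The gap formula follows from Pascal's rule: consecutive terms in decreasing order differ by $M_{j,k}-M_{j-1,k}=\binom{j+k-2}{k-2}$, with $M_{-1,k}=0$ and $0\le j\le h-2$.

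It remains to identify the leading coefficients. For each primitive zero-sum $z\in\mathbb Z^k$ modulo sign with $\rho(z)=r\le h$, I would count the $k$-subsets of $[q]$ lying on the hyperplane $z\cdot a=0$ inside the ordered chamber. Exactly as in Lemma~\ref{lem:onehyperplane}, passing to gap coordinates is unimodular. The count is then a weighted Ehrhart count on a rational $(k-2)$-dimensional gap lattice with the degree-one translation weight $q-\sum g_i$. This gives $w_k(z)q^{k-1}+O_z(q^{k-2})$, where $w_k(z)$ is an explicit rational volume integral; positivity holds because the hyperplane meets the open chamber (the normal $(r-1,-r,1,0,\ldots,0)$ from Corollary~\ref{cor:universal-principal-skeleton} shows such $z$ exist). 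Define $\kappa_{k,r}=\sum_z w_k(z)$, summed over the finitely many classes of degree $r$ that meet the chamber.

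The key step is the inclusion--exclusion. For fixed $h$ there are finitely many active hyperplanes, and any two distinct ones meet in a codimension-two flat containing $O_{h,k}(q^{k-2})$ integer points of the chamber. Off these intersections, a configuration lying on exactly one hyperplane of primitive degree $r$ has active rank one, so the rank-one argument in Corollary~\ref{cor:universal-principal-skeleton} gives $|hA|=M_{h,k}-M_{h-r,k}$. Configurations on no hyperplane are $B_h$ and give $M_{h,k}$. The values $M_{h,k}-M_{h-r,k}$ are distinct for distinct $r$. Summing the single-hyperplane counts over $\rho(z)=r$ therefore gives \eqref{eq:pop-hk-tail}. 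Subtracting the whole union from $\binom qk$ gives \eqref{eq:pop-hk-max}, since all multiple intersections are absorbed into $O_{h,k}(q^{k-2})$.

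The main obstacle is the uniform weighted Ehrhart step for a general hyperplane in $k$ variables. The lattice $\{g:\sum c_ig_i=0\}$ has nontrivial index structure, and the boundary faces where some gap vanishes must be shown to contribute only $O(q^{k-2})$. I would handle this by citing the exact quasipolynomiality already established in Corollary~\ref{cor:addition-type-frequency}. That result gives degree $k-1$ and a positive rational leading coefficient for each rank-one stratum, which avoids computing $w_k$ in closed form. Rationality and explicitness then follow from the volume formula \eqref{eq:rank-frequency-constant}.
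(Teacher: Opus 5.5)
Your proposal is correct and follows essentially the paper's proof: Ehrhart counts on each relation hyperplane summed over the primitive directions of degree $r$, pairwise intersections absorbed into $O_{h,k}(q^{k-2})$, the principal-binomial count $M_{h,k}-M_{h-r,k}$ off those intersections, subtraction from $\binom qk$ for the maximum, and Pascal's rule for the gaps. The differences are cosmetic. The paper bounds the other cardinalities directly, because they are confined to the pairwise intersections, rather than going through Theorem~\ref{thm:rank-frequency}. It also treats the single-hyperplane estimate \eqref{eq:pop-hk-ehrhart} as a standard Ehrhart fact, not as a genuine obstacle.
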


To describe the constants, let $\mathcal Z_{k,r}$ be the finite set,
modulo sign, of primitive vectors $z\in\mathbb Z^k$ such that
\[
 \sum_{i=1}^kz_i=0,\qquad
 \rho(z):=\sum_{i=1}^kz_i^+=r,
\]
and whose hyperplane meets the open ordered chamber
$0<x_1<\cdots<x_k<1$.  Let $\omega_k(z)$ be the relative normalized
$(k-1)$-volume of that rational hyperplane section; equivalently, it is the
leading Ehrhart coefficient in
\begin{equation}\label{eq:pop-hk-ehrhart}
 \#\{1\leq a_1<\cdots<a_k\leq q:z\cdot a=0\}
 =\omega_k(z)q^{k-1}+O_z(q^{k-2}).
\end{equation}
Then
\begin{equation}\label{eq:pop-hk-kappa}
 \boxed{\ \kappa_{k,r}=
 \sum_{z\in\mathcal Z_{k,r}}\omega_k(z).\ }
\end{equation}
In particular, $\kappa_{k,r}$ is positive and rational.  Positivity already
follows from
\[
 z=(r-1,-r,1,0,\ldots,0),
\]
whose relation hyperplane meets the open ordered chamber.

\begin{proof}[Proof of Theorem~\ref{thm:popularity-higher-k}]
Equation~\eqref{eq:pop-hk-ehrhart} is the standard Ehrhart estimate for a
rational polytope of dimension $k-1$; replacing weak by strict order only
changes boundary terms of order $q^{k-2}$.  For fixed $h,k$, there are only
finitely many primitive relation directions of degree at most $h$; taking
the maximum of their Ehrhart error constants makes every such estimate
uniform in $O_{h,k}(q^{k-2})$.
Intersections of two distinct relation hyperplanes have codimension two in
$\mathbb R^k$, and hence contain $O_{h,k}(q^{k-2})$ ordered integer points.

Outside the union of these pairwise intersections, a non-$B_h$ set has a
unique primitive active direction $z$, of some degree $r\leq h$.
Strict ordering excludes $r=1$, since then
$z=\pm(e_i-e_j)$ would force $a_i=a_j$.  Thus $2\leq r\leq h$.
Write $z=z^+-z^-$.  Over any field $K$, put $S=K[x_1,\ldots,x_k]$ and
\[
 f_z=x^{z^+}-x^{z^-}\in
 S.
\]
Every degree-at-most-$h$ toric binomial has, after cancellation, exponent
difference $mz$ for some integer $m$; it is therefore divisible by $f_z$,
because $X^m-Y^m$ is divisible by $X-Y$.  Conversely, every multiple of
$f_z$ is a relation.  Thus the degree-$h$ relation space is
$f_zS_{h-r}$, of dimension $M_{h-r,k}$, and
\[
 |hA|=M_{h,k}-M_{h-r,k}.
\]
This is the $k$-variable version of the principal-ideal argument in
Proposition~\ref{prop:rankone}.  No active relation gives the maximum
$M_{h,k}$.

Summing \eqref{eq:pop-hk-ehrhart} over primitive directions and absorbing
all pairwise intersections proves \eqref{eq:pop-hk-tail} and
\eqref{eq:pop-hk-max}; any other cardinality is confined to those
intersections.  Finally, with $M_{-1,k}:=0$, Pascal's identity gives
\[
 M_{j,k}-M_{j-1,k}=\binom{j+k-2}{k-2},
\]
which proves the gap assertion.
\end{proof}

There is also a sharp aggregate law for the coefficients.  Put
$d=k-1$.  In gap coordinates define
\[
 c_j=-\sum_{i=1}^jz_i\qquad(1\leq j\leq d).
\]
The inverse map is unimodular and is given by
\[
 z=(-c_1,c_1-c_2,\ldots,c_{d-1}-c_d,c_d).
\]
Hence $z$ is primitive exactly when $c$ is primitive, and
\begin{equation}\label{eq:pop-hk-gapdegree}
 \delta_k(c)=\rho(z(c))=
 \frac12\left(
 |c_1|+\sum_{j=2}^{d}|c_j-c_{j-1}|+|c_d|
 \right).
\end{equation}
The relation hyperplane meets the ordered chamber exactly when $c$ has at
least one positive and one negative coordinate.

For a nonzero mixed real vector $c\in\mathbb R^d$, let $w_k(c)$ be the
Euclidean $(k-1)$-volume of
\[
 \left\{(t,g_1,\ldots,g_d)\in\mathbb R_{\geq0}^{k}:
 t+\sum_{j=1}^dg_j\leq1,\quad c\cdot g=0\right\}
\]
divided by $\|c\|_2$.  If $c$ is primitive integral, then the lattice in
the displayed hyperplane has covolume $\|c\|_2$, so
$w_k(c)=\omega_k(z(c))$.  For real $c$, this definition satisfies
$w_k(tc)=t^{-1}w_k(c)$ for $t>0$.

\begin{lemma}[Regularity of the section weight]
\label{lem:section-weight-regularity}
On the mixed cone, $w_k$ is Borel measurable and is continuous on every
strict sign chamber.  Its extension by zero to the nonmixed cone is therefore
continuous almost everywhere.  Moreover, for every $\varepsilon>0$, it is
bounded on $\{c:\varepsilon\leq\delta_k(c)\leq1\}$.
\end{lemma}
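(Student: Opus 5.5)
The plan is to express $w_k(c)$ as a finite sum of explicit rational functions of $c$, one formula for each strict sign chamber. For mixed $c$, put $P=\{(t,g)\in\mathbb R^k_{\geq0}: t+\sum_j g_j\leq1\}$ and $H_c=\{c\cdot g=0\}$. The section $P\cap H_c$ is a polytope of dimension $k-1$. Its vertices are the intersection points of $H_c$ with the edges of the simplex $P$. An edge $[e_i,e_j]$ meets $H_c$ in its relative interior exactly when $c_i$ and $c_j$ have strictly opposite signs, and then the intersection point is $(|c_j|e_i+|c_i|e_j)/(|c_i|+|c_j|)$. Here $e_0$ denotes the $t$-vertex, and we set $c_0=0$ for the origin and the $t$-vertex. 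Every vertex $e_i$ with $c_i=0$ lies in $H_c$.

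Hence, on a strict sign chamber (each $c_j$ nonzero with a fixed sign), the combinatorial type of $P\cap H_c$ is constant. Its vertices are given by rational functions with nonvanishing denominators $|c_i|+|c_j|=\pm(c_i-c_j)$. So its Euclidean $(k-1)$-volume is obtained by triangulating this fixed combinatorial polytope. That triangulation can be taken as the cone from a vertex that stays fixed, for instance the origin or $e_0$, which lie in every $H_c$. Each simplex volume is the square root of a Gram determinant of rational functions. So the volume is continuous on the chamber, and dividing by $\|c\|_2\neq0$ keeps it continuous. Borel measurability on the whole mixed cone then follows because the mixed cone is the finite union of these strict chambers and the lower-dimensional walls $\{c_j=0\}$. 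On each wall the same argument applies with the zero-sign pattern fixed, so $w_k$ is piecewise continuous on finitely many Borel pieces. The walls have measure zero, so the extension by zero is continuous off a null set: the nonmixed cone's interior (all coordinates of one sign) is open, and there $w_k\equiv0$.

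For boundedness on $\{\varepsilon\leq\delta_k(c)\leq1\}$, use $\|c\|_\infty\leq\delta_k(c)$, the gap-coordinate comparison already proved in the proof of Theorem~\ref{thm:rank-diversity-frequency}. This gives $\|c\|_2\geq\|c\|_\infty\geq\varepsilon/\sqrt d$. Indeed, $\delta_k(c)\leq\|c\|_1\leq d\|c\|_\infty$, so $\|c\|_\infty\geq\varepsilon/d$. Meanwhile the numerator is at most the maximal $(k-1)$-dimensional cross-section of the unit simplex $P$, which is an absolute constant depending on $k$. So $w_k\leq C_k d/\varepsilon$.

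The only delicate point is the claim that the combinatorial type, and hence the volume formula, is constant on each strict chamber. I will check this carefully via the edge-intersection description. I do not need continuity across chamber walls, so no degenerate case requires real analysis.
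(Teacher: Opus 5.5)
Your proof is correct and essentially matches the paper's: explicit edge-intersection vertices on each strict sign chamber, continuity of the section volume, division by $\|c\|_2$, finitely many Borel sign strata with null walls, and a uniform cross-section bound combined with norm comparison. The only differences are that you justify volume continuity through a combinatorially fixed coning triangulation, where the paper simply cites continuity of the volume of a convex hull of continuously moving points, and one small slip: $\|c\|_\infty\leq\delta_k(c)$ points the wrong way and does not give $\|c\|_\infty\geq\varepsilon/\sqrt d$ (which is false in general), so use $\varepsilon\leq\delta_k(c)\leq\|c\|_1\leq\sqrt d\,\|c\|_2$, or your own correct bound $\|c\|_\infty\geq\varepsilon/d$.
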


\begin{proof}
Fix a strict sign chamber and put
$P=\{i:c_i>0\}$ and $N=\{j:c_j<0\}$.  In the simplex with coordinates
$(t,g_1,\ldots,g_d)$, the section $c\cdot g=0$ is the convex hull of
$0$, the $t$-vertex $e_t$, and the points
\[
 v_{ij}(c)=
 \frac{|c_j|e_i+c_i e_j}{c_i+|c_j|}
 \qquad(i\in P,\ j\in N).
\]
Indeed, these are precisely the simplex vertices lying on the section and
the intersections with edges whose endpoints lie on opposite sides.  The
displayed vertices depend continuously on $c$, and the Euclidean
$(k-1)$-volume of their convex hull is continuous.  Since $\|c\|_2>0$ on
the chamber, division by $\|c\|_2$ preserves continuity.  The same
description, with the zero-coordinate vertices included, gives measurability
on the finitely many
lower-dimensional sign strata.  Their union, together with the boundary of
the mixed cone, lies in the coordinate hyperplanes and has measure zero.
Finally, every section is convex and lies in the fixed unit simplex, so the
isodiametric inequality bounds its $(k-1)$-volume uniformly.  Equivalence of
the norms $\delta_k$ and $\|\cdot\|_2$ then bounds $\|c\|_2^{-1}$ on the
stated annulus.
\end{proof}

\begin{proposition}[Aggregate higher-cardinality coefficient]
\label{prop:pop-hk-summatory}
For integers $m\geq0$ and $k\geq3$, write
\[
 K_{m,k}=\sum_{r=2}^{m}\kappa_{k,r},
\]
with the empty sum equal to zero when $m=0$ or $1$.
Let
\[
 I_k=\int_{\substack{c\in\mathbb R^{k-1}\ {\rm mixed}\\
                      \delta_k(c)\leq1}}w_k(c)\,dc.
\]
Then $0<I_k<\infty$ and, for fixed $k\geq3$,
\begin{equation}\label{eq:pop-hk-summatory}
 K_{h,k}=\sum_{r=2}^h\kappa_{k,r}
 \sim\frac{I_k}{2\zeta(k-1)}h^{k-2}.
\end{equation}
Here the asymptotic is as $h\to\infty$.  For $k=4$, these constants agree
with the preceding ones: $\kappa_{4,r}=\kappa_r$ and $I_4=I$.
In particular, $K_{h,k}\asymp_k h^{k-2}$.
\end{proposition}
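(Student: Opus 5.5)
The plan is to copy the proof of Proposition~\ref{prop:kappasize}, with $\delta_k$ and $w_k$ in place of $\delta$ and $w$. There are four steps: finiteness and positivity of $I_k$, a Riemann-sum asymptotic for the sum over all mixed integer vectors, M\"obius inversion down to primitive vectors, and a final division by two to account for the sign identification in $\mathcal Z_{k,r}$.

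\textbf{Step 1: $0<I_k<\infty$.} Positivity holds because $w_k>0$ on the open mixed cone: a mixed $c$ gives a section of the simplex of full dimension $k-1$, since it passes through $e_t$ and through the points $v_{ij}$. The mixed cone has nonempty interior inside the $\delta_k$-unit ball. For finiteness, homogeneity gives $w_k(c)=\delta_k(c)^{-1}w_k(c/\delta_k(c))$, and Lemma~\ref{lem:section-weight-regularity} bounds $w_k$ on the unit $\delta_k$-sphere. Hence $w_k(c)\ll_k\delta_k(c)^{-1}$. In dimension $d=k-1\geq2$ this is locally integrable, so $I_k<\infty$.

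\textbf{Step 2: Riemann-sum asymptotic.} Put
\[
 S_k(X)=\sum_{c\in\mathbb Z^{d}\ \mathrm{mixed},\ \delta_k(c)\leq X}w_k(c).
\]
Rescaling $c=Xy$ gives $w_k(c)=X^{-1}w_k(y)$ on a lattice of mesh $1/X$ and cell volume $X^{-d}$. So the expected asymptotic is
\[
 S_k(X)=I_kX^{d-1}+o(X^{d-1}).
\]
To prove it:
\begin{itemize}
\item Split at $\delta_k(y)=\varepsilon$.
\item On the annulus $\varepsilon\leq\delta_k\leq1$, the function $w_k$ is bounded and continuous almost everywhere by Lemma~\ref{lem:section-weight-regularity}, hence Riemann integrable. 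The ordinary Riemann-sum limit applies there.
\item Near the origin, use that the $\delta_k$-ball is a fixed rational polytope. A degree shell $\delta_k=m$ therefore contains $O(m^{d-1})$ lattice points, each carrying weight $O(1/m)$. The discrete contribution of $\delta_k\leq\varepsilon X$ is then $O(\varepsilon^{d-1}X^{d-1})$, and the continuous contribution satisfies the same bound.
\item Let $X\to\infty$ first, then $\varepsilon\downarrow0$.
\end{itemize}
The same shell estimate gives the uniform bound $S_k(X)\ll X^{d-1}$ for $X\geq1$.

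\textbf{Step 3: M\"obius inversion.} Use $\delta_k(nc)=n\delta_k(c)$ and $w_k(nc)=n^{-1}w_k(c)$. The primitive sum is
\[
 P_k(h)=\sum_{n\leq h}\frac{\mu(n)}{n}\,S_k(h/n),
\]
so
\[
 \frac{P_k(h)}{h^{d-1}}=\sum_{n\leq h}\frac{\mu(n)}{n^{d}}\cdot\frac{S_k(h/n)}{(h/n)^{d-1}}.
\]
Each summand is bounded by $C/n^{d}$ with $d\geq2$, so dominated convergence applies and gives
\[
 P_k(h)\sim\frac{I_k}{\zeta(k-1)}\,h^{k-2}.
\]

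\textbf{Step 4: identification with $K_{h,k}$.} For primitive integral $c$ we have $w_k(c)=\omega_k(z(c))$, and mixedness of $c$ is exactly the condition that the hyperplane meets the ordered chamber. The map $c\mapsto z$ is unimodular, and there are no primitive mixed vectors of degree $1$ (those would be $\pm(e_i-e_j)$, which do not meet the chamber). Therefore $P_k(h)=2K_{h,k}$, which gives \eqref{eq:pop-hk-summatory}. The asymptotic then yields $K_{h,k}\asymp_k h^{k-2}$.

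\textbf{Step 5: consistency at $k=4$.} Check that for $c=(A,B,-D)$ the volume definition of $w_4$ reproduces \eqref{eq:relationweight}. Lemma~\ref{lem:onehyperplane} computes the same leading Ehrhart coefficient $\omega_4$, so $\kappa_{4,r}=\kappa_r$. Integrating then gives $I_4=I$.

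The main obstacle is Step 2. It needs the shell lattice-point estimate uniform in the dimension $d$, and the boundedness of $w_k$ away from $0$; near the origin the singularity $\delta_k^{-1}$ is exactly borderline for the shell sum. The almost-everywhere continuity that Riemann integrability requires is already supplied by Lemma~\ref{lem:section-weight-regularity}.
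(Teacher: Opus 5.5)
Your proposal is correct and follows the paper's proof essentially step for step: the bound $w_k\ll_k\delta_k^{-1}$ giving $0<I_k<\infty$, the Riemann-sum asymptotic $S_k(X)=I_kX^{k-2}+o(X^{k-2})$ via the $\varepsilon$-split and the rational-polytope shell count, M\"obius inversion with dominated convergence, and the factor two coming from the sign identification in $\mathcal Z_{k,r}$. Your explicit check that $w_4(A,B,-D)=D/(6(D+A)(D+B))$ supplies the $k=4$ identification that the paper only asserts. The concerns in your closing paragraph are unnecessary: $d=k-1\ge2$ is fixed, and the shell sum $\sum_{m\le\varepsilon X}m^{d-2}$ is comfortably $O((\varepsilon X)^{d-1})$, so nothing is borderline.
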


\begin{proof}
The function $\delta_k$ is a norm on $\mathbb R^d$ and hence, by
finite-dimensional norm equivalence,
$\delta_k(c)\asymp_k\|c\|_2$.  A hyperplane section of the fixed unit
simplex has uniformly bounded volume, so
\[
 w_k(c)\ll_k\|c\|_2^{-1}\ll_k\delta_k(c)^{-1}.
\]
Since $d=k-1\geq2$, this proves integrability at the origin; positivity is
clear on any open mixed-sign cone.

Let
\[
 S_k(X)=\sum_{\substack{c\in\mathbb Z^d\setminus\{0\}\\
                         c\ {\rm mixed},\ \delta_k(c)\leq X}}w_k(c).
\]
Homogeneity and weighted Riemann sums give
\[
 S_k(X)=I_kX^{d-1}+o_k(X^{d-1}).
\]
For completeness, after removing $\delta_k(c)\leq\varepsilon X$, this is,
by Lemma~\ref{lem:section-weight-regularity}, an ordinary Riemann sum with a
bounded, almost-everywhere continuous integrand.  By
\eqref{eq:pop-hk-gapdegree}, the $\delta_k$-unit ball is a fixed rational
$d$-dimensional polytope; subtracting consecutive rational Ehrhart counts
shows that the shell $m-1<\delta_k(c)\leq m$ contains
$O_k(m^{d-1})$ lattice points.  On the removed region each such point has
weight $O_k(1/m)$, so the total contribution is
$O_k((\varepsilon X)^{d-1})$.  Homogeneity gives the same
$O_k(\varepsilon^{d-1})$ bound for the normalized continuous contribution.
Letting $X\to\infty$ first and then $\varepsilon\downarrow0$ proves the
displayed asymptotic.

Replacing $c$ by $-c$ leaves both the section hyperplane and its Euclidean
normal length unchanged, so $w_k(-c)=w_k(c)$.  Thus M\"obius inversion over
all primitive mixed vectors counts every class modulo sign twice.  Since
$w_k(mc)=m^{-1}w_k(c)$,
\[
 2K_{h,k}=\sum_{m\leq h}\frac{\mu(m)}mS_k(h/m).
\]
The bound $S_k(X)\ll_kX^{d-1}$ and dominated convergence, using
$\sum m^{-d}<\infty$, now give
\[
 2K_{h,k}\sim
 I_kh^{d-1}\sum_{m=1}^{\infty}\frac{\mu(m)}{m^d}
 =\frac{I_k}{\zeta(d)}h^{d-1}.
\]
Substituting $d=k-1$ proves \eqref{eq:pop-hk-summatory}.
\end{proof}

The endpoint $k=3$ is completely explicit.  Modulo sign, every mixed
vector has the form $c=(a,-b)$ with $a,b>0$; here
\[
 \delta_3(c)=a+b,
 \qquad w_3(c)=\frac1{2(a+b)}.
\]
Thus
\begin{equation}\label{eq:pop-k3-explicit}
 \kappa_{3,r}=\frac{\varphi(r)}{2r},
 \qquad
 K_{h,3}\sim\frac{h}{2\zeta(2)}.
\end{equation}
Moreover, three distinct integers have a one-dimensional homogeneous
relation lattice, so the ``other cardinality'' case in
Theorem~\ref{thm:popularity-higher-k} is empty when $k=3$.

\subsection{Exact collision multiplicities on the principal skeleton}
\label{subsec:rankone-multiplicity}

For an ordered integer set
$A=(a_1<\cdots<a_k)$ and $n\in\mathbb Z$, put
\[
 r_{A,h}(n)
 =
 \#\{u\in\mathcal U_{h,k}:u\cdot A=n\}.
\]
Thus $r_{A,h}$ is Nathanson's representation function, counting
nondecreasing $h$-term representations.  For $j\geq1$, write
\[
 H_j(A;h)=\#\{n\in hA:r_{A,h}(n)=j\}.
\]
Also put
\[
\begin{aligned}
  C_2(A;h)&=\#\{n\in hA:r_{A,h}(n)=2\},\\
  C_{\geq3}(A;h)&=\#\{n\in hA:r_{A,h}(n)\geq3\}.
 \end{aligned}
\]
\par\medskip
\begin{theorem}[Exact rank-one representation law]
\label{thm:rankone-multiplicity}
Let $h\geq2$, $k\geq3$, and let $A\subset\mathbb Z$ be an ordered
$k$-set with $r_h(A)=1$.  Let $z$, chosen up to sign, be the primitive
generator of the rank-one lattice
\[
 L_h(A)\cap\mathbb Z^k=\mathbb Zz,
\]
and put $r=\rho(z)$.  Then $2\leq r\leq h$, and, for every $j\geq1$,
\begin{equation}\label{eq:rankone-multiplicity}
 \boxed{\quad
 H_j(A;h)
 =
 M_{h-(j-1)r,k}
 -2M_{h-jr,k}
 +M_{h-(j+1)r,k}.
 \quad}
\end{equation}
Here and below $M_{s,k}=0$ when $s<0$.  In particular,
\begin{equation}\label{eq:rankone-max-multiplicity}
 \max_{n\in hA}r_{A,h}(n)
 =1+\left\lfloor\frac hr\right\rfloor
\end{equation}
and
\[
 |hA|=M_{h,k}-M_{h-r,k}.
\]
The collision counts satisfy
\begin{align}
 C_2(A;h)
 &=M_{h-r,k}-2M_{h-2r,k}+M_{h-3r,k},
 \label{eq:rankone-simple-collisions}\\
 C_{\geq3}(A;h)
 &=M_{h-2r,k}-M_{h-3r,k}.
 \label{eq:rankone-multiple-collisions}
\end{align}
\end{theorem}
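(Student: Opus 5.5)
Because $r_h(A)=1$, the proof of Proposition~\ref{prop:rankone} and Corollary~\ref{cor:universal-principal-skeleton} already give $2\leq r\leq h$ and the description of the fibres. The plan is to identify each representation fibre as a lattice chain and then count chains of each length by a telescoping count of their starting points. Fix $n\in hA$ and a fibre $\Phi_n=\{u\in\mathcal U_{h,k}:u\cdot A=n\}$. Any two members $u,v\in\Phi_n$ have $u-v$ a homogeneous relation of degree at most $h$, hence $u-v\in L_h(A)\cap\mathbb Z^k=\mathbb Zz$. The fibre is therefore contained in one line $u_0+\mathbb Zz$, and it is convex along that line: the set $\{m:u_0+mz\geq0\}$ is an integer interval. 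So each fibre is a chain $\{w,w+z,\ldots,w+(j-1)z\}$ with $j=r_{A,h}(n)$. Conversely, any such chain in $\mathcal U_{h,k}$ lies in one fibre, since $z\cdot A=0$.

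Next, orient $z$ so that each chain has a unique bottom element $w$, meaning $w-z\notin\mathbb Z_{\geq0}^k$. Chains of length at least $j$ correspond bijectively to bottoms $w$ with $w+(j-1)z\geq0$. Write $G_j$ for the number of $u\in\mathcal U_{h,k}$ with $u+(j-1)z\geq0$, without the bottom condition. I claim $G_j=M_{h-(j-1)r,k}$. The condition $u+(j-1)z\geq0$ says exactly $u\geq(j-1)z^-$ coordinatewise. Since $z^+$ and $z^-$ have disjoint supports, subtracting $(j-1)z^-$ identifies these $u$ with $\mathcal U_{h-(j-1)r,k}$.

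Each element $u$ counted by $G_j$ lies in a chain containing $u,u+z,\ldots,u+(j-1)z$; conversely, a chain of length $L\geq j$ contributes exactly $L-j+1$ such starting points. Hence $G_j=\sum_{L\geq j}(L-j+1)H_L$. Taking a second difference gives
\[
H_j=G_j-2G_{j+1}+G_{j+2},
\]
which is \eqref{eq:rankone-multiplicity}. The identity can be checked by expanding the sum and noting that only the terms with $L=j$ survive. The remaining claims follow directly. The largest $j$ with $H_j\neq0$ is the largest $j$ with $G_j>0$, namely $j-1\leq h/r$; this proves \eqref{eq:rankone-max-multiplicity}. Telescoping $\sum_j H_j$ gives $G_1-G_2=M_{h,k}-M_{h-r,k}$, which is $|hA|$. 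Finally, $C_2=H_2$, and $C_{\geq3}=\sum_{j\geq3}H_j$ telescopes to $G_3-G_4$.

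The main obstacle is the fibre-structure step: showing that collisions outside $\mathbb Zz$ cannot occur, and that chains have no gaps. The first point uses only that $L_h(A)$ is one-dimensional, together with saturation of $\mathbb Zz$ (so an integer vector in the real span is an integer multiple of $z$). Gap-freeness follows from convexity of the nonnegativity region along the line. The rest is bookkeeping.
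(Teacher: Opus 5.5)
Your proof is correct and follows the paper's route. Your $G_j$ is the paper's pair count $T_{j-1}=\#\{u\in\mathcal U_{h,k}:u+(j-1)z\in\mathcal U_{h,k}\}=M_{h-(j-1)r,k}$, obtained by the same shift by $(j-1)z^-$. The identity $G_j=\sum_{n\in hA}\bigl(r_{A,h}(n)-j+1\bigr)^+$, the second finite difference, and the telescoping for $|hA|$, $C_2$ and $C_{\geq3}$ are exactly the paper's argument.
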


\begin{proof}
Since $r_h(A)=1$, some nonzero active relation is an integral multiple
$mz$ with $\rho(mz)\leq h$.  Hence $r=\rho(z)\leq h$.  If $r=1$, then
$z=e_i-e_j$ up to sign for distinct $i,j$, forcing $a_i=a_j$, a
contradiction.  Thus $r\geq2$.

For $n\in hA$, let
\[
 F_n=\{u\in\mathcal U_{h,k}:u\cdot A=n\}.
\]
If $u,v\in F_n$, then $u-v$ is a homogeneous relation and
$\rho(u-v)\leq h$.  Hence $u-v\in L_h(A)\cap\mathbb Z^k=\mathbb Zz$.
Because $z$ is primitive and the nonnegative simplex is convex, each
$F_n$ is a consecutive lattice segment parallel to $z$.

For $m\geq0$, put
\[
 T_m
 =
 \#\{u\in\mathcal U_{h,k}:u+mz\in\mathcal U_{h,k}\}.
\]
Writing $z=z^+-z^-$, the map
\[
 w\longmapsto u=w+mz^-
\]
is a bijection from $\mathcal U_{h-mr,k}$ to the set counted by $T_m$
when $h-mr\geq0$; both sets are empty otherwise.  Consequently
\begin{equation}\label{eq:rankone-chain-pairs}
 T_m=M_{h-mr,k}.
\end{equation}
A fibre of cardinality $L$ contributes exactly $(L-m)^+$ pairs to
$T_m$.  Therefore
\[
 T_m=\sum_{n\in hA}\bigl(r_{A,h}(n)-m\bigr)^+.
\]
Taking the second finite difference at $m=j$ gives
\[
 H_j(A;h)=T_{j-1}-2T_j+T_{j+1},
\]
and \eqref{eq:rankone-multiplicity} follows from
\eqref{eq:rankone-chain-pairs}.  The largest $m$ for which $T_m>0$ is
$\lfloor h/r\rfloor$, proving \eqref{eq:rankone-max-multiplicity}.
Finally,
\[
 |hA|=T_0-T_1,
 \qquad
 C_{\geq3}(A;h)=T_2-T_3,
\]
while $C_2(A;h)=H_2(A;h)$, proving the remaining formulas.
\end{proof}

\subsection{A universal conditional limit law}
\label{subsec:conditional-limit}

The coefficient asymptotic from Proposition~\ref{prop:pop-hk-summatory}
has a distributional consequence.  For a non-$B_h$ set
$A=\{a_1<\cdots<a_k\}$, let
\[
 \tau(A)=\min\{\rho(z):0\ne z\in\mathbb Z^k,
                  \ \sum_i z_i=0,\ z\cdot(a_1,\ldots,a_k)=0\}
\]
be its first relation degree.

\begin{corollary}[Universal first-relation and sumset laws]
\label{cor:conditional-limit}
Fix $k\geq3$, and take the limits first as $q\to\infty$ and then as
$h\to\infty$.  If $A$ is uniform in $\binom{[q]}k$, conditional on $A$
not being a $B_h$-set, then
\begin{equation}\label{eq:first-relation-limit}
 \frac{\tau(A)}h\ \Longrightarrow\ X_k,
 \qquad \mathbb P(X_k\leq t)=t^{k-2}\quad(0\leq t\leq1).
\end{equation}
Equivalently, $X_k$ has the $\operatorname{Beta}(k-2,1)$ law.  Moreover,
\begin{equation}\label{eq:sumset-ratio-limit}
 \frac{|hA|}{M_{h,k}}\ \Longrightarrow\
 Y_k:=1-(1-X_k)^{k-1},
\end{equation}
whose distribution function is
\begin{equation}\label{eq:sumset-ratio-cdf}
 \mathbb P(Y_k\leq y)=
 \left[1-(1-y)^{1/(k-1)}\right]^{k-2}
 \qquad(0\leq y\leq1).
\end{equation}
Furthermore,
\begin{equation}\label{eq:sumset-ratio-mean}
 \mathbb E Y_k
 =1-\frac{(k-2)!(k-1)!}{(2k-3)!}.
\end{equation}
\end{corollary}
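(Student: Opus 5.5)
The plan is to compute everything exactly at fixed $h$, pass to $q\to\infty$, and only then let $h\to\infty$. At fixed $h$, Theorem~\ref{thm:popularity-higher-k} says that, up to an error $O_{h,k}(q^{k-2})$, the non-$B_h$ sets in $\binom{[q]}k$ are exactly the active-rank-one sets. Their number is $K_{h,k}q^{k-1}+O_{h,k}(q^{k-2})$.

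For such a set the unique primitive active direction $z$ has degree $r=\rho(z)$. I first check that $\tau(A)=r$. Any relation $w$ of degree below $r$ would be active, since $r\le h$, so it would be an integral multiple of $z$ and hence have degree at least $r$. So $\tau(A)=r$, and by the principal-ideal argument $|hA|=M_{h,k}-M_{h-r,k}$. Consequently the conditional law of $\tau(A)$ as $q\to\infty$ is
\[
 \lim_{q\to\infty}\mathbb P\bigl(\tau(A)=r\mid A\text{ not }B_h\bigr)=\frac{\kappa_{k,r}}{K_{h,k}}\qquad(2\le r\le h).
\]
The contribution of the pairwise intersections vanishes in this limit. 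For those sets I only need that they are negligible, not that $\tau$ is well behaved on them.

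Next I let $h\to\infty$. For fixed $t\in(0,1]$,
\[
 \lim_{q\to\infty}\mathbb P\bigl(\tau(A)\le th\bigr)=\frac{K_{\lfloor th\rfloor,k}}{K_{h,k}}.
\]
Proposition~\ref{prop:pop-hk-summatory} gives $K_{m,k}\sim \frac{I_k}{2\zeta(k-1)}m^{k-2}$ with $I_k>0$. Applying it at $m=\lfloor th\rfloor$ and at $m=h$, the ratio tends to $t^{k-2}$. For $t=0$ the probability is $0$. This proves \eqref{eq:first-relation-limit}, and $t^{k-2}$ is the $\operatorname{Beta}(k-2,1)$ distribution function.

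For the sumset ratio, write $r=\tau(A)$ on the rank-one event, so that
\[
 \frac{|hA|}{M_{h,k}}=1-\frac{M_{h-r,k}}{M_{h,k}}.
\]
Since
\[
 \frac{M_{h-r,k}}{M_{h,k}}=\prod_{i=1}^{k-1}\frac{h-r+i}{h+i},
\]
this equals $(1-r/h)^{k-1}+O_k(1/h)$, uniformly for $0\le r\le h$. The event probabilities converge in $q$, so the continuous mapping theorem applies to $x\mapsto1-(1-x)^{k-1}$. Together with the uniform $O(1/h)$ error, this gives \eqref{eq:sumset-ratio-limit}. The distribution function follows by inverting, since $Y_k\le y$ if and only if $X_k\le 1-(1-y)^{1/(k-1)}$, which gives \eqref{eq:sumset-ratio-cdf}. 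The mean is
\[
 \mathbb E(1-X_k)^{k-1}=\int_0^1(k-2)x^{k-3}(1-x)^{k-1}\,dx=(k-2)B(k-2,k)=\frac{(k-2)!\,(k-1)!}{(2k-3)!},
\]
which yields \eqref{eq:sumset-ratio-mean}.

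I expect the main obstacle to be the bookkeeping of the iterated limit rather than any new estimate. The $q$-limit has to be taken exactly at each fixed $h$. After that, every probability is an explicit ratio of the $\kappa_{k,r}$, and weak convergence in $h$ reduces to the regular-variation asymptotic for $K_{m,k}$. The one point needing care is to justify that $K_{\lfloor th\rfloor,k}/K_{h,k}\to t^{k-2}$ holds for each fixed $t$, which requires $\lfloor th\rfloor\to\infty$. Pointwise convergence of distribution functions at continuity points then suffices.
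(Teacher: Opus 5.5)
Your proposal is correct and follows essentially the same route as the paper. Fixed-$h$ hyperplane asymptotics give the conditional law $\kappa_{k,r}/K_{h,k}$, Proposition~\ref{prop:pop-hk-summatory} gives $K_{\lfloor th\rfloor,k}/K_{h,k}\to t^{k-2}$, and the uniform estimate $M_{h-r,k}/M_{h,k}=(1-r/h)^{k-1}+O_k(h^{-1})$ with continuous mapping yields the sumset law, its distribution function by inversion, and the mean via the Beta integral. Your explicit check that $\tau(A)=\rho(z)$ on the rank-one locus fills in a step the paper uses only implicitly.
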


\begin{proof}
For fixed $h,k$, the one-hyperplane asymptotics and the fact that pairwise
intersections contribute only $O_{h,k}(q^{k-2})$ give
\[
 \lim_{q\to\infty}
 \mathbb P(\tau(A)=r\mid A\text{ is not }B_h)
 =\frac{\kappa_{k,r}}{K_{h,k}},
 \qquad K_{m,k}=\sum_{s=2}^m\kappa_{k,s}\quad(m\geq0),
\]
where the sum is empty, and therefore zero, for $m=0,1$.
The same estimate shows that, under this conditioning, the probability of
two independent active relations tends to zero.  Proposition~
\ref{prop:pop-hk-summatory} says
\[
 K_{h,k}\sim C_kh^{k-2},\qquad C_k=\frac{I_k}{2\zeta(k-1)}>0.
\]
Therefore, for $0\leq t\leq1$,
\[
 \lim_{h\to\infty}\lim_{q\to\infty}
 \mathbb P\left(\frac{\tau(A)}h\leq t
                  \mathrel{\Big|} A\text{ is not }B_h\right)
 =\lim_{h\to\infty}\frac{K_{\lfloor th\rfloor,k}}{K_{h,k}}
 =t^{k-2},
\]
with the endpoints following directly.  This proves
\eqref{eq:first-relation-limit}.

Outside the asymptotically negligible multiple-relation locus, the active
rank is one, and the principal-binomial formula gives
\[
 \frac{|hA|}{M_{h,k}}
 =1-\frac{M_{h-\tau(A),k}}{M_{h,k}}.
\]
Uniformly for integers $0\leq s\leq h$, the finite product formula for the binomial
coefficients gives
\[
 \frac{M_{h-s,k}}{M_{h,k}}
 =(1-s/h)^{k-1}+O_k(h^{-1}).
\]
Hence, if $\tau(A)/h\to x$, then $|hA|/M_{h,k}$ tends to
$1-(1-x)^{k-1}$.  The continuous mapping theorem proves
\eqref{eq:sumset-ratio-limit}.  Since the displayed map is increasing and has
inverse $y\mapsto1-(1-y)^{1/(k-1)}$,
\eqref{eq:sumset-ratio-cdf} follows.
Finally,
\[
 \mathbb E(1-X_k)^{k-1}
 =(k-2)\int_0^1t^{k-3}(1-t)^{k-1}\,dt
 =\frac{(k-2)!(k-1)!}{(2k-3)!},
\]
which proves \eqref{eq:sumset-ratio-mean}.
\end{proof}

\begin{corollary}[Conditional simple-collision phase law]
\label{cor:conditional-collision-phase}
Fix $k\geq3$.  There is a unique number
$\theta_k\in(0,1/3)$ satisfying
\begin{equation}\label{eq:collision-threshold}
 (1-\theta_k)^{k-1}
 -3(1-2\theta_k)^{k-1}
 +2(1-3\theta_k)^{k-1}=0.
\end{equation}
If $A_q$ is uniform in $\binom{[q]}k$, then, with the limits taken in
the displayed order,
\begin{equation}\label{eq:conditional-collision-phase}
 \boxed{\quad
 \lim_{h\to\infty}\lim_{q\to\infty}
 \mathbb P\left(
  C_2(A_q;h)>C_{\geq3}(A_q;h)
  \mathrel{\Big|} A_q\text{ is not }B_h
 \right)
 =1-\theta_k^{\,k-2}.
 \quad}
\end{equation}
For $k=4$,
\[
 \theta_4=\frac{21-\sqrt{69}}{62},
 \qquad
 1-\theta_4^2=0.9580848601\ldots.
\]
\end{corollary}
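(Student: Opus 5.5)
The plan is to reduce the conditional event to a deterministic statement about the first relation degree $\tau=\tau(A_q)$, and then apply the Beta limit law of Corollary~\ref{cor:conditional-limit}.

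\textbf{Step 1 (reduction to rank one).} As in the proof of Corollary~\ref{cor:conditional-limit}, for fixed $h$ the multiple-relation locus has conditional probability $O_{h,k}(q^{-1})$ given that $A_q$ is not $B_h$. So up to a vanishing error we may assume $r_h(A_q)=1$, with primitive active direction of degree $r=\tau(A_q)\in[2,h]$.

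\textbf{Step 2 (comparing the collision counts).} Theorem~\ref{thm:rankone-multiplicity} gives the exact identity
\[
 C_2-C_{\geq3}=M_{h-r,k}-3M_{h-2r,k}+2M_{h-3r,k},
\]
with the convention $M_{s,k}=0$ for $s<0$. Set $x=r/h$ and
\[
 \phi(x)=(1-x)^{k-1}-3(1-2x)_+^{k-1}+2(1-3x)_+^{k-1}.
\]
Uniformly in $0\leq r\leq h$, the ratio $M_{h-s,k}/M_{h,k}$ equals $(1-s/h)^{k-1}+O_k(h^{-1})$. Hence
\[
 (C_2-C_{\geq3})/M_{h,k}=\phi(x)+O_k(1/h).
\]

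\textbf{Step 3 (analysing $\phi$).}
\begin{itemize}
\item On $[1/3,1]$ we have $\phi\ge (1-x)^{k-1}-3(1-2x)_+^{k-1}$. This is positive for $x\ge1/2$. For $1/3\le x<1/2$ it is positive as well, because $(1-x)/(1-2x)\ge2$ there and $2^{k-1}\ge4>3$.
\item At $x=0$, $\phi(0)=0$, and the expansion $\phi(x)=x^2(k-1)(k-2)(1-9+8)/2+\dots$ has vanishing quadratic term. I will instead check the sign of $\phi$ near $0$ directly; it should be negative, since for tiny $r$ most collisions have high multiplicity.
\item On $(0,1/3)$, substitute $y=1-x$. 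Then $\phi$ is a combination of three $(k-1)$-st powers of linear forms, and $\phi(x)/(1-3x)^{k-1}$ is a function $g(s)=a^{k-1}-3b^{k-1}+2$ with ratios $s$. A Descartes-rule or convexity argument then gives exactly one sign change.
\end{itemize}
The conclusion of Step 3 is a unique root $\theta_k\in(0,1/3)$, with $\phi<0$ on $(0,\theta_k)$ and $\phi>0$ on $(\theta_k,1]$.

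\textbf{Step 4 (passing to the limit).} Since $\phi$ has a simple root at $\theta_k$, the $O(1/h)$ error only changes the event $C_2>C_{\geq3}$ for $r/h$ within $O(1/h)$ of $\theta_k$, or for $r/h=O(1/h)$. Both of these have vanishing limiting probability under the continuous law $X_k$. So the double limit equals
\[
 \mathbb P(X_k>\theta_k)=1-\theta_k^{k-2},
\]
by Corollary~\ref{cor:conditional-limit}.

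\textbf{Step 5 ($k=4$).} With $y=1-\theta$, the equation becomes
\[
 (1-\theta)^3-3(1-2\theta)^3+2(1-3\theta)^3=0 .
\]
Expanding gives $-\theta(31\theta^2-21\theta+3)=0$, whose root in $(0,1/3)$ is $(21-\sqrt{69})/62$. The numerical value then follows.

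\textbf{Main obstacle.} I expect the uniqueness and sign analysis of $\phi$ on $(0,1/3)$ for general $k$ to be the hardest step. I would first try writing
\[
 \phi=\sum_j c_j(1-jx)^{k-1},
\]
which is an alternating combination of three exponentials in the variable $\log$. Descartes' rule for exponential sums, $\sum_j c_j e^{\lambda_j t}$ with sign pattern $(+,-,+)$, gives at most two real zeros, and one of them is the trivial zero at $x=0$. The remaining zero is then the unique $\theta_k$, and its existence follows by continuity together with the endpoint signs.
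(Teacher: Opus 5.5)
\textbf{Verdict: genuine gap in Step~3.} Your overall route is the paper's: reduce to the active-rank-one locus, use $C_2-C_{\geq3}=M_{h-r,k}-3M_{h-2r,k}+2M_{h-3r,k}$ from Theorem~\ref{thm:rankone-multiplicity}, rescale to $\phi(r/h)$, and pass to the limit with the $\operatorname{Beta}(k-2,1)$ law. But Step~3 carries the first assertion of the statement itself, namely uniqueness of $\theta_k\in(0,1/3)$, and you do not prove it.

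\emph{The proposed tool does not apply.} The functions $(1-jx)^{k-1}=\exp((k-1)\log(1-jx))$ are not of the form $e^{\lambda_j t}$ for one common variable $t=t(x)$, because the ratios $\log(1-jx)/\log(1-x)$ are not constant. Laguerre's rule of signs for exponential sums therefore says nothing about zeros in $x$; it would only count zeros in the fixed exponent $k-1$.

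\emph{Your expansion at $0$ is wrong.} The linear coefficient of $\phi$ is $-(k-1)(1-3\cdot2+2\cdot3)=-(k-1)\neq0$; your own Step~5 expansion contains the term $-3\theta$. The quadratic coefficient is $\binom{k-1}{2}(1-12+18)\neq0$. It is this nonzero linear term that makes $\phi<0$ near $0$.

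\emph{A working argument.} This is the concrete form of your ``convexity'' hint and is what the paper does. For $0<x<1/3$ put $t=(1-3x)/(1-2x)\in(0,1)$. Then $2-t=(1-x)/(1-2x)$, so $\phi(x)=(1-2x)^{k-1}g(t)$ with $g(t)=(2-t)^{k-1}-3+2t^{k-1}$. The derivative $g'(t)=(k-1)\bigl(2t^{k-2}-(2-t)^{k-2}\bigr)$ is strictly increasing in $t$, hence changes sign exactly once. Together with $g(0)=2^{k-1}-3>0$, $g(1)=0$ and $g'(1)=k-1>0$, this forces exactly one zero of $g$ in $(0,1)$. Since $x\to0^+$ corresponds to $t\to1^-$, you get $\phi<0$ on $(0,\theta_k)$ and $\phi>0$ on $(\theta_k,1/3)$. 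Your bound on $[1/3,1/2)$ via $(1-x)/(1-2x)\geq2$ is fine.

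\emph{A second, fixable oversight in Steps~3--4.} You claim $\phi>0$ on $(\theta_k,1]$, but $\phi(1)=0$, since $\phi(x)=(1-x)^{k-1}$ for $x\geq1/2$. Near $x=1$ an $O(1/h)$ error can therefore flip the sign of $\phi(r/h)+O(1/h)$ on a window of width $\asymp h^{-1/(k-1)}$, not $O(1/h)$, and your Step~4 ignores this region. Step~4 also presumes $\theta_k$ is a simple root, which you assert without proof. Either defect can be repaired in two ways:
\begin{enumerate}
\item For $r>h/2$ note directly that $D_{h,k}(r)=M_{h-r,k}>0$; simplicity of $\theta_k$ follows from the monotonicity of $g$ on $(0,t^*)$.
\item Argue as the paper does. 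Off $U_\eta=\{x:|\Phi_k(x)|\leq\eta\}$ the signs of $D_{h,k}(r)$ and $\Phi_k(r/h)$ agree for large $h$. Portmanteau gives $\limsup_h\mathbb P(X_h\in U_\eta)\leq\mathbb P(X_k\in U_\eta)$, which tends to $\mathbb P(X_k\in\{0,\theta_k,1\})=0$ as $\eta\downarrow0$. This needs neither simplicity of the root nor separate treatment of the endpoints.
\end{enumerate}
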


\begin{proof}
On the active-rank-one locus, with primitive relation degree $r$,
Theorem~\ref{thm:rankone-multiplicity} gives
\begin{equation}\label{eq:collision-difference}
 \begin{aligned}
 D_{h,k}(r)
 &:=C_2(A;h)-C_{\geq3}(A;h)\\
 &=M_{h-r,k}-3M_{h-2r,k}+2M_{h-3r,k}.
 \end{aligned}
\end{equation}
For fixed $h,k$, the multiple-relation locus has conditional probability
tending to zero as $q\to\infty$.  Hence the one-hyperplane asymptotics
give
\[
 \lim_{q\to\infty}
 \mathbb P\left(
  C_2(A_q;h)>C_{\geq3}(A_q;h)
  \mathrel{\Big|} A_q\text{ is not }B_h
 \right)
 =
 \frac{1}{K_{h,k}}
 \sum_{r=2}^h
 \kappa_{k,r}\,
 \mathbf 1_{\{D_{h,k}(r)>0\}}.
\]

Put $d=k-1$ and, for $0\leq x\leq1$, define
\[
 \Phi_k(x)
 =
 (1-x)^d
 -3\bigl((1-2x)^+\bigr)^d
 +2\bigl((1-3x)^+\bigr)^d.
\]
The elementary polynomial expansion of the binomial coefficient gives,
uniformly for the shifted integers $s$ occurring below,
\[
 \frac{d!}{h^d}M_{s,k}
 =\bigl((s/h)^+\bigr)^d+O_k(h^{-1}),
\]
where $M_{s,k}=0$ for $s<0$.  Consequently, uniformly for $2\leq r\leq h$,
\begin{equation}\label{eq:collision-difference-scaling}
 \frac{d!}{h^d}D_{h,k}(r)
 =\Phi_k(r/h)+O_k(h^{-1}).
\end{equation}

We verify the asserted sign change.  For $0<x<1/3$, set
\[
 t=\frac{1-3x}{1-2x}\in(0,1).
\]
Then
\[
 \Phi_k(x)
 =(1-2x)^d g_d(t),
 \qquad
 g_d(t)=(2-t)^d-3+2t^d.
\]
Now
\[
 g_d'(t)
 =d\bigl(2t^{d-1}-(2-t)^{d-1}\bigr)
\]
changes sign exactly once.  Moreover,
$g_d(0)=2^d-3>0$, $g_d(1)=0$, and $g_d(t)<0$ immediately to the
left of $1$.  Thus $g_d$, and hence $\Phi_k$, has exactly one
interior zero corresponding to $\theta_k$.  Also
\[
 \Phi_k(x)<0\quad(0<x<\theta_k),
 \qquad
 \Phi_k(x)>0\quad(\theta_k<x<1).
\]
For $1/3\leq x<1/2$, the latter positivity follows from
$(1-x)/(1-2x)\geq2$ and $2^d>3$; for $1/2\leq x<1$ it is immediate.

Let $X_h$ be the grid-valued random variable with
\[
 \mathbb P(X_h=r/h)=\frac{\kappa_{k,r}}{K_{h,k}}
 \qquad(2\leq r\leq h).
\]
The fixed-$h$ formula above is the probability that
$D_{h,k}(hX_h)>0$, and Corollary~\ref{cor:conditional-limit} says that
$X_h\Longrightarrow X_k$, where
$\mathbb P(X_k\leq x)=x^{k-2}$.  To justify the moving sign condition
explicitly, put
\[
 Z_k=\Phi_k^{-1}(0)=\{0,\theta_k,1\},
 \qquad U_\eta=\{x\in[0,1]:|\Phi_k(x)|\leq\eta\}.
\]
By the uniform estimate \eqref{eq:collision-difference-scaling}, for every
$\eta>0$ and all sufficiently large $h$, the signs of $D_{h,k}(r)$ and
$\Phi_k(r/h)$ agree whenever $r/h\notin U_\eta$.  Hence the difference
between the fixed-$h$ probability and $\mathbb P(X_h>\theta_k)$ is at most
$\mathbb P(X_h\in U_\eta)$.  Portmanteau gives
\[
 \limsup_{h\to\infty}\mathbb P(X_h\in U_\eta)
 \leq\mathbb P(X_k\in U_\eta).
\]
As $\eta\downarrow0$, the right side tends to
$\mathbb P(X_k\in Z_k)=0$.  Also
$\mathbb P(X_h>\theta_k)\to\mathbb P(X_k>\theta_k)$ because the limiting
law has no atom at $\theta_k$.  Taking first $q\to\infty$ and then
$h\to\infty$, the iterated limit in
\eqref{eq:conditional-collision-phase} is therefore
\[
 \mathbb P(X_k>\theta_k)
 =1-\theta_k^{k-2}.
\]
\enlargethispage{\baselineskip}
For $k=4$, expanding \eqref{eq:collision-threshold} gives
\[
 \theta(-3+21\theta-31\theta^2)=0,
\]
and the root in $(0,1/3)$ is the stated $\theta_4$.
\end{proof}

\begin{remark}[Nathanson's collision problem]
\label{rem:nathanson-problem11}
Problem~11 of \cite{NathansonTriangular} asks for
$C_{\geq3}(A;h)<C_2(A;h)$ for almost all $A$, without conditioning.
For fixed $h\geq2$ and $k\geq3$, however, almost every
$A_q\in\binom{[q]}k$ is a
$B_h$-set, and then both quantities are zero.  Indeed,
Theorem~\ref{thm:popularity-higher-k} gives
\[
 \mathbb P\bigl(C_2(A_q;h)>C_{\geq3}(A_q;h)\bigr)
 \leq
 \mathbb P(A_q\text{ is not }B_h)
 =O_{h,k}(q^{-1}).
\]
Thus the strict inequality in Problem~11 is false as stated.
Corollary~\ref{cor:conditional-collision-phase} gives the exact answer to its
natural version conditioned on the occurrence of a collision, in the
iterated-limit regime proved here.
\end{remark}

For $k=4$, the conditional limit has
\[
 \mathbb P(Y_4\leq y)=\left[1-(1-y)^{1/3}\right]^2,
 \qquad \mathbb E Y_4=\frac9{10}.
\]

\begin{remark}[The joint regime]
The order of limits is essential to the statement proved here.  The error
terms in the fixed-$(h,k)$ hyperplane counts are not presently uniform
enough to assert the same law in an arbitrary joint regime $h=h(q)$.  At
fixed $h,k$, the one-hyperplane terms make the expected number of primitive
active relation directions asymptotic to $k!K_{h,k}/q$, while
$K_{h,k}\sim C_kh^{k-2}$.  Thus the natural critical scale is
$q\asymp h^{k-2}$.  Determining the largest joint range in which the
conditional $\operatorname{Beta}(k-2,1)$ law persists, and the limiting point
process of primitive active directions modulo sign, marked by $\rho(z)/h$,
when $h^{k-2}/q\to\lambda\in(0,\infty)$, remains open.  A Poisson-type
description would require uniform control of the multiple-relation strata.
\end{remark}

\begin{corollary}[Random configurations versus random labels]
\label{cor:label-configuration-reversal}
For fixed $h\geq2$ and $k\geq3$, as $q\to\infty$ a uniformly random
$A\in\binom{[q]}k$ satisfies
\[
 \mathbb P(A\text{ is not }B_h)
 =\frac{k!K_{h,k}}q+O_{h,k}(q^{-2}),
 \qquad K_{h,k}\asymp_k h^{k-2}.
\]
For each $2\leq r\leq h$,
\[
 \mathbb P(|hA|=M_{h,k}-M_{h-r,k})
 =\frac{k!\kappa_{k,r}}q+O_{h,k}(q^{-2}).
\]
Moreover,
\begin{equation}\label{eq:random-configuration-principal}
 \begin{aligned}
  \mathbb P\bigl(r_{h,k}(|hA|)\leq1\bigr)
   &=1-O_{h,k}(q^{-2}),\\
  \mathbb P\bigl(r_h(A)=1\mid A\text{ is not }B_h\bigr)
   &=1-O_{h,k}(q^{-1}).
 \end{aligned}
\end{equation}
By contrast, if $k\geq4$ is fixed and $N_{h,k}$ is uniform in the finite set
$\Rcal(h,k)$, then, as $h\to\infty$,
\begin{equation}\label{eq:random-label-nonprincipal}
 \mathbb P\bigl(r_{h,k}(N_{h,k})\leq1\bigr)
 =\frac{h}{|\Rcal(h,k)|}
 =h^{-(k-2)+o_k(1)}\longrightarrow0.
\end{equation}
Thus principal labels are asymptotically negligible among attainable
cardinalities, although their realizations account for asymptotically almost
every fixed-order configuration in a growing interval.
Here $h$ is fixed before $q\to\infty$;
no joint $(h,q)$ assertion is made.
\end{corollary}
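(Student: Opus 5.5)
The plan is to assemble the statement from the higher-cardinality popularity law (Theorem~\ref{thm:popularity-higher-k}), the universal principal skeleton (Corollary~\ref{cor:universal-principal-skeleton}), the aggregate coefficient asymptotic (Proposition~\ref{prop:pop-hk-summatory}), and the exponent law (Theorem~\ref{thm:endpoint-spectrum}). For the configuration side, fix $h,k$. A set fails to be $B_h$ exactly when its sumset is not $M_{h,k}$. Subtracting \eqref{eq:pop-hk-max} from $\binom qk$ and dividing by $\binom qk=q^k/k!+O_k(q^{k-1})$ gives
\[
 \mathbb P(A\text{ is not }B_h)=\frac{k!K_{h,k}}{q}+O_{h,k}(q^{-2}).
\]
Dividing \eqref{eq:pop-hk-tail} by $\binom qk$ gives the individual principal probabilities. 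The bound $K_{h,k}\asymp_kh^{k-2}$ is the last sentence of Proposition~\ref{prop:pop-hk-summatory}. I note that the $O(q^{-2})$ error absorbs both the Ehrhart error $O(q^{k-2})$ and the $q^{k-1}$ correction in $\binom qk$.

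For \eqref{eq:random-configuration-principal}, I would use the fact that by Corollary~\ref{cor:universal-principal-skeleton} the labels of minimum rank at most one are exactly the $h$ popular values \eqref{eq:pop-hk-values}. Theorem~\ref{thm:popularity-higher-k} puts every other label on pairwise intersections of active hyperplanes, which carry $O_{h,k}(q^{k-2})$ configurations. Dividing by $\binom qk$ gives the first line. For the second line, the event ``not $B_h$ and $r_h(A)\ne1$'' means $r_h(A)\ge2$, so $A$ lies in such a codimension-two intersection and has probability $O(q^{-2})$. The conditioning event has probability $\asymp_h q^{-1}$ because $K_{h,k}>0$. The ratio therefore gives $1-O_{h,k}(q^{-1})$.

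For the label side, take $k\ge4$ and $N_{h,k}$ uniform on $\Rcal(h,k)$. Corollary~\ref{cor:universal-principal-skeleton} shows that exactly $h$ labels have minimum rank at most one, which gives the identity $h/|\Rcal(h,k)|$. Theorem~\ref{thm:endpoint-spectrum} gives $|\Rcal(h,k)|=h^{k-1+o_k(1)}$, so the ratio is $h^{-(k-2)+o_k(1)}\to0$ because $k\ge4$.

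The only delicate point is bookkeeping the order of limits: $h$ is fixed while $q\to\infty$ in the first part, and $q$ plays no role in the second. Otherwise every step is a direct division of already-established exact asymptotics. The main obstacle is ensuring that the ``other cardinality'' configurations are genuinely $O(q^{k-2})$ uniformly over the finitely many directions. This is already built into the proof of Theorem~\ref{thm:popularity-higher-k}, because finitely many primitive directions of degree at most $h$ exist.
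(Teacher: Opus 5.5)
Your proposal is correct and follows essentially the same route as the paper. You divide the popularity law by $\binom qk$, identify the rank-$\leq1$ labels with the $h$ popular values via the universal principal skeleton, and compare the $O_{h,k}(q^{k-2})$ pairwise-intersection locus with the order-$q^{k-1}$ non-$B_h$ locus for the conditional statement; on the label side you combine the count $h$ with Theorem~\ref{thm:endpoint-spectrum}. The only cosmetic difference is that you bound the nonprincipal-label configurations directly by the union of pairwise hyperplane intersections, whereas the paper sums the $O_{h,k}(q^{k-2})$ bound over the finitely many nonprincipal labels; both are immediate from Theorem~\ref{thm:popularity-higher-k}.
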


\begin{proof}
The first two displays are Theorem~\ref{thm:popularity-higher-k} divided by
$\binom qk=q^k/k!+O_k(q^{k-1})$.  That theorem also says that every
nonprincipal label has $O_{h,k}(q^{k-2})$ realizations.  There are only
finitely many labels for fixed $h,k$, so the first assertion in
\eqref{eq:random-configuration-principal} follows from
Corollary~\ref{cor:universal-principal-skeleton}.  Outside the union of two
distinct active-relation hyperplanes a non-$B_h$ tuple has actual active rank
one; those intersections contain $O_{h,k}(q^{k-2})$ points, whereas the
non-$B_h$ locus has order $q^{k-1}$.  This proves the conditional assertion.
Finally, Corollary~\ref{cor:universal-principal-skeleton} gives exactly $h$
labels of minimum rank at most one, and
Theorem~\ref{thm:endpoint-spectrum} gives
$|\Rcal(h,k)|=h^{k-1+o_k(1)}$ for fixed $k\geq4$.
\end{proof}

\section{Consequences and two spectrum questions}\label{sec:questions}

Theorem~\ref{thm:endpoint-spectrum} determines the ambient polynomial
exponent at every fixed cardinality $k\geq4$, while
Theorem~\ref{thm:main} gives the sharper positive-density conclusion at the
four-point transition.  We record consequences that transport this spectrum
abundance beyond its original one-dimensional formulation.

Recall that for an additive abelian group $G$ we write
$\Rcal_G(h,k)=\{|hA|:A\subset G,\ |A|=k\}$.

\begin{corollary}[Universality over torsion-free groups]
\label{cor:lattice-transfer}
Let $G$ be any nontrivial torsion-free abelian group.  For all $h,k\geq1$,
\begin{equation}\label{eq:torsion-free-spectrum-universality}
 \Rcal_G(h,k)=\Rcal(h,k).
\end{equation}
Consequently the spectrum-only conclusions
\eqref{eq:endpoint-local}, \eqref{eq:ambient-exponent}, and
\eqref{eq:localmain}, together with the four-point positive-density
conclusion, hold with $\Rcal$ replaced by $\Rcal_G$.  The diameter-filtered
integer-witness assertion \eqref{eq:quantmain} is not asserted for an
abstract group $G$.  In particular,
$|\Rcal_G(h,k)|=h^{k-1+o_k(1)}$ for every fixed $k\geq4$, and
$|\Rcal_G(h,4)|=\Theta(h^3)$ with positive lower density.
\end{corollary}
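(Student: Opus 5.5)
The plan is to prove both inclusions in \eqref{eq:torsion-free-spectrum-universality} directly, then read off the spectrum-only consequences.

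\emph{Inclusion $\Rcal(h,k)\subseteq\Rcal_G(h,k)$.} Fix $0\neq g\in G$. Torsion-freeness makes $n\mapsto ng$ an injective homomorphism $\mathbb Z\to G$. Hence for $B\subset\mathbb Z$ with $|B|=k$, the image $g\cdot B$ is a $k$-subset of $G$. Moreover $h(g\cdot B)=g\cdot(hB)$, so $|h(g\cdot B)|=|hB|$.

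\emph{Inclusion $\Rcal_G(h,k)\subseteq\Rcal(h,k)$.} Let $A=\{a_1,\ldots,a_k\}\subset G$ and let $H$ be the subgroup it generates. Then $H$ is finitely generated and torsion-free, so $H\cong\mathbb Z^m$. Fix a translation-invariant lexicographic order on $H$ and list $A$ increasingly in it. Theorem~\ref{thm:ordered-group-shadow} then supplies integers $b_0<\cdots<b_{k-1}$ reproducing the signs of all $(u-v)\cdot A$ for $u,v\in\mathcal U_{h,k}$. In particular $u\cdot A=v\cdot A$ if and only if $u\cdot b=v\cdot b$. Since $|hA|$ is the number of classes of this equivalence on $\mathcal U_{h,k}$, we get $|hA|=|hB|$ with $B=\{b_i\}$ and $|B|=k$. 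Only the equality part of the shadow is needed, and this is also the content of Corollary~\ref{cor:finite-profile-transfer} at the single degree $j=h$, which I would simply cite.

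\emph{Consequences.} Once the spectra coincide, every statement concerning only the set $\Rcal(h,k)$ transfers verbatim to $\Rcal_G(h,k)$: \eqref{eq:endpoint-local}, \eqref{eq:ambient-exponent}, \eqref{eq:localmain}, and the positive lower density $|\Rcal(h,4)|=\Theta(h^3)$. By contrast, \eqref{eq:quantmain} refers to the normalized diameter of integer witnesses, which is not a property of labels alone, so it is not claimed for $G$.

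There is no real obstacle. The only point needing care is that the order on $H$ must be translation-invariant, so that the sign and equality data agree with the group structure. This is exactly the hypothesis of Lemma~\ref{lem:finite-scalarization}, and it is already handled there.
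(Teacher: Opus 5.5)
Your proof is correct and follows the paper's argument: the injective map $n\mapsto ng$ gives $\Rcal(h,k)\subseteq\Rcal_G(h,k)$, and ordering the finitely generated torsion-free subgroup and applying Theorem~\ref{thm:ordered-group-shadow} gives the reverse inclusion. One detail to add: Theorem~\ref{thm:ordered-group-shadow} is stated only for $k\geq2$, so you should handle the trivial case $k=1$ (where both spectra equal $\{1\}$) separately, as the paper does.
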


\begin{proof}
The case $k=1$ is immediate, so assume $k\geq2$.
Let $A\subset G$ be finite.  Its finitely generated subgroup is torsion-free
and hence admits a translation-invariant group order.  Theorem~
\ref{thm:ordered-group-shadow} gives an integer set with the same complete
$h$-addition table and therefore the same sumset cardinality.  This proves
$\Rcal_G(h,k)\subseteq\Rcal(h,k)$.  Conversely, if $g\ne0$ lies in $G$, the
map $n\mapsto ng$ embeds every finite integer set and preserves all sumset
cardinalities, proving the reverse inclusion.
\end{proof}

\begin{corollary}[Exponent dichotomy for abelian groups]
\label{cor:group-exponent-dichotomy}
Let $G$ be an abelian group and fix $k\geq4$.
\begin{enumerate}[label=\textup{(\roman*)}]
\item If $G$ has unbounded exponent, then
\begin{equation}\label{eq:unbounded-group-spectrum-exponent}
 |\Rcal_G(h,k)|=h^{k-1+o_k(1)}
 \qquad(h\to\infty).
\end{equation}
\item If $mG=0$ for some positive integer $m$, then
\begin{equation}\label{eq:bounded-group-spectrum}
 |\Rcal_G(h,k)|\leq m^{k-1}
 \qquad(h\geq1).
\end{equation}
\end{enumerate}
 Consequently, when $|G|\geq k$, the spectrum sizes are bounded in $h$ if
 and only if $G$ has bounded exponent.  The bounded-exponent case has
 logarithmic growth exponent $0$, while the unbounded-exponent case has
 exponent $k-1$.
\end{corollary}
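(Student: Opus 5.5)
Part~(i) follows almost directly from what has already been proved, while part~(ii) is Nathanson's embedding argument.

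For part~(i), the upper bound is the ambient bound $|hA|\leq M_{h,k}$. Every value therefore lies in $[1,M_{h,k}]$, so $|\Rcal_G(h,k)|\leq h^{k-1+o(1)}$.

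For the lower bound in part~(i), I reduce to the integer spectrum by embedding. If $G$ has unbounded exponent, then either $G$ contains an element of infinite order, or $G$ is torsion and contains elements of arbitrarily large finite order. In the first case, $n\mapsto ng$ embeds $\mathbb Z$ into $G$, so $\Rcal(h,k)\subseteq\Rcal_G(h,k)$, and Theorem~\ref{thm:endpoint-spectrum} gives the claim.

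In the torsion case, fix $h$ and pick $g$ whose order $N$ is large compared with the witnesses. By Theorem~\ref{thm:compression-master} (or Corollary~\ref{cor:optimal-finite-modeling}), every label in $\Rcal(h,k)$ has an integer witness $A\subset[0,(k-1)h^{k-2}]$. All of its $h$-sums then lie in $[0,h(k-1)h^{k-2}]$. Choose $N>(k-1)h^{k-1}$. Reduction modulo $N$ is then injective on $hA$, and $A\mapsto A\cdot g\subset\langle g\rangle\cong\mathbb Z/N$ preserves $|hA|$ and $|A|=k$. So $\Rcal(h,k)\subseteq\Rcal_G(h,k)$ for every $h$. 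This uses unboundedness of the orders only through the existence of an element of order exceeding $(k-1)h^{k-1}$, which holds for each $h$. The lower bound $h^{k-1-o(1)}$ then transfers.

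The key point, and the only real obstacle, is the compression bound $N(h,k)=O_k(h^{k-2})$. It is what lets a single cyclic subgroup of polynomially large order carry the entire integer spectrum at level $h$. The earlier bound $O_k(h^{k-1})$ would also suffice here, since only finiteness for each fixed $h$ is needed.

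For part~(ii), suppose $mG=0$ and $A=\{a_1,\dots,a_k\}$. Translating so that $a_1=0$ does not change $|hA|$. Every $h$-sum is then $\sum_{i\geq2}u_ia_i$ with $u_i\geq0$. Since $ma_i=0$, each such sum equals $\sum_{i\geq 2}\bar u_ia_i$ with $\bar u_i\in\{0,\dots,m-1\}$. Hence $hA$ is contained in the image of $\{0,\dots,m-1\}^{k-1}$, and $|hA|\leq m^{k-1}$. The spectrum is therefore a subset of $\{1,\dots,m^{k-1}\}$, giving $|\Rcal_G(h,k)|\leq m^{k-1}$ uniformly in $h$.

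The consequences follow from the two parts. When $|G|\geq k$ the spectrum is nonempty, part~(ii) gives boundedness under bounded exponent, and part~(i) gives growth $h^{k-1+o(1)}\to\infty$ otherwise. The stated logarithmic exponents $0$ and $k-1$ follow immediately.
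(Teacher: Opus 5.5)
Your proof is correct and is essentially the paper's argument. In part~(i) you combine Nathanson's embedding (an element $g$ of order exceeding $hD$ maps an integer witness $B\subset[0,D]$, together with all its $h$-fold sums, injectively into $G$) with Theorem~\ref{thm:endpoint-spectrum} and the ambient bound, and in part~(ii) you use the same count $|hA|\leq|\langle A\rangle|\leq m^{k-1}$ after translating $0$ into $A$.

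The one difference is that you invoke the compression theorem to get a uniform $D=(k-1)h^{k-2}$. This is harmless but not needed: the paper simply chooses $g$ separately for each witness, and $\Rcal(h,k)$ is finite anyway, so compression is not really ``the key obstacle''. Note also that your opening sentence has the labels swapped: it is part~(i), not part~(ii), that is Nathanson's embedding argument.
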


\begin{proof}
If $G$ has unbounded exponent, Nathanson's embedding argument gives
\[
 \Rcal(h,k)\subseteq\Rcal_G(h,k)
\]
for every $h,k$ \cite[Theorem~3]{NathansonProblems}: for a finite integer
witness $B\subset[0,D]$, choose $g\in G$ of order greater than $hD$ and map
$b\mapsto bg$.  Theorem~\ref{thm:endpoint-spectrum} supplies the lower
exponent in \eqref{eq:unbounded-group-spectrum-exponent}, while the universal
bound $|hA|\leq M_{h,k}$ gives $|\Rcal_G(h,k)|\leq M_{h,k}=O_k(h^{k-1})$.

Now suppose $mG=0$.  Translate a $k$-set $A$ so that it contains zero.
The subgroup generated by its other $k-1$ elements is an image of
$(\mathbb Z/m\mathbb Z)^{k-1}$, so
\[
 |hA|\leq|\langle A\rangle|\leq m^{k-1}.
\]
Thus every element of $\Rcal_G(h,k)$ belongs to
$\{1,\ldots,m^{k-1}\}$, proving \eqref{eq:bounded-group-spectrum}.
If $|G|\geq k$, the spectrum is nonempty for every $h$, so the final
logarithmic statement is well defined.
\end{proof}

The torsion-free additive equality-profile transfer is classical: Ruzsa's
finite Freiman-model theorem gives an $F_h$-isomorphic model in $\mathbb Z$
\cite[Part~II, Lemma~2.3.4]{GeroldingerRuzsa2009}.  Rajagopal likewise
observes that his fixed-$h$ spectrum results extend to infinite torsion-free
abelian groups \cite[Section~5.4]{Rajagopal2026}.  The new refinement in
Corollary~\ref{cor:finite-profile-transfer} is ordered and quantitative: one
integer shadow simultaneously preserves the entire truncated profile, every
comparison, the active rank, and the relation-birth filtration through degree
$h$, at the optimal universal height order $\Theta_k(h^{k-2})$.  For
$G=\mathbb Z^m$, the single-degree equality contains Nathanson's recent
lattice theorem as a special case \cite[Theorem~1]{NathansonLattice2026}.
The transfer concerns attainable
cardinalities; the $[q]$-frequency statements remain one-dimensional, and
normalized diameter has no coordinate-free group analogue.  Corollary~
\ref{cor:defecttwolattice} nevertheless gives an exact $\ell_\infty$
obstruction, in every lattice dimension, for the single value $M_h-2$.
Kova\v{c}evi\'{c}'s recent work on an isodiametric theorem for the type-$A_3$
root lattice, with graph distance (equivalently, half the ambient $\ell_1$
distance), and on diameter-perfect finite-group $B_h$-sets concerns a
different packing-and-tiling extremum in a related relation-lattice setting
\cite{Kovacevic2026}.

For the complementary fixed-curve direction, Hoa and Tien derive regularity
bounds and eventual sumset structure from Ap\'ery data
\cite{HoaTien2026}.  We instead vary the exponent set and count the distinct
degree-$h$ values.

\begin{corollary}[Density of degree-$h$ Hilbert values]\label{cor:hilbert-density}
As $h\to\infty$, let $C_A\subset\mathbb P^3$ range over the projective monomial curves
associated with four distinct integer exponents.  The number of distinct
degree-$h$ Hilbert values $H_{C_A}(h)$ is $\Theta(h^3)$.  Moreover,
$\gg h^3$ such values already occur, inside the window in
\eqref{eq:localmain}, among curves in the family from
Theorem~\ref{thm:family} whose coordinate rings are arithmetically
Cohen--Macaulay of type two.
\end{corollary}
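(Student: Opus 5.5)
The plan is to assemble this from three earlier results: Elias's identity \eqref{eq:eliasbridge}, Theorem~\ref{thm:main}, and Theorem~\ref{thm:family}.

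First, \eqref{eq:eliasbridge} says $H_{C_A}(h)=|hA|$ for every four-element integer exponent set $A$. Translation and the choice of a primitive representative do not change the curve up to the relevant identification, and they do not change $|hA|$. So the set of degree-$h$ Hilbert values of such curves is exactly $\Rcal(h,4)$. The upper bound $O(h^3)$ then comes from the ambient interval \eqref{eq:ambient}, because every value lies in $[3h+1,M_h]$. The lower bound $\gg h^3$ is Theorem~\ref{thm:main}. Together these give $\Theta(h^3)$ distinct Hilbert values.

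For the refinement, I will go back to the proof of Theorem~\ref{thm:main}. There, every counted value in the window \eqref{eq:localmain} is realized by a set $A_{p,q,r,s}$, with parameters in the box of Lemma~\ref{lem:fullschart} and $(p,q)=1$. By Lemma~\ref{lem:fullschart}, these parameters satisfy the hypotheses \eqref{eq:familyhyp}, so Theorem~\ref{thm:family} applies.

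Theorem~\ref{thm:family} and the Hilbert--Burch resolution that follows it show that $S/I_A$ is Cohen--Macaulay of dimension two. Its minimal resolution has length two and last free module of rank two. So the coordinate ring is arithmetically Cohen--Macaulay of type two. The value $H_{C_A}(h)=|hA|$ lies in the stated window by the explicit bounds on $D(u,v,b,c)$ in the proof of Theorem~\ref{thm:main}, and these windowed values number $\gg h^3$.

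I do not expect a real obstacle. The one point to check is that the $\gg h^3$ count in Theorem~\ref{thm:main} is stated for distinct values of $|hA|$, not just for parameter tuples. The chart--energy argument gives exactly distinct defects, and through \eqref{eq:fourtermdefect} these are distinct Hilbert values. The field $K$ plays no role in this count, since the Hilbert function is field-independent by \eqref{eq:eliasbridge}.
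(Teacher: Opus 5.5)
Your proposal is correct and follows the paper's own argument: Elias's identity \eqref{eq:eliasbridge} converts Hilbert values into sumset cardinalities; the construction in the proof of Theorem~\ref{thm:main} supplies the $\gg h^3$ windowed values, whose witnesses $A_{p,q,r,s}$ are ACM of type two by Theorem~\ref{thm:family} and its Hilbert--Burch resolution; and the upper bound is the trivial count of integers up to $M_h$. The only difference is that you bound via the interval $[3h+1,M_h]$, while the paper uses $[1,M_h]$, which makes no difference to the argument.
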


\par\medskip
\begin{proof}
Elias's identity \eqref{eq:eliasbridge} identifies $H_{C_A}(h)$ with
$|hA|$.  The lower bound, including the ACM assertion and the localized
window, is the construction in the proof of Theorem~\ref{thm:main}; the
upper bound follows because every Hilbert value is an integer in
$[1,M_h]$, so there are at most $M_h=O(h^3)$ distinct values.
\end{proof}

Corollary~\ref{cor:lattice-transfer} combines the classical torsion-free
equality-profile transfer with the new ordered, filtered, sharp-height shadow
theorem.  The
Hilbert consequences likewise occur in every projective embedding dimension
$k-1$.  Corollary~
\ref{cor:endpoint-acm-type-two} shows that, for every fixed $k\geq4$, the
endpoint lower bound already counts degree-$h$ Hilbert values of
arithmetically Cohen--Macaulay projective monomial curves in
$\mathbb P^{k-1}$ of Cohen--Macaulay type two.  Their defining ideals are
prime toric almost complete intersections, and their coordinate rings have
the fixed total Betti polynomial $(1+2\xi)(1+\xi)^{k-3}$.  Thus the algebraic chart and its
suspension tower are not merely proof devices: one rigid homological format
already realizes the full ambient polynomial exponent.

It is natural to ask for the finer limiting distribution.

\begin{question}[Limiting density]\label{q:density}
Does
\[
 \frac{|\Rcal(h,4)|}
 {|[3h+1,\binom{h+3}{3}]\cap\Z|}
\]
have a limit?  If so, is the limit equal to $1$?
\end{question}

Our proof produces positive density from a single Cohen--Macaulay chamber
and uses only a coarse energy-to-image inequality.  Determining the image
density of the four-cubic map in \eqref{eq:fourtermdefect}, or combining
several chambers with controlled overlap, may yield a stronger density bound.

Theorem~\ref{thm:rank-frequency} determines the exact quasipolynomial degree
and an effective leading coefficient for every attainable cardinality.  At four
elements, Theorem~\ref{thm:spectral-rank} completely identifies the three
minimum-rank layers: the maximum has rank zero, the universal principal list
has rank one, and every other attainable value has rank two.  What remains
unknown is which individual integers populate that rank-two layer and how
those integers are distributed.  For every fixed $k\geq5$,
Theorem~\ref{thm:maximal-rank-saturation}\textup{(ii)} shows that, at each sufficiently large
$h$, at least one intrinsic rank
$r^\ast_{h,k}\in\{2,\ldots,k-2\}$ supports $h^{k-1-o(1)}$ labels, each admitting
a maximal-rank prime toric ACM endpoint witness.  The selected rank need not
be canonical and may vary with $h$.
Proposition~\ref{prop:stable-factor-barrier} further shows that eliminating
the subpolynomial loss in \eqref{eq:endpoint-local} requires a mechanism
beyond the present stable factorized chart.

\begin{question}[Higher-cardinality density and rank stratification]
\label{q:higher-spectrum}
Fix $k\geq5$.  Is $|\Rcal(h,k)|=\Theta_k(h^{k-1})$?  More sharply, does
\[
 \frac{|\Rcal(h,k)|}
 {|[hk-h+1,\binom{h+k-1}{k-1}]\cap\Z|}
\]
converge, and if so is its limit positive?  Can the subpolynomial factor in
\eqref{eq:endpoint-local} be replaced by $(\log h)^{-O_k(1)}$, or by a
positive constant?  For
$2\leq r\leq k-2$, determine the growth of the minimum-rank strata
$|\Rcal^{[r]}(h,k)|$ as $h\to\infty$.  Is there a fixed
$r\in\{2,\ldots,k-2\}$ for which
$|\Rcal^{[r]}(h,k)|=h^{k-1-o(1)}$ as $h\to\infty$, and if so, which $r$?
\end{question}

The active-relation filtration now has exact descriptions in three
distinguished rank regimes.  Theorem~\ref{thm:optimal-four-mark} gives the
optimal finite ruler at the rank-zero four-mark $B_h$ endpoint.  At rank one,
Theorem~\ref{thm:rankone-multiplicity} shows that a primitive relation
determines the entire representation histogram, while
Corollary~\ref{cor:conditional-collision-phase} gives the conditional
collision phase.  For maximal-rank
representatives, the endpoint amplifier supplies the full spectral exponent
together with rigid ACM structure.  Thus, for every sufficiently large $h$,
the construction and the finite-observation geometry meet in a possibly
$h$-dependent intrinsic minimum-rank stratum.  The construction supplies a
full-exponent family of labels with maximal-rank ACM witnesses, while the
geometry identifies their intrinsic codimension and realization rarity.  The
remaining synthesis problem is to identify or stabilize that rank, determine
every rank stratum, and obtain positive density by a mechanism not confined
to this single fixed-scale full-residue chart.

\bibliographystyle{amsplain}
\enlargethispage{2\baselineskip}
\begin{sloppypar}
\providecommand{\bysame}{\leavevmode\hbox to3em{\hrulefill}\thinspace}
\providecommand{\href}[2]{#2}

\end{sloppypar}

\end{document}